\theoremstyle{plain}
\newtheorem{theorem}{Theorem}[section]
\newtheorem{lemma}[theorem]{Lemma}
\newtheorem{sublemma}[theorem]{Sublemma}
\newtheorem{proposition}[theorem]{Proposition}
\newtheorem{corollary}[theorem]{Corollary}
\newtheorem{definition}[theorem]{Definition}
\newtheorem{remark}[theorem]{Remark}
\newtheorem{remarks}[theorem]{Remarks}
\newtheorem{notation}[theorem]{Notation}
\newtheorem{Fact}[theorem]{Fact}
\newtheorem*{claim}{Claim}
\newtheorem*{thrm}{Theorem}
\newtheorem*{lemma*}{Lemma}
\newtheorem*{prop}{Proposition}
\newcommand{\N}{\mathbb{N}}
\newcommand{\Q}{\mathbb{Q}}
\newcommand{\mc}{\mathcal}
\newcommand{\e}{\epsilon}
\newcommand{\al}{\alpha}
\newcommand{\mca}[1][A]{\mathcal{#1}}
\newcommand{\fa}{f_{\alpha}}
\newcommand{\fb}{f_{\beta}}
\newcommand{\fg}{f_{\gamma}}
\newcommand{\fd}{f_{\delta}}
\newcommand{\ga}{g_{\alpha}}
\newcommand{\gb}{g_{\beta}}
\newcommand{\norm}[1][\cdot]{\lVert #1\rVert}
\newcommand{\hxn}[1][x]{(\widehat{#1}_{n})_{n\in\N}}
\newcommand{\hyn}{(\hat{y}_{n})_{n\in\N}}
\newcommand{\xn}[1][x]{(#1_{n})_{n\in\N}}
\DeclareMathOperator{\minsupp}{minsupp}
\DeclareMathOperator{\maxsupp}{maxsupp}
\DeclareMathOperator{\supp}{supp}
\DeclareMathOperator{\conv}{conv}
\DeclareMathOperator{\ran}{range}
\DeclareMathOperator{\dist}{dist}
\DeclareMathOperator{\ind}{ind}
\DeclareMathOperator{\inde}{ind}
\begin{document}
\title{On the hereditary proximity to $\ell_1$ }
\author{S.A. Argyros}

\address{Department of Mathematics, National Technical University of Athens, Athens 15780, Greece}
\email{sargyros@math.ntua.gr}
\author{A. Manoussakis}
\address{Department of Mathematics, University of Aegean, Karlovasi, Greece}
\email{amanouss@aegean.gr}
\author{A.M. Pelczar}
\address{Institute of Mathematics, Jagiellonian University, {\L}ojasiewicza 6, 30-348 Krak\'ow, Poland}
\email{anna.pelczar@im.uj.edu.pl}
\begin{abstract}
In the first part of the paper we present and discuss
concepts of local and asymptotic hereditary proximity to
$\ell_1$. The second part is devoted to a complete separation of
the hereditary local proximity to $\ell_1$ from the asymptotic one. More
precisely for every countable ordinal $\xi$ we construct a
separable reflexive space $\mathfrak{X}_{\xi}$ such that every
infinite dimensional subspace of it has  Bourgain $\ell_1$-index
greater than $\omega^\xi$ and the space itself has no $\ell_1$-spreading model.  We also present a reflexive HI space admitting
no $\ell_{p}$ as a spreading model.
\end{abstract}
\keywords{Bourgain $\ell_{1}$-index, $\ell_{1}$-spreading model, attractors method}
\subjclass[2000]{46B20, 46B15,03E10, 05A17}
\maketitle

\section{Introduction}
Concepts of proximity to a classical $\ell_p$ (or $c_0$) space play a significant
role to the understanding of the structure of a Banach space. They are categorized as
follows:

The first one is the global proximity to $\ell_p$ which simply means that $\ell_{p}$  is
isomorphic to a subspace  $Y$ of $X$. The local proximity which occurs more frequently,
due to  J.L. Krivine's theorem \cite{K},  is measured through the Bourgain
$\ell_{p}$-index \cite{B}. The last concept is the asymptotic proximity that varies from
A. Brunel-L. Sucheston $\ell_p$-spreading models, \cite{BS}, to the asymptotic $\ell_p$ spaces.
The latter class of Banach spaces appeared after B.S. Tsirelson space \cite{T}  that
answered in negative the famous Banach's problem by showing that global proximity to some
$\ell_p$ is not always possible.

It is easy to see that the global proximity to $\ell_p$ is the strongest one followed by
the asymptotic one. The local proximity  is the weakest among them. It is also known that
the three classes are separated for each $\ell_{p}$. Namely there are spaces with
arbitrarily large local proximity to $\ell_p$ but no asymptotic one and similarly for the
asymptotic and the global ones. The present paper is mainly devoted  to  the separation
of the local and asymptotic proximity to $\ell_{1}$ when the first one is hereditarily
large. In particular our work is motivated by a result of the third named author stated
as follows.
\begin{thrm} (\cite{P}) Let $X$ be a separable Banach space and $\xi$ be a
countable ordinal. If $X$ is boundedly distortable  and has hereditary Bourgain
$\ell_{1}$-index greater than  $\omega^{\xi}$  then it is saturated by asymptotic
$\ell_{1}^{\xi}$ spaces.
\end{thrm}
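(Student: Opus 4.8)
The plan is to prove, by transfinite induction on $\xi$, that every infinite-dimensional subspace $Y$ of $X$ contains a further subspace that is asymptotic $\ell_1^\xi$ — that is, admits a constant $C$ with $\|\sum_{j=1}^m w_j\|\ge C^{-1}\sum_{j=1}^m\|w_j\|$ for every $\mathcal S_\xi$-admissible normalised block sequence $w_1<\dots<w_m$ — the whole point of the hypothesis of bounded distortability being to force all constants produced along the way to depend only on $\xi$ and on the distortion constant of $X$, never on $Y$. Given $Y$, one may pass first to a subspace with a Schauder basis, so that all sequences below are block sequences of it.

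\emph{Step 1 (index $\Rightarrow$ averages).} Since $I(Y)>\omega^\xi$, the Schreier-family description of the Bourgain $\ell_1$-index — Bourgain's analysis \cite{B} together with its later refinements relating the index to the hierarchy $(\mathcal S_\zeta)$ — yields a constant $K_Y$ such that for every $n$ there is a normalised $x\in Y$ of the form $x=\sum_i a_i u_i$, where $(u_i)$ is an $\mathcal S_\xi$-admissible normalised block sequence, $(a_i)$ a probability vector, and $\|x\|\ge 1/K_Y$. A diagonalisation then produces inside $Y$ a rapidly increasing block sequence $(y_k)$ of such $\mathcal S_\xi$-$K_Y$-averages whose number of summands and whose supports spread as fast as one wishes; at a limit ordinal $\xi=\sup_n\xi_n$ this is run against the sequence $(\xi_n)$ defining $\mathcal S_\xi$, taking the $k$-th average to be an $\mathcal S_{\xi_{n_k}}$-average with $n_k\uparrow\infty$, so that these reassemble into genuine $\mathcal S_\xi$-data.

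\emph{Step 2 (distortion $\Rightarrow$ stabilisation) and Step 3 (reading off asymptotic $\ell_1^\xi$).} A priori neither $K_Y$ nor the norm induced on $\overline{[y_k]}$ by the $\mathcal S_\zeta$-combinations ($\zeta\le\xi$) of the coordinate functionals $(y_k^*)$ is under control as $Y$ varies — this is exactly the pathology realised by the spaces $\mathfrak X_\xi$ constructed later in the paper, which have large hereditary index but no $\ell_1$-spreading model. Extending the $y_k^*$ by Hahn--Banach to $X$ and forming, on all of $X$, the Tsirelson-type equivalent norms built from these functionals under the $(\tfrac12,\mathcal S_\zeta)$-operations, I would use that $X$ is boundedly distortable to pass to a subspace $Z\subseteq\overline{[y_k]}$ on which these auxiliary norms are $\lambda$-equivalent to $\|\cdot\|$, with $\lambda$ depending only on the distortion constant; the key point making this possible is that, by the \emph{hereditary} index hypothesis, every subspace again contains $\mathcal S_\xi$-averages, so the auxiliary norm is bounded below on the sphere of every subspace and distortion cannot push its infimum to $0$. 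On such a $Z$ one then checks the asymptotic estimate directly: given an $\mathcal S_\xi$-admissible normalised block sequence $(w_j)_{j=1}^m$ in $Z$, each $w_j$ is a block of the $y_k$'s, and by the rapid increase of $(y_k)$ together with a combinatorial lemma on Schreier families (of the ``$\mathcal S_\xi[\mathcal S_\xi]$ versus $\mathcal S_\xi$ after reindexing by a sufficiently spread set'' type) the decompositions of the $w_j$ splice into a single norming functional $f$ obtained by one $(\tfrac12,\mathcal S_\xi)$-operation applied to functionals with $f_j(w_j)=\|w_j\|$; hence $\|\sum_j w_j\|\ge\lambda^{-1}f(\sum_j w_j)=\tfrac12\lambda^{-1}\sum_j\|w_j\|$, so $Z$ is asymptotic $\ell_1^\xi$.

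The main obstacle is Step 2: extracting from bounded distortability a stabilisation strong enough to yield the \emph{asymptotic} estimate rather than merely an $\ell_1^\xi$-spreading model, while simultaneously keeping the auxiliary norming functionals uniformly bounded — which forces a preliminary passage to a subspace on which $(y_k)$ satisfies an upper $\mathcal S_\xi$-estimate, obtained by a further appeal to non-arbitrary distortability — and remaining compatible with the induction. The limit-ordinal case couples the successor and limit steps, since the $\mathcal S_{\xi_n}$-averages of Step 1 must be assembled across all $n$ against a single uniform constant; this bookkeeping, rather than any individual estimate, is where the bulk of the work lies. The admissibility combinatorics of Step 3 is then routine once the supports of $(y_k)$ are made sufficiently lacunary.
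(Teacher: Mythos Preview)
This theorem is not proved in the present paper. It is quoted in the Introduction as a result of the third-named author from \cite{P}, and serves only as motivation: the paper's goal is precisely to show that without bounded distortability the conclusion fails, via the construction of $\mathfrak{X}_\xi$. There is therefore no proof in this paper to compare your proposal against.

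If your aim is to reconstruct the argument of \cite{P}, a few cautions about your sketch. Your Step~2 is where the real content lies, and as written it is not a proof but a hope: you propose to build Tsirelson-type equivalent norms from Hahn--Banach extensions of the $y_k^*$ and then invoke bounded distortability to stabilise them, but you have not shown these norms are equivalent norms on all of $X$ (as opposed to merely on $[y_k]$), nor that their restriction to an arbitrary subspace has infimum bounded away from zero --- your remark that ``every subspace again contains $\mathcal S_\xi$-averages'' addresses subspaces of $[y_k]$, not of $X$. Bounded distortability is a statement about equivalent norms on $X$, so this gap matters. Your Step~3 also leans on a combinatorial splicing lemma (``$\mathcal S_\xi[\mathcal S_\xi]$ versus $\mathcal S_\xi$ after reindexing'') that is false in the form suggested: $\mathcal S_\xi[\mathcal S_\xi]=\mathcal S_{\xi\cdot 2}\neq\mathcal S_\xi$, and no amount of spreading fixes this for $\xi\ge 1$. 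The actual argument in \cite{P} proceeds differently and you should consult it directly.
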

Let's recall that the  hereditary Bourgain $\ell_{p}$-index of a Banach space  $X$ is
the minimum  of   Bourgain $\ell_{p}$- index of its subspaces.  In the sequel by the
$\ell_{p}$-index we will mean the Bourgain $\ell_{p}$-index.

In view of the above theorem it is natural to ask how critical is
the bounded distortion of $X$ for the final conclusion. It is also worth  adding that heredity assumptions for the local proximity to $\ell_1$  could yield large asymptotic one. In this direction we prove the following.
\begin{prop} Let $(e_n)$ be a Schauder basis of a  Banach space $X$ such that
the Bourgain $\ell_{1}$-tree supported by any subsequence of
$\xn[e]$ has order greater than $\omega^{\xi}$.
Then  there exists a subsequence   generating an
$\ell_{1}^{\xi}$-spreading model.
\end{prop}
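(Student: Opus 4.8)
It suffices to find an infinite set $n_1<n_2<\cdots$ and a constant $c>0$ for which
\[
\Big\|\sum_{k\in F}a_k e_{n_k}\Big\|\ \ge\ c\sum_{k\in F}|a_k|
\qquad\text{for every }F\in\mathcal S_\xi\text{ and all scalars }(a_k)_{k\in F},
\]
that is, such that $(e_{n_k})_k$ is $(\mathcal S_\xi,c)$-$\ell_1$: after the routine passage to a further subsequence along which all Brunel--Sucheston limits exist, such a subsequence generates an $\ell_1^\xi$-spreading model. The engine is the classical correspondence between the ordinal index of a well-founded tree and the Schreier families $\mathcal S_\zeta$: a tree of uniformly $C$-$\ell_1$ block sequences whose order is at least $\omega^\xi$ contains a normalized block sequence all of whose $\mathcal S_\xi$-indexed subsequences are again members of the tree, i.e.\ one that is $(\mathcal S_\xi,C)$-$\ell_1$ with the same $C$. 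The role of the \emph{hereditary} nature of the hypothesis is twofold: it lets us work with a single constant, and it lets us replace such a block sequence by an honest subsequence of $(e_n)$.

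First I would stabilize the constant. For $C\in\N$ let $\mathcal A_C$ be the set of infinite $M\subseteq\N$ for which the tree of $C$-$\ell_1$ block sequences of $(e_n)_{n\in M}$ has order $>\omega^\xi$; each $\mathcal A_C$ is closed under supersets and Borel, and the hypothesis gives $\bigcup_C\mathcal A_C=[\N]^\infty$. A Ramsey/diagonalization argument — re-applying the hypothesis along a diagonal subsequence to rule out the alternative — should then produce $C$ and $M_0$ with $[M_0]^\infty\subseteq\mathcal A_C$, so that the $C$-$\ell_1$ block tree of \emph{every} subsequence of $M_0$ still has order $>\omega^\xi$. I expect this step to be essentially routine; the one point requiring care is that $\omega^\xi$ is a limit ordinal, which forces one to invoke the standard stability of the Bourgain $\ell_1$-index under passing to block subspaces rather than a crude order bound.

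The heart of the matter is the descent from block sequences to a subsequence of the basis. By the previous step and the tree/Schreier correspondence, inside every subsequence $M$ of $M_0$ there is a normalized block sequence of $(e_n)_{n\in M}$ that is $(\mathcal S_\xi,C)$-$\ell_1$. I would run an induction on $\xi$ — the successor step reducing an $\mathcal S_{\zeta+1}$-estimate to $\mathcal S_\zeta$-estimates across an $\mathcal S_1$-admissible grouping of the blocks, the limit step handling the diagonal $\mathcal S_{\zeta_n}$ with $\zeta_n\nearrow\xi$ — interleaved with a diagonalization over subsequences, the hereditary hypothesis staying available on each tail (which is what keeps the recursion alive). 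The output should be a subsequence $(e_{n_k})_k$ in which the $(\mathcal S_\xi,C)$-$\ell_1$ structure supplied by the block sequences descends to the basis vectors themselves — equivalently, one in which the $\mathcal S_\xi$ repeated averages of $(e_{n_k})_k$ stay uniformly bounded below — with a constant $c>0$ that survives the whole construction. Preserving one positive constant through this transfer from blocks to basis vectors and through the nested extractions is what I expect to be the main obstacle; it is precisely the point at which local proximity to $\ell_1$ is forced to become asymptotic, and it is where the heredity of the hypothesis is indispensable.

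Finally, with $(e_{n_k})_k$ in hand, pass to a further subsequence along which all Brunel--Sucheston limits exist; the $(\mathcal S_\xi,c)$-$\ell_1$ estimate is inherited, so this subsequence generates an $\ell_1^\xi$-spreading model with constant $c$, as required.
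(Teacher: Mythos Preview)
Your proposal misreads the hypothesis. The trees in question are not block $\ell_1$-trees in the subspace $\langle (e_n)_{n\in M}\rangle$; they are $\ell_1$-trees whose nodes are finite sequences of the basis vectors themselves, i.e.\ $\mc{T}_M\subset\{(e_n)_{n\in F}:F\in[M]^{<\infty}\}$. Consequently there is no ``descent from block sequences to a subsequence of the basis'' to perform --- the trees already live on the basis. If one actually assumed only that the \emph{block} $\ell_1$-index of each $\langle (e_n)_{n\in M}\rangle$ exceeds $\omega^\xi$, the conclusion would be false: the auxiliary space $X_{G_\xi}$ constructed later in this very paper has precisely that property and yet admits no $\ell_1$-spreading model whatsoever. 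So the step you flag as ``the main obstacle'' is in general impossible, and your proposed transfinite induction cannot bridge it.

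The paper's argument is short and direct once the hypothesis is read correctly. After a diagonalization to stabilize the constant $C$ (essentially as you outline), one associates to the resulting hereditary $\ell_1$-tree $\mc{T}$ the family $\mc{F}_\mc{T}=\{F:(e_n)_{n\in F}\in\mc{T}\}\subset[\N]^{<\infty}$ and applies Gasparis' dichotomy: for some infinite $M$, either $\mc{S}_\xi\cap[M]^{<\infty}\subset\mc{F}_\mc{T}$ or $\mc{F}_\mc{T}\cap[M]^{<\infty}\subset\mc{S}_\xi$. An order comparison (using $o(\mc{T})>\omega^\xi+1$, to which one first reduces via a Judd--Odell type argument) rules out the second alternative; the first says exactly that $(e_n)_{n\in M}$ generates an $\ell_1^\xi$-spreading model with constant $C$. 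No transfinite induction, no passage through block sequences, and no Brunel--Sucheston extraction are needed.
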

Our aim is to show that large hereditary $\ell_{1}$-structure in a Banach space $X$ does
not imply in general any asymptotic one. More precisely  the main goal at the present
paper is to prove the next
\begin{thrm}
For every countable ordinal $\xi$ there exists a separable reflexive space
$\mathfrak{X}_{\xi}$  with the hereditary  $\ell_{1}$-index greater than $\omega^{\xi}$
such that $\mathfrak{X}_{\xi}$ does not admit an $\ell_{1}$-spreading model. Moreover the
dual $\mathfrak{X}_{\xi}^{*}$  has  hereditary $c_0$-index greater than $\omega^{\xi}$
and does not admit $c_0$ as a spreading model.
\end{thrm}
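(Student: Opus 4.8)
The plan is to realise $\mathfrak{X}_\xi$ as a Tsirelson-type space: define a norming set $W\subseteq c_{00}(\N)$ as the minimal symmetric subset of $c_{00}$ containing $\{\pm e_n^*:n\in\N\}$, stable under restriction to intervals of $\N$ and under a carefully chosen list of operations, and equip $\mathfrak{X}_\xi$ with the norm $\|x\|=\sup\{f(x):f\in W\}$. The list of operations must reconcile two opposite requirements. To stay asymptotically far from $\ell_1$ one includes Schlumprecht-type operations $(\mc{A}_{n_j},\theta_j)$ with $\theta_j\downarrow 0$, $n_j\uparrow\infty$ fast and $n_j\theta_j\to\infty$: these force reflexivity (no $c_0$- and no $\ell_1$-subspace, boundedly complete and shrinking basis, as for the mixed Tsirelson spaces of Argyros--Deliyanni type) and, via the usual rapidly-increasing-sequence estimates, keep the unit basis --- hence, after the standard reductions, every seminormalised block sequence --- from generating an $\ell_1$-spreading model. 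To push the hereditary $\ell_1$-index beyond $\omega^\xi$ one must also put into $W$ functionals built from $\mc{S}_\xi$-admissible families; but an \emph{unrestricted} $(\mc{S}_\xi,\theta)$-operation with $\theta$ bounded away from $0$ would at once produce an $\ell_1$-spreading model on the basis, so these $\mc{S}_\xi$-operations are admitted only along \emph{special sequences} of previously constructed functionals, chosen through a coding injection $\sigma$ in the spirit of the attractors method; the attractor coordinates are exactly what will make the resulting $\ell_1^\xi$-structure available in every block subspace while keeping it non-asymptotic.

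The core of the positive part is the lower bound on the hereditary index. Given an arbitrary infinite dimensional block subspace $Y$ of $\mathfrak{X}_\xi$, I would construct, for the appropriate constant $C$, a normalised block tree $(x_t)_{t\in\mc{T}}$ indexed by a well-founded tree $\mc{T}$ of order $\omega^\xi$, whose branches are $C$-equivalent to the unit vector bases of $\ell_1$ of the corresponding lengths; since every subspace contains a block subspace, this gives hereditary $\ell_1$-index $>\omega^\xi$. The nodes are taken to be ``$\xi$-exact'' vectors of $Y$ --- normalised averages whose norm is witnessed by a single functional $\theta(f_1+\dots+f_d)\in W$ with $(f_i)$ $\mc{S}_\xi$-admissible --- and the point is that the special-sequence machinery (the attractors) forces such vectors, and moreover whole admissible families of them, to exist inside $Y$. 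The $\ell_1$-equivalence of the branches \emph{with a constant independent of the height} is the delicate estimate; it is obtained by the standard ``basic inequality'' analysis of the tree of an arbitrary $f\in W$ evaluated on an admissible convex combination of the $x_t$, together with the self-similarity of the norm.

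To prove that $\mathfrak{X}_\xi$ admits no $\ell_1$-spreading model, by the usual reductions it is enough to rule out that a seminormalised block sequence $(x_n)$ generates one; passing to a rapidly increasing subsequence $(x_{k_n})$ I would establish the averaging estimate $\big\|\frac1m\sum_{i=1}^m x_{k_i}\big\|\to 0$ as $m\to\infty$. This rests on a tree analysis of an arbitrary $f\in W$ acting on such an average: an outermost $(\mc{A}_{n_j},\theta_j)$-operation contributes only $O(\theta_j)$, while the constraints imposed on special sequences by $\sigma$ forbid a coherent iterated use of $\mc{S}_\xi$-operations, so no $f$ can keep the average norm bounded below. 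This is the ``$\ell_1$-spreading-model-free'' argument of Argyros--Deliyanni type, adapted to the present operations; it coexists with the previous step precisely because the $\ell_1^\xi$-gadgets built there live only along the special trees and cannot be freely iterated.

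The dual assertions are then essentially formal. As $\mathfrak{X}_\xi$ is reflexive with a basis, $\mathfrak{X}_\xi^*$ is reflexive with the biorthogonal basis; dualising the $\ell_1^m$-trees of the second step gives, by the standard duality between the $\ell_1$-index of a space and the $c_0$-index of its dual, block trees in $\mathfrak{X}_\xi^*$ of order $>\omega^\xi$ with $c_0$-branches, in every subspace. Finally, if some seminormalised block sequence $(f_n)$ in $\mathfrak{X}_\xi^*$ generated a $c_0$-spreading model, then picking $x_n\in\mathfrak{X}_\xi$ with $\|x_n\|=1$, $\supp x_n\subseteq\ran f_n$ and $f_n(x_n)\ge c>0$ produces, by an elementary estimate on disjointly supported blocks, an $\ell_1$-lower estimate for $(x_n)$, hence an $\ell_1$-spreading model in $\mathfrak{X}_\xi$, contradicting the previous paragraph; so $\mathfrak{X}_\xi^*$ has no $c_0$-spreading model. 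The main obstacle throughout is the tension between the two defining features --- hereditary large $\ell_1$-index is an abundance statement, the absence of an $\ell_1$-spreading model a scarcity one --- so the real work lies in designing the $\mc{S}_\xi$-operations and the coding $\sigma$ so that the $\ell_1^\xi$-structure is simultaneously fragile enough to remain non-asymptotic and ubiquitous enough to be forced into every block subspace, and in the uniform-constant estimates for the branches of the index trees.
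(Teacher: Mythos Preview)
Your sketch has the right flavor but misses the architecture that makes the construction work, and one of your ``essentially formal'' steps is in fact the hardest part.

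\medskip

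\textbf{The duality claim is the main gap.} You assert that once the hereditary $\ell_1$-index of $\mathfrak{X}_\xi$ exceeds $\omega^\xi$, ``dualising the $\ell_1^m$-trees \dots\ gives, by the standard duality between the $\ell_1$-index of a space and the $c_0$-index of its dual'' the corresponding $c_0$-trees in every subspace of $\mathfrak{X}_\xi^*$. There is no such duality at the \emph{hereditary} level: an $\ell_1$-tree in a block subspace $Y$ of $X$ dualises to a $c_0$-tree in a \emph{quotient} of $X^*$, not in an arbitrary subspace. The paper makes exactly this point explicitly (end of Section~\ref{sec12}): having hereditary $c_0$-index $>\omega^\xi$ in $X_\xi^*$ does \emph{not} yield hereditary $\ell_1$-index $>\omega^\xi$ in $X_\xi$, and this is precisely why a further step is needed. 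So neither direction of your duality shortcut is available.

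\medskip

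\textbf{The paper's architecture is three-layered, not one.} The actual construction is: (i) a ground set $G_\xi$ containing a coded $c_0$-tree of $\mc{S}_\xi$-height built from functionals $m_{2j-1}^{-2}\sum_{k\in B}e_k^*$, together with $\ell_2$-convex combinations; the auxiliary space $X_{G_\xi}$ is $\ell_2$-saturated and has no $\ell_1$-spreading model (this is the delicate combinatorial Proposition~\ref{nol1}, proved by Ramsey arguments, not by an averaging estimate as you suggest); (ii) a mixed-Tsirelson extension $K_\xi\supset G_\xi$ with \emph{attracting functionals} $m_{2j-1}^{-1}\sum(f_{2i-1}+e^*_{l_{2i}})$; the attractors transfer the $c_0$-tree from $G_\xi$ into every block subspace of the \emph{dual} $X_\xi^*$ (Corollary~\ref{c412}, Proposition~\ref{p413}), not into subspaces of $X_\xi$; (iii) the final space is a \emph{quotient} $\mathfrak{X}_\xi=X_\xi/X_L$ for a suitable $L\subset\N$, so that $\mathfrak{X}_\xi^*=X_L^\perp\subset X_\xi^*$; only at this stage can the $c_0$-structure in the dual be read back as hereditary $\ell_1$-structure in $\mathfrak{X}_\xi$ (Corollary~\ref{last}), because the attractor halves $m_{2j-1}^{-2}\sum f_{2i-1}$ can be approximated by elements of $X_L^\perp$ while the complementary halves $m_{2j-1}^{-2}\sum e^*_{l_{2i}}$ live in the $G_\xi$-tree.

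\medskip

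\textbf{Why your single-step plan runs into trouble.} Your idea of putting restricted $\mc{S}_\xi$-operations directly into $W$ and using attractors to force $\ell_1$-trees in every block subspace of the primal conflates two roles. Attractors, as designed, push structure into subspaces of the \emph{dual}; to recover $\ell_1$-trees in an arbitrary block subspace $Y\subset\mathfrak{X}_\xi$ you need norming functionals for the tree vectors that simultaneously (a) assemble into a $G_\xi$-special sequence, so that $\|\sum\pm g_j\|\leq 2$, and (b) lie close to the annihilator of some fixed complemented piece, so that they act only on $Y$. Step (iii) above is exactly the device that arranges (b). Without it, the branch-constant estimate you call ``the delicate estimate'' has no reason to be uniform across subspaces. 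Finally, your ``no $c_0$-spreading model in the dual'' paragraph is correct and matches the paper; but your ``no $\ell_1$-spreading model'' via $\|\frac1m\sum x_{k_i}\|\to 0$ is too coarse: in the paper this property is first established for $X_{G_\xi}$ by a genuinely combinatorial argument (Proposition~\ref{nol1}), and then lifted to $X_\xi$ via a general extension theorem (Theorem~\ref{t11.3}).
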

Our approach in constructing the space $\mathfrak{X}_{\xi}$ is based on mixed Tsirelson
extensions of a ground set $G_{\xi}$ using the method of  attractors. The latter appeared
in \cite{AT} and is extensively used in \cite{AAT}. The method of  attractors has two
separated steps that we are about to describe.

In the first step an auxiliary space is constructed  that partially solves the required problem. In our case for a given countable ordinal $\xi$ we construct  a ground set  $G_{\xi}$ such that the resulting space  $X_{G_{\xi}}$  is reflexive, has a Schauder basis $\xn[e]$ and satisfies the following properties.

i) The space $X_{G_{\xi}}$  does not have an $\ell_1$-spreading model.

ii)   For every $L\in [\N]$ the  $\ell_1$-index  of the subspace $\langle (e_n)_{n\in L}\rangle$ is greater than $\omega^{\xi}$.

The above stated proposition together with property i) of the space $X_{G_{\xi}}$ indicate that property ii) requires special attention. Namely for every subsequence $(e_n)_{n\in L}$ of the basis  the  $\ell_{1}$-index of the corresponding subspace should be greater than $\omega^{\xi}$ but the  $\ell_{1}$-tree supported by that  subsequence  should be of small height.  To achieve those requirements we include  in the set $G_{\xi}$ a $c_0$-tree generated by functionals of the form $m_{2j-1}^{-2}\sum_{k\in B}e_{k}^{*}$, $\# B\leq n_{2j-1}$ and the tree order induced by an appropriate coding function.

The proof that  $X_{G_{\xi}}$ does not have an $\ell_1$-spreading model  heavily relies  on combinatorics in particular on  Ramsey theory.

The next step is to define the space $X_{\xi}$. For this purpose
we proceed to a mixed Tsirelson extension  $K_{\xi}$ of the
set $G_{\xi}$  including also the attracting functionals.
The space   $X_{\xi}$ is the completion of $(c_{00}(\N),\norm_{K_{\xi}})$
where $\norm_{K_{\xi}}$ is the norm induced by the set $K_{\xi}$.

For our approach the  attracting  functionals are of significant importance. They are  the tool for transferring   to every  block subspace of  $X_{\xi}^{*}$ a large $c_0$-subtree from the set $G_{\xi}$.
Thus  we are able to show that the hereditary $c_0$ index of $X_{\xi}^{*}$ is greater than $\omega^{\xi}$.
This  does not yield that the corresponding $\ell_{1}$-index of $X_{\xi}$ is also greater than  $\omega^{\xi}$.  Therefore we need one more step, namely the desired space $\mathfrak{X}_{\xi}$ is
a quotient $X_{\xi}/X_{\xi}^{L}$ where
$X_{\xi}^{L}=\overline{\langle (e_n)_{n\in L}\rangle}$ with
$L$ a suitable  subset of $\N$.
Both spaces $X_{\xi}, \mathfrak{X}_{\xi}$ do not have an $\ell_{1}$
spreading model.

We proceed now to  describe how the paper is organized.

Section \ref{sec2} is devoted to preliminary notions and results.

In Section \ref{sec3} we discuss different concepts of
local and asymptotic proximity to $\ell_{1}$.
More  precisely we introduce the hereditary strategic
$\ell_{1}$-index of a Banach space $X$ denoted as $I_{hs}(X)$.
This index is formulated in terms of Gowers  game and Schreier
families  $\mc{S}_{\xi}$, $\xi<\omega_{1}$.
A non-hereditary version of the aforementioned index, related to the asymptotic structures
defined in \cite{MMT}, is also presented.
The  $I_{hs}(X)$ is essentially equivalent  to the hereditary block
 $\ell_{1}$-index, denoted as $I_{hb}(X)$,
in the following manner:  First we show  that for every countable ordinal $\xi$ if $I_{hs}(X)>\xi$ then $I_{hb}(X)>\omega^{\xi}$. In the opposite direction Gowers dichotomy \cite{G} yields that if  $I_{hb}(X)>\omega^{\xi}$ then there exists a closed subspace $Y$ of $X$ with $I_{hs}(Y)>\xi.
$
Furthermore we  examine the relation between the concepts of non-hereditary $\ell_{1}$-proximity.
We finish Section \ref{sec3} by showing strong correlation between the above notions in
a subsequence setting, i.e. with block sequences replaced by subsequences of a fixed basis. The main result in this part is Proposition \ref{p2.7} which has been mentioned before.

With Section \ref{sec4}  we start dealing with our final goal namely the space
$\mathfrak{X}_{\xi}$. Thus in this section for a given countable ordinal $\xi$ we define
a ground set $G_{\xi}$ which serves as a norming set for the  aforementioned auxiliary
space $X_{G_{\xi}}$. The set $G_{\xi}$ includes a rich $c_0-$tree of height greater than
$\omega^{\xi}$. This tree is defined as follows.   First $G_{\xi}$ contains all
functionals of the form $m_{2j-1}^{-2}\sum_{k\in B}e_{k}^{*}$ with $\# B \leq n_{2j-1}$
where $(n_j)_{j\in\N}$, $(m_{j})_{j\in\N}$ are  appropriate increasing sequences of
natural numbers. Next using a coding function $\sigma$, in a similar manner as in the
classical work of B. Maurey and H. Rosenthal  \cite{MR}, we define a well-founded tree of
$\sigma$-special sequences $(f_{1},\dots,f_{d})$ ordered by the initial segment
inclusion. Each $f_{l}$ is of the form  $m_{2j_{l}-1}^{-2}\sum_{k\in B_{l}}e_{k}^{*}$,
$\#B_{l}\leq n_{2j_{l}-1}$. Then we include into $G_{\xi}$  all $\sum_{l=1}^{d}\pm f_{l}$
where $(f_{l})_{l=1}^{d}$ is a $\sigma$-special sequence.

A second ingredient of $G_{\xi}$ is  coming from James tree-like spaces \cite{Ja} (see
also \cite{Kal}, Chapter 13 or \cite{AAT}). Namely we include all rational
$\ell_2$-convex combinations of  functionals $\sum_{l=1}^{d}\pm f_{l}$ with disjoint
weights.  We denote by $X_{G_{\xi}}=\overline{( c_{00}(\N),\norm_{G_{\xi}})}$
the space with the set $G_{\xi}$ as the norming set.

In Section \ref{sec5} we present the basic properties of the space $X_{G_{\xi}}$. Namely it is reflexive, has a Schauder basis $\xn[e]$, is $\ell_2$-saturated and for every $L\in [\N]$ the  $\ell_{1}$-index of the subspace $\langle (e_n)_{n\in L}\rangle $ is greater than $\omega^{\xi}$.
Also we show a dual result. Namely for every $(e_n^{*})_{n\in L}$  in the dual $X_{G_{\xi}}^{*}$ the  $c_{0}$-index of  $\langle (e_{n}^{*})_{n\in L}\rangle $ is similarly large.

In Section \ref{sec6} it is shown that the space $X_{G_\xi}$ does not have an
$\ell_{1}$-spreading model. This is the most involved part of the study of $X_{G_{\xi}}$.
This result is critical as with some small additional effort yields that the ultimate
space $\mathfrak{X}_{\xi}$ shares the same property. The difference of the space
$X_{G_{\xi}}$ from the earlier examples of spaces with no $\ell_{p}$-spreading model,
(i.e. \cite{OSsm})  is that $X_{G_{\xi}}$ has a rich local $\ell_{1}$ structure.
Therefore the proof requires new tools which are of combinatorial nature. The main part
of the proof is given by Proposition \ref{nol1}.

In Section \ref{sec7} we define the set $K_{\xi}$ which is the norming set of the intermediate space $X_{\xi}$.  The ingredients of the set $K_{\xi}$ are the following:

i)  The  set $G_{\xi}$  is included into $K_{\xi}$. In particular $K_{\xi}$ is a  mixed Tsirelson extension of $G_{\xi}$.

ii) For the aforementioned  sequences $(m_j)_{j}, (n_{j})_{j}$  the set  $K_{\xi}$  is closed under the  even operations. Namely  it contains all  $f=m_{2j}^{-1}\sum_{i=1}^{n_{2j}}f_{i}$ with $f_{1}<\dots<f_{n_{2j}}$ in
$K_{\xi}$.

iii) For the odd operations
$(\mc{A}_{n_{2j-1}},m_{2j-1}^{-1})$ $K_{\xi}$
includes the attracting functionals.
Those  are functionals of the form
$f=m_{2j-1}^{-1}\sum_{i=1}^{n_{2j-1}/2}(f_{2i-1}+e_{l_{2i}}^{*})$
with $f_{1}<e^{*}_{l_2}<f_{3}<e^{*}_{l_4}<\dots$
and the whole sequence is selected with the aid of a coding function.

iv)  The set $K_{\xi}$ contains all rational $\ell_{2}$-convex combinations of its
weighted functionals with disjoint weights and also  it is a rationally convex set.

The set $K_{\xi}$ induces a norm on $c_{00}(\N)$  and the space
$X_{\xi}$ is its completion.
The attracting functionals are responsible for carrying
structure from the set $G_{\xi}$ to  block subspaces of
$X_{\xi}^{*}$. To make more transparent the role of the attracting functionals
let us first notice, that each attracting functional $f$ consists of two parts.
Namely $f=g_1+g_{2}$  where
$g_{1}=m_{2j-1}^{-1}(f_{1}+f_{3}+\dots)$ and
$g_{2}=m_{2j-1}^{-1}(e^{*}_{l_{2}}+e^{*}_{l_{4}}+\dots)$.
The key result for applying the method of attractors is the following
\begin{lemma*}
For every block subspace $Y$ of $X_{\xi}^{*}$, for every $j_0\in\N$  and every $\e>0$ there exists an attracting  functional $f=m_{2j_{0}-1}^{-1}\sum_{i=1}^{n_{2j_{0}-1}/2}(f_{2i-1}+e_{l_{2i}}^{*})$ such that writing $f=g_{1}+g_{2}$ we have $\dist(g_{1},Y)<\e$ and
\begin{equation}\label{ei1} \norm[g_{1}]\geq cm_{2j_0-1}
\end{equation}
where $c\in (0,1)$ is a universal constant.
\end{lemma*}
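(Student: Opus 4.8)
\emph{Proof plan.} I would separate the two conclusions. The inequality $\norm[g_{1}]\ge c\,m_{2j_{0}-1}$ does not mention $Y$; it is a structural feature of every attracting functional of the displayed form. It is obtained by evaluating $g_{1}$ on a suitably chosen normalized vector of $X_{\xi}$ whose successive parts are dual to the $f_{2i-1}$, together with the observation that the attractor coordinates $e^{*}_{l_{2i}}$ --- whose only role is to force $\norm[f]\le 1$ through the James--tree type cancellation built into $K_{\xi}$ --- do not occur in $g_{1}$, so that $g_{1}$ is free of that damping. The resulting estimate is the standard one for the ``functional part'' of an attracting configuration, and together with the growth relations between $(m_{j})_{j}$ and $(n_{j})_{j}$ it produces a universal $c\in(0,1)$; I would quote this from the general norm estimates for $X_{\xi}$ rather than reprove it. Everything then reduces to exhibiting an attracting functional of weight $m_{2j_{0}-1}$ whose part $g_{1}$ lies within $\e$ of $Y$.

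For this I would construct $f$ by choosing its entries $f_{1}<e^{*}_{l_{2}}<f_{3}<e^{*}_{l_{4}}<\dots<f_{n_{2j_{0}-1}-1}<e^{*}_{l_{n_{2j_{0}-1}}}$ recursively, respecting at each step the admissibility restrictions dictated by the coding function $\sigma$, and arranging that each odd entry $f_{2i-1}$ lies within distance $\e_{i}$ in norm of some normalized functional $h_{i}$ of $Y$, where the $\e_{i}$ are fixed in advance with $\sum_{i}\e_{i}<\e\,m_{2j_{0}-1}$. Then $g_{1}=m_{2j_{0}-1}^{-1}\sum_{i=1}^{n_{2j_{0}-1}/2}f_{2i-1}$ satisfies $\norm[g_{1}-m_{2j_{0}-1}^{-1}\sum_{i}h_{i}]\le m_{2j_{0}-1}^{-1}\sum_{i}\e_{i}<\e$ with $m_{2j_{0}-1}^{-1}\sum_{i}h_{i}\in Y$, giving $\dist(g_{1},Y)<\e$.

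The main obstacle is this recursive selection, because $\sigma$ leaves essentially no freedom in the entries: once $f_{1},e^{*}_{l_{2}},\dots,e^{*}_{l_{2i-2}}$ are fixed, the coding function dictates the index $l_{2i}$, a lower bound on $\minsupp f_{2i-1}$, and (in the constructions underlying this method) the weight $m_{2k_{i}-1}$ that $f_{2i-1}$ must carry, with these weights forced to increase rapidly. Hence what is needed is that the \emph{arbitrary} block subspace $Y$ be rich enough: in every tail of the basis and for every sufficiently large prescribed weight it should contain a normalized functional of that weight, or rather a rational element of $K_{\xi}$ of that weight within the required error of $Y$ (here one uses that $Y$ has normalized block functionals beyond any given support and that $K_{\xi}$, being rationally convex and a norming set, supplies rational approximants). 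This dual richness is of the kind established for the spaces at hand in Section~\ref{sec5}; feeding it into the recursion, diagonalizing over the countably many demands generated by $\sigma$, and taking the minimal supports increasing rapidly yields the entries $f_{2i-1}$ and hence $f$. The delicate point is to organize the choices so that the rational convex and $\ell_{2}$-convex operations that manufacture elements of $K_{\xi}$ do not destroy the weight datum that $\sigma$ reads off, and so that all the accuracies $\e_{i}$ are met at once.
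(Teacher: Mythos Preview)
Your separation of the two conclusions is where the argument breaks. The inequality $\norm[g_1]\ge c\,m_{2j_0-1}$ is \emph{not} a structural feature of every attracting functional. In the definition of an attractor sequence the odd entries $f_{2i-1}$ are only required to be results of $(\mc{A}_{n_{2j_k}},m_{2j_k}^{-1})$-operations with weights dictated by $\sigma$; nothing forces them to have norm bounded below. Taking each $f_{2i-1}=m_{2j_{k_i}}^{-1}e^{*}_{n_i}$ gives a perfectly legitimate attractor sequence with $\norm[g_1]\le m_{2j_0-1}^{-1}\sum_i m_{2j_{k_i}}^{-1}$, which is of order $m_{2j_0-1}^{-4}$ since $m_{2j_1}>n_{2j_0-1}^{3}$. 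Even after you impose that each $f_{2i-1}$ be $\e_i$-close to a normalized $h_i\in Y$, your proposed test vector ``dual to the $f_{2i-1}$'' would have to satisfy an upper bound of the form $\norm[\frac{1}{n_{2j_0-1}}\sum_k(-1)^k x_k]\le C\,m_{2j_0-1}^{-2}$ for the evaluation to yield \eqref{ei1}. This is precisely Corollary~\ref{depest}, and it does not hold for an arbitrary block sequence $(x_k)$; it needs the full exact-pair/RIS structure and the Basic Inequality.

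The paper does not try to decouple the two conclusions. It first produces, inside $Y$, $2$--$c_0^{n_{2j}}$ averages (Proposition~\ref{p4.7}) together with dual $\ell_1$-averages, and from these it builds $(6,2j)$-exact pairs $(y,y^{*})$ with $y^{*}$ essentially in $Y$ and of the prescribed weight (the weight comes for free because $y^{*}=m_{2j}^{-1}\sum_t x^{*}_t$ is an $(\mc{A}_{n_{2j}},m_{2j}^{-1})$-combination). These exact pairs are then threaded into a $(6,2j_0-1)$-attracting sequence $(y_i,f_i)$; the vector side $(y_i)$ is what makes Corollary~\ref{depest} applicable, and Corollary~\ref{c412} then reads off \eqref{ei1}. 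In other words, the same construction simultaneously delivers $\dist(g_1,Y)<\e$, the correct weights demanded by $\sigma$, \emph{and} the dual vectors needed to witness the lower bound. Your recursion produces only the functional side and therefore cannot certify \eqref{ei1}; and your approximation step---replacing an arbitrary normalized $h_i\in Y$ by some $f_{2i-1}\in K_\xi$ of a \emph{prescribed} weight---is not something density of $K_\xi$ in $B_{X_\xi^{*}}$ provides.
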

Granting \eqref{ei1} we proceed as follows. We observe that $\norm[f]\leq 1$ as $f\in
K_{\xi}$ and also \eqref{ei1} yields $\norm[m_{2j_0-1}^{-1}g_{1}]\geq c$. Therefore the
functional $m_{2j_0-1}^{-1}g_{1}$ has norm bounded from below by $c$ and also
\begin{equation}
\label{ei2} \norm[m_{2j_0-1}^{-1}g_{1}-(-m_{2j_0-1}^{-1}g_{2})]\leq m_{2j_0-1}^{-1}.
\end{equation}
We recall that $(-m_{2j_0-1}^{-1}g_{2})\in G_{\xi}$ and in fact is a component of the
$c_0$-tree structure included in $G_{\xi}$. Therefore inequality \eqref{ei2} permit us to
transfer into an arbitrary block subspace of $X_{\xi}^{*}$ a $c_0-$tree structure of
height greater than $\omega^{\xi}$.  It is worth pointing out that the method of attractors offers a
considerable reduction to the complexity of the proofs for properties of the ultimate
space. For example in the case of the space $\mathfrak{X}_{\xi}$  the proof that the
space does not have an $\ell_{1}$-spreading model is essentially given for the auxiliary
space $X_{G_{\xi}}$ where the norming set $G_{\xi}$ is simpler than the set  $K_{\xi}$.

Sections \ref{sec8},\ref{sec9},\ref{sec10} are rather technical and include the necessary
estimations for proving the aforementioned  inequality \eqref{ei1}. This part is closely
related to the well known estimations in mixed Tsirelson and Hereditarily Indecomposable
spaces.  The additional complexity of this part, compared to the previous similar results,
arises from property $iv)$ of the norming set $K_{\xi}$ and the local $c_{0}-$structure
of the set $G_{\xi}$. A consequence of the new estimations  is the
following theorem.
\begin{thrm} There exists
a reflexive HI space $X$ admitting no $\ell_p$, $1\leq p <\infty$, or $c_0$ as a spreading model.
\end{thrm}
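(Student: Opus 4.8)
The plan is to build the space $X$ as a mixed Tsirelson--HI construction incorporating the same local $c_0$-structure used in $G_\xi$, but now run with a single sufficiently fast-growing pair of sequences $(m_j)_j$, $(n_j)_j$ rather than indexed by a countable ordinal $\xi$; concretely $X$ should be a BD-type or AH-type HI space whose norming set is closed under the even operations $(\mca[A]_{n_{2j}},m_{2j}^{-1})$, contains attracting functionals for the odd operations, contains all rational $\ell_2$-convex combinations of weighted functionals with disjoint weights, and is rationally convex, exactly as in items $ii)$--$iv)$ for $K_\xi$. The HI property is obtained by the standard argument: using the coding function $\sigma$ one shows that for any two block subspaces $Y,Z$ and any $\e>0$ one can build a long $\sigma$-special sequence alternating between $Y$ and $Z$, produce from it a vector of the form $\sum_{l=1}^d \pm f_l$-type dual object giving a $(1,j)$-type average on one side and a $(-1,j)$-type average on the other, and conclude $\dist(S_Y,S_Z)=0$; the basic inequality \eqref{ei1} and the estimations of Sections \ref{sec8}--\ref{sec10} are precisely what make this work in the presence of property $iv)$.

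Next I would establish that $X$ is reflexive: the lower $\ell_1$-type estimates coming from the mixed Tsirelson structure prevent $\ell_1$ from embedding, the upper estimates via the $\ell_2$-convex combinations and the saturation by $\ell_2$-like averages (as for $X_{G_\xi}$) prevent $c_0$, and since $X$ is HI with separable dual it is reflexive by James' theorem together with the standard boundedly-complete/shrinking basis analysis for such norming sets. The core point, and I expect it to be the main obstacle, is the no-spreading-model statement: one must show that no normalized block sequence in $X$ generates $\ell_p$ for any $1\le p<\infty$ nor $c_0$. For $\ell_1$ this is handled as for $X_{G_\xi}$ --- indeed the excerpt stresses that the attractors method reduces the $\ell_1$-spreading-model analysis essentially to the ground set --- so one invokes the combinatorial/Ramsey argument behind Proposition \ref{nol1} (adapted to the single-$\xi$-free scaling) to rule out $\ell_1$. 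Ruling out $c_0$ and $\ell_p$ for $1<p<\infty$ simultaneously requires combining the lower tree-complexity estimates (which, for any block sequence, force averages along some $\mca[A]_{n_{2j}}$ to be summed with gain $m_{2j}^{-1}$, contradicting a $c_0$-estimate) with the upper $\ell_2$-convexity estimates (which for a would-be $\ell_p$ spreading model with $p\ne 2$ force, after passing to a subsequence and using the disjoint-weight $\ell_2$-convex combinations in the norming set, an incompatible norm estimate on convex combinations), so that $\ell_2$ itself is the only candidate and is then excluded by the HI/mixed-Tsirelson lower estimate showing weighted averages do not behave like an $\ell_2$-basis.

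Thus the skeleton is: (1) define the norming set $K$ with items $ii)$--$iv)$ and the attracting functionals, take $X$ to be its completion; (2) prove \eqref{ei1} by the estimations of Sections \ref{sec8}--\ref{sec10}; (3) deduce the HI property via the $\sigma$-coded special-sequence argument; (4) deduce reflexivity from the two-sided estimates; (5) prove no block sequence generates $\ell_1$ by the Proposition \ref{nol1} argument transferred through the attractors, and no block sequence generates $c_0$ or $\ell_p$, $1<p<\infty$, by playing the lower mixed-Tsirelson estimates against the upper $\ell_2$-convexity estimates. The hard part is step (5) for the range $1<p<\infty$: one has a genuinely $\ell_2$-convex upper structure and a genuinely $\ell_1$-flavoured lower local structure, and the delicate work is to show no intermediate $\ell_p$ behavior, nor $\ell_2$ itself, can survive along any block subsequence --- this is where the combinatorics of the tree of $\sigma$-special sequences and the interplay with property $iv)$ must be pushed, essentially repeating and refining the arguments that give the no-$\ell_1$-spreading-model property for $X_{G_\xi}$ but now also controlling the upper estimates.
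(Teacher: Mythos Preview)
Your plan is considerably more complicated than the paper's and contains two substantive misjudgments.

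First, the construction. The paper does \emph{not} take a ground set with the $G_\xi$-type $c_0$-tree, nor does it use attracting functionals. The norming set $K_{HI}$ has the trivial ground set $\{\pm e_n^*:n\in\N\}$, is closed under the even operations, under rational $\ell_2$-convex combinations of weighted functionals with distinct weights, is rationally convex, and for the odd operations is closed on \emph{special sequences} in which every entry (not just the odd-indexed ones) is a weighted functional chosen via the coding $\sigma$. Attracting functionals --- half of whose entries are basis vectors $e^*_{l_{2i}}$ --- are tailored to transfer structure from the ground set into block subspaces of the dual; they are not the right tool for the HI argument, where one needs to build a dependent sequence with \emph{all} components in two arbitrary block subspaces $Y,Z$. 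With your proposed norming set you would not obviously obtain the standard $\sum(-1)^i x_i$ estimate that forces $\dist(S_Y,S_Z)<\e$, since the even components are pinned to the basis rather than to $Y$ or $Z$.

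Second, and more importantly, you have the difficulty backwards in step (5). Ruling out $c_0$ and $\ell_p$ for $1<p<\infty$ is a one-line observation: Proposition~\ref{p4.6} holds in any space with such a norming set, so every block subspace contains normalized $2$-$\ell_1^k$ averages for every $k$; hence only $\ell_1$ is finitely block representable and no $\ell_p$, $p>1$, or $c_0$ can occur as a spreading model. There is no delicate interplay between lower mixed-Tsirelson estimates and upper $\ell_2$-convexity; the whole paragraph you devote to this is unnecessary. The only case requiring work is $\ell_1$, and here the paper invokes the general Theorem~\ref{t11.3}: if $X_G$ has no $\ell_1$-spreading model (trivially true when $G=\{\pm e_n^*\}$, since $X_G=c_0$), then no extension $\mc{Y}_{D_G}$ admits $\ell_1$ as a spreading model. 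The proof of Theorem~\ref{t11.3} for $\ell_1$ is short and self-contained (Lemmas~\ref{le7.1}, \ref{le7.2} plus a half-page averaging argument) and does not require the Ramsey machinery of Proposition~\ref{nol1}, which is specific to the rich ground set $G_\xi$.
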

This result is presented in Section  \ref{sec11} which also includes  a general result
concerning spaces with no $\ell_{p}$ (or $c_0$) as a spreading model (Theorem
\ref{t11.3}). E. Odell and Th. Schlumprecht, \cite{OSsm}, have presented the first
example of a Banach space with no $\ell_p$ (or $c_0$) as a spreading model. The
aforementioned result provides an alternative proof of the latter property of E. Odell
and Th. Schlumprecht example and also yields that the space $X_{\xi}$ does not have an
$\ell_{1}$-spreading model.

In Section \ref{sec12} we show   that the space  $X_{\xi}^{*}$  has  hereditary
$c_0$-index  greater than $\omega^{\xi}$. This result does not allow us to conclude that
the hereditary $\ell_{1}$-index of $X_{\xi}$ is also greater than $\omega^{\xi}$. It is
worth pointing out  that  in general the existence of local or asymptotic $c_0$-structure
in $X^{*}$ yields that the corresponding $\ell_{1}$ will occur on $X$.  For example it is
easy to see that  if $X^{*}$  has a $c_0$-spreading model then  $X$ will have a
corresponding $\ell_{1}$. This fact seems not to remain valid for the hereditary local or
asymptotic structure as $X_{\xi}$ indicates.

In Section \ref{sec13}  we make the final step in defining the space
$\mathfrak{X}_{\xi}=X_{\xi}/ X_{\xi}^{L}$ where $X_{\xi}^{L}=\langle(e_n)_{n\in
L}\rangle$ with a suitable subset $L$ of $\N$. The space  $\mathfrak{X}_{\xi}$ has  a
Schauder basis and $I_{hb}(\mathfrak{X}_{\xi})>\omega^{\xi}$. The space
$\mathfrak{X}_{\xi}$  does not have an $\ell_{1}$-spreading model since it is quotient of
$X_{\xi}$ which satisfies the same property. Actually both spaces $X_{\xi}$ and
$\mathfrak{X}_{\xi}$ does not have any $\ell_{p}$ as a spreading model.

\section{Preliminaries}\label{sec2}
We start by recalling  some basic definitions and standard
notation. Let $X$ be a Banach space with a basis $(e_i)$. Given
any basic sequence $(x_n)$ by $\langle (x_n)_n\rangle$ we denote the closed vector
subspace spanned by $(x_n)$. The \textit{support} of a vector
$x=\sum_i x_i e_i$ is the set $\supp x =\{ i\in\N :\ x_i\neq 0\}$,
we define the $\ran x$ of a vector $x\in X$ as the smallest
interval in $\N$ containing support of $x$. Given any $x=\sum_i
x_ie_i$ and finite $E\subset\N$ put $Ex=\sum_{i\in E}x_ie_i$. We
write $x<y$ for vectors $x,y\in X$, if $\maxsupp (x)<\minsupp(
y)$. A \textit{block sequence} is any sequence $(x_i)\subset X$
satisfying $x_{1}<x_{2}<\dots$, a \textit{block subspace} of $X$ -
any closed subspace spanned by an infinite block sequence. A
\textit{tail subspace} of $X$ is any subspace of the form
$\langle (e_n)_{n\geq n_0}\rangle$ for some $n_0\in\N$.

Let  $(x_n)_{n\in\N}$ be a sequence and $(\e_{n})_{n\in\N}$ be a sequence of positive
numbers. We say that $(y_n)_{n\in\N}$  is $(\e_n)$-close  to $(x_n)_{n\in\N}$ if
$\norm[x_n-y_n]<\e_n$ for every $n\in\N$.

Given infinite $M\subset\N$ by $[M]$ we denote the family of all infinite subsets of $M$,
by $[M]^{<\infty}$ - the family of all finite subsets of $M$. By $[M]^{n}$, $n\in\N$, we denote all finite subsets of $M$ of cardinality $n$. A family $\mc{F}$ of finite
subsets of $\N$ is \textit{regular}, if it is \textit{hereditary}, i.e. for any $G\subset
F$, $F\in \mc{F}$ also $G\in \mc{F}$, \textit{spreading}, i.e. for any integers
$n_1<\dots<n_k$ and $m_1<\dots<m_k$ with $n_i\leq m_i$, $i=1,\dots, k$, if
$(n_1,\dots,n_k)\in \mc{F}$ then also $(m_1,\dots,m_k)\in \mc{F}$, and \textit{compact}
in the product topology of $2^\N$.

The families $\mc{A}_n$, $n\in\N$, are defined by the following formula:
$$
\mc{A}_n=\{F\subset \N:\ \# F\leq n\}, \  \ n\in\N.
$$
Define the \textit{generalized Schreier families} $(\mc{S}_{\xi})_{\xi<\omega_1}$ of
finite subsets of $\N$ by the transfinite induction \cite{AA}:
$$\mc{S}_0=\{\{ n\}:\ n\in\N\}\cup\{\emptyset\}$$
$$\mc{S}_{\xi+1}=\mc{S}_{1}[\mc{S}_{\xi}]=\{ F_1\cup\dots\cup F_m:\ m\in\N,\ F_1,\dots,F_m\in \mc{S}_\xi,\ m\leq F_1<\dots<F_m\}$$
for any $\xi<\omega_1$. If $\xi$ is a limit ordinal, choose
$\xi_n\nearrow \xi$ and set
$$\mc{S}_{\xi}=\{F:\ F\in \mc{S}_{\xi_n}\ \mathrm{and}\ n\leq F\ \mathrm{for\ some}\ n\in\N\}.
$$
It is well known that the families $\mc{A}_n$, $n\in\N$, $\mc{S}_{\xi}$,
$\xi<\omega_{1}$, are regular families of finite subsets of $\N$.

Let us recall that a set $F\in \mc{S}_\xi$ is called a maximal set if  there is no $G\in
\mc{S}_\xi$ such that $F\subseteq G$.  In \cite{GA} it is proved that $F$ is
$\mathcal{S}_{\xi}$-maximal if and only if there is no $k\in\N$ with $F<k$ and
$F\cup\{k\}\in\mc{S}_{\xi}$.
\begin{definition} Let $\mc{F}$ be one of the families  $\mc{A}_n$, $n\in\N$, $\mc{S}_{\xi}$, $\xi<\omega_{1}$, and $\theta\in (0,1)$.

1) A finite sequence $(f_{1},\dots, f_{k})$ in $c_{00}(\N)$ is said to be $\mc{F}$-admissible if
$$
\mbox{$\supp(f_1)<\dots<\supp(f_k)$ and $\{\min (f_{1}),\dots,\min (f_{k})\}\in\mc{F}$}
$$
2) The $(\mc{F},\theta)$-operation on $c_{00}(\N)$ is the operation which assigns to each $\mc{F}$-admissible sequence $f_{1}<\dots<f_{d}$ the vector $\theta(f_{1}+\dots+f_{d}).$
\end{definition}
Throughout this paper by a \textit{tree on a set} $X$ we mean a subset $\mc{T}$ of
$\bigcup_{n=1}^\infty X^n$ such that $(x_1,\dots,x_k)\in \mc{T}$ whenever
$(x_1,\dots,x_k,x_{k+1})\in \mc{T}$, $k\in\N$, ordered by the initial segment inclusion.
A tree $ \mc{T}$ is \textit{well-founded}, if there is no infinite sequence $(x_i)\subset
X$ with $(x_1,\dots,x_k)\in \mc{T}$ for any $k\in\N$. Given a tree $ \mc{T}$ on $X$ put
$$
D( \mc{T})=\{(x_1,\dots,x_k):\ (x_1,\dots,x_k,x)\in \mc{T} \ \ \mathrm{for}\ \ \mathrm{some}\ \ x\in X\}.
$$
Inductively define trees $D^\al( \mc{T})$: $D^0( \mc{T})= \mc{T}$,
$D^{\al+1}(\mc{T})=D(D^\al( \mc{T}))$ for $\al$ ordinal and $D^\al(
\mc{T})=\bigcap_{\xi<\al}D^\xi( \mc{T})$ for $\al$ limit ordinal. The
\textit{order} of a well-founded tree $ \mc{T}$ is given by $o(
\mc{T})=\inf\{\al:\ D^\al( \mc{T})=\emptyset \}$.

Let $\mc{F}$ be a countable family  of  finite subset of $\N$
endowed with the topology of the pointwise topology.  For
$\al<\omega_{1}$, we set $\mc{F}^{\al+1}=\{F\in \mc{F}: F
\mbox{\,- a limit point of } \mc{F}^{\al}\}$ and for $\al$ limit
ordinal $\mc{F}^{\al}=\cap_{\beta<\al}\mc{F}^{\beta}$. The
Cantor-Bendixson index of $\mc{F}$, denoted by $CB(\mc{F})$, is
defined as the least $\al$ for which $\mc{F}^{\al}=\emptyset$.

Let $\mc{\mc{T}}$ be a countable tree on $\N$. Then $\mc{T}$
defines the family
$$
\mc{F}_{\mc{T}}=\{F\in [\N]^{<\infty}: \text{there exists $t\in
\mc{T}$ with  $F$ a subset of the range of $t$}\}.
$$
It follows that the family $\mc{F}_{\mc{T}}$ is hereditary. Conversely given $\mc{F}$ a countable family of finite subsets of $\N$ with  each
$F\in\mc{F}$ we associate  the finite strictly increasing sequence $t_{F}$ of integers
with range equal to the set $F$. We set
$$\mc{T}_{\mc{F}}=\{t\in \cup_{n} [\N]^{n}: \mbox{there exists $F\in\mc{F}$ such that $t$ is initial segment of $t_{F}$}\}.
$$
From the above definitions it follows that $F\in \mc{F}^{1}$ if and only if the
corresponding node in the tree has infinitely many immediate successors. It follows
$CB(\mc{F}) \leq o(\mc{T_{F}})$. In the case of the Schreier families  (c.f \cite{AA}) it
follows that
$$
CB(\mc{S}_{\xi})=o(T_{\mc{S}_{\xi}})=\omega^{\xi}+1.
$$
For  unexplained notions and notations we refer the reader to
\cite{LT}.
\section{Concepts of proximity to $\ell_{1}$}\label{sec3}
In this section we introduce the hereditary strategic $\ell_{1}$-index (Def. \ref{hs}) and
show that it is essentially equivalent to the notion of the hereditary Bourgain
$\ell_{1}$-index (Prop. \ref{her-asympt}). We discuss also non-hereditary concepts of
proximity to $\ell_{1}$ and show strong relation between these notions in the sequence
setting (Prop. \ref{p2.7}).

\begin{definition}
Let $X$ be a Banach space with a basis.

A tree $ \mc{T}$ on $X$ is an $\ell_{1}$-\textit{tree on} $X$ with constant $C\geq 1$, if any
$(x_1,\dots,x_k)\in  \mc{T}$ is a normalized sequence $C$-equivalent to the unit vector basis
of $\ell_{1}^k$. Let
$$
I(X,C)=\sup\{o( \mc{T}):\  \mc{T} \ -\ \ell_{1}\text{-tree on } X \text{ with constant
}C\}, \ \ C\geq 1
$$
The (\textit{Bourgain) $\ell_{1}$-index of $X$} is defined by $I(X)=\sup\{I(X,C):\ C\geq
1\}$.

The \textit{block (Bourgain)} $\ell_{1}$-index $I_b(X)$, is defined
analogously, using $\ell_{1}$-trees consisting only of block
sequences.
\end{definition}
It follows by  \cite{B} that for a separable Banach space $X$ the $\ell_{1}$-index is a
countable ordinal if and only if $X$ does not contain $\ell_{1}$. In this case the
$\ell_{1}$-index is of the form $\omega^{\xi}$ for some $\xi<\omega_{1}$ and also it is
greater than $I_b(X,C)$ for any $C\geq 1$, and the same holds for the block
$\ell_{1}$-index \cite{JO}. Recall that if $I(X)\geq \omega^\omega$, then $I(X)=I_b(X)$,
if $I(X)=\omega^{n+1}$, then $I_b(X)=\omega^n$ or $I_b(X)=\omega^{n+1}$ \cite{JO}. For
more information on the block $\ell_{1}$-index see \cite{JO}.

The hereditary (Bourgain) $\ell_{1}$-index is defined as
$I_{h}(X)=\min\{I(Y): Y$ is  subspace of $X\}$. The
hereditary block (Bourgain) $\ell_{1}$-index, denoted as
$I_{hb}(X)$, is defined similarly taking block Bourgain $\ell_{1}$-indices of block subspaces.

Next we introduce the following notion.
\begin{definition}
Let $X$ be a Banach space with basis, $\xi<\omega_{1}$, $C\geq 1$ and consider an
$\mc{S}_\xi$-game between the players $S$ and $V$ defined as follows:

in the $i-$th move player $S$ chooses a block subspace $X_i$ of $X$ and player $V$ picks a
normalized block vector $x_i\in X_i$.

We say that player $V$ wins, if the resulting sequence $(x_{i})_{i=1}^{k}$ is
$C$-equivalent to the standard basis of $\ell_{1}^{k}$ and maximal
$\mc{S}_{\xi}$-admissible (i.e.  $\{\minsupp x_i:\ i=1,\dots,k\}$ is
$\mc{S}_{\xi}$-maximal). We say that $V$ has a winning strategy, if $V$ wins the game for
any possible choice of the player $S$.
\end{definition}
\begin{definition}\label{hs}
Let $X$ be a Banach space with a basis. We define the \textit{hereditary strategic $\ell_{1}$-index} of $X$ by the formula
\begin{align*}
I_{hs}(X)= \sup\{&\xi<\omega_1:\ \text{for any}\ \zeta<\xi\ \text{there is}\ C\geq 1\ \text{such that}
\\
&\text{V has a winning strategy in }S_\zeta-\text{game with
constant}\ C\}.
\end{align*}
If we consider the game above where $S$ chooses only tail
subspaces instead of arbitrary block subspaces, then we define the
\textit{strategic $\ell_{1}$-index} of $X$ denoted by $I_s(X)$.
\end{definition}
In non-hereditary case for $\xi=1$ we obtain Definition 2.1.
\cite{MMT}.

The next result describes the relations between the above
introduced notions.
\begin{proposition}\label{her-asympt}
Let $X$ be a Banach space with a basis, $\xi<\omega_1$.

If $I_{hs}(X)>\xi$ then $I_{hb}(X)>\omega^\xi$.

If $I_{hb}(X)>\omega^{\xi}$ then $I_{hs}(Y_0)>\xi$ for some block
subspace $Y_0$ of $X$.
\end{proposition}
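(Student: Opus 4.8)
The plan is to prove the two implications separately, both by explicit transfinite induction on $\xi$, converting between the strategic (game-theoretic) picture and the tree picture.

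\textbf{First implication: $I_{hs}(X)>\xi \Rightarrow I_{hb}(X)>\omega^\xi$.} Fix $\zeta<\xi$ and $C\ge 1$ for which $V$ has a winning strategy in the $\mc{S}_\zeta$-game; it suffices to produce, in every block subspace of $X$, a block $\ell_1$-tree of order $>\omega^\zeta$ with a constant depending only on $C$ (and then let $\zeta\nearrow\xi$, using $CB(\mc{S}_\zeta)=\omega^\zeta+1$). The idea is to ``unfold'' the winning strategy: the player $S$ in the game is allowed to choose an arbitrary block subspace at each move, so by letting $S$ range over all tail subspaces $\langle(e_n)_{n\ge m}\rangle$ we obtain, from $V$'s responses, a tree whose branches are the normalized block sequences $(x_1,\dots,x_k)$ that can arise. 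Each such branch is $C$-equivalent to the $\ell_1^k$ basis and its minima form an $\mc{S}_\zeta$-maximal set. The key combinatorial input is that the tree of such sequences, ordered by initial segment, has order at least $CB(\mc{S}_\zeta)=\omega^\zeta+1>\omega^\zeta$; this is because maximality of the terminal nodes together with the fact that $S$ can force $\minsupp x_{i+1}$ to be arbitrarily large makes the derivation tree of the game dominate the derivation tree of $\mc{S}_\zeta$ itself. One has to be a little careful that this block $\ell_1$-tree genuinely lives inside the given block subspace $Y$: apply the strategy with $S$ restricted to tail subspaces of $Y$, which is legal since tail subspaces of $Y$ are block subspaces of $X$.

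\textbf{Second implication: $I_{hb}(X)>\omega^\xi \Rightarrow I_{hs}(Y_0)>\xi$ for some block subspace $Y_0$.} Here the natural tool is Gowers' dichotomy (cited in the introduction as \cite{G}): given that $I_{hb}(X)>\omega^\xi$, hence $I_{hb}(Y)>\omega^\xi$ for every block subspace $Y$, one wants to find a block subspace $Y_0$ on which $V$ wins every $\mc{S}_\zeta$-game, $\zeta<\xi$, with some uniform constant. For each $\zeta<\xi$ and each $C$, consider the set $\mathcal{A}_{\zeta,C}$ of block sequences that are $C$-equivalent to $\ell_1$-bases and $\mc{S}_\zeta$-maximal; by a standard stabilization/$\Delta$-system type argument (or directly by Gowers' game-theoretic dichotomy), either some block subspace $Y$ avoids producing long enough $\ell_1$-trees — contradicting $I_{hb}(Y)>\omega^\zeta$ via the characterization that block $\ell_1$-index $>\omega^\zeta$ forces $\mc{S}_\zeta$-large normalized block $\ell_1$-sequences in tails — or $V$ has a strategy in the Gowers game on some $Y$ to always produce a sequence in $\mathcal{A}_{\zeta,C}$, which is exactly a winning strategy in the $\mc{S}_\zeta$-game. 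Finally one diagonalizes over $\zeta\nearrow\xi$ to obtain a single $Y_0$ (passing to a decreasing sequence of block subspaces and taking a diagonal block subspace), which gives $I_{hs}(Y_0)>\xi$.

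\textbf{Main obstacle.} The routine direction is the first one (unfolding the strategy into a tree and comparing orders), though the bookkeeping on constants and on maximality-versus-order of the game tree needs care. The genuinely delicate point is the second implication: translating ``block $\ell_1$-index of every block subspace exceeds $\omega^\xi$'' into ``there is a block subspace where $V$ has, for every $\zeta<\xi$, a \emph{winning} strategy'' — i.e.\ upgrading the existence of long $\ell_1$-trees (an $\exists\forall$ statement about arbitrary sub-block-subspaces) to a strategy that works against \emph{all} of $S$'s moves with a uniform constant. This is precisely where Gowers' dichotomy is essential: without it one only gets that long $\mc{S}_\zeta$-admissible $\ell_1$-sequences exist somewhere, not that they can be produced reactively; the dichotomy (applied to the analytic set of ``good'' sequences inside a suitably chosen block subspace) is what converts this into a winning strategy, and then a diagonal argument across $\zeta<\xi$ finishes the proof.
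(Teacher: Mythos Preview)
Your first implication follows the same route as the paper: unfold $V$'s winning strategy against tail subspaces of a given block subspace $Y$ into an $\ell_1$-tree and compare its order with $o(\mc{S}_\xi)$. One minor simplification: the paper observes at the outset that $I_{hs}(X)>\xi$ is equivalent to $V$ having a winning strategy in the $\mc{S}_\xi$-game itself, so there is no need to work with $\zeta<\xi$ and let $\zeta\nearrow\xi$.

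For the second implication your outline has the right architecture (stabilize the constant, then apply Gowers' dichotomy to the set of $\mc{S}_\xi$-maximal $C$-$\ell_1$ sequences), but there is a genuine gap in ruling out the ``bad'' alternative of the dichotomy. The bad alternative is that some block subspace $Y$ contains no $\mc{S}_\zeta$-maximal block sequence $C$-equivalent to an $\ell_1$-basis; you claim this contradicts $I_{hb}(Y)>\omega^\zeta$ ``via the characterization that block $\ell_1$-index $>\omega^\zeta$ forces $\mc{S}_\zeta$-large normalized block $\ell_1$-sequences in tails''. But this is not an a priori characterization --- it is exactly the statement that needs proof. A block $\ell_1$-tree of order $>\omega^\zeta$ produces long $\ell_1$-sequences, yet says nothing about whether the set of minimal supports of any branch is $\mc{S}_\zeta$-\emph{maximal}. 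This is precisely where the paper invokes Gasparis' dichotomy \cite{GA}: for the hereditary family $\mc{F}$ of minimal-support sets of the $\ell_1$-tree, one passes to an infinite $M$ with either $\mc{S}_\xi\cap[M]^{<\infty}\subset\mc{F}$ (giving the desired $\mc{S}_\xi$-maximal $\ell_1$-sequences in the block subspace over $M$) or $\mc{F}\cap[M]^{<\infty}\subset\mc{S}_\xi$, which bounds the order of the restricted tree by $\omega^\xi+1$ and contradicts the index hypothesis applied to that further block subspace. Without Gasparis --- or some equivalent combinatorial bridge between tree order and Schreier admissibility --- your argument does not close.

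A secondary remark: once the constant is stabilized and Gowers' dichotomy is applied for $\mc{S}_\xi$ directly, you obtain $V$'s winning strategy in the $\mc{S}_\xi$-game on some $Y_0$ outright, so the final diagonalization over $\zeta<\xi$ is unnecessary.
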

\begin{proof}
We can assume that $X$ does not contain a copy of $\ell_{1}$. Notice
that for any $\xi<\omega_1$ we have $I_{hs}(X)>\xi$ if and only if
player $V$ has a winning strategy in $S_\xi$-game with some
constant $C\geq 1$.

Assume first that player $V$ has a winning strategy in the
$\mc{S}_\xi$-game (with some constant $C$). Fix a block subspace
$Y$. Then player $V$ in particular has a winning strategy in
producing maximal $\mc{S}_\xi$-admissible sequences
$C$-equivalent to the unit vector basis of $\ell_{1}$ in the
$\mc{S}_\xi^Y$-game, in which player $S$ chooses at each step some
$m_k\in\N$ and $V$ chooses $x_k\in Y$ with $x_k>m_k$ (i.e. player
$S$ chooses only tail subspaces of $Y$).

Take the tree $\mc{T}$ of all block sequences produced by player
$V$ in all $\mc{S}_\xi^Y$-games for all block subspaces $Y$,
with all possible moves of player $S$,
i.e. all block sequences $(x_1,\dots,x_k)$ produced by player
$V$ at some point in all $\mc{S}_\xi^Y$-games according to his
winning strategy. It follows that $\mc{T}$ is an $\ell_{1}$-tree
with a constant $C$. We show now that $o(\mc{T})\geq
o(\mc{S}_\xi)=\omega^\xi+1$ which implies that
$I_b(Y)>\omega^\xi$.

For any $(m_1,\dots,m_k)\in S_\xi$ by $\mc{T}_{m_1,\dots,m_k}$
denote the set of all block sequences of length $k$ produced by
$V$ according to his winning strategy in all $\mc{S}_\xi^Y$-games,
where $S$ has chosen in his first $k$ moves $m_1,\dots,m_k$.

By induction we show that for any $\al<\omega_1$ we have
$$
D^\al(\mc{T})\supset \cup\{\mc{T}_{m_1,\dots,m_k}:\
(m_1,\dots,m_k)\in D^\al(\mc{S}_\xi)\}.
$$
Indeed, if $\al=\beta+1$ (in particular if $\beta=0$), then by
the inductive assumption (or by the definition of $\mc{T}$ in case of
$\beta=0$) we have
\begin{align*}
D(&D^\beta\mc{T})
\supset 
\cup\{D (\mc{T}_{m_1,\dots,m_k}): (m_1,\dots,m_k)\in
D^\beta(\mc{S}_\xi)\}
\\
&
=\cup\{\mc{T}_{m_1,\dots,m_{k-1}}:
(m_1,\dots,m_k)\in D^\beta(\mc{S}_\xi)\}
=
\cup\{\mc{T}_{m_1,\dots,m_l}: (m_1,\dots,m_l)\in D^\al
(\mc{S}_\xi)\}
\end{align*}
For any limit $\al<\omega_1$ by the inductive assumption we have
\begin{align*}
\cap_{\beta<\al}D^\beta\mc{T}&\supset \cap_{\beta<\al}\cup\{
\mc{T}_{m_1,\dots,m_k}:\ (m_1,\dots,m_k)\in
D^\beta(\mc{S}_\xi)\}
\\
&\supset
\cup\{\mc{T}_{m_1,\dots,m_k}:\ (m_1,\dots,m_k)\in D^\al (\mc{S}_\xi)\}.
\end{align*}
It follows that $o(\mc{T})\geq o(\mc{S}_\xi)$, which ends the
proof of the first implication.

Assume now that for any block subspace $Y$ of $X$ we have $I_b(Y)>\omega^\xi$. We will
show now that in any block subspace $Y$ there is a block subspace $Y_0$ and $C\geq 1$
such that in any subspace $W\subset Y_0$ there is a maximal $\mc{S}_\xi$-admissible block
sequence $(x_1,\dots,x_k)$ $C$-equivalent to the unit vector basis of $\ell_{1}^k$. Then
by Gowers dichotomy for games for families of finite block sequences \cite{G} in some
block subspace of $Y$ player $V$ has a winning strategy for producing maximal
$S_\xi$-admissible block sequences $2C$-equivalent to unit vector basis of suitable
finite dimensional spaces $\ell_{1}$, which will prove that $I_{hs}(X)>\xi$.

First notice that there is some block subspace $Y_0$ and universal
constant $C$ such that for any block subspace $Z$ of $Y_0$ there
is an $\ell_{1}$-tree $\mc{T}_Z$ on $Z$ with constant $C$ and
$o(\mc{T_{Z}})>\omega^\xi+1$. Since we deal with $\ell_{1}$-trees, we
can assume that they are hereditary trees, i.e. for any
$\ell_{1}$-tree $\mc{R}$, any $(x_i)_{i\in F}\in \mc{R}$ and any
$G\subset F$ we have also $(x_i)_{i\in G}\in \mc{R}$.

Indeed, otherwise we can produce a decreasing sequence of block
subspaces $(Y_n)$ such that in each $Y_n$ there is no
$\ell_{1}$-tree with constant $n$ and order greater than
$\omega^\xi+1$. Then the diagonal subspace does not contain any
$\ell_{1}$-tree of order greater than $\omega^\xi+1$, a
contradiction with the assumption, since the block  $\ell_{1}$-index
of a Banach space is a limit ordinal.

Take $Y_0$ and the constant $C$ as above, for any block subspace
$Z$ of $Y_0$ pick an $\ell_{1}$-tree $\mc{T}_Z$ with constant $C$
and $o(\mc{T}_Z)>\omega^\xi+1$. Let $\mc{T}=\bigcup \{\mc{T}_Z:
Z\subset Y_0\}$. Then $\mc{T}$ is also an hereditary $\ell_{1}$-tree
(with constant $C$).

Pick any block subspace $W=\langle (w_n)_n\rangle$ of $Y_0$. Let $L=\{\minsupp w_n:\
n\in\N\}$ and
$$
\mc{F}=\{(\minsupp x_1,\dots,\minsupp x_k)\subset L: \ (x_1,\dots,x_k)\in
\mc{T}\cap W^k,\ k\in\N\}
$$
Notice that the tree $\mc{F}$ is hereditary and well-founded.
Indeed, assume that there is $(n_i)_{i\in\N}$ such that
$(n_1,\dots,n_k)\in \mc{F}$ for any $k\in\N$. Thus for any
$k\in\N$ there is $(x_1^k,\dots,x_k^k)\in \mc{T}$ with $n_1\leq
x_1^k<n_2\leq x_2^k<\dots<n_k\leq x_k^k$. By compactness argument
we can assume that for some block sequence $(x_i)$ we have
$x_i^k\to x_i$ in $X$ as $k\to\infty$. Since each sequence
$(x_1^k,\dots,x_k^k)$ is $C$-equivalent to the unit vector basis
of $\ell_{1}^k$, thus $(x_i)_{i\in\N}$ is equivalent to the unit
vector basis of $\ell_{1}$, a contradiction with the assumption from
the beginning of the proof.

By I. Gasparis' dichotomy \cite{GA} there is some infinite
$M\subset L$ such that either $\mc{S}_\xi\cap [M]^{<\infty}\subset
\mc{F}$ or $\mc{F}\cap [M]^{<\infty}\subset \mc{S}_\xi$.

We show that the first case holds by the following
\begin{claim}
Given any tree $\mc{R}$ on $X$ let
$$
\mc{F}_\mc{R}=\{(\minsupp x_1,\dots,\minsupp x_k): (x_1,\dots,x_k)\in
\mc{R}\}
$$
Then if trees $\mc{R}$ and $\mc{F}_\mc{R}$ are well-founded it
follows that $o(\mc{R})\leq o(\mc{F}_\mc{R})$.
\end{claim}
To prove the claim it is enough to show by induction that
$\mc{F}_{D^\alpha (\mc{R})}\subset D^\alpha (\mc{F}_\mc{R}) $ for
any $\alpha<\omega_1$.

Let now $Z=\langle (w_n)_{\minsupp w_n\in M}\rangle$. By definition of $\mc{T}$ and
$\mc{F}$ we have that $\mc{F}\cap [M]^{<\infty}\supset
\mc{F}_{\mc{T}_Z}$, therefore by the above Claim
$$
o(\mc{F}\cap [M]^{<\infty})\geq
o(\mc{T}_Z)>\omega^\xi+1=o(\mc{S}_\xi)
$$
hence $\mc{S}_\xi\cap [M]^{<\infty}\subset \mc {F}$ must hold.

Take now any $(n_1,\dots,n_k)$ maximal in $\mc{S}_\xi\cap
[M]^{<\infty}\subset \mc{F}$ and the corresponding
$(x_1,\dots,x_k)\in \mc{T}\cap W^k$ with $\minsupp x_i=n_i$,
$i=1,\dots,k$. It is clear that $(x_1,\dots, x_k)$ is
maximal $\mc{S}_\xi$-admissible. Therefore we picked in $W$
a maximal $\mc{S}_\xi$-admissible block sequence
$C$-equivalent to the unit vector basis of some finite dimensional
$\ell_{1}$, which by previous remarks ends the proof.
\end{proof}
\begin{remark} \label{asympt}
The non-hereditary case the above proposition takes the following form for a Banach space
$X$ with a basis:

If $I_s(X)> \xi$, then $I_b(X)>\omega^\xi$.

If $I_b(\langle (e_n)_{n\in M}\rangle)>\omega^\xi$ for any $M\in [\N]$, then $I_s(X)> \xi$.

The proof goes along the same scheme as above, with passing to subspaces spanned by
subsequences of the basis instead of block subspaces.
\end{remark}
A stronger representation of $\ell_{1}$ in a Banach space is
described by the following notions:
\begin{definition}
Let $X$ be a Banach space with a basis and  $\xi<\omega_{1}$ be a countable ordinal.

A normalized basic sequence $(x_i)_{i\in\N}\subset X$ \textit{generates an
$\ell_{1}^{\xi}$-spreading model}, $\xi<\omega_1$, with constant $C\geq 1$, if for any
$F\in  \mc{S}_\xi$ the sequence $(x_i)_{i\in F}$ is $C$-equivalent to the unit vector
basis of $\ell_{1}^{\# F}$.

We say that $(x_i)$ generates an $\ell_{1}$-spreading model if the above property holds for
$\xi=1$.

The space $X$ is $\ell_{1}^{\xi}$-\textit{asymptotic},
 if any $\mc{S}_{\xi}$-admissible sequence
$(x_i)_{i=1}^n$ is $C$-equivalent to the unit vector basis of $\ell_{1}^n$.

The space $X$  is said to be $\ell_{1}$-asymptotic if the above property holds for
$\xi=1$.
\end{definition}
The asymptotic $\ell_{1}$ spaces were introduced in \cite{MT}, $\ell_{1}$-spreading models of
higher order were studied in \cite{AMT},\cite{AG}.

Replacing $\ell_{1}$ by $c_0$ in the above definitions we obtain the (Bourgain) $c_0$-index,
a $c_0^\xi$-spreading model and an asymptotic $c_0$ space.

The structures described in the above definitions give us an hierarchy of the
representation of $\ell_{1}$ in a Banach space $X$ with a basis, not containing
$\ell_{1}$. For a countable ordinal $\xi<\omega_{1}$ we consider the following four
``local`` structures:
\begin{enumerate}
\item[A)] The space $X$ is  $\ell_{1}^{\xi}$ asymptotic. \item[B)]
The space $X$ contains a sequence generating an
$\ell_{1}^{\xi}$-spreading model.
 \item[C)] The strategic
$\ell_{1}$-index of $X$ is greater than $\xi$.
\item[D)] The block
$\ell_{1}$-index of $X$ is greater than $\omega^{\xi}$.
\end{enumerate}
Let us observe that the following implications hold for the above
structures
$$
A)\Rightarrow B)\Rightarrow C) \Rightarrow D)
$$
The reverse implications are not true.

First observe that for every $\xi<\omega_{1}$ the Tsirelson spaces
$T[\mc{S}_{\xi},\theta]$, $\theta\in (0,1)$, are examples of reflexive Banach space which
are $\ell_{1}^{\xi}$-asymptotic however they do not contain $\ell_{1}$.

The Schreier space  $X_{\xi}$, $\xi<\omega_{1}$, is an example of  Banach space  with
basis for which every subsequence of the basis   generates an $\ell_{1}^{\xi}$-spreading
model and it does not contain an asymptotic  $\ell_{1}^{\zeta}$ subspace for any
$\zeta\leq\xi$ since, as it is well known, $X_{\xi}$ is $c_0$-saturated.

In the next section of the paper for every $\xi<\omega_{1}$ we
provide an example of Banach space $X_{G_\xi}$ with  strategic
$\ell_{1}$-index greater than $\xi$ yet $X_{G_{\xi}}$ does not
contain sequence generating an $\ell_{1}$-spreading model. More
precisely the space $X_{G_{\xi}}$ has basis such that in the game,
where $S$ chooses subspaces spanned by subsequences of the basis,
player $V$ has a winning strategy. Therefore if a space $X$ has
 strategic $\ell_{1}$-index greater than $\xi$, then it does not
follow that $X$ contains a sequences generating an
$\ell_{1}$-spreading model.

A richer asymptotic $\ell_{1}^{\xi}$ structure than the one described in property $C)$
would be provided by a winning strategy of player $V$ in the following modification of
the $\mc{S}_\xi$-game: player $S$ chooses subsequences of the basis instead of block
subspaces and player $V$ picks vectors from subsequences chosen by player $S$. In such
a case every subsequence of the basis would admit $\ell_{1}$-tree (i.e. formed by elements
of this subsequence) of order $\omega^{\xi}$. Let us observe the following
\begin{proposition}\label{p2.7} Let $(e_n)$ be a basis of $X$ such that any subsequence $(e_n)_{n\in
L}$ admit an $\ell_{1}$-tree $\mc{T}_L\subset\{(e_n)_{n\in F}:\ F\in [L]^{<\infty}\}$ of order
$\omega^\xi$, $\xi<\omega_1$. Then some subsequence of $(e_n)_{n\in\N}$ generates an
$\ell_{1}^{\xi}$-spreading model.
\end{proposition}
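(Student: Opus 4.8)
The plan is to extract the subsequence by combining I. Gasparis' dichotomy \cite{GA} with two preliminary reductions: amplifying the order of the available $\ell_{1}$-trees and uniformizing the $\ell_{1}$-constant. We may assume $\xi\geq 1$, since for $\xi=0$ every normalized sequence generates an $\ell_{1}^{0}$-spreading model and there is nothing to prove.

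First I would amplify and uniformize. Applying the hypothesis to a subsequence $(e_{n})_{n\in L}$ and to all of its tails and stacking, over every maximal node of an $\ell_{1}$-tree of order $\geq\omega^{\xi}$, a new $\ell_{1}$-tree of order $\geq\omega^{\xi}$ supported by later basis vectors, one obtains for every $L\in[\N]$ a basis-element $\ell_{1}$-tree supported by $(e_{n})_{n\in L}$ of order $\geq\omega^{\xi}+\omega^{\xi}>\omega^{\xi}+1=o(\mc{T}_{\mc{S}_{\xi}})$; here the $\ell_{1}$-constant deteriorates, but only by a factor governed by the basis constant (this is the only place the basis structure is used, to control constants under gluing), so it stays finite for each $L$. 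A standard diagonalization then produces a tail subspace and a single constant $C$ such that every subsequence of that tail supports a basis-element $\ell_{1}$-tree of order $>\omega^{\xi}+1$ with constant $C$; passing to that tail and relabelling, we may assume this holds on all of $X$. We may take all these trees to be hereditary.

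Next I would pass to the combinatorial picture. Put $\mc{G}=\{F\in[\N]^{<\infty}:\ (e_{n})_{n\in F}\ \text{is}\ C\text{-equivalent to the unit vector basis of}\ \ell_{1}^{\#F}\}$. Since the $e_{n}$ are normalized, membership in $\mc{G}$ reduces to the lower estimate $\norm[\sum_{n\in F}a_{n}e_{n}]\geq C^{-1}\sum_{n\in F}|a_{n}|$, a property inherited by subsets, so $\mc{G}$ is hereditary. If $\mc{G}$ is not compact there is an infinite $A$ with $[A]^{<\infty}\subseteq\mc{G}$; then $(e_{n})_{n\in A}$ is $C$-equivalent to the unit vector basis of $\ell_{1}$ and hence generates an $\ell_{1}^{\xi}$-spreading model, and we are done. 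So assume $\mc{G}$ is compact. For each $L$, the hereditary tree $\mc{T}_{L}$ from the previous step, read off on indices, is order-isomorphic to $\mc{T}_{\mc{F}_{L}}$ where $\mc{F}_{L}=\{F\subseteq L:\ (e_{n})_{n\in F}\in\mc{T}_{L}\}$; thus $\mc{F}_{L}$ is a hereditary compact family with $\mc{F}_{L}\subseteq\mc{G}\cap[L]^{<\infty}$ and $o(\mc{T}_{\mc{F}_{L}})>\omega^{\xi}+1$.

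Finally, I would apply Gasparis' dichotomy \cite{GA} to the hereditary compact families $\mc{S}_{\xi}$ and $\mc{G}$, obtaining $M\in[\N]$ with either $\mc{S}_{\xi}\cap[M]^{<\infty}\subseteq\mc{G}$ or $\mc{G}\cap[M]^{<\infty}\subseteq\mc{S}_{\xi}$. The second alternative is impossible: it would force $\mc{F}_{M}\subseteq\mc{S}_{\xi}\cap[M]^{<\infty}$, hence $o(\mc{T}_{\mc{F}_{M}})\leq o(\mc{T}_{\mc{S}_{\xi}\cap[M]^{<\infty}})=o(\mc{T}_{\mc{S}_{\xi}})=\omega^{\xi}+1$ (the order of a Schreier family being unchanged under restriction to an infinite set, as $\mc{S}_{\xi}$ is spreading), contradicting $o(\mc{T}_{\mc{F}_{M}})>\omega^{\xi}+1$. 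So $\mc{S}_{\xi}\cap[M]^{<\infty}\subseteq\mc{G}$. Writing $M=\{m_{1}<m_{2}<\cdots\}$ and using $m_{i}\geq i$ together with the spreading property of $\mc{S}_{\xi}$, for every $F\in\mc{S}_{\xi}$ the set $\{m_{i}:\ i\in F\}$ belongs to $\mc{S}_{\xi}\cap[M]^{<\infty}\subseteq\mc{G}$, so $(e_{m_{i}})_{i\in F}$ is $C$-equivalent to the unit vector basis of $\ell_{1}^{\#F}$; thus $(e_{m_{i}})_{i\in\N}$ generates an $\ell_{1}^{\xi}$-spreading model. The main obstacle is precisely the amplification step: the raw hypothesis only yields $\ell_{1}$-trees of order $\geq\omega^{\xi}$, which does not exceed $o(\mc{T}_{\mc{S}_{\xi}})=\omega^{\xi}+1$, so the unwanted side of the dichotomy is not automatically excluded — one must first upgrade the order past $\omega^{\xi}+1$ by tree surgery while keeping the $\ell_{1}$-constant uniform, which is what forces the two-stage reduction above.
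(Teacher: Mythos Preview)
Your proof is correct and follows essentially the same route as the paper: amplify each subsequence tree past order $\omega^{\xi}+1$, uniformize the $\ell_{1}$-constant by diagonalization, then apply Gasparis' dichotomy and exclude the bad alternative by comparing orders. The only differences are cosmetic: you carry out the amplification explicitly by stacking two copies of the tree (the paper instead invokes Lemma~5.7 of \cite{JO}), and you work with the slightly larger hereditary family $\mc{G}$ of all $C$-$\ell_{1}$ index sets rather than the index family coming from a fixed union of trees; one terminological slip---the diagonalization yields a \emph{subsequence}, not a tail subspace---does not affect the argument since you immediately relabel.
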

\begin{proof}
Assume first that any subsequence $(e_n)_{n\in L}$ of the basis admits an $\ell_{1}$-tree
of order greater than $\omega^\xi+1$.

Repeating the reasoning and notation from the proof of Prop. \ref{her-asympt} we can
assume that there is some subsequence $(e_n)_{n\in L_0}$ and an hereditary $\ell_{1}$-tree
$\mc{T}$ formed by elements of this subsequence such that for any $M\subset L_0$ the
order of $\mc{F}_\mc{T}\cap [M]^{<\infty}$ is greater than $\omega^\xi+1$.

As before by I.Gasparis' dichotomy \cite{GA} pick an infinite $M\subset\N$ such that
either $\mc{S}_\xi\cap [M]^{<\infty}\subset \mc{F}_\mc{T}$ or $\mc{F}_\mc{T}\cap
[M]^{<\infty}\subset \mc{S}_\xi$. By the above remark the first case holds. Notice that
if $M=(m_i)$, then for any $F\in \mc{S}_\xi$ we have $(m_i)_{i\in F}\in \mc{S}_\xi\cap
[M]^{<\infty}\subset \mc{F}_\mc{T}$, which implies that $(e_n)_{n\in M}$ generates an
$\ell_{1}^\xi$-spreading model.

Now notice that if $(e_n)$ admits an $\ell_{1}$-tree of order $\omega^\xi$, then it also
admit an $\ell_{1}$-tree of order greater than $\omega^\xi+1$ (maybe with worse constant).

It follows by repeating the reasoning in Lemma 5.7 \cite{JO} in case of the block
$\ell_{1}$-index defined not by using all block $\ell_{1}$-trees but only
$\ell_{1}$-trees consisting of finite subsequences of $(e_n)$. Therefore we can reduce the
case where all subsequences admit $\ell_{1}$-trees of order $\omega^\xi$ to the case of
order greater than $\omega^\xi+1$, which was treated above and hence we finish the proof
of the observation.
\end{proof}
We end this section with the following observation regarding the non-hereditary indices.
Assuming only $I_b(X)>\omega^\xi$ does not imply $I_s(X)> 1$, as it shown by the following example.

Let $X$ be an $\ell_{2}$-direct sum of $\ell_{1}^{n}$'s, i.e.
$X=\left(\bigoplus_{n=1}^{\infty}\ell_{1}^n\right)_{\ell_{2}}$.

It is clear that $I_b(X)>\omega$, since $\sup\{o(\mc{T}): \mc{T} \ \ell_{1}\text{-tree
with constant } 1\}=\omega$.

On the other hand $I_s(X)=1$. Indeed, consider $\mc{S}_1$-game with player $S$ choosing
tail subspaces and let in the first move player $S$ pick a tail subspace
$(\oplus_{n=n_0}^\infty\ell_{1}^n)_{\ell_2}$. In the $(i+1)$-th move player $S$ chooses
tail subspaces after the support of vector $x_i$ picked by $V$ in the $i$-th move, i.e.
if in the $i$-th move player $V$ picks a vector
$x_i\in(\oplus_{n=1}^{n_i}\ell_{1}^n)_{\ell_2}$, then in the $(i+1)$-th move player $S$
chooses tail subspace $(\oplus_{n=n_i+1}^\infty\ell_{1}^n)_{\ell_2}$.

Therefore player $S$ forces the sequence $(x_1,\dots,x_k)$ produced by player $V$ to be
1-equivalent to the unit vector basis of $\ell_2^k$, hence there is no universal constant
$C$ for which player $V$ has a winning strategy for producing $\mc{S}_1$-admissible block sequences
$C$-equivalent to the unit vector basis of suitable finite dimensional $\ell_{1}$.
\section{Definition of the ground set $G_{\xi}$}\label{sec4}
In this section we define for any $\xi<\omega_1$ ground sets $G'_\xi$ and $G_\xi$ for the
auxiliary spaces $Y_{G_\xi}$ and $X_{G_\xi}$ respectively, in particular we introduce the
tree of $G_\xi$-special functionals, defined with the use of special coding function
$\sigma_1$, and related notions. We show that the space $Y_{G_\xi}$ is $c_0$-saturated
(Prop. \ref{c0}).

We recall that a  subset $G$ of $c_{00}(\N)$  is said to be a ground set if:
\begin{enumerate}
 \item  $G$ is symmetric and  $\{e_{n}^{*}:n\in\N\}$ is contained in $G$.
\item $\norm[\phi]_{\infty}\leq 1$ and $\phi(n)\in\Q$ for $\phi\in G$.
\item $G$ is closed under the restriction of its elements to intervals of $\N$.
\end{enumerate}
 A ground set $G$ induces a norm $\norm_{G}$ in $c_{00}(\N)$ defined by
\begin{center} $\norm[x]_{G}=\sup\{\phi(x):\phi\in G\}$ and we set
$X_{G}=\overline{( c_{00}(\N), \norm_{G})}$.
\end{center}
It is easy to see that the natural basis of $c_{00}(\N)$ is a Schauder
basis of the space $X_{G}$.  In the opposite for every Banach space with a Schauder basis $\xn$ there exists a ground set $G$ such that  the natural correspondence $e_{n}\to x_{n}$ is extended to an isomorphism between
$X_{G}$ and $X$.

We pass now  to define the ground set $G_{\xi}$.
Fix $\xi<\omega_{1}$. We choose two strictly increasing sequences  $(n_{j})_{j}$,
$(m_{j})_{j}$ of positive integers, such that
\begin{enumerate}
\item[(i)]
$m_{1}=2^{5}$  and  $m_{j+1}= m_{j}^{5}$
\item [(ii)]
$n_{1}=2^{6}$  and $n_{j+1}=(2n_{j})^{s_{j}}$ where
$2^{s_{j}}=m_{j+1}^{3}$\,.
\end{enumerate}
Let us observe, for later use,
that $260m_{2j}^{4}\leq n_{2j-1}$ for  $j\geq 2$.
Set $G_{0}=\{\pm e^{*}_{n}: n\in\N\}$
\begin{align*}
G_{1}^{j}=\left\{\frac{1}{m^2_{2j-1}}\sum_{i\in E}f_{i}:,
f_{i}\in G_{0}, (f_{i})_{i\in E},\,\,\# E\leq n_{2j-1}\right\},\,\,
j\in \N.
\end{align*}
Finally we set $G_{1}=\cup_{j\in\N}G_{1}^{j}$.
\begin{notation}
For every  $f\in G_{1}^{j}$ we define  the weight of $f$ as $w(f)=m^2_{2j-1}$
 and  the index as $\ind(f)=j$.
The elements of $G_1$ are called functionals of type I.
 \end{notation}
We consider the following set
$$
\mathcal{U}=\{(f_{1},\dots,f_{d}): f_{1}<\dots<f_{d}, f_{i}\in
G_{1}^{j_i}, j_{i}<j_{i+1}\,\,\textrm{for all}\, i<
d\in\N\}.
$$
Let  $\N=M_{1}\cup M_{2}$ where  $M_{1},M_{2}$ are infinite disjoint sets. Let
also $ \sigma_{1}:\mathcal{U}\to M_{2} $
be a $1-1$ function (i.e. a coding function) such that
$\sigma_{1}(f_{1},\dots,f_{i+1})>\sigma_{1} (f_{1},\dots,f_{i})$ for every $i\in\N$.
\begin{definition}\label{dee1}
A special sequence  is an element
$(f_{1},\dots,f_{d})$ of $\mathcal{U}$ satisfying the following:
\begin{enumerate}
\item[1)] $(\minsupp f_{i})_{i=1}^{d}\in\mc{S}_{\xi}$. \item[2)]
$f_{1}\in G_{1}^{j_1}, j_{1}\in M_{1}$,
$f_{i+1}\in G_{1}^{\sigma_{1}(f_{1},\dots,f_{i})}$\,\, for every
$i=1,\dots,d-1$.
\end{enumerate}
\end{definition}
We define
$$
G_{sp}=\left\{E\sum_{i=1}^{d}\e_{i}f_{i}:
(f_{1},\dots,f_{d})\, \textrm{is a special
sequence},\,\,\e_{i}\in\{-1,1\},\,\, E\,\,\textrm{interval
of}\,\N\right\}
$$
For any element
$\phi=E\sum_{i=1}^{d}\e_{i}f_{i}$ of $G_{sp}$ we set
$\ind(\phi)=\{\ind
(f_i):\, Ef_i\neq \emptyset, i\leq d\}$. We define also
$$
G_{\ell_2}=\left\{\sum_{i=1}^{d}a_i\phi_{i}:d\in\N,\,\,\sum_{i=1}^{d}a_{i}^{2}\leq
1,\,\,\,(\phi_{i})_{i=1}^{d}\subset G_{sp}\cup G_1,\,\,\,
(\ind\phi_i)_{i=1}^d\,\,\,\textrm{pairwise disjoint}\right\}.
$$
\begin{definition}\label{dee2}
 The ground set $G_{\xi}$ is defined to be  the set
\begin{align*}
G_{\xi}=G_0\cup G_{1}\cup G_{sp}\cup G_{\ell_2}.
\end{align*}
\end{definition}
\begin{remarks}\label{r3}
1) The set  $G_{\xi}$ is symmetric and closed  under the restriction of its
elements  on intervals of $\N$.

2) The injectivity  of the coding function $\sigma_{1}$ yields that the set of the
special sequences has a tree structure i.e. if  $(f_{1},\dots,f_{d})$,
$(g_{1}\,\dots,g_{n})$ are two special sequences then either $f_{i}\ne g_{j}$ for all
$i,j$ or there exists $i_0\leq\min\{d,n\}$ such that $f_{i}=g_{i}$ for all $i<i_{0}$ and
$f_{i}\ne g_{j}$ for all $i,j\geq i_0$, in particular $w(f_i)\neq w(g_j)$ for all
$i,j>i_0$.
\end{remarks}
We shall call every special sequence also a segment of the tree of the special sequences.
The elements of $G_{sp}$ are called $G_{\xi}-$special functionals.
\begin{notation}\label{not2} For every segment $s=(f_1,\dots,f_{d})$ of the tree of the special sequences  we set
$
F(s)=\{\sum_{i=1}^{d}\e_{i}f_{i}: \e_{i}\in\{-1,1\}\}
$
and for $f\in F(s)$ we set $\inde(f)=\inde(s)$.

If $V=\{s_{1},\dots,s_{d}\}$ is a set of segments we set
$$
\overline{V}=\left\{\sum_{i=1}^{d}\lambda_{i}\phi_{i}: \sum_{i=1}^{d}\lambda_i^2\leq 1,\phi_{i}\in F(s_i)\,\,\text{and $\inde(\phi_{i})\cap\inde(\phi_{j})=\emptyset$ for all $1\leq i\ne j\leq d$}\right\}
$$
It readily follows that $\overline{V}$ is symmetric.
\end{notation}
We set  $G_{\xi}^{\prime}=G_{0}\cup G_{1}\cup G_{sp}\cup \{ 0\}$.
\begin{lemma}\label{l4}
The set $G^{\prime}_{\xi}$ is a closed  subset of $[0,1]^{<\omega}$ in the pointwise
topology.
\end{lemma}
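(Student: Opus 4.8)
\textbf{Plan for the proof of Lemma \ref{l4}.}
The goal is to show that $G'_\xi = G_0 \cup G_1 \cup G_{sp} \cup \{0\}$ is closed in the product space $[0,1]^{<\omega}$, i.e. (after identifying finitely supported sequences with their extension by $0$) in $[-1,1]^{\N}$ with the product topology; actually I would work in $[-1,1]^{\N}$ since the functionals take values in $[-1,1]$, but the pointwise closure argument is the same. The plan is to take a net (equivalently, by metrizability, a sequence) $(\phi_k)_k$ in $G'_\xi$ converging pointwise to some $\phi \in [-1,1]^{\N}$ and argue that $\phi \in G'_\xi$. The key structural fact to exploit is that $G'_\xi$ is a union of three ``levels'' of functionals whose discrete data (weights, indices, supports, the coefficients $\pm 1$, and — for special functionals — the coding bookkeeping) are rigidly constrained, so that pointwise convergence forces the data to stabilize.

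First I would dispose of the easy degenerate cases: if $\phi = 0$ we are done, so assume $\phi \neq 0$, fix $n_0 \in \supp\phi$ and note $\phi(n_0)\neq 0$. Since convergence is coordinatewise and the values $\phi_k(n)$ lie in the discrete-modulo-scaling set of rationals available at each type (for type I functionals every nonzero coordinate has absolute value $m_{2j-1}^{-2}$ for some $j$; for special functionals every nonzero coordinate is a $\pm 1$ combination, restricted to an interval, of such, hence again of the form $\pm m_{2j-1}^{-2}$ when it survives, or a small sum of such with disjoint weights — but in $G_{sp}$ the $f_i$ have strictly increasing indices so supports are disjoint across the $f_i$ and each coordinate of $E\sum \e_i f_i$ equals $\pm m_{2j_i-1}^{-2}$ for a single $i$), the sequence $(\phi_k(n_0))_k$ is eventually constant. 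This already pins down, for $k$ large, the weight $w_k$ of the ``block'' of $\phi_k$ active at coordinate $n_0$, hence an index $j$ with $\phi(n_0) = \pm m_{2j-1}^{-2}$; more importantly, since the weights $m_{2j-1}^2$ are distinct and the possible coordinate values $\pm m_{2j-1}^{-2}$ are separated away from $0$, only finitely many weights can occur among the pieces of $\phi_k$ that meet the finite set $\{1,\dots,N\}$ with values $\geq$ any fixed threshold, and the supports of these pieces live inside $\{1,\dots,N\}$ for $k$ large (by pointwise convergence on each coordinate $\leq N$ together with the fact that outside $\supp\phi$ the limit is $0$, so the contributions must vanish there too — here one uses that each nonzero coordinate is bounded below in absolute value). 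I would then conclude that, restricted to any initial interval $\{1,\dots,N\}$ containing $\supp\phi$, the functionals $\phi_k$ are for large $k$ all \emph{identical} as elements of $c_{00}$, by a pigeonhole argument over the finitely many admissible finite configurations: finitely many choices of weights, finitely many choices of supports inside $\{1,\dots,N\}$, finitely many sign patterns. Hence $\phi \in c_{00}(\N)$ and $\phi = \phi_k$ for large $k$, so $\phi \in G'_\xi$.

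The step that needs genuine care — and which I expect to be the main obstacle — is controlling the \emph{special} functionals in $G_{sp}$: one must check that the limit of a sequence of special functionals (which is, by the above, actually equal to one of them for large $k$) is again special, and more subtly that the pigeonhole ``only finitely many configurations'' claim is legitimate for $G_{sp}$. The point is that a priori a special functional $E\sum_{i=1}^d \e_i f_i$ could have very large $d$ while meeting $\{1,\dots,N\}$ in only a short segment; but because $(\minsupp f_i)_{i=1}^d \in \mc{S}_\xi$ and the indices $j_i$ are strictly increasing and tied to $\minsupp f_i$ via the coding function $\sigma_1$ (with $\sigma_1$ strictly increasing along the sequence and $f_{i+1}\in G_1^{\sigma_1(f_1,\dots,f_i)}$), the data of the pieces that actually meet $\{1,\dots,N\}$ — i.e. the truncation $E \cap \{1,\dots,N\}$ applied to the functional — is determined by finitely much information (the surviving $f_i$'s supports lie in $\{1,\dots,N\}$, hence so do their min-supports, hence there are finitely many options for the min-supports, hence finitely many options for $j_1\in M_1$ and then, recursively, the whole coding-determined chain of weights is forced and finite). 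So the truncated configurations range over a finite set, pigeonhole applies, $\phi_k$ stabilizes on $\{1,\dots,N\}$, and since $\phi$ is supported there, $\phi$ equals that stable value, which is an element of $G_{sp}$ (a restriction-to-interval of a special functional, and $G_{sp}$ is closed under such restrictions by Remarks \ref{r3}(1)). Finally, if the stable type turns out to be $G_0$ or $G_1$ instead, the same conclusion $\phi\in G'_\xi$ holds trivially. I would also remark that the omission of $G_{\ell_2}$ from $G'_\xi$ is exactly what makes the statement true: the $\ell_2$-convex combinations form a non-closed (or at least more delicate) family, whereas $G_0\cup G_1\cup G_{sp}\cup\{0\}$ is a ``discrete skeleton'' whose pointwise limits cannot escape it.
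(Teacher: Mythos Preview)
Your approach has genuine gaps. The conclusion ``$\phi = \phi_k$ for large $k$'' is false in general: one can have $\phi_k = f_1 + f_2^k \in G_{sp}$ with $f_1$ fixed and $\supp f_2^k = \{k\}$ (taking $j_2 = \sigma_1(f_1)$), so that $\phi_k \to f_1$ pointwise while $\phi_k \neq f_1$ for every $k$. More fundamentally, you assume $\supp\phi$ is finite in order to pick $N$ with $\supp\phi\subset\{1,\dots,N\}$, but this is exactly what must be proved. Two intermediate steps also fail: the implication ``finitely many options for the min-supports, hence finitely many options for $j_1 \in M_1$'' is a non-sequitur (the index $j_1$ is not determined by $\minsupp f_1$), and ``each nonzero coordinate is bounded below in absolute value'' is false, since the possible values $\pm m_{2j-1}^{-2}$ accumulate at $0$; hence for $n \notin \supp\phi$ you cannot conclude $\phi_k(n) = 0$ eventually, and your pigeonhole over ``finitely many configurations'' on $\{1,\dots,N\}$ does not go through.

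The paper avoids these problems by analysing the pieces directly rather than counting configurations. After reducing to $\phi_n = E_n\sum_{i=1}^{d_n}\e_i^n f_i^n$ with $\min E_n$ bounded, regularity of $\mc{S}_\xi$ yields a subsequence and a single $d$ such that $\minsupp f_i^n$ is bounded for $i\le d$ while $\minsupp f_{d+1}^n \to \infty$. Let $i_0\le d$ be the first index at which $f_{i_0}^n$ fails to stabilize: either $w(f_{i_0}^n)\to\infty$ and $f_{i_0}^n\to 0$, or the weight is eventually constant and compactness of the relevant $G_1^{j_0}$ gives $f_{i_0}^n\to f_{i_0}$. The point you are missing is that in either case the injectivity of $\sigma_1$ forces $\limsup_n w(f_{i_0+j}^n)=\infty$ for every $j\ge 1$, so all later pieces converge to $0$; this is what makes the limit finitely supported and equal to a genuine $G_\xi$-special functional.
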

\begin{proof}
Let  $(\phi_{n})_{n\in\N}$ be  a  sequence of elements of $G^{\prime}_{\xi}$ such that
$\lim_{n}\phi_{n}=\phi$  pointwise. It is clear that if  $\phi_{n}\in G_0$ for infinitely
many $n$'s then $\phi\in G_0\cup\{0\}$.

Also if  $\phi_n\in G_{1}$ for all but finitely many $n$'s, then if
$\{\phi_{n}:n\in\N\}\cap G_{1}^{j}\ne\emptyset$ for infinitely many $j$'s then $\phi=0$.
Otherwise  there exists $j_0$ such that $\phi_{n}\in G_{1}^{j_0}$ for all but finitely
many $n$'s. Since $\mc{A}_{n_{2j_0}-1}$ is regular we get $\phi\in G_{1}^{j_0}$.

Finally  assume that   for all but finitely many $n$'s there exists a segment
 $s_{n}=(f_{1}^{n},\dots,f_{d_n}^{n})$ of the tree of the special sequences such that
$\phi_{n}=E_n\sum_{i=1}^{d_{n}}\e_i^{n}f_{i}^{n}\in F(s_n)$ .

If $\sup_{n}\min E_n=+\infty$, then $\phi=0$.
Otherwise we can
assume that for all but finitely many $n$'s we have $E_n=[k,K)$, for some $k\in\N$ and
$K\in\N\cup\{+\infty\}$.

Since the family $\mc{S}_{\xi}$ is regular we get $\{m_{i}\}_{i=1}^{d}\in \mc{S}_{\xi}$
such that
$$
m_{1}\leq f_{1}^{n}<m_{2}\leq f_{2}^{n}\dots <m_{d}\leq f_{d}^{n}\,\,\,\textrm{(and $\minsupp f_{d+1}^{n}\to\infty$)}
$$
Let $i_0=\min\{i\leq d:\textrm{there is no $n\in\N$ such that $f_{i}^{n}= f_{i}^{m}$ for
all $m\geq n$}\}. $ By the reasoning above in case $(\phi_n)\subset G_1$ we can assume
that for any $i=1,\dots,d$ we have $f_i^n\to f_i$ for some $f_i\in G_1\cup \{0\}$. Also we may assume that $\e_{i}^{n}=\e_{i}$ for all $i\leq d$ and  $n\in\N$.

If $\limsup_{n} w(f_{i_0}^{n})=+\infty$ then $f_{i_0}=0$. Also since
$\sigma_{1}$ is $1-1$ we get  $\limsup w(f_{i_0+j}^{n})=+\infty$
and therefore $f_{i_0+j}=0$ for all $j=1,\dots,d-i_0$.

If $w(f_{i_0}^{n})=m_{2j_0-1}^2$ for all but finitely many $n$'s, since the set
$G_{1}^{j_0}$ is compact we get that  $f_{i_0}^{n}\to f_{i_0}\in G_{1}^{j_0}$. Again
since $\sigma_{1}$ is $1-1$ we get $\limsup w(f_{i_0+j}^{n})=+\infty$ and therefore
$f_{i_0+j}=0$ for all $j=1,\dots,d-i_0$.

It follows that  $(f_{1},\dots, f_{d})$ is a special sequence and therefore
 $\sum_{i=1}^{d_n}\e_{i}^{n}f_{i}^{n}\to \sum_{i=1}^{d}\e_{i}f_{i}$ is a $G_{\xi}-$special functional.
\end{proof}
We define $X_{G_{\xi}}=\overline{(c_{00}(\N),\norm_{G_{\xi}})}$ and
$Y_{G_\xi}=\overline{(c_{00}(\N),\norm_{G^{\prime}_{\xi}})}$.

Let us observe that since the sets $G_{\xi}, G_{\xi}^{\prime}$ are symmetric and closed
under projections on intervals it follows that $(e_{n})_{n\in\N}$ is a bimonotone basis
for the spaces  $X_{G_{\xi}}$ and $Y_{G_{\xi}}$.
\begin{proposition}\label{c0}
$Y_{G_{\xi}}$ is $c_0$-saturated.
\end{proposition}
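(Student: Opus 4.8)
The plan is to show that every infinite-dimensional closed subspace of $Y_{G_\xi}$ contains a further subspace isomorphic to $c_0$, and the natural route is to produce inside any block subspace a sequence of normalized block vectors $(x_k)$ that is equivalent to the unit vector basis of $c_0$. Since the norming set $G'_\xi = G_0 \cup G_1 \cup G_{sp} \cup \{0\}$ contains no ``averaging'' operations that are coherent across widely separated weights (the only large-coefficient functionals are the type~I functionals $m_{2j-1}^{-2}\sum_{i \in E} f_i$ of a single fixed weight $m_{2j-1}^2$, and the special functionals $\sum \e_i f_i$ of norm at most $\sum a_i^2$-controlled $\ell_2$-type size when one passes through $G_{\ell_2}$—but $G_{\ell_2}$ is \emph{not} in $G'_\xi$!), the upper $c_0$ estimate should come essentially for free once the vectors are chosen with rapidly growing supports. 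So the real content is the lower estimate: one needs that for a carefully chosen block sequence, the norm of $\sum_{k \in F} x_k$ stays bounded below.

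First I would fix an infinite-dimensional closed subspace $Z$ of $Y_{G_\xi}$; by a standard perturbation/gliding-hump argument it suffices to treat a block subspace $Z = \langle (z_n)_n \rangle$. Then I would build the desired $c_0$-basis $(x_k)$ by a diagonal construction: choose $x_k$ to be a normalized block vector of $Z$ whose support lies far to the right of $x_{k-1}$, with the gap between consecutive supports growing fast enough (faster than the growth governing $\mc S_\xi$ and than the $n_j$'s, $m_j$'s) that no single functional in $G'_\xi$ can interact efficiently with more than one of the $x_k$'s. Concretely, for the upper estimate $\|\sum_{k\in F} a_k x_k\| \le C \max_k |a_k|$, I would analyze the three types of $\phi \in G'_\xi$: for $\phi \in G_0 = \{\pm e_n^*\}$ it is trivial; for $\phi \in G_1^j$ of weight $m_{2j-1}^2$ the value $|\phi(\sum a_k x_k)|$ is at most $m_{2j-1}^{-2}$ times a sum of at most $n_{2j-1}$ coordinate evaluations, and because the supports of the $x_k$ grow so fast, only boundedly many (in fact, with the right growth, essentially one) of the $x_k$ can contribute a full-strength piece while the rest are hit only by a tail of small mass—this is where one uses that the $x_k$ themselves can be taken with small ``flatness'' or simply that $\|\phi\| \le 1$ forces the estimate; for $\phi \in G_{sp}$ a special functional $E\sum_{i=1}^d \e_i f_i$, each $f_i$ has a distinct weight and the $(\minsupp f_i) \in \mc S_\xi$, so again the fast growth of the supports of the $x_k$ relative to $\mc S_\xi$ limits the overlap, giving a uniform bound.

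The main obstacle I expect is the lower estimate: showing $\|\sum_{k \in F} x_k\| \ge c$ for all finite $F$, i.e. that the construction does not accidentally produce an $\ell_2$-like or $\ell_1$-like sequence. The natural fix is to choose each $x_k$ to be (a normalized version of) a vector on which some functional in $G'_\xi$ already acts with value close to $1$ and, crucially, to choose these witnessing functionals to be \emph{disjointly supported} single coordinate functionals—i.e. arrange that each $x_k$ has a coordinate $e_{n_k}^*(x_k)$ bounded below by a fixed constant (possible after a small perturbation since $(e_n)$ is a bimonotone basis and the $x_k$ are normalized block vectors, so some coefficient has modulus at least comparable to $1/\text{(length of support)}$—one must therefore actually choose $x_k$ to be close to a single basis vector or a short average, which is fine since we only need \emph{some} normalized block sequence in $Z$, and $Z$ being a block subspace has normalized vectors of arbitrarily large minimal support, but by reflexivity / the structure we can instead pick $x_k$ near $e_{m_k}$-combinations where one coordinate dominates). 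Then taking $\phi = e_{n_{k_0}}^*$ for a fixed $k_0 \in F$ shows $\|\sum_{k \in F} x_k\| \ge e_{n_{k_0}}^*(\sum_k x_k) \ge c$ (the off-diagonal terms vanish since supports are disjoint). Combining the two estimates yields $c\max_k|a_k| \le \|\sum_k a_k x_k\| \le C\max_k|a_k|$, so $(x_k)$ spans a copy of $c_0$ inside $Z$, proving $Y_{G_\xi}$ is $c_0$-saturated.
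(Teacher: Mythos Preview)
Your approach is quite different from the paper's, and as written it has a genuine gap.

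The paper's proof is a two-line abstract argument. By Lemma~\ref{l4} (proved just before this proposition) the set $G'_\xi$ is a \emph{countable compact} subset of $[0,1]^{<\omega}$ in the pointwise topology. Since $G'_\xi$ norms $Y_{G_\xi}$, the evaluation map embeds $Y_{G_\xi}$ isometrically into $C(G'_\xi)$, and the classical Pe{\l}czy\'nski--Semadeni theorem says that $C(K)$ is $c_0$-saturated for every countable compact $K$. No explicit $c_0$-sequence is ever constructed.

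Your direct construction runs into a real obstruction at the lower estimate. You propose to arrange $|e_{n_k}^*(x_k)|\ge c$ for each $k$, so that a single $e_{n_{k_0}}^*$ witnesses $\|\sum_{k\in F}x_k\|\ge c$. But an arbitrary block subspace of $Y_{G_\xi}$ need not contain any normalized vector with a uniformly large coordinate: the block $z=\frac{m_{2j-1}^2}{n_{2j-1}}\sum_{i\in F}e_i$, $\#F=n_{2j-1}$, satisfies $\|z\|_{G'_\xi}\ge 1$ (via the $G_1^j$ functional $m_{2j-1}^{-2}\sum_{i\in F}e_i^*$) while $\|z\|_\infty=m_{2j-1}^2/n_{2j-1}$. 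A block subspace spanned by such averages with $j\to\infty$ contains, past any point, only normalized vectors with arbitrarily small $\ell_\infty$-norm, so your scheme fails there. Your fallback to ``reflexivity / the structure'' does not help: the very conclusion you are after is that $Y_{G_\xi}$ is $c_0$-saturated, hence certainly not reflexive.

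The upper estimate is also sketchier than you indicate. A special functional $E\sum_{i=1}^d\e_i f_i\in G_{sp}$ carries \emph{no} averaging factor, so even if each $f_i$ meets a single $x_{k_i}$ you still face $\sum_{i=1}^d |f_i(x_{k_i})|$, with each term possibly close to $1$. Bounding this sum uniformly in $d$ is not automatic from ``fast support growth'' of the $x_k$; it requires exploiting the rapid growth of the weights $w(f_i)$ along a $\sigma_1$-chain together with a specific choice of the $x_k$ (e.g.\ long $\ell_1$-averages). This is fixable, but your sketch does not supply the argument.
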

\begin{proof}
The set $G_{\xi}^\prime$ is countable by the construction and compact by Lemma \ref{l4}.
Since  $G_{\xi}^{\prime}$ is norming set of  $Y_{G_{\xi}}$ we conclude that $Y_{G_{\xi}}$
is isometric to a subspace of $C(G_{\xi}^{\prime})$ and hence is $c_0$-saturated, see
\cite{PS} (see also \cite{AK} and \cite{Kal}, Theorem 4.5).
\end{proof}
We end this section with the following observations regarding the norming set $G_{\xi}$.

From the definition of the norming set  $G_{\xi}$ it follows that for any finite sequence
$(\phi_i)\subset G_1\cup G_{sp}$ with pairwise disjoint index sets and any
$(a_i)\in\ell_2$ we have $\norm[\sum_ia_i\phi_i]_{G_\xi}^*\leq (\sum_ia_i^2)^{1/2}$.
Therefore for any infinite sequence $(\phi_i)_{i}\subset G_{1}\cup G_{sp}$ with pairwise
disjoint index sets and any $(a_i)_i\in\ell_2$ the series $\sum_ia_i\phi_i$ is convergent
in norm and
$$
\|\sum_{i=1}^{\infty}a_{i}\phi_{i}\|_{G_\xi}^*\leq
(\sum_{i=1}^{\infty}a_{i}^{2})^{1/2}.
$$
Notice also that since the set $G_{\xi}$ is a norming set for $X_{G_\xi}$ we obtain that
$B_{X_{G_{\xi}}^{*}}=\overline{\conv(G_{\xi})}^{w^{*}}$.
Also it is not hard  to show that
 $\overline{G_{\xi}}^{w^*}=G_{\xi}\cup  \mathcal{F}$ where
$$
\mathcal{F}=\{\sum\limits_{i=1}^{\infty}a_{i}\phi_{i}: (a_{i})_{i\in\N}\in B_{\ell_2},
\phi_{i}\in G_{1}\cup G_{sp},\,\,\,\ind(\phi_{i})\cap \ind(\phi_{j})=\emptyset\,\,
\mbox{for all $i\ne j$}\}.
$$
We omit the proof since we shall not make use of this result.
\section{Basic properties of the space $X_{G_{\xi}}$}\label{sec5}
In this section we shall prove that the space
$X_{G_{\xi}}$ has the following properties
 \begin{enumerate}
 \item[1)] It is reflexive.
\item[2)] For every subsequence $(e_{n})_{n\in M}$ of the basis the subspace $\langle (e_{n})_{n\in M}\rangle$ has   $\ell_{1}$-index greater than $\omega^{\xi}$.
\item[3)] Every subspace of $X_{G_{\xi}}$ contains $\ell_{2}$.
\end{enumerate}
In order  to prove the theorem we shall need the following definition
\begin{definition} \label{aa1} Let $(x_n)_n$ be a bounded block
sequence in $X_{G_{\xi}}$ and $\e >0.$ We say that $(x_n)_n$ is {\em
$\e$-separated} if for every $\phi\in G_{1}=\cup_{j\in
\mathbb{N}}G_{1}^j$
$$
\# \{n: |\phi(x_n)|\ge\e \}\le 1.
$$
In addition, we say that $(x_n)_n$ is {\em separated} if for every
$L\in [\N]$ and $\e >0$ there exists an $M\in [L]$ such that
$(x_n)_{n\in M}$ is $\e$-separated.
\end{definition}
Concerning the  separated sequences the following holds:
\begin{proposition}\label{aa4}
Let $(x_n)_{n\in\N}$ be a  separated sequence in $X_{G_{\xi}}$ with
$\norm[x_n]_{G_\xi}\leq 1$. Then for all $m\in\N$ there is $L\in[\N]$ such that for all
$g\in G_{\ell_2}$
 $$
\#\{n\in L: \vert g(x_n)\vert\geq m^{-1}\}\leq 65m^2.
$$
\end{proposition}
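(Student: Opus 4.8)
The plan is as follows. Fix $m\in\N$, put $\e=\e(m)$ for a sufficiently small parameter to be specified at the end, and use Definition~\ref{aa1} to pass to $L\in[\N]$ with $(x_n)_{n\in L}$ $\e$-separated; I would also thin $L$ further so that $(x_n)_{n\in L}$ is rapidly increasing, i.e. $\maxsupp x_n<\minsupp x_{n+1}$ with the supports growing fast relative to $n$. Fix then $g=\sum_{i=1}^d a_i\phi_i\in G_{\ell_2}$, so $\phi_i\in G_1\cup G_{sp}$, the index sets $\ind\phi_i$ are pairwise disjoint and $\sum_i a_i^2\le 1$, and set $N=\{n\in L:\ |g(x_n)|\ge m^{-1}\}$, which we may assume finite; the goal is $\#N\le 65m^2$.

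The first step is to extract the $\ell_2$-information. From the inequality recorded at the end of Section~\ref{sec4}, $\|\sum_i b_i\phi_i\|_{G_\xi}^*\le(\sum_i b_i^2)^{1/2}$ for every $(b_i)\in\ell_2$; evaluating against $x_n$ and taking the supremum over $\sum_i b_i^2\le 1$ yields $\sum_i\phi_i(x_n)^2\le\norm[x_n]_{G_\xi}^2\le 1$ for every $n$. By Cauchy--Schwarz, $m^{-1}\le|g(x_n)|\le(\sum_i a_i^2)^{1/2}(\sum_i\phi_i(x_n)^2)^{1/2}\le(\sum_i\phi_i(x_n)^2)^{1/2}$, hence $\sum_i\phi_i(x_n)^2\ge m^{-2}$ for $n\in N$. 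Summing over $n\in N$ and interchanging summations, $\#N\le m^2\sum_{n\in N}\sum_i\phi_i(x_n)^2$, so it suffices to bound the double sum $\sum_i\sum_{n\in N}\phi_i(x_n)^2$ by an absolute constant (which will come out to $65$).

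The remaining, decisive step is that bound, and here the separated property and the combinatorial structure of $G_1$ and $G_{sp}$ must be used together. For a type~I functional $\phi_i=m_{2j-1}^{-2}\sum_{k\in E}\pm e_k^*$ with $\#E\le n_{2j-1}$, $\e$-separation leaves at most one $n\in L$ with $|\phi_i(x_n)|\ge\e$, contributing at most $1$, while for the remaining blocks $|\phi_i(x_n)|\le m_{2j-1}^{-2}\#(E\cap\ran x_n)$ and the pairwise disjointness of the $\ran x_n$ bounds $\sum_n|\phi_i(x_n)|$, so the $\e$-small contributions are controlled. For $\phi_i=E\sum_l\e_lf_l\in G_{sp}$ one exploits that the pieces $f_l\in G_1^{j_l}$ have strictly increasing weights $w(f_1)<w(f_2)<\cdots$ growing super-geometrically, so that on a single block $x_n$ the value $\phi_i(x_n)$ is, up to an error of order $w(f_{l_0+1})^{-1}$ (here $f_{l_0}$ is the first piece meeting $\ran x_n$), determined by the single type~I functional $f_{l_0}$; this reduces the special case to the type~I case. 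Crucially, because the sets $\ind\phi_i$ are pairwise disjoint, the ``first relevant pieces'' arising from different $\phi_i$'s are type~I functionals of pairwise distinct weights, so $\e$-separation applies to each of them independently, and the number of resulting ``large'' pairs $(i,n)$ is controlled through the cardinality constraints $\#E\le n_{2j-1}$ --- with $n_1=64$ at the lowest weight presumably being the origin of the constant $64$, and the extra $1$ coming from the single separated large block. Finally $\e=\e(m)$ and the rate of the thinning of $L$ are chosen so that all the $\e$-small and tail errors together do not exceed $1$, giving $\sum_i\sum_{n\in N}\phi_i(x_n)^2\le 64+1=65$ and hence $\#N\le 65m^2$.

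The main obstacle is precisely this last step. The trouble is that $g\in G_{\ell_2}$ can be a genuine $\ell_2$-combination in which no single $\phi_i$, and no single type~I piece, evaluates to anything comparable to $m^{-1}$ on a given $x_n$, so one cannot merely exhibit a witnessing functional and quote $\e$-separation; the $\ell_2$-mass $\sum_i\phi_i(x_n)^2\ge m^{-2}$ has to be tracked simultaneously across all blocks of $N$, which forces a careful joint accounting of $\e$-separation, the disjointness of index sets, the super-geometric growth of the weights, and the cardinality bounds $\#E\le n_{2j-1}$. Making that accounting close is exactly what dictates the admissible choices of $\e(m)$ and of the thinning of $L$.
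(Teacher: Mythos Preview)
The paper does not give its own proof here; it defers to Lemma~3.12 and Proposition~3.14 of \cite{AAT}. So I can only assess your proposal on its own merits.

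Your Cauchy--Schwarz reduction is correct: for $n\in N$ one has $m^{-1}\le|g(x_n)|\le(\sum_ia_i^2)^{1/2}(\sum_i\phi_i(x_n)^2)^{1/2}\le(\sum_i\phi_i(x_n)^2)^{1/2}$, hence $\sum_i\phi_i(x_n)^2\ge m^{-2}$ and $\#N\le m^2\sum_{n\in N}\sum_i\phi_i(x_n)^2$.

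But the target bound $\sum_i\sum_{n\in N}\phi_i(x_n)^2\le 65$ is \emph{false}, so the approach cannot close. Consider, for $n\ge 2$, successive sets $F_n$ with $\#F_n=n_{2n-1}$ and put
\[
x_n=\frac{m_{2n-1}^2}{2n_{2n-1}}\sum_{k\in F_n}e_k,\qquad \phi_n=\frac{1}{m_{2n-1}^2}\sum_{k\in F_n}e_k^*\in G_1^n.
\]
Then $\|x_n\|_{G_\xi}\le 1$, $\phi_n(x_n)=1/2$, and $\phi_n(x_{n'})=0$ for $n'\ne n$. For $\phi\in G_1^j$ with $j\ne n$ one gets $|\phi(x_n)|\le\max\bigl(n_{2j-1}m_{2n-1}^2/(2n_{2n-1}),\,m_{2n-1}^2/(2m_{2j-1}^2)\bigr)$, which is astronomically small; hence $(x_n)_{n\ge 2}$ is already $\e$-separated for every $\e>0$, and no thinning you perform can change the picture. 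Now fix $m$, pick any $d\le m^2/4$ indices $n_1<\cdots<n_d$ in your $L$, and set $g=d^{-1/2}\sum_{i=1}^d\phi_{n_i}\in G_{\ell_2}$. Then $|g(x_{n_i})|=1/(2\sqrt d)\ge 1/m$, so $\{n_1,\dots,n_d\}\subset N$, and
\[
\sum_i\sum_{n\in N}\phi_i(x_n)^2\ge\sum_{i=1}^d\phi_{n_i}(x_{n_i})^2=\tfrac{d}{4}\approx\tfrac{m^2}{16},
\]
which exceeds $65$ once $m>32$. The proposition itself is fine here ($\#N\le m^2/4\le 65m^2$); it is your intermediate inequality that fails. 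The Cauchy--Schwarz step is lossy precisely when $\sum_i\phi_i(x_n)^2$ is near $1$ rather than near $m^{-2}$, and this example shows that regime actually occurs.

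Two of your auxiliary claims about $G_{sp}$ are also incorrect. First, the coding $\sigma_1$ in Definition~\ref{dee1} is only required to be injective and increasing along branches; nothing forces super-geometric growth of the weights (contrast with the coding $\sigma$ of Section~\ref{sec7}). Second, for $\phi=E\sum_l\e_lf_l\in G_{sp}$ the error $|\phi(x_n)-\e_{l_0}f_{l_0}(x_n)|=|\sum_{l>l_0}\e_lf_l(x_n)|$ is not $O(w(f_{l_0+1})^{-1})$: each $|f_l(x_n)|$ can be of order $1$, since $\|f_l\|_\infty$ small does not prevent $f_l$ from pairing well with a long block $x_n$.

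The argument in \cite{AAT} works with the coefficients $(a_i)$ rather than with $\sum_i\phi_i(x_n)^2$: roughly, for each $n\in N$ one isolates a set $I_n$ of indices with $\sum_{i\in I_n}a_i^2\gtrsim m^{-2}$, and separation together with the tree structure of the special sequences forces the $I_n$ to be essentially disjoint, so $\#N\cdot m^{-2}\lesssim\sum_ia_i^2\le 1$. Your $\e$-separation and the ``first relevant piece'' idea are ingredients there, but they feed into disjointness of the $I_n$, not into a uniform bound on $\sum_i\sum_n\phi_i(x_n)^2$.
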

The proof of the above results follows the arguments of Lemma 3.12 and Proposition 3.14 in \cite{AAT} where  we refer  for the proofs.
\begin{lemma}\label{l1a}
Let  $\xn$ be a bounded block sequence. Setting $y_{n}=\frac{1}{n}\sum_{i\in F_{n}}x_i$
where $\# F_{n}=n$ and $F_{n}<F_{n+1}$ we get that $\xn[y]$ is separated.
\end{lemma}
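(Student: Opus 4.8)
The plan is to fix $L\in[\N]$ and $\e>0$ and to produce $M\in[L]$ such that $(y_n)_{n\in M}$ is $\e$-separated, i.e.\ for every $\phi\in G_1$ we have $\#\{n\in M:\ |\phi(y_n)|\ge\e\}\le 1$. First I would record the elementary estimate: since $x_n\in\langle e_i:\ i\in\ran x_n\rangle$, each block $y_n$ is an average $\tfrac1n\sum_{i\in F_n}x_i$ of a \emph{tail} of the original block sequence; in particular the intervals $\ran y_n$ are successive, and $\ran y_n\subset[\minsupp x_{\min F_n},\infty)$, so $\minsupp y_n\to\infty$. I would also normalize: let $D=\sup_n\norm[x_n]_{G_\xi}<\infty$.

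The key step is a pigeonhole on the \emph{weights} of functionals in $G_1$. Recall $G_1=\bigcup_j G_1^j$ and every $\phi\in G_1^j$ has $w(\phi)=m_{2j-1}^2\ge m_1^2=2^{10}$, with $m_j\nearrow\infty$. Given $\phi\in G_1^j$, for a single block $x_i$ we trivially have $|\phi(x_i)|\le\norm[\phi]_{\infty}\norm[x_i]_{\ell_1^{\ran x_i}}$, but that is too crude; instead I use that $\phi$ restricted to $\ran x_i$ is again (a restriction of) an element of $G_1^j$, hence $|\phi(x_i)|\le\norm[x_i]_{G_\xi}\le D$ — and, crucially, that for \emph{disjointly supported} averages the mass of a single type-I functional of fixed weight is controlled by its small coefficient $m_{2j-1}^{-2}$ together with the cardinality bound $\# E\le n_{2j-1}$. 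Concretely, for $\phi=m_{2j-1}^{-2}\sum_{k\in E}\pm e_k^*$ one estimates
$$
|\phi(y_n)|=\frac1n\Bigl|\sum_{i\in F_n}\phi(x_i)\Bigr|\le\frac1n\sum_{i\in F_n:\ \ran x_i\cap E\neq\emptyset}|\phi(x_i)|\le\frac{D}{n}\,\#\{i\in F_n:\ \ran x_i\cap E\neq\emptyset\},
$$
and since $(\ran x_i)_i$ are successive intervals and $\# E\le n_{2j-1}$, the number of blocks $x_i$ meeting $E$ is at most $n_{2j-1}+1$. Hence $|\phi(y_n)|\le D(n_{2j-1}+1)/n$. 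Thus once $n>2D(n_{2j-1}+1)/\e$, the functional $\phi$ (and indeed every $\phi'\in G_1^j$) satisfies $|\phi(y_n)|<\e$. Since the weights $m_{2j-1}^2$ are strictly increasing, for the fixed $\e$ only finitely many indices $j$ — say $j<j_0(\e)$ — can ever give $|\phi(y_n)|\ge\e$ for arbitrarily large $n\in L$; and for each such finite list of "dangerous" weights, a single further diagonal/pigeonhole choice of $M\in[L]$ ensures that for each of the finitely many $j<j_0$ there is at most one $n\in M$ with $|\phi(y_n)|\ge\e$ for some $\phi\in G_1^j$ — using that for fixed $j$, two disjointly supported blocks $y_n,y_{n'}$ cannot both be hit with large mass by functionals of the \emph{same} weight unless one chooses the $F_n$'s to grow fast enough, which one may arrange since $\# F_n=n\to\infty$. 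Combining: discard from $L$ the finitely many small $n$, then for the finitely many dangerous weights keep only every other admissible index; the resulting $M$ works.

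The main obstacle I anticipate is the bookkeeping in the last sentence: making precise that for a \emph{fixed} weight $m_{2j-1}^2$ one can have at most one $n\in M$ with $|\phi(y_n)|\ge\e$ for some $\phi\in G_1^j$. This is where the hypothesis $\# F_n=n$ (rather than bounded) and $F_n<F_{n+1}$ is used: for $n$ large relative to $n_{2j-1}$ the bound $|\phi(y_n)|\le D(n_{2j-1}+1)/n<\e$ already rules $y_n$ out entirely, so in fact for each fixed $j$ there is only a bounded \emph{initial} segment of indices that can be dangerous, and after deleting those nothing remains — so the "every other" trick is not even needed once $n$ is large. I would phrase the proof so that a single threshold $n\ge N(\e)$, depending only on $\e$, $D$ and the finitely many $n_{2j-1}$ with $m_{2j-1}^2$ below the relevant cutoff, suffices; then $M=\{n\in L:\ n\ge N(\e)\}$ is $\e$-separated, and letting $\e\downarrow0$ through a sequence and diagonalizing gives that $(y_n)$ is separated in the sense of Definition \ref{aa1}.
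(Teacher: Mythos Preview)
Your averaging estimate $|\phi(y_n)|\le D(n_{2j-1}+1)/n$ for $\phi\in G_1^j$ is correct and is exactly one half of the paper's argument (their condition (2)). The gap is in the other half: you never establish any control for \emph{large} $j$. Your sentence ``since the weights $m_{2j-1}^2$ are strictly increasing, only finitely many indices $j$ \dots\ can ever give $|\phi(y_n)|\ge\e$ for arbitrarily large $n$'' does not follow from anything you proved --- the only displayed bound involves $n_{2j-1}$, which \emph{grows} with $j$, so for large $j$ your estimate says nothing. The missing estimate is the obvious $\|\phi\|_\infty$-bound: since $\|y_n\|_\infty\le C$, for $\phi\in G_1^j$ one has $|\phi(y_n)|\le C\cdot\#\ran y_n/m_{2j-1}^2$, which makes $j$ safe once $m_{2j-1}^2>C\#\ran y_n/\e$. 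Note that this threshold depends on $\#\ran y_n$, not just on $\e$.

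Because the two thresholds point in opposite directions --- large $n$ kills small $j$, large $j$ (relative to $\#\ran y_n$) is killed by condition (1) --- a single tail $M=L\cap[N(\e),\infty)$ does \emph{not} suffice: for any $N$ one can have $n\ge N$ with $\#\ran y_n$ enormous, allowing some $\phi\in G_1^j$ with very large $j$ to hit $y_n$ and also $y_{n+1}$ (your bound $D(n_{2j-1}+1)/(n+1)$ is then useless). The paper resolves this by an interlocking inductive choice: pick $l_1\in L$, then $j_1$ with $\e m_{2j_1-1}^2>C\#\ran y_{l_1}$; then $l_2\in L$ with $\e l_2>Cn_{2j_1-1}$, then $j_2$ with $\e m_{2j_2-1}^2>C\#\ran y_{l_2}$; and so on. For the resulting $M=\{l_i\}$, any $\phi\in G_1^j$ can hit $y_{l_i}$ with value $\ge\e$ only when $j_{i-1}\le j<j_i$, and these windows are pairwise disjoint --- hence at most one $i$. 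Your ``single threshold $N(\e)$'' plan cannot be made to work; the inductive alternation between the $l_i$'s and the $j_i$'s is the point.
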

\begin{proof}
Let  $\e>0$, $L\in [\N[ $  and  assume that $\norm[x_n]_{G_\xi}\leq C$ for any $n$.
Pick inductively sequences $(j_i)$, $(l_i)\subset L$ such that
\begin{center}
1) $\e m_{2j_i-1}^2>C\#\ran y_{l_i}$\,\,\,\  and\,\,\,\
2)$\e l_i>Cn_{2j_{i-1}-1}$.
\end{center}
Take now any $\phi\in G_1^j$ and $l_i$. If $j\geq j_i$, then by (1) $\vert\phi
(y_{l_i})\vert <\e$. If $j<j_{i-1}$, then by (2) also $\vert\phi (y_{l_i})\vert <\e$.
Therefore $\#\{i:\ \vert\phi (y_{l_i})\vert \geq \e\}\leq 1$ and hence setting
$M=(l_i)_i$ we end the proof that $(y_l)_{l\in M}$ is $\e$-separated.
\end{proof}
Combining the above lemma  with Proposition \ref{aa4} and  the choice of $(m_j)$, $(n_j)$
we obtain the following.
\begin{proposition}\label{ris-sss} Let $\xn$ be a bounded block sequence. Assume that $y_{n}=\frac{1}{n}\sum_{i\in
F_{n}}x_i$, where $\# F_{n}=n$  and $F_{n}<F_{n+1}$, satisfy $\norm[y_n]\leq 1$. Then for
any $j\geq 2$ there is an infinite $L\subset\N$ such that for any $g\in G_{\xi}$
$$
\#\{n\in L: \ \vert g(y_n)\vert \geq 2m_{2j}^{-2}\}\leq n_{2j-1}
$$
\end{proposition}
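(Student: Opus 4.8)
The plan is to combine the two preceding results in the obvious way, tracking the constants carefully enough to absorb them into the factor $m_{2j}^{-2}$. First I would apply Lemma \ref{l1a} to the block sequence $\xn$: since $(x_n)$ is bounded, the averaged vectors $y_n = \tfrac1n\sum_{i\in F_n}x_i$ form a separated sequence, and by hypothesis $\norm[y_n]\leq 1$. Thus $(y_n)_{n\in\N}$ is a separated sequence in $X_{G_\xi}$ with $\norm[y_n]_{G_\xi}\leq 1$, so Proposition \ref{aa4} applies: for the chosen $m$ there is $L\in[\N]$ with $\#\{n\in L:\ |g(y_n)|\geq m^{-1}\}\leq 65m^2$ for every $g\in G_{\ell_2}$. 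The remaining work is to (a) choose $m$ as a function of $j$ so that $65m^2\leq n_{2j-1}$ and $m^{-1}\leq 2m_{2j}^{-2}$, and (b) upgrade the statement from $G_{\ell_2}$ to all of $G_\xi = G_0\cup G_1\cup G_{sp}\cup G_{\ell_2}$.

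For (a), the natural choice is $m = m_{2j}^2$ (so that $m^{-1} = m_{2j}^{-2} \leq 2m_{2j}^{-2}$); then we need $65 m_{2j}^4 \leq n_{2j-1}$, which is exactly the inequality $260 m_{2j}^4\leq n_{2j-1}$ recorded "for later use" after the choice of $(n_j),(m_j)$ in Section~\ref{sec4} (valid for $j\geq 2$), with room to spare. So with this choice Proposition \ref{aa4} already gives $\#\{n\in L:\ |g(y_n)|\geq 2m_{2j}^{-2}\}\leq \#\{n\in L:\ |g(y_n)|\geq m_{2j}^{-2}\}\leq 65 m_{2j}^4\leq n_{2j-1}$ for every $g\in G_{\ell_2}$.

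For (b), I would observe that $G_0\cup G_1\subset G_{\ell_2}$ (an element of $G_0$ or $G_1$ is a trivial $\ell_2$-convex combination of a single weighted functional with one index), and likewise each $G_\xi$-special functional $\phi\in G_{sp}$, having a single index set $\ind(\phi)$, lies in $G_{\ell_2}$ as the one-term combination $1\cdot\phi$. Hence $G_\xi = G_{\ell_2}$ up to these inclusions — more precisely every $g\in G_\xi$ is already an element of $G_{\ell_2}$ — so the estimate obtained in (a) holds for all $g\in G_\xi$, which is the assertion. I expect no real obstacle here; the only point requiring a moment's care is checking that the definition of $G_{\ell_2}$ literally allows single-term combinations (it does, taking $d=1$, $a_1=1$) so that the reduction to $G_{\ell_2}$ is genuine rather than merely up to a constant. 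If one preferred to be conservative about the constant coming out of Proposition \ref{aa4} (the "$65m^2$" versus a possibly slightly larger bound in the cited source \cite{AAT}), the slack between $65$ and $260$ in the recorded inequality $260m_{2j}^4\leq n_{2j-1}$ is exactly what is there to absorb it, so the argument is robust.
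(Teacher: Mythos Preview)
Your proposal is correct and is exactly the approach the paper indicates (it merely says ``combining the above lemma with Proposition~\ref{aa4} and the choice of $(m_j)$, $(n_j)$''). One small slip: the inclusion $G_0\subset G_{\ell_2}$ fails, since $\pm e_n^*$ has sup-norm $1$ and so cannot equal any $m_{2j-1}^{-2}\sum_i f_i\in G_1$; but this case is harmless anyway, as each $g\in G_0$ has singleton support and hence meets the support of at most one $y_n$.
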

As corollary of the above proposition we obtain that the basis is shrinking.
\begin{corollary}\label{shrinking}
Every bounded block sequence in $X_{G_{\xi}}$ is weakly null.
\end{corollary}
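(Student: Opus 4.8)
The plan is to argue by contradiction, using the separation of averages provided by Proposition \ref{ris-sss}. By rescaling we may assume $\norm[x_n]\le 1$. Suppose $\xn$ is not weakly null. Then there are $x^{*}\in X_{G_{\xi}}^{*}$ with $\norm[x^{*}]\le 1$ and $\delta>0$ such that $|x^{*}(x_n)|>\delta$ for infinitely many $n$; passing to a subsequence on which $x^{*}(x_n)$ has constant sign, and replacing $x^{*}$ by $-x^{*}$ if necessary, we may assume (after relabelling) that $x^{*}(x_n)\ge\delta$ for all $n$. I would record at this point that one cannot hope to bound $\norm[\sum_{n\in F}x_n]$ directly, since a single functional of type I can be large on many of the $x_n$ at once; this is precisely why averaging is forced upon us.

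Next I would pass to averages. Fix finite sets $F_1<F_2<\cdots$ with $\#F_m=m$ and put $y_m=\frac1m\sum_{i\in F_m}x_i$. Then $\norm[y_m]\le 1$, and by linearity $x^{*}(y_m)\ge\delta$ for every $m$. Choose $j\ge 2$ large enough that $2m_{2j}^{-2}<\delta/2$ (possible since $m_j\to\infty$), and apply Proposition \ref{ris-sss} to the block sequence $\xn$ and the averages $(y_m)$: this yields an infinite $L\subset\N$ such that for every $g\in G_{\xi}$ one has $\#\{m\in L:\ |g(y_m)|\ge 2m_{2j}^{-2}\}\le n_{2j-1}$.

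The contradiction then comes from a counting estimate. Let $F\subset L$ be finite. For $g\in G_{\xi}$ split $F$ into the set $F'$ of those $m$ with $|g(y_m)|\ge 2m_{2j}^{-2}$ and its complement; then $\#F'\le n_{2j-1}$, and since $|g(y_m)|\le\norm[y_m]\le 1$ we get $|g(\sum_{m\in F}y_m)|\le \#F'+2m_{2j}^{-2}\#F\le n_{2j-1}+2m_{2j}^{-2}\#F$. As $G_{\xi}$ is a symmetric norming set, $\norm[\sum_{m\in F}y_m]\le n_{2j-1}+2m_{2j}^{-2}\#F$. On the other hand $\norm[x^{*}]\le 1$ gives $\norm[\sum_{m\in F}y_m]\ge x^{*}(\sum_{m\in F}y_m)\ge\delta\#F$, so $(\delta-2m_{2j}^{-2})\#F\le n_{2j-1}$, i.e. $\tfrac{\delta}{2}\#F\le n_{2j-1}$ for every finite $F\subset L$ — impossible, since $L$ is infinite.

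I do not expect a genuine obstacle here: essentially all the difficulty is already absorbed in Proposition \ref{ris-sss}, whose role is exactly to pass from an arbitrary bounded block sequence to separated averages on which \emph{every} element of $G_{\xi}$ (not merely those in $G_1$ or $G_{\ell_2}$) is small on all but $n_{2j-1}$ coordinates, uniformly. The two points needing a word of care are: arranging a constant sign for $x^{*}(x_n)$ along a subsequence so that the averages pair positively with $x^{*}$; and using that functionals in $G_{\xi}$ have norm at most $1$, so that the few ``large'' coordinates of $\sum_{m\in F}y_m$ contribute at most $n_{2j-1}$ in total regardless of the chosen $g$.
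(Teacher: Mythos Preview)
Your proof is correct and follows essentially the same route as the paper's: both pass to the averages $y_m$, invoke Proposition \ref{ris-sss} to obtain the separation property on an infinite $L$, and then compare the action of $x^{*}$ on a sum of $y_m$'s with the upper bound coming from the counting estimate against every $g\in G_\xi$. The only cosmetic difference is that the paper fixes the length of the second average to be $n_{2j}$ and uses that $n_{2j-1}/n_{2j}$ is small, whereas you let $\#F\to\infty$ with $n_{2j-1}$ fixed; these are the same computation.
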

\begin{proof}
Assume that there exist a normalized block sequence $\xn$, $x^{*}\in X_{G_{\xi}}^{*}$ of norm one and $\e>0$ such that
$x^{*}(x_n)>\e$ for all $n$.

Let $j\in\N$ , $j\geq 2$ such that $1/2m_{2j}^2<\e/4$. By Proposition  \ref{ris-sss} setting  $y_{n}=\frac{1}{\# F_{n}}\sum_{i\in F_{n}}x_{n}$,  where
$\#F_{n}=n$ and  $F_{n}<F_{n+1}$ for all $n\in\N$,
we may assume that  for all  $g\in G_{\xi}$ it holds that
\begin{equation}
\#\{n: \vert
g(y_n)\vert\geq 2m_{2j}^{-2}\}\leq n_{2j-1}\Rightarrow
\vert g(\frac{1}{n_{2j}}\sum_{i=1}^{n_{2j}}y_n)\vert\leq \frac{n_{2j-1}}{n_{2j}}+\frac{1 }{2m_{2j}^{2}}\leq \frac{1}{m_{2j}^{2}}<\frac{\e}{2}. 
\end{equation}
This yieldd a contradiction
since   $G_{\xi}$ is a norming set for $X_{G_{\xi}}$.
\end{proof}
We prove now the reflexivity of  $X_{G_{\xi}}$.
\begin{theorem}\label{refl} The space $X_{G_\xi}$ is reflexive.
\end{theorem}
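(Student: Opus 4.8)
The plan is to use James's classical characterization: a Banach space with a Schauder basis is reflexive if and only if the basis is both shrinking and boundedly complete. The first half is already done: Corollary \ref{shrinking} establishes that every bounded block sequence is weakly null, which is precisely the statement that the basis $(e_n)$ is shrinking (the functionals $(e_n^*)$ form a basis of $X_{G_\xi}^*$). So the entire content of the proof is to show that $(e_n)$ is \emph{boundedly complete}, i.e. that whenever $(a_n)\subset\R$ is a sequence of scalars with $\sup_N\norm[\sum_{n=1}^N a_ne_n]_{G_\xi}<\infty$, the series $\sum_n a_ne_n$ converges in $X_{G_\xi}$.

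The key step is the following estimate, which is the reflexivity-type dichotomy underlying all mixed Tsirelson constructions: there is a constant $C$ so that for every normalized block sequence $\xn$ one can, after passing to a subsequence, find convex combinations $y_n=\frac1{\#F_n}\sum_{i\in F_n}x_i$ (with $F_1<F_2<\cdots$) satisfying $\norm[y_n]\leq 1$ and then, by Proposition \ref{ris-sss}, for a suitable $j$ an infinite $L$ with $\#\{n\in L:\ |g(y_n)|\geq 2m_{2j}^{-2}\}\leq n_{2j-1}$ for all $g\in G_\xi$. This forces $\norm[\frac1{n_{2j}}\sum_{n\in L,\,n\leq\text{(first }n_{2j}\text{)}}y_n]\leq \frac{n_{2j-1}}{n_{2j}}+\frac1{2m_{2j}^2}<\frac1{m_{2j}^2}\to 0$, exactly as in the proof of Corollary \ref{shrinking}. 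Thus no normalized block sequence can have all its ``averaged block averages'' bounded below, which is incompatible with the existence of a non-convergent bounded partial-sum sequence. Concretely, suppose $(e_n)$ is not boundedly complete; then there is $\e>0$, a scalar sequence $(a_n)$ with $\sup_N\norm[\sum_1^N a_ne_n]\leq 1$, and a block sequence $z_k=\sum_{n\in I_k}a_ne_n$ of ``tail blocks'' with $\norm[z_k]\geq\e$ and $I_1<I_2<\cdots$. Normalizing, $x_k=z_k/\norm[z_k]$ is a normalized block sequence. Applying the averaging-and-separation machinery above to $(x_k)$ produces, for a well-chosen $j$, a block vector $w=\frac{\e}{m_{2j}}\sum_{n\in L'}x_n$ that is a legitimate scaled partial-sum increment of $\sum a_ne_n$ (up to the normalizing constants, which lie in $[\e,1]$) whose norm is at most $\frac1{m_{2j}^2}$; but as an increment of the partial sums of $\sum a_ne_n$ it ought to have norm $\geq$ something like $\e^2/m_{2j}$ by examining the evaluation on $G_\xi$-functionals supported appropriately — and choosing $j$ large makes $\frac1{m_{2j}^2}$ far smaller than this lower bound, a contradiction. (The cleanest packaging is: by Proposition \ref{ris-sss} applied to $(x_k)$ together with bimonotonicity of the basis, the partial sums of $\sum a_ne_n$ are not Cauchy would force a uniformly-bounded-below block sequence whose Schreier-averages go to zero, an impossibility.)

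The main obstacle is the bookkeeping at the level where one converts ``the partial sums are not Cauchy'' into a genuine normalized block sequence and then identifies the convex-combination vector $w$ constructed by the separation machinery with an actual increment of the original partial sums; one must track the normalizing constants $\norm[z_k]\in[\e,1]$ and use bimonotonicity of $(e_n)$ (already noted after Lemma \ref{l4}) to guarantee that the lower norm bound on $w$ coming from its role inside $\sum a_ne_n$ really is of order $\e^2/m_{2j}$ rather than being destroyed by cancellation. Once that is in place, the contradiction with $\norm[w]\leq m_{2j}^{-2}$ is immediate for $j$ large, so $(e_n)$ is boundedly complete; combined with the shrinking property from Corollary \ref{shrinking} and James's theorem, $X_{G_\xi}$ is reflexive. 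I expect this argument to mirror almost verbatim the proof of Corollary \ref{shrinking}, with Proposition \ref{ris-sss} doing the heavy lifting in both cases.
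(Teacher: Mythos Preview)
Your overall strategy (James's theorem, shrinking already done, reduce to boundedly complete) matches the paper. The gap is in the boundedly complete half: your averaging argument does not supply the needed \emph{lower} bound, and I do not see how to repair it along the lines you sketch.

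Proposition~\ref{ris-sss} and the machinery behind Corollary~\ref{shrinking} give \emph{upper} bounds on norms of Ces\`aro-type averages of a block sequence. In the shrinking proof this collides with a lower bound coming from the single persistent functional $x^*$ with $x^*(x_n)>\e$. In your boundedly complete argument there is no such functional: each block $z_k=\sum_{i\in I_k}a_ie_i$ has its own norming functional $g_k\in G_\xi$, and nothing prevents massive cancellation when you average. Concretely, your vector $w$ is a convex combination $\sum_k c_k x_k=\sum_k \frac{c_k}{\norm[z_k]}z_k$; because the coefficients $c_k/\norm[z_k]$ vary with $k$, this is \emph{not} a partial-sum increment of $\sum a_ne_n$, so bimonotonicity gives you nothing. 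And even if it were an increment, bimonotonicity would give an \emph{upper} bound ($\leq 2$), not the lower bound $\gtrsim \e^2/m_{2j}$ you claim. The sentence ``by examining the evaluation on $G_\xi$-functionals supported appropriately'' is exactly where the proof would have to happen, and it is left blank.

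The paper's proof works on the dual side instead. One fixes norming functionals $g_n\in G_\xi$ with $\ran g_n\subset F_n$ and $g_n(\sum_{i\in F_n}a_ie_i)\geq\e_0$, and then exploits the explicit closure properties of $G_\xi$ to \emph{combine} many $g_n$'s into a single functional in $G_\xi$ whose value on a long partial sum exceeds~$1$. This requires a case analysis on the type of $g_n$ ($G_0$, $G_1$, $G_{sp}$, $G_{\ell_2}$): for $g_n\in G_0$ one uses the $(\mc{A}_{n_{2j-1}},m_{2j-1}^{-2})$-operation, and in the remaining cases one either finds infinitely many $g_n$ with pairwise disjoint index sets and uses the $\ell_2$-combination in $G_{\ell_2}$, or splits each $g_n$ into a low-index part (controlled by support size) and a high-index part (eventually disjoint by a diagonal choice). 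The separation machinery of Proposition~\ref{ris-sss} plays no role here; what is used is the internal algebraic structure of the norming set.
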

\begin{proof}
We show that the basis is shrinking and boundedly complete.
Corollary \ref{shrinking} yields that the basis $(e_{n})_{n\in\N}$
is shrinking. We  prove that $(e_{n})_{n\in\N}$  is also
boundedly complete.

On the contrary assume that
$\sup_{n\in\N}\norm[\sum_{i=1}^{n}a_{i}e_{i}]_{G_\xi}\leq 1$ and there exist $\e_0>0$,
successive intervals $F_{1}<F_{2}<\dots$ of   $\N$ such that  $\norm[\sum_{i\in
F_{n}}a_{i}e_{i}]_{G_\xi}> \e_0$.

For every $n$ choose $g_{n}\in G_{\xi}$ such that  $g_{n}(\sum_{i\in
F_{n}}a_{i}e_{i})\geq \e_0$ and  $\ran g_{n}\subset F_{n}$.

We distinguish the following cases

\textit{Case} 1. $g_{n}\in G_0$ for infinitely many $n$'s.

Then  for $j\in\N$ and $M>\max F_{n_{2j-1}}$ and  we get
$$
\|\sum_{i=1}^{M}a_{i}e_{i}\|_{G_\xi}\geq\frac{1}{m_{2j-1}^2}\sum_{n=1}^{n_{2j-1}}g_{n}(\sum_{i=1}^{M}a_{i}e_{i})\geq
\frac{\e_0 n_{2j-1}}{m^2_{2j-1}} ,
$$
a contradiction for large $j$.

We state the next three cases.

\textit{Case 2}. $g_{n}\in G_{1}$ for all but finitely many $n$'s.

\textit{Case 3}. $g_{n}\in G_{sp}$ for all but  finitely many $n$'s.

\textit{Case 4}. $g_{n}\in G_{\ell_2}$ for all but  finitely many $n$'s.

The proofs of these three  cases  follow the same argument hence we shall give only the  proof of \textit{Case 4}.

If we have that   infinitely many $g_{n}$'s have pairwise disjoint index sets then for
suitable $n\in\N$ and $A\in \mc{A}_{n}$ the functional $g=\frac{1}{\sqrt{n}}\sum_{i\in
A}g_{i}\in G_{\ell_2}$ will give us  a contradiction.

Assume that only finitely many $g_{n}$'s  have pairwise disjoint index sets.

Let $g_{n}=\sum_{i\in D_{n}}c_{i}\phi_{i}$ where $(\phi_{i})_{i\in D_{n}}$ have pairwise
disjoint index sets. For every $j\in\N$ we set $\phi_{i}=\phi_{i,j}^{1}+\phi_{i,j}^{2}$
where $\ind(\phi_{i,j}^{1})\subset\{1,\dots,j\}$ and
$\ind(\phi_{i,j}^{2})\subset\{j+1,j+2,\dots\}$.

Then $g_{n}=g_{n,j}^{1}+g_{n,j}^{2}=\sum_{i\in D_{n,1}}c_{i}\phi_{i,j}^{1}+\sum_{i\in
D_{n,2}}c_{i}\phi_{i,j}^{2}$. Let us observe that $\# D_{n,1}\leq j$.

We distinguish the following two subcases.

\textit{Subcase 4a.} There exists $j_0\in\N$ such that for all but finitely many $n$'s,
$$
\vert g_{n,j_0}^{1}(\sum_{i\in F_n}a_{i}e_{i})\vert\geq \e_{0}/2.
$$
Since  $\ind(\phi_{i,j_0}^{1})\subset\{1,\dots,j_0\}$ it follows that  $\#\supp \phi_{i,j_0}^{1}\leq\sum_{i\leq j_0}n_{2i-1}=n_{0}$ and
$$
\vert \phi_{i,j_0}^{1}(\sum_{i\in F_{n}}a_{i}e_{i})\vert\leq n_0\norm[\phi_{i,j_0}^{1}]_{\infty}\max_{i\in F_{n}}\vert a_i\vert
\leq \frac{n_0}{m_{1}^{2}}\max_{i\in F_{n}}\vert a_{n}\vert.
$$
It follows
$$
\e_0/2\leq \vert \sum_{i\in D_{n,1}}c_{i}\phi_{i,j_0}^{1}(\sum_{i\in
F_{n}}a_{i}e_{i})\vert\leq \sum_{i\in D_{n,1}}\vert
c_i\vert\frac{n_0}{m_{1}^{2}}\max_{i\in F_{n}}\vert a_{n}\vert\leq
j_{0}\frac{n_0}{m_{1}^{2}}\max_{i\in F_{n}}\vert a_{i}\vert.
$$
From the above relation as in \textit{Case 1} we derive a contradiction.

\textit{Subcase 4b}. For every $j,m\in\N$ there exists $n\in\N$,
$n>m$ with
$$
\vert g_{n,j}^{1}(\sum_{i\in F_{n}}a_{i}e_{i})\vert\leq \e_0/2.
$$
Then  we choose inductively an increasing sequence $(n_{i})_{i\in\N}$ such that
$$
\vert g_{n_i, j_{i-1}}^{1}(\sum_{l\in F_{n_i}}a_{l}e_{l})\vert\leq\e_{0}/2
\,\,\,\mbox{where $j_{i-1}=\max\{\ind(g_{n_k}):k\leq i-1\}$.}
$$
It follows that
$
\vert g_{n_i, j_{i-1}}^{2}(\sum_{l\in F_{n_i}}a_{l}e_{l})\vert\geq\e_{0}/2
$
and the functionals $g^{2}_{n_i,j_{i-1}}$ have pairwise disjoint index sets.

Setting $g=\frac{1}{\sqrt{n}}
\sum_{i\in A}g_{n_i,j_{i-1}}^{2}$ for suitable $n$ and $A\in\mathcal{A}_{n}$ we derive a contradiction.
\end{proof}
From Theorem \ref{refl} and  Proposition \ref{c0} we obtain the following
\begin{corollary}\label{iss}
The identity map $Id:\ X_{G_{\xi}}\to Y_{G_{\xi}}$ is strictly singular.
\end{corollary}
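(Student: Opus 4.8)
The plan is to combine the reflexivity of $X_{G_\xi}$ (Theorem \ref{refl}) with the $c_0$-saturation of $Y_{G_\xi}$ (Proposition \ref{c0}) in the standard way. Recall that the identity map $Id\colon X_{G_\xi}\to Y_{G_\xi}$ is well-defined and bounded since $G_\xi'\subset G_\xi$, so $\norm[x]_{G_\xi'}\leq\norm[x]_{G_\xi}$ for every $x\in c_{00}(\N)$. To show $Id$ is strictly singular, I would argue by contradiction: suppose there is an infinite-dimensional closed subspace $Z$ of $X_{G_\xi}$ on which $Id$ is an isomorphism onto its image $Id(Z)\subset Y_{G_\xi}$.

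First I would pass to a subspace spanned by a block sequence: by a standard perturbation/gliding-hump argument (using that $(e_n)$ is a bimonotone basis of both spaces), $Z$ contains a further infinite-dimensional subspace $Z'$ spanned by a normalized block sequence, and $Id$ restricted to $Z'$ is still an isomorphism onto its image. Next, since $Id(Z')$ is an infinite-dimensional subspace of $Y_{G_\xi}$ and $Y_{G_\xi}$ is $c_0$-saturated (Proposition \ref{c0}), $Id(Z')$ contains a subspace $W$ isomorphic to $c_0$. Pulling $W$ back through the isomorphism, $Id^{-1}(W)$ is an infinite-dimensional subspace of $Z'\subset X_{G_\xi}$ isomorphic to $c_0$. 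But $X_{G_\xi}$ is reflexive by Theorem \ref{refl}, hence contains no isomorphic copy of $c_0$ (a reflexive space cannot contain a non-reflexive subspace), a contradiction. This contradiction shows no such $Z$ exists, so $Id$ is strictly singular.

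The only mildly technical point — and hence the main thing to be careful about — is the reduction to a block subspace on which $Id$ remains an isomorphism; everything else is soft. One could also phrase the argument entirely in terms of ``every infinite-dimensional subspace of $X_{G_\xi}$ contains $\ell_2$'' (property 3) in Section \ref{sec5}) together with the fact that $Y_{G_\xi}$ is $c_0$-saturated, since no subspace of a $c_0$-saturated space can contain $\ell_2$; this gives an alternative route avoiding even the reflexivity statement, though the reflexivity route is cleaner given what is already proved above.
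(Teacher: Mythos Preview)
Your proposal is correct and is exactly the argument the paper has in mind: the paper simply states that the corollary follows from Theorem \ref{refl} (reflexivity of $X_{G_\xi}$) and Proposition \ref{c0} ($c_0$-saturation of $Y_{G_\xi}$), and your contradiction argument is the standard way to unpack that. The reduction to a block subspace is in fact unnecessary---if $Id$ were an isomorphism on any infinite-dimensional $Z$, then $Id(Z)$ already contains a copy of $c_0$ by Proposition \ref{c0}, which pulls back to $X_{G_\xi}$---but including it does no harm.
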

We prove now that every subsequence of the basis generates a subspace with 
$\ell_{1}$-index greater that $\omega^{\xi}$.
\begin{proposition}
For every $M \in [\N]$ the subspace  $\langle (e_{n})_{n\in M}\rangle$ has   $\ell_{1}$-index greater than $\omega^{\xi}$.
\end{proposition}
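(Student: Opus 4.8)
The plan is to build inside $\langle (e_n)_{n\in M}\rangle$ an $\ell_1$-tree with a \emph{universal} constant whose order is at least $\omega^{\xi}+1$; since $I(\langle (e_n)_{n\in M}\rangle)=\sup_{C\geq 1}I(\langle (e_n)_{n\in M}\rangle,C)$, this suffices. The tree is obtained by realising the tree of $G_{\xi}$-special sequences with all functionals supported in $M$. Fix $j_1\in M_1$ with $j_1\geq 2$; then in every special sequence $(f_1,\dots,f_d)$ that we build the indices $j_1<j_2<\dots<j_d$ are all $\geq 2$. Writing $f_l=\frac1{m_{2j_l-1}^2}\sum_{k\in B_l}e_k^*\in G_1^{j_l}$ with $B_l\subset M$ and $\#B_l=n_{2j_l-1}$, associate to $(f_1,\dots,f_d)$ the normalized block vectors $\widehat x_l=x_l/\norm[x_l]$, where $x_l=\frac{m_{2j_l-1}^2}{n_{2j_l-1}}\sum_{k\in B_l}e_k$. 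Since $\supp\widehat x_l=B_l=\supp f_l\subset M$ these vectors lie in $\langle (e_n)_{n\in M}\rangle$, and because $f_1<\dots<f_d$ the sets $B_1,\dots,B_d$ are pairwise disjoint.

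First I would check the $\ell_1$-equivalence along a branch. As $f_l\in G_{\xi}$ and $f_l(x_l)=\#B_l/n_{2j_l-1}=1$ we get $\norm[x_l]\geq 1$; the reverse bound $\norm[x_l]\leq C_0$, with a universal constant $C_0$, follows by estimating the action on the flat average $x_l$ of each of the four kinds of functionals $G_0,G_1,G_{sp},G_{\ell_2}$ composing $G_{\xi}$, exactly in the spirit of Lemma \ref{l1a} and Proposition \ref{aa4} and using the growth of $(m_j),(n_j)$ (e.g.\ $260m_{2j}^4\leq n_{2j-1}$ and $2^{s_j}=m_{j+1}^3$), which makes the gains from small weights negligible. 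Given signs $\e_1,\dots,\e_d$, the functional $\phi=\sum_{l=1}^d\e_lf_l$ belongs to $G_{sp}\subset G_{\xi}$, so $\phi(y)\leq\norm[y]$ for every $y$; moreover the disjointness of the $B_l$ gives $f_l(x_{l'})=0$ for $l\neq l'$, whence $\phi(\widehat x_l)=\e_l f_l(\widehat x_l)$ with $f_l(\widehat x_l)=\norm[x_l]^{-1}\in[C_0^{-1},1]$. Thus for any scalars $a_1,\dots,a_d$, taking $\e_l=\sign a_l$,
$$
\Big\|\sum_{l=1}^d a_l\widehat x_l\Big\|\ \geq\ \phi\Big(\sum_{l=1}^d a_l\widehat x_l\Big)\ =\ \sum_{l=1}^d|a_l|\,f_l(\widehat x_l)\ \geq\ \frac1{C_0}\sum_{l=1}^d|a_l|,
$$
while $\big\|\sum_{l=1}^d a_l\widehat x_l\big\|\leq\sum_{l=1}^d|a_l|$ is automatic. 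Hence each branch $(\widehat x_1,\dots,\widehat x_d)$ is $C_0$-equivalent to the unit vector basis of $\ell_1^d$, and the family $\mc{T}$ of all such sequences is an $\ell_1$-tree on $\langle (e_n)_{n\in M}\rangle$ with constant $C_0$ (it is a tree, since an initial segment of a special sequence is again a special sequence).

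It remains to see $o(\mc{T})>\omega^{\xi}$. Restricting to special sequences built by a canonical recursive rule for the $B_l$ (say $B_l=$ the first $n_{2j_l-1}$ elements of $M$ lying above $\maxsupp f_{l-1}$ and above a prescribed value of $\minsupp f_l\in M$) gives a subtree of $\mc{T}$ order-isomorphic to the tree of admissible $\minsupp$-sequences $(\minsupp f_1,\dots,\minsupp f_d)$ of special sequences; the latter is a subtree of $\mc{S}_{\xi}\cap[M]^{<\infty}$ subject only to extra ``gap'' inequalities $\minsupp f_{l+1}>h(f_1,\dots,f_l)$, where $h$ records the lengths of the supports forced by the coding function $\sigma_1$. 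Since $\mc{S}_{\xi}$ is spreading, a diagonal choice of a sufficiently rapidly increasing $N=\{n_1<n_2<\dots\}\subset M$ makes $\{(n_{i_1},\dots,n_{i_k}):(i_1,\dots,i_k)\in\mc{S}_{\xi}\}$ satisfy all these constraints, so through the canonical rule it is lifted to a subtree of $\mc{T}$ order-isomorphic to $\mc{T}_{\mc{S}_{\xi}}$; therefore $o(\mc{T})\geq o(\mc{T}_{\mc{S}_{\xi}})=\omega^{\xi}+1$, and consequently $I(\langle (e_n)_{n\in M}\rangle)\geq I(\langle (e_n)_{n\in M}\rangle,C_0)>\omega^{\xi}$.

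The one genuinely technical point is the uniform estimate $\norm[x_l]\leq C_0$: on the flat average $x_l$ one must control the $G_{sp}$- and, above all, the $G_{\ell_2}$-functionals, where $\ell_2$-convexity and nested supports compete with the flatness of $x_l$, and this is exactly where the tower-type growth of $(n_j)$ against $(m_j)$ is used (at most one atomic piece per weight can meet $B_l$, while any advantage from a small weight is absorbed by the rapid growth of $n_{2j-1}$). The $\ell_1$-estimate along a branch is then a one-line computation, and the order estimate a routine spreading-and-diagonalisation argument.
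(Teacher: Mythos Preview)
Your approach is essentially the paper's: build $\sigma_1$-special sequences $(f_1,\dots,f_d)$ supported in $M$, associate the rescaled flat vectors $x_l=\frac{m_{2j_l-1}^2}{n_{2j_l-1}}\sum_{k\in B_l}e_k$, and norm $\sum a_lx_l$ by $\sum\e_lf_l\in G_{sp}$. Two minor differences are worth flagging. First, what you call the ``genuinely technical point'' the paper records as the one-line estimate
\[
\frac{1}{m_{2j-1}^{2}}\ \leq\ \Big\|\frac{1}{n_{2j-1}}\sum_{i=1}^{n_{2j-1}}e_{k_i}\Big\|_{G_\xi}\ \leq\ \frac{2}{m_{2j-1}^2},
\]
so in fact $1\leq\|x_l\|\leq 2$ with the sharp constant $C_0=2$; the case analysis you sketch is correct but not as delicate as you suggest (at most one piece of a $G_{sp}$- or $G_{\ell_2}$-functional carries the index $j_l$, and all other contributions are dominated by $\sum_{j\neq j_l}m_{2j-1}^{-2}$). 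Second, your spreading/diagonalisation argument for $o(\mc{T})\geq\omega^\xi+1$ is exactly what is needed and is more explicit than the paper, which simply asserts that ``for every $F\in\mc{S}_\xi$ we have a special sequence'' without spelling out the gap condition $\minsupp f_{l+1}>h(f_1,\dots,f_l)$.
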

\begin{proof}
It is not hard to see that for every $j\in\N$ the following holds
\begin{equation}\label{las}\frac{1}{m_{2j-1}^{2}}\leq\|\frac{1}{n_{2j-1}}\sum_{i=1}^{n_{2j-1}}e_{k_i}\|_{G_\xi}\leq \frac{2}{m_{2j-1}^2}.
\end{equation}
Let  $(f_{1},\dots,f_{d})$ be a special sequence where
$f_{i}=\frac{1}{m_{2j_i-1}^{2}}\sum_{k\in F_{i}}e^{*}_{k}$ with $\# F=n_{2j_i-1}$ and
$F\subset M$.

For every $i\leq d$  take the vector $x_{i}=\frac{m_{2j_i-1}^{2}}{n_{2j_i-1}}\sum_{k\in
F_{i}}e_{k}$. It follows that $1\leq \norm[x_i]_{G_\xi}\leq 2$ and
$$
\|\sum_{i=1}^{d}a_{i}x_{i}\|_{G_\xi}\geq\sum_{i=1}^{d}\e_{i}f_{i}(\sum_{i=1}^{d}a_{i}x_{i})=\sum_{i=1}^{d}\vert a_i\vert.
$$
Since for every $F\in\mc{S}_{\xi}$ we have a special sequence $(f_{i})_{i\in F}$ it follows
that the   subspace  $\langle (e_{n})_{n\in M}\rangle$
contains  an $\ell_{1}$-tree with constant $1$ of order $\omega^{\xi}$. It follows that the  $\ell_{1}$ -index of the  subspace  $\langle (e_{n})_{n\in M}\rangle$ is greater than $\omega^{\xi}$.
\end{proof}
From \eqref{las} it follows that for every $M\in[\N]$ the $c_0$-index of the subspace
generated by the subsequence $(e_{n}^{*})_{n\in M}$ of the basis   of $X_{G_{\xi}}^{*}$
is  greater than $\omega^{\xi}$.   Indeed, take an $G_{\xi}$-special  sequence
$(f_{i})_{i=1}^{d}$ supported in the set $M$ such that each
$f_{i}=m_{2j_{i}-1}^{-2}\sum_{k\in B}e_{k}^{*}$ with  $\# B=n_{2j_{i}-1}$.  From
\eqref{las} we get  $\norm[f_{i}]\geq 1/2$ for all $i\leq d$ and hence
$$1/2\leq \|\sum_{i=1}^{d}\pm f_{i}\|\leq 1.
$$
From the above inequality easily follows that the $c_0$-index  of $\langle
(e_{n}^{*})_{n\in M}\rangle$  is greater than $\omega^{\xi}$.

We show now the the space $X_{G_{\xi}}$ is $\ell_{2}$-saturated.
\begin{theorem}
For any subspace $Y$ of $X_{G_{\xi}}$ and any $\e>0$ there is a subspace of $Y$ which is
$(1+\e)$ isomorphic to $\ell_2$.
\end{theorem}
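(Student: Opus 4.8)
The plan is to show that every subspace $Y$ of $X_{G_\xi}$ contains an almost isometric copy of $\ell_2$ by producing, inside $Y$, a normalized block sequence $(z_n)$ of specially averaged vectors which is, on the one hand, dominated by the $\ell_2$-basis (this is essentially for free from the structure of $G_\xi$) and, on the other hand, dominates the $\ell_2$-basis (this is where the $G_{\ell_2}$-part of the norming set and the separation machinery of Section \ref{sec5} do the work). By a standard perturbation argument it suffices to treat block subspaces, so fix a normalized block sequence spanning a subspace of $Y$.

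\textbf{Upper $\ell_2$-estimate.} First I would fix a normalized block sequence $(x_n)$ in $Y$ and form the averages $y_n=\frac1n\sum_{i\in F_n}x_i$ with $F_n<F_{n+1}$, $\#F_n=n$. By Lemma \ref{l1a} the sequence $(y_n)$ is separated, and after passing to a subsequence we may by Proposition \ref{aa4} (applied with a sequence of parameters $m\to\infty$ and a diagonal argument) and Proposition \ref{ris-sss} assume that for every $g\in G_\xi$ and every threshold the set of $n$ on which $|g(y_n)|$ is large is controlled. Given scalars $(a_n)$ and $g\in G_\xi$, one splits $g(\sum a_n y_n)=\sum a_n g(y_n)$ according to the dyadic size of $|g(y_n)|$: on each block of indices where $m^{-1}>|g(y_n)|\ge (2m)^{-1}$ there are at most $O(m^2)$ terms by Proposition \ref{aa4}, so by Cauchy--Schwarz the contribution of that block is $O(m^2\cdot m^{-1})\cdot (\sum a_n^2)^{1/2}$-type but summed geometrically over $m$ it converges; the precise bookkeeping (using $260m_{2j}^4\le n_{2j-1}$ and the rapid growth of $(m_j),(n_j)$) yields $|g(\sum a_n y_n)|\le C(\sum a_n^2)^{1/2}$, hence $\|\sum a_n y_n\|_{G_\xi}\le C(\sum a_n^2)^{1/2}$ for a universal $C$. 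Normalizing, after a further subsequence one sharpens $C$ to $1+\e$ (or absorbs it into the final equivalence constant).

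\textbf{Lower $\ell_2$-estimate.} This is the delicate half and the main obstacle. One must produce, for each finite tuple of scalars $(a_i)_{i=1}^d$, a functional $g\in G_\xi$ with $g(\sum a_i z_i)\ge c(\sum a_i^2)^{1/2}$, where $(z_i)$ is the normalized block sequence built above. The natural candidate is $g=\sum_i b_i\phi_i$ with $b_i=a_i/(\sum a_j^2)^{1/2}$ and $\phi_i\in G_1\cup G_{sp}$ a weighted functional, with pairwise disjoint index sets, chosen so that $\phi_i$ nearly norms $z_i$; such a $g$ lies in $G_{\ell_2}$. To arrange this one needs each $z_i$ to admit a norming functional of the form $m^{-2}_{2j_i-1}\sum_{k\in B}e_k^*$ (or a $G_\xi$-special functional) with the weights $w(\phi_i)$ all distinct, which is exactly the kind of vector produced by the repeated-averaging construction (cf. \eqref{las} and the proof that subsequences have large $\ell_1$-index): taking $z_i$ of the form $\frac{m_{2j_i-1}^2}{n_{2j_i-1}}\sum_{k\in B_i}x_k'$ for a sufficiently spread block sequence $(x_k')$ inside $Y$, one gets $1\le\|z_i\|\le 2$ and a type-I functional $\phi_i$ with $\phi_i(z_i)\ge 1-\e$ supported in $\ran z_i$, with $w(\phi_i)$ selected strictly increasing and hence with disjoint index sets. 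Then $g=\sum_i b_i\phi_i\in G_{\ell_2}$ gives $g(\sum a_i z_i)\ge \sum_i|a_i| b_i\phi_i(z_i)\ge (1-\e)(\sum a_i^2)^{1/2}$. The subtlety is that in forming the second-level averages $z_i$ one must \emph{also} retain the separation property so that the upper estimate still applies to $(z_i)$; this forces an interleaved choice where the parameters governing Proposition \ref{ris-sss} for $(z_i)$ are fixed before the weights $m_{2j_i-1}$ are chosen, and one must check that a type-II $G_\xi$-special functional does not over-estimate $\|\sum a_i z_i\|$ — this is where one invokes Proposition \ref{aa4} once more to see that any $g\in G_{\ell_2}$, in particular any $\ell_2$-combination of special functionals, still obeys the universal upper bound $C(\sum a_i^2)^{1/2}$.

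\textbf{Conclusion.} Combining the two estimates, the normalized block sequence $(z_i/\|z_i\|)$ is $C$-equivalent to the unit vector basis of $\ell_2$; a routine further application of the perturbation/averaging refinement (shrinking the separation thresholds and the gaps $\ran z_{i}$ to $\ran z_{i+1}$) drives $C$ down to $1+\e$, so $Y$ contains a $(1+\e)$-isomorphic copy of $\ell_2$. I expect the lower estimate, and specifically the simultaneous maintenance of separation and of a system of disjoint-weight norming functionals, to be the only genuinely non-routine point; the upper estimate is a packaging of Lemma \ref{l1a}, Proposition \ref{aa4} and Proposition \ref{ris-sss} already assembled in the section.
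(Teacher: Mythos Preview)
Your overall architecture---build a normalized block sequence in $Y$ with both upper and lower $\ell_2$ estimates---is the right one, but both halves as you sketch them have genuine gaps.

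\textbf{Upper estimate.} The dyadic decomposition via Proposition~\ref{aa4} does not sum. On the block $\{n:(2m)^{-1}\le|g(y_n)|<m^{-1}\}$ there are at most $65m^2$ indices, so Cauchy--Schwarz gives a contribution $\le m^{-1}\sqrt{65m^2}\,(\sum a_n^2)^{1/2}=\sqrt{65}\,(\sum a_n^2)^{1/2}$, a \emph{constant} per dyadic scale; summing over infinitely many scales diverges. The exponent $2$ in the bound $65m^2$ is exactly the critical one where this argument fails, and the growth of $(m_j),(n_j)$ does not help since Proposition~\ref{aa4} holds for all integers $m$, not just the $m_j$.

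\textbf{Lower estimate.} Functionals in $G_1$ have the form $m_{2j-1}^{-2}\sum_{k\in E}\pm e_k^*$ and act by summing \emph{coordinates}. There is no reason a vector $\frac{m_{2j_i-1}^2}{n_{2j_i-1}}\sum_{k\in B_i}x_k'$ with the $x_k'$ in an arbitrary block subspace $Y$ should be normed by such a functional; that would force $\sum_k\|x_k'\|_{\ell_1}\approx n_{2j_i-1}$, which essentially says the $x_k'$ are close to basis vectors. The display~\eqref{las} you cite is about averages of basis vectors, not of block vectors in $Y$.

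The ingredient you are missing is Corollary~\ref{iss}: the identity $X_{G_\xi}\to Y_{G_\xi}$ is strictly singular, so in any block subspace one finds normalized $z_i$ with $\|z_i\|_{G_\xi'}<\e_i$ arbitrarily small. Then every $\phi\in G_1\cup G_{sp}$ satisfies $|\phi(z_i)|<\e_i$, and each $z_i$ is normed (up to $\e_i$) by some $g_i\in G_{\ell_2}$. An inductive choice using Lemma~\ref{ll1a} lets one arrange the effective index sets of the $g_i$ to be disjoint, which yields the lower estimate directly. For the upper estimate, the same smallness $|\phi(z_i)|<\e_i$ combined with the disjoint-index structure of $G_{\ell_2}$ is what allows one to show that for any $g=\sum_kc_k\phi_k\in G_{\ell_2}$ the significant contribution to $g(z_i)$ comes from a single $k=k(i)$, with the $k(i)$ distinct over $i$; this is the mechanism behind $\sum_i|g(z_i)|^2\le 1+\delta$, which then gives the Cauchy--Schwarz upper bound. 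This is the content of the argument in \cite{AAT}, Lemma~B.13 and Theorem~B.14, to which the paper defers.
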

The  proof of the theorem  follows the arguments of Lemma B.13 and Theorem B.14  from \cite{AAT} and we omit it.
\section{The space $X_{G_{\xi}}$ does  not have $\ell_{1}$ as a spreading model.}\label{sec6}
In this section we prove that  $X_{G_{\xi}}$ does not contain a sequence generating  an $\ell_{1}$-spreading model. The basic tool for the proof is  Proposition \ref{nol1} which is a combinatorial result.
We begin with the following lemmas.
\begin{lemma}\label{ll1a}
For any $x\in c_{00}(\N)$ and $\e>0$ there is $j_{0}=j_0(x,\e)\in\N$
such that for any $g\in
G_{\ell_2}$ with $\ind (g)\cap \{1,\dots,j_0\}=\emptyset$ we have $\vert g(x)\vert<\e$
\end{lemma}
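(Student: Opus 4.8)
The plan is to exploit the $\ell_2$-convex structure of $G_{\ell_2}$ together with the fast growth $m_{j+1}=m_j^5$. Fix $x\in c_{00}(\N)$ and $\e>0$, set $R=\supp x$, and let $g=\sum_{i=1}^d a_i\phi_i\in G_{\ell_2}$ with $\sum_i a_i^2\le 1$, $(\phi_i)_{i=1}^d\subset G_1\cup G_{sp}$ and the sets $\ind(\phi_i)$ pairwise disjoint; the hypothesis $\ind(g)\cap\{1,\dots,j_0\}=\emptyset$ means precisely that $\ind(\phi_i)\subseteq\{j_0+1,j_0+2,\dots\}$ for every $i$. Since $\sum_i a_i^2\le1$, Cauchy--Schwarz gives $|g(x)|\le\big(\sum_{i=1}^d\phi_i(x)^2\big)^{1/2}$, so it suffices to make the right-hand side small.

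The heart of the argument is a pointwise bound: for each $k\in\N$, $\sum_{i=1}^d|\phi_i(k)|^2\le 2\,m_{2j_0+1}^{-4}$. If $\phi_i\in G_1^j$ then $|\phi_i(k)|\le m_{2j-1}^{-2}$ with $j=\ind(\phi_i)$; if $\phi_i=E\sum_l\e_lf_l\in G_{sp}$, the $f_l$ have successive, hence disjoint, supports, so at most one is nonzero at $k$, and $\|f_l\|_\infty\le m_{2\ind(f_l)-1}^{-2}$, so in every case either $\phi_i(k)=0$ or $|\phi_i(k)|\le m_{2a-1}^{-2}$ for some $a\in\ind(\phi_i)$. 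Since the index sets $\ind(\phi_i)$ are pairwise disjoint and all contained in $\{j_0+1,\dots\}$, at most one $i$ contributes for each integer $a>j_0$, whence $\sum_{i=1}^d|\phi_i(k)|^2\le\sum_{a>j_0}m_{2a-1}^{-4}$; the growth condition $m_{j+1}=m_j^5$ gives $\sum_{a>j_0}m_{2a-1}^{-2}\le 2\,m_{2j_0+1}^{-2}$, and since each term is $\le m_{2j_0+1}^{-2}$ the claimed bound follows.

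A second Cauchy--Schwarz, now over the finite set $R$, closes the argument: viewing $\big(\sum_i\phi_i(x)^2\big)^{1/2}$ as the $\ell_2^d$-norm of the vector $\sum_{k\in R}x(k)\,(\phi_i(k))_{i=1}^d$, the triangle inequality and Cauchy--Schwarz yield $\big(\sum_i\phi_i(x)^2\big)^{1/2}\le\|x\|_2\big(\sum_{k\in R}\sum_i|\phi_i(k)|^2\big)^{1/2}\le\|x\|_2\sqrt{2\,\#R}\;m_{2j_0+1}^{-2}$. As $m_j\to\infty$, it suffices to have chosen $j_0=j_0(x,\e)$ at the outset so large that $m_{2j_0+1}^2>\e^{-1}\|x\|_2\sqrt{2\,\#\supp x}$; then $|g(x)|<\e$ for every admissible $g$. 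The only point needing attention is that the length $d$ is unbounded, so a naive $\|\phi_i\|_\infty\,\|x\|_1$ estimate on each term is useless; the disjointness of the index sets is exactly what converts $\sum_i|\phi_i(k)|^2$ into a convergent sum of distinct squared reciprocal weights, and I expect no genuine obstacle beyond this bookkeeping.
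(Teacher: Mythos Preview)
Your proof is correct and shares the paper's key mechanism: after one Cauchy--Schwarz, control $\sum_i\phi_i(x)^2$ by exploiting that the pairwise disjoint index sets $\ind(\phi_i)\subset\{j_0+1,j_0+2,\dots\}$ turn the relevant sum into a convergent tail of inverse fourth powers of weights. The paper's execution, however, is one step shorter. It bounds $|\phi_i(x)|\le\|\phi_i\|_\infty\|x\|_{\ell_1}$ directly, notes that $\|\phi_i\|_\infty=m_{2\min\ind(\phi_i)-1}^{-2}$ (for $\phi_i\in G_1$ this is immediate; for $\phi_i\in G_{sp}$ the supports of its components $f_l$ are disjoint, so the sup-norm is attained at the component of smallest index), and then uses disjointness of the sets $\ind(\phi_i)$ to see that the integers $\min\ind(\phi_i)$ are distinct and all exceed $j_0$, yielding
\[
\sum_i\|\phi_i\|_\infty^2\ \le\ \sum_{j>j_0}m_{2j-1}^{-4}.
\]
Thus your closing remark that ``a naive $\|\phi_i\|_\infty\|x\|_1$ estimate on each term is useless'' is mistaken: that estimate, combined with the disjointness of indices exactly as you use it, is the paper's simpler route. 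Your pointwise bound $\sum_i|\phi_i(k)|^2\le\sum_{a>j_0}m_{2a-1}^{-4}$ and the second Cauchy--Schwarz over $\supp x$ are valid but amount to an unnecessary detour; the final constant you obtain, involving $\|x\|_2\sqrt{\#\supp x}$, is also slightly weaker than the paper's $\|x\|_{\ell_1}$.
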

\begin{proof}
Let $D=\norm[x]_{\ell_{1}}$ and take $j_0$ so that
$\sum_{j=j_0+1}^{\infty}m_{2j-1}^{-4}<(\e/D)^2$. Take now any $g\in G_{\ell_2}$,
$g=\sum_ia_i\phi_i$ with $\ind (g)\cap \{1,\dots, j_0\}=\emptyset$ and compute
\begin{align*}
\vert g(x)\vert&\leq (\sum_ia_i^2)^{1/2}(\sum_i\vert \phi_i(x)\vert^2)^{1/2}\leq
(\sum_i\norm[\phi_i]_{\infty}^2\norm[x]_{\ell_{1}}^2)^{1/2}
\\
&\leq D(\sum_{j=j_0+1}^{\infty}m_{2j-1}^{-4})^{1/2}<\e.
\end{align*}
\end{proof}
\begin{lemma}\label{ll2}
Let $\xn$ be a  normalized block sequence and $\e>0$. There exist
$j_{1}\in\N$ and $L\in [\N]$ such that for every $f$ of type I
with $\ind(f)\geq j_1$ it holds that $\vert f(x_n)\vert>\e$ for at
most one $n\in L$.
\end{lemma}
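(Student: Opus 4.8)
The plan is to attach to each $x_n$ the finite set of ``scales at which $x_n$ is visible'', bound its cardinality uniformly in $n$ by means of the $\ell_2$-convex combinations built into $G_\xi$, and then separate these sets by a sunflower argument. Concretely, for a normalized $x$ put $H(x)=\{j\in\N:\ |f(x)|>\e\ \text{for some}\ f\in G_{1}^{j}\}$. The first (and essentially only non-formal) step is the estimate $\#H(x)<\e^{-2}$. For each $j\in H(x)$ choose a witness $f_j\in G_{1}^{j}$; replacing $f_j$ by $-f_j\in G_{1}^{j}$ if necessary we may assume $f_j(x)>\e$. The functionals $f_j$, $j\in H(x)$, lie in $G_1$ and have pairwise disjoint index sets $\{j\}$, so by the estimate for $\ell_2$-combinations of functionals from $G_1\cup G_{sp}$ with pairwise disjoint index sets recorded at the end of Section \ref{sec4}, the functional $g=(\#H(x))^{-1/2}\sum_{j\in H(x)}f_j$ satisfies $\norm[g]_{G_\xi}^*\le 1$. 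Hence $1\ge g(x)>(\#H(x))^{-1/2}\cdot\#H(x)\cdot\e=(\#H(x))^{1/2}\e$, i.e. $\#H(x)<\e^{-2}$; in particular each $H(x_n)$ is finite.

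Since the sets $H(x_n)$, $n\in\N$, are finite of cardinality $<\e^{-2}$, the $\Delta$-system (sunflower) lemma yields an infinite $L\subset\N$ and a finite core $R\subset\N$ with $H(x_n)\cap H(x_m)=R$ for all distinct $n,m\in L$. Put $j_1=1+\max R$ (with the convention $\max\emptyset=0$). Then for distinct $n,m\in L$ the sets $H(x_n)\cap[j_1,\infty)$ and $H(x_m)\cap[j_1,\infty)$ are disjoint, since their intersection is contained in $H(x_n)\cap H(x_m)\cap[j_1,\infty)=R\cap[j_1,\infty)=\emptyset$. Now let $f$ be of type I with $\ind(f)=j\ge j_1$. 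If $|f(x_n)|>\e$ for some $n\in L$ then $j\in H(x_n)$, hence $j\in H(x_n)\cap[j_1,\infty)$; as these sets are pairwise disjoint over $n\in L$, the index $j$ lies in at most one of them, so $|f(x_n)|>\e$ for at most one $n\in L$. This is exactly the assertion.

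The heart of the argument is therefore the uniform bound $\#H(x)<\e^{-2}$, and the essential ingredient there is the James-tree-like part $G_{\ell_2}$ of the norming set: if a normalized vector were ``visible'' at arbitrarily many scales, superposing the corresponding type I witnesses with equal $\ell_2$-weights would produce a functional of norm $\le 1$ attaining an unboundedly large value on the vector, which is impossible. Everything that follows is soft combinatorics (the sunflower lemma), and in fact the block structure of $(x_n)$ plays no role -- only $\sup_n\norm[x_n]_{G_\xi}<\infty$ is used.
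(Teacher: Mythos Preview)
Your proof is correct. Both arguments rest on the same analytic point: if a normalized $x$ had witnesses $f_j\in G_1^j$ with $|f_j(x)|>\e$ for $k$ different values of $j$, then the $\ell_2$-combination $k^{-1/2}\sum_j \pm f_j\in G_{\ell_2}$ would give $\e\sqrt{k}\le 1$, so $k<\e^{-2}$.

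The difference is in the combinatorial wrapping. The paper iterates Ramsey's theorem on pairs: at each step it colors $[L_i]^2$ according to whether a common type~I witness of index $\ge j_i$ exists, and if the homogeneous set falls on the ``yes'' side it pins down one more element $l_i$ and raises the threshold $j_i$; after $\lceil\e^{-2}\rceil$ steps the $\ell_2$-bound forces a contradiction. You instead package the information as the finite ``visibility'' set $H(x_n)$ of size $<\e^{-2}$ and invoke the infinite $\Delta$-system lemma once to get a common core $R$ and an infinite $L$; setting $j_1=1+\max R$ finishes immediately. Your route is shorter and makes the uniform bound $\#H(x)<\e^{-2}$ explicit, at the cost of citing the sunflower lemma as a black box; the paper's route is more self-contained but hides the same bound inside the termination of the Ramsey iteration. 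Your closing remark that only $\sup_n\|x_n\|_{G_\xi}<\infty$ is needed (not the block structure) is also correct and applies equally to the paper's argument.
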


\begin{proof}
 We set
$
\mathcal{D}_{1}=\{(n,m):\text{exists $f$ of type I with}\,\, \vert f(x_n)\vert\geq\e\,\text{and}\,\,\vert f(x_m)\vert\geq\e\}.
$

If there exists  $L\in [\N]$ with
$[L]^{2}\cap\mathcal{D}=\emptyset$ the proof is complete.
Otherwise by Ramsey theorem there exists $L_{1}$ such that
$[L_1]^{2}\subset\mathcal{D}$. Let $l_1=\min L_1$ and let $j_{1}$
be such that $\vert f(x_{l_1})\vert<\e$ for all $f$ of type I with
$\ind(f)\geq j_{1}$.

We set
$$
\mathcal{D}_{2}=\{(n,m)\in [L_1]^{2}:\text{exists $f$ of type I with  $\ind(f)\geq j_1$},\, \vert f(x_n)\vert\geq\e\,\text{and}\,\vert f(x_m)\vert\geq\e\}
$$
If there exists $L_{2}\in [L_1]$ with
$[L_{2}]^{2}\cap\mathcal{D}_{2}=\emptyset$ the proof is complete.
Otherwise by Ramsey theorem there exists $L_{2}$ such that
$[L_2]^{2}\subset\mathcal{D}_{2}$. Let $l_2=\min L_{2}>l_{1}$ and
let $j_{2}$ be such that $\vert f(x_{l_2})\vert<\e$ for all $f$ of
type I with $\ind(f)\geq j_{2}$.

If the conclusion does not hold, choosing $k_0$ such the
$\e\sqrt{k_0}>1$,  following the above arguments we get
$L_{1}\supset L_{2}\supset\dots \supset L_{k_0+1}$ such that
$[L_{i}]^{2}\subset \mathcal{D}_{i}$, where
$$
\mathcal{D}_{i}=\{(n,m)\in [L_{i-1}]^{2}:\text{exists $f$ of type I with  $\ind(f)\geq j_{i-1}$},\,\, \vert f(x_n)\vert\geq\e\,\text{and}\,\vert f(x_m)\vert\geq\e\}
$$
and    $\vert f(x_{\min  L_{i}})\vert<\e$ for all $f$ of type I with $\ind(f)\geq j_{i}$.

Setting $l_{i}=\min L_{i}$ we get
$(l_{i},l_{k_0+1})\in\mathcal{D}_{i}$ for every $i=1,\dots,k_0$
and therefore there exists  $f_{i}$ of type I with $\ind(f_{i})\in
[j_{i-1},j_{i})$ such that $\vert
f_{i}(x_{l_{k_0+1}})\vert\geq \e$.

It follows $\frac{1}{\sqrt{k_{0}}}\sum_{i=1}^{k_0}\e_{i}f_{i}(x_{k_0+1})\geq \e\sqrt{k_0}>1$, a contradiction.
\end{proof}

\begin{proposition}\label{nol1}
Let $0<\e<10^{-10}$ and  $\xn$ be a normalized block basis such that for all $n_{1}<n_{2}<n_{3}$  there exists $\phi\in G_{\ell_2}$  such that
\begin{equation}\label{lo}
 \vert \phi(x_{n_i})\vert\geq 1-\e\,\,\,\,\text{for all i=1,2,3}.
\end{equation}
Then there exists  $j_{1}\in\N$ and  $L\in [\N]$ such that for all $n\in L$ there exists
$\phi_{n}=\sum_{i\in B_{n}}\lambda_{i,n}\phi_{i,n}$ with $\inde(\phi_{i,n})\subset\{1\dots,j_1\}$
for all $i\in B_{n}$ and $\sum_{i\in B_{n}}\lambda_{i,n}^{2}\leq 1$ such that
$$
\phi_{n}(x_{n})\geq 0.75.
$$
\end{proposition}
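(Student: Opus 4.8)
The plan is to extract from \eqref{lo} a rigidity among the witnessing functionals, peel off a piece of uniformly bounded index, and finish by a Ramsey argument. Fix a small $\e_1\in(0,\e)$ and apply Lemma \ref{ll2} with $\e_1$ to get $j_1\in\N$ and an infinite set on which every $f$ of type I with $\inde(f)\ge j_1$ is $\e_1$-large on at most one $x_n$; shrinking it to an infinite $L_1$ we may also assume the ranges $\ran x_n$, $n\in L_1$, are as widely separated as we wish. Fix a triple $n_1<n_2<n_3$ in $L_1$ and a witnessing $\phi=\sum_{i=1}^d a_i\phi_i\in G_{\ell_2}$, with $\sum a_i^2\le1$ and $\phi_i\in G_1\cup G_{sp}$ having pairwise disjoint index sets. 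By the estimate at the end of Section \ref{sec4}, $\sum_{i=1}^d \phi_i(x_{n_m})^2\le\norm[x_{n_m}]^2=1$ for $m=1,2,3$, while $|\phi(x_{n_m})|=|\langle(a_i)_i,(\phi_i(x_{n_m}))_i\rangle|\ge1-\e$; since both $(a_i)_i$ and $(\phi_i(x_{n_m}))_i$ have $\ell_2$-norm at most $1$, the identity $\norm[a-v]_{\ell_2}^2=\norm[a]^2-2\langle a,v\rangle+\norm[v]^2$ forces $(a_i)_i$ to lie within $\sqrt{2\e}$ of $(\sigma_m\phi_i(x_{n_m}))_i$, where $\sigma_m=\sign\phi(x_{n_m})$. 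Picking $m',m''\in\{1,2,3\}$ with $\sigma_{m'}=\sigma_{m''}$ and flipping $\phi$ so these are $+1$, the two profiles $(\phi_i(x_{n_{m'}}))_i$, $(\phi_i(x_{n_{m''}}))_i$ differ by at most $2\sqrt{2\e}$ in $\ell_2$; in particular $|\phi_i(x_{n_{m'}})-\phi_i(x_{n_{m''}})|\le2\sqrt{2\e}$ for every $i$.

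Next split $\phi=\phi^{\flat}+\phi^{\sharp}$ by separating each $\phi_i$ along the index threshold $j_1$: a type I $\phi_i$ goes into $\phi^{\flat}$ or $\phi^{\sharp}$ according as $\inde(\phi_i)<j_1$ or $\ge j_1$, while a $G_{\xi}$-special $\phi_i$ is cut at its first component of index $\ge j_1$, the initial segment going into $\phi^{\flat}$ and the terminal one into $\phi^{\sharp}$ (both pieces are interval restrictions of special functionals, hence still lie in $G_{sp}\cup G_1\cup\{0\}$; here one uses that indices increase along a special sequence). Then $\phi^{\flat}=\sum_i a_i\phi_i^{\flat}$ has all its indices $<j_1$, coefficient square-sum $\le1$ and pairwise disjoint index sets, so after deleting zero terms $\phi^{\flat}$ is exactly of the form demanded of $\phi_n$; and $\phi^{\sharp}$ has all its indices $\ge j_1$. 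It therefore suffices to show $|\phi^{\sharp}(x_{n_m})|<0.25-\e$ for at least one $m\in\{m',m''\}$: then $\phi^{\flat}(x_{n_m})\ge|\phi(x_{n_m})|-|\phi^{\sharp}(x_{n_m})|>0.75$, and $x_n:=x_{n_m}$, $\phi_n:=\phi^{\flat}$ work.

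The estimate on $\phi^{\sharp}$ is the technical heart. For its type I part, Lemma \ref{ll2} gives that each such component (of index $\ge j_1$) is $\e_1$-large on at most one of $x_{n_{m'}},x_{n_{m''}}$, whence by the rigidity of the two profiles its value on each of the two is at most $\e_1+2\sqrt{2\e}$; feeding this together with the disjoint-index $\ell_2$-estimate of Section \ref{sec4} — which limits how much coefficient mass $\phi$ may carry on high-index components without losing $|\phi(x_{n_m})|\ge1-\e$ — and with the wide separation of the $\ran x_n$ on $L_1$, one controls the type I part of $\phi^{\sharp}$ on one of the two vectors. The special part is the delicate point: a special component of $\phi^{\sharp}$ that is non-negligible on some $x_{n_m}$ must achieve this through an interval on which it behaves like a special functional whose leading weight is bounded in terms of $\norm[x_{n_m}]_{\ell_1}$ (Lemma \ref{ll1a}), whereas the weights grow along a special sequence at the rate of $(m_j)$ and the $\mc{S}_{\xi}$-structure bounds the number of its components inside a fixed range; balancing these constraints against the profile rigidity and the $\ell_2$-estimate shows the special part of $\phi^{\sharp}$, too, is small on at least one of $x_{n_{m'}},x_{n_{m''}}$. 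This interlocking argument — pitting the local $\ell_1$-structure carried by the special/type I functionals against the $\ell_2$-rigidity forced by \eqref{lo} — is where the real work lies, and is precisely what makes $X_{G_{\xi}}$ harder to handle than spreading-model-free examples without rich local $\ell_1$-structure.

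Finally, the above produces, for every triple in $L_1$, a distinguished member $x_n$ among its three together with a functional $\phi_n=\phi^{\flat}$ of the required form satisfying $\phi_n(x_n)\ge0.75$. Colouring each triple by the position ($1$, $2$ or $3$) of its distinguished member and applying Ramsey's theorem yields an infinite $L\subset L_1$ on which this position is constant; discarding the first one or two elements of $L$, every $n\in L$ is then the distinguished member of some triple contained in $L$, giving the asserted $j_1$ and $L$.
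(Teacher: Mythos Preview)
Your opening moves are correct and nicely put: the duality estimate at the end of Section~\ref{sec4} does give $\sum_i\phi_i(x_{n_m})^2\le 1$, and together with $|\phi(x_{n_m})|\ge 1-\e$ this really does force $\|(a_i)-(\sigma_m\phi_i(x_{n_m}))_i\|_{\ell_2}\le\sqrt{2\e}$. The final Ramsey-on-position step is also fine.

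The genuine gap is exactly where you say ``this is the technical heart'' and then hand-wave. The rigidity is an $\ell_2$ statement; passing to a single coordinate gives $|\phi_i(x_{n_{m'}})-\phi_i(x_{n_{m''}})|\le 2\sqrt{2\e}$, and combined with Lemma~\ref{ll2} you get that every type~I component of index $\ge j_1$ satisfies $|\phi_i(x_{n_m})|<\e_1+2\sqrt{2\e}$ on \emph{both} $m\in\{m',m''\}$. But this gives no control on the \emph{sum} $\sum_{i\in I^\sharp}a_i\phi_i(x_{n_m})$: there is no bound on $\#I^\sharp$ nor on $\sum_{I^\sharp}a_i^2$, and the rigidity only tells you this sum is close to $\sum_{I^\sharp}\phi_i(x_{n_m})^2$, which is nonnegative and could be anywhere in $[0,1]$. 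For the special components of $\phi^\sharp$ the situation is worse: Lemma~\ref{ll2} concerns single type~I functionals, not $\pm$-sums of them along a tail segment, so your first sentence about the special part does not even get started. Your appeal to Lemma~\ref{ll1a} here is also illegitimate as stated: the $j_0$ in that lemma depends on $\|x\|_{\ell_1}$, which is not uniformly bounded over the $x_n$, so there is no single threshold that works for all of them.

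This last point is precisely why the paper anchors everything at $x_1$: Lemma~\ref{ll1a} is applied \emph{once} to $x_1$ to produce a fixed $j_0$, and the triple is always $(1,k,n)$. The action of $\phi_{k,n}$ on $x_1$ then forces almost all coefficient mass onto segments with $\min\inde\le j_0$ (your rigidity, restricted to this role, reappears as \eqref{lo3}--\eqref{lo4}). What remains---and what your sketch does not have a substitute for---is a structural argument using that the components of a special functional are range-ordered with \emph{increasing} indices: one looks at the first component of each low-index segment whose range meets $\ran x_n$, classifies segments by whether this ``crossing'' component has index $\le j_1$ or acts small on $x_n$ or on $x_k$, Ramsey-stabilises the dominating case, and in the bad cases builds uniformly bounded families of initial segments whose $w^*$-compactness in $G_\xi'$ (Lemma~\ref{l4}) yields a contradiction. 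That compactness/tree-structure argument is the real content; your $\ell_2$-rigidity alone does not replace it.
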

\begin{proof}
From Lemma \ref{ll1a} we get     $j_0\in\N$ such that
\begin{equation}\label{lo1}
\mbox{$\forall\,\phi=\sum_{i=1}^{d}\lambda_{i}\phi_{i}$ with $\inde(\phi_{i})\cap\{1,\dots,j_0\}=\emptyset$ and $\sum_{i\leq d}\lambda_{i}^{2}\leq 1$}
\Rightarrow
\vert\phi(x_1)\vert<\e
\end{equation}
Let $\delta>0$ such that $\delta j_0<\e$. From Lemma \ref{ll2} we get $M\in [\N]$  and $j_{1}\in \N$ such that
\begin{equation}\label{lo2}\mbox{
for every $f$ of type I with $\ind(f)>j_1$, $\vert f(x_n)\vert>\delta$ for at most one
$n\in M$.}
\end{equation}
Let $M_{1}=\{1\}\cup M$. After reordering we may assume that $M_{1}=\N$.

Let $n\in \N$ and for every $1<k<n$ consider the triple $(1,k,n)\in [\N]^{3}$. Let
$\phi_{k,n}=\sum_{s\in S_{k,n}}c_{s}\phi_{s}\in G_{\ell_2}$, where $\phi_{s}\in F(s)$, be
the functional we obtain  from \eqref{lo} for $x_{1}, x_{k}$ and $x_{n}$. We set
$$
\mbox{$J_{k,n}=\{s\in S_{k,n}: \inde(\phi_{s})>j_{0}\}$ and $G_{k,n}$ the complement of $J_{k}$.}
$$
From \eqref{lo1} we get $\vert\sum_{s\in J_{k,n}}c_{s}\phi_{s}(x_{1})\vert<10^{-10}$ and
therefore
\begin{equation}\label{lo3}
(\sum_{s\in G_{k,n}}c_{s}^{2})^{1/2}\geq\vert \sum_{s\in G_{k,n}}c_{s}\phi_{s}(x_{1})\vert>1-2/10^{10}.
\end{equation}
The disjointness of the index sets of the segments $s\in S_{k,n}$ yields that  $\# G_{k,n}\leq j_0$ for every $k$.

By \eqref{lo3} we get $\sum_{s\in G_{k,n}}c^{2}_{s}\geq (1-2/10^{10})^2$  and therefore
\begin{equation}\label{lo4}
\sum_{s\in J_{k,n}}c^{2}_{s}\leq 1-(1-2/10^{10})^2<4/10^{10}\Rightarrow
\|\sum_{s\in J_{k,n}}c_{s}\phi_{s}\|_{G_\xi}^*\leq 2/10^{5}.
 \end{equation}
From  \eqref{lo} and \eqref{lo4} it follows that for $r=k,n$
\begin{equation}\label{lo5}
\vert \sum_{s\in G_{k,n}}c_{s}\phi_{s}(x_{r})\vert\geq
1-10^{-10}-2/10^{5}>1-3/10^{5}.
\end{equation}
For every $s\in G_{k,n}$, if $\phi_{s}=\sum_{i\in V_{s}}\e_{i}f_{s,i}$
let $f_{s,k,n}$  be its first node  with the property $\ran(f_{s,i})\cap\ran (x_{n})\ne\emptyset$. Set
\begin{align*}
G_{k,n}^{0}&=\{s\in G_{k,n}: \ind(f_{s,k,n})\leq j_1\},
\\
G^{1}_{k,n}&=\{s\in G_{k,n}:\ind(f_{s,k,n})> j_1\,\text{and}\,\, \vert f_{s,k,n}(x_{n})\vert \leq\delta\},
\\
G_{k,n}^{2}&=\{s\in G_{k,n}\setminus G_{k,n}^{1}:\ind(f_{s,k,n})>j_1\,\text{and}\,\,\vert f_{s,k,n}(x_{k})\vert \leq\delta\}.
\end{align*}
From \eqref{lo2} it follows that for every $s\not\in(G_{k,n}^{0}\cup G_{k,n}^{1})$ it
holds that $\vert f_{s,k,n}(x_{k})\vert\leq\delta$ and hence  the sets  $G_{k,n}^i$,
$i=0,1,2$ give a partition of $G_{k,n}$.

For every $1<k<n$ we set
\begin{equation}\mbox{
$H_{k,n}=\begin{cases}
G_{k,n}^{0}\,\,&\mbox{if\,\, $\vert \sum_{s\in G_{k,n}^0}c_{s}\phi_{s}(x_k)\vert\geq 0.75\,\,\,\,(**)$}. \\
G^{1}_{k,n}\,\, &\mbox{if $(**)$ fails  and $\sum_{s\in G_{k,n}^{1}}c_{s}^2\geq\sum_{s\in G_{k,n}^{2}}c_{s}^{2}$}. \\
G_{k,n}^{2}\,\,&\mbox{\qquad otherwise.}
             \end{cases}
$}
\end{equation}
By Ramsey theorem passing to an infinite subset $N$ of $\N$ we may assume that there
exists  $i\in\{0,1,2\}$ such that for all $1<k<n\in N$, $H_{k,n}=G_{k,n}^{i}$.  Without
loss of generality we may assume that $N=\N$.

If  $H_{k,n}=G_{k,n}^{0}$ for all $k<n$ then  the proof is complete.
Next we  show that that the cases $H_{k,n}=G_{k,n}^{1}$ or $H_{k,n}=G_{k,n}^{2}$ are not possible.
\begin{claim}Assume that either $H_{k,n}=G_{k,n}^{1}$  for all $k<n$
or $H_{k,n}=G_{k,n}^{2}$ for all $k<n$.

Then there exists $N\in\N$ such that for all $n\in\N$ there exists a family $U_{n}$ of segments such that:

1) $\# U_{n}\leq N$.

2) For every $1<k<n$, $\sup\{\phi(x_k):\phi\in \overline{U}_{n}\}\geq\delta$.
\end{claim}
Let's see first how the conclusion of the claim makes impossible  $H_{k,n}=G_{k,n}^{1}$
or $H_{k,n}=G_{k,n}^{2}$.

Let $U_{n}=\{s_{1,n},\dots,s_{d_n,n}\}$ with $d_n\leq N$ for every $n>1$. From 2) we
obtain that for every $1<k<n$ there exist $\phi_{k}\in \overline{U}_{n}$   such that
$$
\phi_{k}(x_k)\geq \delta.
$$
By the compactness of the set $G_{\xi}^{\prime}$ passing to an
infinite subset of $\N$ we get that there exist finite segments
$s_{1},\dots,s_{N}$ such that
$w^{*}-\lim_{n\to\infty}s_{i,n}=s_{i}$ for every $i\leq N$.

Let $k\in\N$ such that $\supp (x_k)>\max\{\maxsupp s_{i}: i\leq N\}$.  Then since
$w^{*}-\lim_{n\to\infty}s_{i,n}=s_{i}$ we obtain that there exists  $n_0>k$ such that
$\supp s_{i,n_0}\cap\supp (x_k)=\emptyset$ for all $i\leq N$, a contradiction.

\medskip

We proceed now to the proof of the Claim.

Assume that $H_{k,n}\ne G_{k,n}^{0}$. It follows that  $\vert \sum_{s\in G_{k,n}^0}c_{s}\phi_{s}(x_k)\vert< 0.75$ and from  \eqref{lo5} we get
\begin{equation}
\vert \sum_{s\in G_{k,n}\setminus G_{k,n}^0}c_{s}\phi_{s}(x_k)\vert\geq 1-3/10^{5}-0.75=0.25-3/10^{5}.
\end{equation}
As in \eqref{lo3}-\eqref{lo4} we get that
\begin{align}
2\sum_{s\in H_{k,n}}c_{s}^{2}\geq
\sum_{s\in G_{k,n}\setminus G_{k,n}^{0}}c_{s}^{2}\geq
(0.25-3/10^{5})^2
\Rightarrow \sum_{s\in H_{k,n}}c_{s}^{2}>0.031.
\label{lo5aa}
\end{align}
From \eqref{lo5aa}  we obtain $\sum_{s\in G_{k,n}\setminus H_{k,n}}c_{s}^{2}\leq
0.97\Rightarrow\|\sum_{s\in G_{k,n}\setminus H_{k,n}}c_{s}\phi_{s}\|_{G_\xi}^*< 0.985
$
and therefore for $l=k,n$ the following holds:
\begin{align}\label{lo5c}
\vert \sum_{s\in H_{k,n}}c_{s}\phi_{s}(x_l)\vert&\geq
\vert \sum_{s\in G_{k,n}}c_{s}\phi_{s}(x_l)\vert
-\vert \sum_{s\in G_{k,n}\setminus H_{k,n}}c_{s}\phi_{s}(x_l)\vert
\\
&\geq
 1-3/10^{5}-0.985=0.0149.\notag
\end{align}
Assume now  that  $H_{k,n}=G^{1}_{k,n}$ for all $k<n$.

For every $s\in G_{k,n}^{1}$ we set $ \tilde{s}=s_{|[1,\maxsupp (f_{s,k,n})]}$.

Using finite induction we define sets  $U_{2,n}\subset U_{3,n}\subset\dots\subset U_{n-1,n}$ as follows:

Set $U_{2,n}=\{\tilde{s}: s\in G_{2,n}^{1}\}$. Let $k=3,\dots,n-1$
and assume that the set  $U_{k-1,n}$ has been defined.

If there exists
$\phi=\sum_{i}c_{i}\phi_{i}\in \overline{U}_{k-1,n}$, see Notation
\ref{not2},  with $\vert \phi(x_{k})\vert\geq\delta$ we set
$U_{k,n}=U_{k-1,n}$.

Assume that
\begin{equation}\label{laq}\mbox{for all $\phi=\sum_{i}c_{i}\phi_{i}\in \overline{U}_{k-1,n}$ it holds that $\vert \phi(x_{k})\vert<\delta$.}
\end{equation}
For the functional $\phi_{k,n}=\sum_{s\in G^{1}_{k,n}}c_{s}\phi_{s}$ we set
$$
E_{k,n}=\{s\in G^{1}_{k,n}: \tilde{s}\in U_{k-1,n}\}.
$$
Notice that the segments $\tilde{s}, s\in G^{1}_{k,n}\setminus E_{k,n},$ are different
from the segments in $U_{k-1,n}$ and therefore the functionals
$\phi_{s}^{1}=\phi_{s|(\maxsupp(f_{s,k,n}),+\infty)}, s\in G^{1}_{k,n}\setminus E_{k,n}$,
have disjoint index sets from the functionals in $\overline{U}_{k-1,n}$.

From \eqref{laq} we get
\begin{align}
\vert \sum_{s\in E_{k,n}}c_{s}\phi_{\tilde{s}}(x_{k})\vert=\vert \sum_{s\in E_{k,n}}c_{s}\phi_{s}(x_{k})\vert<\delta.
\end{align}
Combining the above inequality with \eqref{lo5} we get
\begin{align}\label{lo7}
(\sum_{s\in G_{k,n}\setminus E_{k,n}}c_{s}^{2})^{1/2}\geq \vert \sum_{s\in  G_{k,n}\setminus E_{k,n}}c_{s}\phi_{s}(x_{k})\vert>1-3/10^{5}-\delta>1-4/10^{5}.
\end{align}
while from \eqref{lo5c}
\begin{align}\label{lo7a}
\vert \sum_{s\in  G^{1}_{k,n}\setminus E_{k,n}}c_{s}\phi_{\tilde{s}}(x_{k})\vert>0.0149-\delta>0.0148.
\end{align}
From \eqref{lo7} following the  arguments we used to get \eqref{lo4} it follows
\begin{equation}\label{lo8}
(\sum_{s\in E_{k,n}}c_{s}^{2})^{1/2}\leq (1-(1-4/10^{5})^2)^{1/2}\leq 0.009.
\end{equation}
From the definition of the set $H_{k,n}$, using that $\vert
f_{s,k,n}(x_{n})\vert<\delta$, and \eqref{lo8} we obtain
\begin{align}\label{lo11} \vert
\sum_{s\in G^{1}_{k,n}\setminus E_{k,n}}c_{s}\phi_{s}^{1}(x_{n})\vert &\geq \vert
\sum_{s\in G^{1}_{k,n}}c_{s}\phi_{s}(x_{n})\vert
-
\sum_{s\in G^{1}_{k,n}}\vert c_{s}f_{s,k,n}(x_{n})\vert
-
\vert \sum_{s\in  E_{k,n}}c_{s}\phi_{s}(x_{n})\vert
\notag\\
&\geq 0.0149-j_0\delta-0.009>0.005.
\notag
\end{align}
Set $U_{k,n}=U_{k-1,n}\cup \{\tilde{s}: s\in G^{1}_{k,n}\setminus E_{k,n}\}$.

Note that in this case  $\#(U_{k,n}\setminus U_{k-1,n})\leq j_0$ and from \eqref{lo7a} we
obtain that for the functional $\phi_{k}=\sum_{s\in G_{k,n}^{1}\setminus
E_{k,n}}c_{s}\phi_{\tilde{s}}\in \overline{U}_{k,n}$ it holds that $\phi_{k}(x_{k})\geq
0.0148$.

Let $U_{n-1,n}$ be the set of the segments we get after we complete the above procedure  for $k=2,\dots,n-1$.

We show now that there is no   $n$ with $\# U_{n-1,n}>j_{0}(2+1/\delta_0^2)$,
$\delta_0=0.005$.

Indeed assume that there exists $n$ such that $\#
U_{n-1,n}>j_{0}(2+1/\delta_0^2)$. Since $\# U_{2,n}\leq j_{0}$ and
$\# (U_{k,n}\setminus U_{k-1,n})\leq j_0$ it follows that $U_{k-1,n}\subsetneqq U_{k,n}$ holds for at
least $t=1+1/\delta_0^2$ different  $k$'s. For every such $k$ we get  a
functional $\phi^1_{k}=\sum_{s\in G_{k,n}^{1}\setminus
E_{k,n}}c_{s}\phi^{1}_{s}$ such that $\phi^1_{k}(x_{n})>\delta_0$
and the  functionals $\phi^1_{k}$ have pairwise disjoint index sets.
It follows that
$$
\frac{1}{\sqrt{t}}\sum_{k}\phi^1_{k}(x_n)\geq \sqrt{t}\delta_0>1,
$$
a contradiction.

\medskip
\textit{Case 2}.  $H_{k,n}=G_{k,n}^{2}$ for every $1<k<n$.

For every $s\in G_{k,n}^{2}$ we set $\tilde{s}=s_{|[1,\minsupp f_{s,k,n})}$.

Using finite induction we define sets  $U_{2,n}\subset U_{3,n}\subset\dots\subset U_{n-1,n}$ as follows:

Set $U_{2,n}=\{\tilde{s}: s\in G_{2,n}^{2}\}$. Let $k=3,\dots,n-1$
and assume that  $U_{k-1,n}$ has been defined.

If there exists $\phi\in \overline{U}_{k-1,n}$ such that  $\vert \phi(x_{k})\vert>\delta$ we set $U_{k,n}=U_{k-1,n}$.

Assume that
\begin{equation}\label{laq3}\mbox{for all $\phi=\sum_{i}c_{i}\phi_{i}\inde\overline{U}_{k-1,n}$ it holds that $\vert \phi(x_{k})\vert<\delta$.}
\end{equation}
Set
$$
E_{k,n}=\{s\in G_{k,m}^{2}: \tilde{s}\in U_{k-1,n}\}.
$$
We get that the segments $\tilde{s}, s\in G_{k,n}^{2}\setminus E_{k,n}$ are different
from the segments in $U_{k-1,n}$ and therefore the functionals
$\phi_{s}^{2}=\phi_{s|[\minsupp f_{s,k,n},+\infty)}$ have disjoint index set from the
functionals in $\overline{U}_{k-1,n}$.

As in the previous case, see \eqref{lo7},\eqref{lo8}, we obtain that $(\sum_{s\in
E_{k,n}}c_{s}^{2})^{1/2}<0.009$.

From \eqref{lo5c} and  \eqref{laq3} we obtain \begin{align}\label{www1} \vert\sum_{s\in
G_{k,n}^{2}\setminus E_{k,n}}c_{s}\phi_{\tilde{s}}(x_{k})\vert& \geq\vert\sum_{s\in
G_{k,n}^{2}}c_{s}\phi_{s}(x_k)\vert- \vert \sum_{s\in
G_{k,n}^{2}}c_{s}f_{s,k,n}(x_k)\vert- \vert\sum_{s\in
E_{k,n}}c_{s}\phi_{\tilde{s}}(x_{k})\vert
\\
& =0.0149-j_0\delta-\delta>0.0148
\notag
\end{align}
Also we get
\begin{align*}
\vert\sum_{s\in G_{k,n}^{2}\setminus E_{k,n}}c_{s}\phi_{s}^{2}(x_{n})\vert&\geq
\vert\sum_{s\in G_{k,n}^{2}}c_{s}\phi_{s}(x_{n})\vert
 -\vert\sum_{s\in E_{k,n}} c_{s}\phi_{s}(x_{n})\vert
 \\
 &\geq 0.0149-0.009>0.005
\end{align*}
We set $U_{k,n}=U_{k-1,n}\cup\{\tilde{s}: s\in G^{2}_{k,n}\setminus E_{k,n}\}$.
Note that  in this case  $\#(U_{k,n}\setminus U_{k-1,n})\leq j_0$ and from \eqref{www1} we get that for the functional $\phi_{k}=\sum_{s\in G_{k,n}^{2}\setminus E_{k,n}}c_{s}\phi_{\tilde{s}}\in \overline{U}_{k,n})$ it holds that $\phi_{k}(x_{k})\geq 0.0148$.

Let $U_{n-1,n}$ be the set of the segments we obtain after we complete the above
procedure. Assuming that there exists  $n$ such that $\# U_{n-1,n}>j_0(2+1/\delta_0^2)$
as in the case where $H_{k,n}=G_{k,n}^{1}$ we get a contradiction.
\end{proof}

\begin{theorem}\label{pr220}
 The space $X_{G_{\xi}}$ does not contain a normalized sequence generating an $\ell_{1}$-spreading model.
\end{theorem}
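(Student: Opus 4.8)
The plan is to argue by contradiction: pass first to a well-behaved block sequence, then to normalized $\ell_1$-averages satisfying the hypothesis of Proposition \ref{nol1}, and finally read off a contradiction from its conclusion using that the identity $X_{G_\xi}\to Y_{G_\xi}$ is strictly singular.

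Suppose $(x_n)$ is normalized and generates an $\ell_1$-spreading model. Since $X_{G_\xi}$ is reflexive (Theorem \ref{refl}), hence has a shrinking basis, and every bounded block sequence is weakly null (Corollary \ref{shrinking}), a standard gliding-hump perturbation — applied if necessary to normalized differences of a subsequence — lets me assume $(x_n)$ is a normalized block sequence generating an $\ell_1$-spreading model with some constant $C$; note a subsequence of such a sequence is again one. Write $\norm[x]_{G^{\prime}_\xi}=\sup\{|\phi(x)|:\phi\in G_0\cup G_1\cup G_{sp}\}$ for the norm of $Y_{G_\xi}$, so that $\norm[x]_{G_\xi}=\max\{\norm[x]_{G^{\prime}_\xi},\,\sup_{\phi\in G_{\ell_2}}|\phi(x)|\}$.

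The core of the argument, and its main difficulty, is to bring $(x_n)$ into the form required by Proposition \ref{nol1}: after passing to a subsequence I want a normalized block sequence, still denoted $(x_n)$, and some $\e<10^{-10}$ such that for all $n_1<n_2<n_3$ there is $\phi\in G_{\ell_2}$ with $|\phi(x_{n_i})|\ge 1-\e$, $i=1,2,3$. This does not follow from the spreading-model inequality alone (which only yields a functional of average value $1/C$ on a triple), so I would proceed in three steps. First, since the spreading model is $\ell_1$, the quantities $\tfrac1p\norm[e_1+\dots+e_p]$ computed there decrease to a positive limit along multiplicative chains of lengths $p$; replacing $(x_n)$ by normalized $\ell_1$-averages of sufficiently large length therefore makes the spreading-model constant as close to $1$ as I wish. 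Second, using the strict singularity of the identity $X_{G_\xi}\to Y_{G_\xi}$ (Corollary \ref{iss}) together with the fact that $(x_n)$ is weakly null in the $c_0$-saturated space $Y_{G_\xi}$ (Proposition \ref{c0}), I pass to a further subsequence along which $\norm[x_n]_{G^{\prime}_\xi}\to 0$; were this impossible, replacing $(x_n)$ by separated $\ell_1$-averages (Lemma \ref{l1a}) and controlling all of $G_\xi$ on them via Propositions \ref{aa4} and \ref{ris-sss}, a second averaging would produce vectors of $X_{G_\xi}$-norm tending to $0$ yet of norm $\ge 1/C$ by the (now almost isometric) spreading-model inequality, a contradiction. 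Third, since $\norm[x_n]_{G_\xi}=1$ and $\norm[x_n]_{G^{\prime}_\xi}\to 0$ force some functional of $G_{\ell_2}$ to almost norm each $x_n$, I restrict each such functional to the range of the corresponding $x_n$, apply a $\Delta$-system/Ramsey reduction to the (finite) index sets of these functionals and discard the bounded-index part of each (which contributes at most $\sqrt{\#\inde(\psi)}\cdot\norm[x_n]_{G^{\prime}_\xi}$); for a fixed triple the three remaining pieces then have pairwise disjoint index sets, and combining them as an $\ell_2$-convex combination yields a single element of $G_{\ell_2}$ of value $\ge 1-\e$ on each $x_{n_i}$.

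Granting this, Proposition \ref{nol1} supplies $j_1\in\N$, $L\in[\N]$ and, for each $n\in L$, a functional $\phi_n=\sum_{i\in B_n}\lambda_{i,n}\phi_{i,n}$ with $\inde(\phi_{i,n})\subset\{1,\dots,j_1\}$, $\sum_i\lambda_{i,n}^2\le 1$ and $\phi_n(x_n)\ge 0.75$. As the sets $\inde(\phi_{i,n})$ are pairwise disjoint subsets of $\{1,\dots,j_1\}$ there are at most $j_1$ of them, and every $\phi_{i,n}\in G_1\cup G_{sp}\subset G_0\cup G_1\cup G_{sp}$, whence
\[
0.75\le\phi_n(x_n)\le\Big(\sum_i\lambda_{i,n}^2\Big)^{1/2}(\# B_n)^{1/2}\max_i|\phi_{i,n}(x_n)|\le\sqrt{j_1}\,\norm[x_n]_{G^{\prime}_\xi}.
\]
Thus $\norm[x_n]_{G^{\prime}_\xi}\ge 0.75/\sqrt{j_1}>0$ for every $n\in L$, contradicting $\norm[x_n]_{G^{\prime}_\xi}\to 0$ obtained above. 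Hence $X_{G_\xi}$ carries no normalized sequence generating an $\ell_1$-spreading model. The steps I expect to be genuinely hard are the second and third of the reduction in the previous paragraph — arranging that the $\ell_1$-mass sits on $G_{\ell_2}$, and that a single $G_{\ell_2}$-functional serves a whole triple — and, together with the combinatorics packaged in Proposition \ref{nol1}, these form the technical heart of the proof.
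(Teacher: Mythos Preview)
Your reduction to the hypothesis of Proposition \ref{nol1} has a genuine gap in step (c), and step (b) is not needed at all. In step (c) you take, for each $x_{n_i}$ in a triple, a near-norming $\psi_i\in G_{\ell_2}$ restricted to $\ran x_{n_i}$, pass to pieces with pairwise disjoint index sets, and then ``combine them as an $\ell_2$-convex combination''. But any such combination has the form $\sum_i\lambda_i\psi_i'$ with $\sum\lambda_i^2\le 1$; since $\psi_j'$ vanishes on $x_{n_i}$ for $j\ne i$, its value on $x_{n_i}$ is $\lambda_i\psi_i'(x_{n_i})\le\lambda_i$, and you cannot make all three $\lambda_i\ge 1-\e$ simultaneously. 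You only get $\approx 1/\sqrt{3}$ on each, far below the $1-\e$ with $\e<10^{-10}$ that Proposition \ref{nol1} requires. The paper avoids this entirely: once step (a) (Fact \ref{fact1}) is done and the spreading-model constant is $(1-\e)^{-1}$, for any triple $n_1<n_2<n_3$ with $n_1\ge 3$ a single $\phi\in G_\xi$ norming $x_{n_1}+x_{n_2}+x_{n_3}$ satisfies $\phi(x_{n_i})\ge 1-3\e$ for each $i$; such $\phi$ cannot lie in $G_0$ (its support meets at most one block), and $G_1\cup G_{sp}\subset G_{\ell_2}$, so $\phi\in G_{\ell_2}$ automatically. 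No $\Delta$-system or recombination is needed.

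Step (b) is also shaky as written: your averaging argument (Lemma \ref{l1a} $+$ Proposition \ref{ris-sss}) does not use the hypothesis ``$\norm[x_n]_{G'_\xi}\not\to 0$'' at all, so if it worked it would prove the theorem outright without Proposition \ref{nol1}. It does not, because the double average $\frac{1}{n_{2j}}\sum_l y_{m_l}$ involves $\sum_l m_l$ many $x_i$'s, and there is no way to choose $m_1<\dots<m_{n_{2j}}$ in the infinite set $L$ so that this is $\mc{S}_1$-admissible in the $x_i$-indices (the size grows with the later $m_l$'s while $\min F_{m_1}$ is fixed by $m_1$). Consequently your endgame, which relies on $\norm[x_n]_{G'_\xi}\to 0$, is unsupported. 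The paper's endgame is different and does not need step (b): from Proposition \ref{nol1} one deduces that every $\phi\in G_{\ell_2}$ with $\inde(\phi)\subset\{j_1+1,j_1+2,\dots\}$ satisfies $|\phi(x_n)|<0.75$, and then a single average $x=\frac{1}{n_0}\sum_{i\le n_0}x_{l_i}$ with $n_0>n_{j_1}$ (which \emph{is} $\mc{S}_1$-admissible) has $\norm[x]\ge 1-\e$, yet splitting any norming $\phi$ into its low-index part (total support $\le\sum_{k\le j_1}n_{2k-1}<n_0$, so contribution $<0.1$) and high-index part (contribution $<0.75$) gives $|\phi(x)|<0.85$, a contradiction.
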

In the proof of the theorem we shall use the following well known fact.
\begin{Fact}\label{fact1} Let $X$ be a Banach space and $(x_n)_{n}$ be a normalized sequence generating an $\ell_{1}$-spreading model with constant $c$.
Then for every $\e>0$ there  there exists a normalized sequence
$(y_n)_{n\in\N}$ generating an $\ell_{1}$-spreading model with constant $(1-\e)^{-1}$. In
particular $(y_{n})_{n}$ can be chosen such that each $y_{n}$ is a
normalized convex combination of $\xn$.
\end{Fact}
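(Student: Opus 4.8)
The plan is to argue by contradiction. Suppose $X_{G_\xi}$ contains a normalized sequence generating an $\ell_{1}$-spreading model with some constant $c$. Since the basis $(e_n)$ of $X_{G_\xi}$ is shrinking (Corollary \ref{shrinking}), a standard small perturbation lets us pass to a finitely supported normalized block sequence with the same property, and then Fact \ref{fact1} lets us replace it by a normalized block sequence $(x_n)$, each $x_n$ a normalized convex combination of the preceding block sequence, generating an $\ell_{1}$-spreading model with constant $(1-\e)^{-1}$ for any prescribed $\e\in(0,10^{-11})$. The role of Fact \ref{fact1} is exactly to push the constant arbitrarily close to $1$.

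Next I would verify that $(x_n)_{n\geq 3}$ satisfies the hypothesis of Proposition \ref{nol1}. Fix $3\leq n_1<n_2<n_3$; since $\{n_1,n_2,n_3\}\in\mc{S}_1$ we have $\|x_{n_1}+x_{n_2}+x_{n_3}\|\geq 3(1-\e)$, so some $\phi\in G_\xi$ satisfies $\phi(x_{n_1}+x_{n_2}+x_{n_3})> 3(1-\e)-\e$. Because $G_\xi$ is a norming set, $\phi(x_{n_i})\leq 1$ for each $i$, hence $\phi(x_{n_i})>1-4\e$ for every $i$; and $\phi\notin G_0$, since an element of $G_0$ has absolute value at most $1$ on a normalized vector while $3(1-\e)-\e>1$. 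Thus $\phi\in G_1\cup G_{sp}\cup G_{\ell_2}\subseteq G_{\ell_2}$, and since $4\e<10^{-10}$ the assumption \eqref{lo} of Proposition \ref{nol1} holds.

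Applying Proposition \ref{nol1} we get $j_1\in\N$, $L\in[\N]$ and, for each $n\in L$, a functional $\phi_n=\sum_{i\in B_n}\lambda_{i,n}\phi_{i,n}$ with $\sum_i\lambda_{i,n}^2\leq 1$, the $\phi_{i,n}$ having (as one reads off from the proof) pairwise disjoint nonempty index sets contained in $\{1,\dots,j_1\}$, and $\phi_n(x_n)\geq 0.75$. Two quantitative consequences: the index sets force $\# B_n\leq j_1$; and each $\phi_{i,n}$, being a functional of type I of index $\leq j_1$ or a $G_\xi$-special functional all of whose nodes have weight in $\{m_1^2,\dots,m_{2j_1-1}^2\}$ (hence at most $j_1$ nodes, since a special sequence has strictly increasing indices), is supported on at most $j_1 n_{2j_1-1}$ coordinates. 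Therefore $\phi_n$ is supported on at most $R:=j_1^2n_{2j_1-1}$ coordinates, a bound independent of $n$; moreover $\phi_n\in G_{\ell_2}$, so $\|\phi_n\|\leq 1$, and after restricting $\phi_n$ to $\ran x_n$ (which preserves all the above) we may assume $\supp\phi_n\subseteq\ran x_n$, so $\min\supp\phi_n\to\infty$ along $n\in L$.

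The remaining step — extracting a contradiction from this — is the delicate point. The natural approach is to exploit that the functionals of the bounded complexity above lie in a pointwise compact set (each $G_1^j$ is pointwise compact as in Lemma \ref{l4}, the $G_\xi$-special functionals with weights bounded by $m_{2j_1-1}^2$ form a pointwise compact set, and the $\ell_2$-coefficients range over a fixed finite-dimensional ball). Passing to a subsequence of $L$ and pigeonholing on the combinatorial type of the $\phi_{i,n}$, one decomposes $\phi_n=\Psi_n+\Gamma_n$ with $\Psi_n\to\Psi$ in norm for a fixed finitely supported functional $\Psi$, while $\Gamma_n\in G_{\ell_2}$, $\|\Gamma_n\|\leq 1$, $\supp\Gamma_n\subseteq\ran x_n$ and $\min\supp\Gamma_n\to\infty$. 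Since $(x_n)$ is weakly null (Corollary \ref{shrinking}) we get $\Psi(x_n)\to 0$, hence $\Psi_n(x_n)\to 0$ and $\Gamma_n(x_n)\geq 0.74$ eventually; so $x_n$ carries, on at most $R$ coordinates tending to infinity, a piece of norm $\geq 0.74$. This must be confronted with the fact that each $x_n$ is a normalized convex combination of a block sequence, so that spreading the combinations out (via Fact \ref{fact1}) makes any functional supported on a uniformly bounded set uniformly small on the $x_n$'s — a chunk of a normalized average of length $N$ on $R$ coordinates has norm at most $cR/N$. I expect the main obstacle to be organising these two facts \emph{without circularity in the parameters}: one must guarantee that the bound $j_1$ (hence $R$) produced by Proposition \ref{nol1} does not grow with the averaging length $N$, which is arranged with the aid of Lemma \ref{ll2} and the estimates for separated sequences (Proposition \ref{ris-sss}); everything else is routine given Proposition \ref{nol1} and Fact \ref{fact1}.
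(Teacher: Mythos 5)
You have proved the wrong statement. Fact~\ref{fact1} is a general assertion about an arbitrary Banach space $X$: if some normalized sequence $(x_n)$ generates an $\ell_1$-spreading model with some constant $c\geq 1$, then for any $\e>0$ there is a normalized sequence of convex combinations of $(x_n)$ generating an $\ell_1$-spreading model with constant $(1-\e)^{-1}$. This has nothing to do with $X_{G_\xi}$; the paper records it as a ``well known fact'' without proof before invoking it in the proof of Theorem~\ref{pr220}. What you have written is instead a proposal for the proof of Theorem~\ref{pr220} (that $X_{G_\xi}$ admits no $\ell_1$-spreading model), and your argument explicitly \emph{uses} Fact~\ref{fact1} as a step (``then Fact~\ref{fact1} lets us replace it by a normalized block sequence $(x_n)$ \ldots generating an $\ell_1$-spreading model with constant $(1-\e)^{-1}$''). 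That is circular as a proof of the Fact and off-topic as an answer to the prompt.

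An actual proof of Fact~\ref{fact1} is a small, self-contained argument: essentially James' non-distortion theorem for $\ell_1$ transported to the spreading model. Pass to a subsequence so that $(x_n)$ has a spreading model $(\tilde e_n)$, a $1$-spreading normalized (and, after the standard weak-Cauchy/difference reduction, $1$-unconditional) sequence satisfying $c^{-1}\sum|a_i|\leq\|\sum a_i\tilde e_i\|\leq\sum|a_i|$. Set $b_k=\|\tilde e_1+\dots+\tilde e_k\|$; subadditivity of $(b_k)$ and $b_k\geq c^{-1}k$ give $b_k/k\to\theta\in[c^{-1},1]$. Choose $N$ with $b_N/N<(1-\e)^{-1}\theta$; blocking $(\tilde e_n)$ into consecutive normalized sums of length $N$ yields a $1$-spreading sequence with $\ell_1$-lower constant exceeding $1-\e$, and pulling this back through the spreading-model limit produces the required normalized convex block averages $(y_n)$ of $(x_n)$. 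None of the machinery around $G_\xi$, the coding $\sigma_1$, Lemma~\ref{ll2} or Proposition~\ref{nol1} enters; a correct answer must not refer to them.
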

\begin{proof}[Proof of Theorem \ref{pr220}]
Let $\xn$ be a sequence which generates an $\ell_{1}$-spreading model with constant $c$.
Since the space $X_{G_{\xi}}$ does not contain $\ell_{1}$ by
standard arguments passing  to a subsequence and taking the
differences  we may assume that  $\xn$ is a block sequence and
generates a spreading model with constant $(1-\e)^{-1}$, $\e<10^{-10}$.

By proposition \ref{nol1} we get that there exist  $j_{1}\in\N$ and $L\in[\N]$ such that
for all $n\in L$ there exists $\phi_{n}\in G_{\ell_2}$ with index set contained in
$\{1,\dots,j_1\}$ such that $\phi_{n}(x_n)\geq 0.75$.

It follows that for every $n\in L$ and  every $\phi\in G_{\ell_2}$
with $\inde (\phi)\subset\{j_1+1,\dots,\}$ it holds that $\vert
\phi(x_{n})\vert<0.75$.

Indeed if there exists such  $\phi$ with $\vert\phi(x_n)\vert\geq
0.75$ then the functionals $\phi$ and $\phi_{n}$ have disjoint
index sets and hence $\psi=\dfrac{\phi+\phi_{n}}{\sqrt{2}}\in K$.
It follows that $\psi(x_n)\geq \frac{0.75+0.75}{\sqrt{2}}>1$, a
contradiction.

Let $n_0\in\N$ with  $n_0>n_{j_1}$. Let $x=\frac{1}{n_0}\sum_{i=1}^{n_0}x_{l_{i}}$ with $n_{0}\leq
 l_1<\dots<l_{n_0}\in L$.

Since $\xn$ is assumed to generate an $\ell_{1}$-spreading model with constant $(1-\e)^{-1}$ it
follows that  $\norm[x]_{G_\xi}\geq 1-\e$.
Let $\phi\in K$ be a functional which norms $x$. It readily follows that  $\phi=\sum_{i=1}^{d}\lambda_{i}\phi_{i}\in G_{\ell_2}$.

For every $i$ let $\phi_{i}^{1}$ be the part of $\phi_{i}$ with the
index set contained in $\{1,\dots,j_{1}\}$ and $\phi_{i}^{2}$  be
the part  with index set contained in $\{j_{1}+1,j_{1}+2,\dots\}$.
Let  $\phi_{1}=\sum_{i=1}^{d}\lambda_{i}\phi_{i}^{1}$ and
$\phi_{2}=\sum_{i=1}^{d}\lambda_{i}\phi_{i}^{2}$.

By the above note we get that $\phi_{2}(x_{l_i})\leq 0.75$ for all $i$ and hence  $\phi_{2}(x)\leq 0.75$.

Also  for every $f$ of type I with  $\ind(f)\leq j_1$ it follows that $\vert
f(x)\vert\leq\frac{\#\supp(f)}{n_{0}}\leq w(f)^{-1}$. Hence
$$\vert \phi^{1}(x)\vert\leq  \sum_{i=1}^{d}\sum_{k\in\inde(\phi_{i}^{1})}m_{k}^{-1}\leq\sum_{k=1}^{\infty}m_{k}^{-1}<0.1.
$$
Therefore  $\vert \phi(x)\vert\leq\vert \phi_{1}(x)\vert+\vert\phi_{2}(x)\vert<0.85$, a contradiction.
\end{proof}

\section{The space $X_{\xi}$ as an extension of the space $X_{G_{\xi}}$}\label{sec7}
In this section we define the space $X_{\xi}$ by its norming set $K_{\xi}$. We introduce the key
ingredient for the definition of $K_{\xi}$ - attractor sequences (Def. \ref{d7.1}), and the basic
tool in estimating the norm in $X_{\xi}$ - a tree-analysis of a functional from $K_{\xi}$ (Def.
\ref{def3.3}).

Let $(\Lambda_{i})_{i\in\N}$ be a partition of $\N$ into pairwise disjoint  infinite
sets. Let $\Q_{s}$ denote the set of all finite sequences $(f_{1},\dots, f_{d})$ such
that for all $i$, $f_{i}\in c_{00}(\N)$, $f_{i}\ne 0$, $f_{i}(n)\in\Q$ for all $n\in\N$
and $f_{1}<f_{2}<\dots<f_{d}$.

We fix a partition $N_{1},N_{2}$ of  $\N$.  Let $\sigma:\Q_{s}\to N_{2}$ be an injective  function such that
$$
m^{1/2}_{2\sigma(f_{1},\dots,f_{d})}>\max\{\vert f_{i}(e_{l})\vert^{-1}: l\in\supp f_{i},i\leq d\}\maxsupp f_{d}.
$$
Such an injective function exists since the set $\Q_{s}$ is countable.

Let $K_{\xi}$ be the minimal subset of $c_{00}(\N)$ satisfying the following conditions
\begin{enumerate}
\item  $K_{\xi}$ is symmetric i.e. if $f\in K_{\xi}$ then $-f\in K_{\xi}$, $K_\xi$ is closed under the restriction of its elements to intervals of $\N$ and $G_{\xi}\subset K_{\xi}$.
\item $K_{\xi}$ is closed under $(\mc{A}_{n_{2j}},m_{2j}^{-1})$-operations.
\item $K_{\xi}$ is closed under $(\mc{A}_{n_{2j-1}},m_{2j-1}^{-1})$-operations on attractor sequences.
\item $K_{\xi}$ is closed under the  operation $\sum_{i\in A}\lambda_{i}f_{i}$ whenever
\begin{enumerate}
\item  $f_{i}$ is the  result of an $(A_{n_{j_i}},
m_{j_i}^{-1})$-operation and $n_{j_i}\ne n_{j_{k}}$ for every
$i\ne k\in A$ \item $(\lambda_{i})_{i\in A}\in B_{\ell_2}\cap
[\Q]^{<\infty}$.
\end{enumerate}
\item $K_{\xi}$ is rationally convex.
\end{enumerate}
In order to complete the definition of  the set $K_{\xi}$ we have to
define the attractor sequences.
\begin{definition}\label{d7.1}
A finite sequence $(f_{1},\dots,f_{d})$ is said to be a $n_{2j-1}$-attractor sequence
provided that
\begin{enumerate}
\item $(f_{1},\dots,f_{d})\in \Q_{s}$ and $f_{i}\in K_{\xi}$ for all
$i=1,\dots,d\leq n_{2j-1}$. \item  The functional  $f_{1}$ is the
result of an $(\mc{A}_{n_{2j_1}},m_{2j_1}^{-1})$operation on a
family of functionals of $K_{\xi}$ for some $j_{1}\in N_{1}$ with
$m_{2j_1}>n^3_{2j-1}$. Also for  every $i=2,\dots,n_{2j-1}/2$,
$f_{2i-1}$ is the result of an
$(\mc{A}_{n_{2\sigma(f_{1},\dots,f_{2i-2})}},m_{2\sigma(f_{1},\dots,f_{2i-2})}^{-1})$
operation on a family of functionals of $K_{\xi}$. \item
$f_{2i}=e^{*}_{\lambda_{2i}}$ for some
$\lambda_{2i}\in\Lambda_{\sigma(f_{1},\dots,f_{2i-1})}$.
\end{enumerate}
\end{definition}
We say that $f\in K_{\xi}$ is of type I if it is a result of some
$(\mc{A}_{n_j},m_{j}^{-1})$ operation. In this case we set $w(f)=m_{j}$ and $\ind(f)=j$.

We say that $f\in K_{\xi}$ is of type II if
$f=\sum_{i=1}^{n}\lambda_{i}f_{i}$ for some
$(f_{i})_{i=1}^{n}\subset K_{\xi}$ with $w(f_{i})\ne w(f_{j})$  for all
$i\ne j$ and $\sum_{i=1}^{n}\lambda_{i}^{2}\leq 1$.

We say that  $f$ is of type III if it is a rational combination of elements of $K_{\xi}$.
\begin{remarks}\label{r3.2}
a) Using the partition of $\N=\cup_{i}\Lambda_{i}$ it  follows
that the set of the attractor sequences has a tree structure i.e. if
$(f_{i})_{i=1}^{n_{2j-1}}$, $(g_{i})_{i=1}^{n_{2k-1}}$ are two
attractor sequences either $g_{i}\ne f_{r}$ for every  $i,r$ or
there exists  $i_0\leq\min\{n_{2j-1},n_{2k-1}\}$ such that
$f_{i}=g_{i}$ for all $i\leq i_0-1$ and $f_{i}\ne g_{r}$ for all
$i,r\geq i_0$. In particular $w(f_{2i-1})\ne w(g_{2r-1})$ for all
$2i-1,2r-1>i_0$ and $f_{2i}\ne g_{2r}$ for all $2i,2r>i_0$.

b) Note that in the definition of the norming set when we take
functionals $\sum_{i=1}^{n}\lambda_{i}f_{i}$ with
$\sum_{i=1}^n\lambda_{i}^{2}\leq 1$ we  require $w(f_{i})\ne
w(f_{j})$ but do not require
$\supp(f_i)\cap\supp(f_{j})=\emptyset$.  This is will be essential
in order to get that no sequence  generates an  $\ell_{1}$-spreading
model.
\end{remarks}
\begin{definition}\label{def3.3}[The tree $T_{f}$ of a functional $f\in K_{\xi}$] Let $f\in K_{\xi}$. By a tree of $f$ we mean a finite family $T_{f}=(\fa)_{\alpha\in\mca}$ indexed by a finite tree $\mca$ with a unique root $0\in\mca$ such that the following holds
\begin{enumerate}
 \item $f_{0}=f$ and $\fa\in K_{\xi}$ for all $\alpha\in\mca.$
\item An $\alpha\in\mca$ is maximal if and only if $\fa\in G_{\xi}$.
\item For every $\alpha\in\mca$ not maximal, denoting by $S_{\alpha}$ the set of the immediate successors of $\alpha$ one of the following holds
\begin{enumerate}
\item There exists $j\in\N$ such that $\# S_\alpha\leq n_{j}$ and $\fa$ is the result  of
the  $(\mc{A}_{n_j},m_{j}^{-1})$-operation in the set $\{f_{\beta}: \beta\in S_{a}\}$. In
this case we say that the weight of $f$ is $w(\fa)=m_{j}$. \item  $\fa=\sum_{\beta\in
S_{\alpha}}\lambda_{\beta}f_{\beta}$ where $\sum_{\beta\in
S_{\alpha}}\lambda_{\beta}^{2}\leq 1$  the functionals $\fb$ are of type I and have
different weights. \item  $\fa$ is a rational convex combination of the elements
$f_{\beta}$, $\beta\in S_{\alpha}$. Moreover for every  $\beta\in S_{\alpha}$ $\ran
f_{\beta}\subset\ran \fa$
\end{enumerate}
\end{enumerate}
\end{definition}
The order $o(\fa)$ for each $\alpha\in\mca$ is also defined by backward induction as follows.

If $\fa\in G_{\xi}$ then $o(\fa) = 1$, otherwise $o(\fa)=1 + \max\{o(\fb):\beta \in S_{\alpha}\}$.

The order $o(T_{f})$ of the aforementioned tree is defined to be equal to $o(f_0)$.
\begin{definition} The order $o(f)$ of an $f \in K_{\xi}$, is defined as
                   $$o(f )=\min\{o(T_f ) :\text{$T_{f}$ is a tree of $f$}\}.$$
\end{definition}

Every functional $f\in K_{\xi}$  admits a tree analysis not necessarily unique.
\begin{definition} We define $X_\xi=\overline{(c_{00}(\N),\norm_{K_{\xi}})}$ 
and we   denote the norm $\norm_{K_{\xi}}$ by $\norm$.
\end{definition}
\begin{definition}
 Let $k\in\mathbb{N}$. A vector $x\in c_{00}(\N)$ is said to be
 a $C-\ell_{1}^{k}$ average if there
 exists $x_{1}<\ldots<x_{k}$, $\norm[x_{i}]\leq C\norm[x]$ and
 $x=\frac{1}{k}\sum_{i=1}^{k}x_{i}$. Moreover, if $\norm[x]=1$ then
 $x$ is called a normalized $C-\ell_{1}^{k}$ average.
\end{definition}
\begin{lemma}[ \cite{Sh}, (or \cite{GM}, Lemma 4)] \label{l4.5}
Let $j\geq 1$, $x$ be a $C-\ell_{1}^{n_{j}}$ average.  Then for
every $n\leq n_{j-1}$ and
 every
 $E_{1}<\ldots<E_{n}$,
we have that $$ \sum_{i=1}^{n}\norm[E_{i}x]\leq
C(1+\frac{2n}{n_{j}})
 <\frac{3}{2}C. $$
\end{lemma}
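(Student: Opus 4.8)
The plan is to exploit the averaging structure of $x$ together with the fact that the basis of $X_\xi$ is bimonotone (recall that $K_\xi$ is closed under restriction of its elements to intervals of $\N$, so $\norm[Ey]\le\norm[y]$ for every interval $E$ and every $y$). By homogeneity I may assume $\norm[x]=1$, and fix $x_1<\dots<x_{n_j}$ with $\norm[x_l]\le C$ for all $l$ and $x=\frac1{n_j}\sum_{l=1}^{n_j}x_l$. For each $i=1,\dots,n$ I would split the blocks meeting $E_i$ into those lying \emph{entirely inside} $E_i$ and those meeting $E_i$ only \emph{partially}. Precisely, put $A_i=\{l:\ \ran x_l\subseteq E_i\}$; since $E_i x_l=x_l$ for $l\in A_i$, one has the exact decomposition
\[
E_i x=y_i+b_i,\qquad y_i:=\frac1{n_j}\sum_{l\in A_i}x_l,\qquad b_i:=\frac1{n_j}\sum_{l\notin A_i}E_i x_l .
\]

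For the interior terms I would use only the triangle inequality, $\norm[y_i]\le\frac1{n_j}\sum_{l\in A_i}\norm[x_l]\le C\cdot\frac{\#A_i}{n_j}$; the key point is that the sets $A_i$ are pairwise disjoint subsets of $\{1,\dots,n_j\}$, whence $\sum_{i=1}^n\#A_i\le n_j$ and therefore $\sum_{i=1}^n\norm[y_i]\le C$. For the boundary terms I would observe that, since each $\ran x_l$ is an interval and $E_1<\dots<E_n$ are successive intervals, for a fixed $i$ at most one block $x_l$ can have part of its range strictly to the left of $E_i$ while still meeting $E_i$ (such a block must contain $\min E_i$, and the ranges are pairwise disjoint), and symmetrically at most one can stick out to the right; hence $E_i x_l\ne 0$ for at most two indices $l\notin A_i$, and by bimonotonicity $\norm[E_i x_l]\le\norm[x_l]\le C$, so $\norm[b_i]\le 2C/n_j$ and $\sum_{i=1}^n\norm[b_i]\le 2nC/n_j$. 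Adding the two estimates gives $\sum_{i=1}^n\norm[E_i x]\le C(1+2n/n_j)$.

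Finally, to obtain $C(1+2n/n_j)<\tfrac32C$ for $n\le n_{j-1}$ I would invoke the rapid growth of $(n_j)$: since $(m_j)$ is increasing with $m_1=2^5$, we have $2^{s_{j-1}}=m_j^{3}\ge 2^{15}$, so $n_j=(2n_{j-1})^{s_{j-1}}\ge(2n_{j-1})^{2}>4n_{j-1}$, and hence $2n/n_j\le 2n_{j-1}/n_j<\tfrac12$ (the case $j=1$ being trivial, since then $n=1$). The argument is essentially routine; the one step that requires a moment's care is the interior estimate — one must keep the factor $\#A_i/n_j$ in the bound for $\norm[y_i]$ rather than use the cruder $\norm[y_i]\le\norm[x]=1$, which would only yield $\sum_i\norm[y_i]\le n$ and be useless.
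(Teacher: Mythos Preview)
Your proof is correct and is precisely the classical argument (due to Schlumprecht, reproduced as Lemma~4 in Gowers--Maurey). The paper does not give its own proof of this lemma; it simply cites \cite{Sh} and \cite{GM}, so there is nothing further to compare.
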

\begin{proposition}\label{p4.6}
For every block sequence $(y_{\ell})_{\ell}\subset X_\xi$, $\e>0$ and every $k\ge m_2$
there exists $x\in\langle (y_{\ell})_{\ell}\rangle$ which is a normalized $2-\ell_{1}^k$
average.

In particular for every $k\in\N$ there exists $n\in\N$ such that  for every finite block subsequence $(y_{\ell_{i}})_{i=1}^{n}$ of $(y_{\ell})_{\ell}$ there exists $(z_{i})_{i=1}^{k}$, a normalized block sequence of $(y_{\ell_i})_{i=1}^{n}$ such that $\norm[z_1+\dots+z_k]\geq k/2$.
\end{proposition}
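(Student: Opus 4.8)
The plan is to argue by contradiction, using only that $K_\xi$ is norming for $X_\xi$ and closed under the even $(\mc{A}_{n_{2j}},m_{2j}^{-1})$-operations. Assume $Y=\langle (y_\ell)_\ell\rangle$ contains no normalized $2$-$\ell_1^{k}$ average. Fix a small $\e\in(0,1)$ and a normalized block sequence $(x^{(0)}_s)_s$ of $(y_\ell)_\ell$, and build an ``averaging tree'': having defined normalized blocks $x^{(p)}_s\in Y$, set $w^{(p+1)}_s=\tfrac1k\sum_{i=1}^{k}x^{(p)}_{(s-1)k+i}$ and $x^{(p+1)}_s=w^{(p+1)}_s/\norm[w^{(p+1)}_s]$. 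The basic dichotomy is: if $\norm[w^{(p+1)}_s]\geq\tfrac12$ for some $p,s$, then writing $x^{(p+1)}_s=\tfrac1k\sum_{i=1}^{k}(x^{(p)}_{(s-1)k+i}/\norm[w^{(p+1)}_s])$ exhibits $x^{(p+1)}_s$ as a normalized $2$-$\ell_1^{k}$ average in $Y$ (its $k$ successive pieces have norm $1/\norm[w^{(p+1)}_s]\leq 2=2\norm[x^{(p+1)}_s]$), contradicting the assumption. Hence $\norm[w^{(p)}_s]<\tfrac12$ for every $p\geq1$ and every $s$, so every renormalization factor $1/\norm[w^{(p)}_s]$ exceeds $2$.

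Next I would extract the quantitative contradiction. Unrolling the definitions, $w^{(p)}_1=\sum_{i=1}^{k^{p}}c_i\,x^{(0)}_i$, where the $x^{(0)}_i$ are $k^{p}$ successive normalized blocks and each leaf coefficient telescopes, through $p$ factors $\tfrac1k$ and $p-1$ renormalization factors, to $c_i=k^{-p}\prod(\text{renormalization factors})\geq 2^{p-1}k^{-p}$. For each $i$ pick $g_i\in K_\xi$ with $\ran g_i\subseteq\ran x^{(0)}_i$ and $g_i(x^{(0)}_i)>1-\e$ (possible since $K_\xi$ norms $X_\xi$ and is closed under interval restrictions), and choose an even index $j_1=j_1(p)$ with $n_{j_1}\geq k^{p}$. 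The $g_i$ being successive, $(g_i)_{i=1}^{k^{p}}$ is $\mc{A}_{n_{j_1}}$-admissible, so $g=m_{j_1}^{-1}\sum_{i=1}^{k^{p}}g_i\in K_\xi$ by closure under the even operations, and therefore
\[
\norm[w^{(p)}_1]\geq g(w^{(p)}_1)=m_{j_1}^{-1}\sum_{i=1}^{k^{p}}c_i\,g_i(x^{(0)}_i)\geq m_{j_1}^{-1}(1-\e)\,k^{p}\cdot 2^{p-1}k^{-p}=m_{j_1}^{-1}(1-\e)2^{p-1}.
\]
The hard part is precisely to see that this overtakes $\tfrac12$: since $n_{j+1}=(2n_j)^{s_j}$ with $2^{s_j}=m_{j+1}^{3}$, the sequence $\log n_j$ grows hugely faster than $\log m_j$ (the same lopsidedness the authors record as $260m_{2j}^{4}\leq n_{2j-1}$), so the least even $j_1(p)$ with $n_{j_1}\geq k^{p}$ grows so slowly in $p$ that $m_{j_1(p)}=o(2^{p})$. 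Fixing $p$ large enough that $(1-\e)2^{p-1}\geq m_{j_1(p)}/2$ forces $\norm[w^{(p)}_1]\geq\tfrac12$, contradicting the first paragraph; this proves the first assertion.

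Finally, the ``in particular'' follows by running the same scheme non-vacuously: given $k$, let $p=p(k)$ be the threshold above and put $n=k^{p(k)}$. For any finite block subsequence $(y_{\ell_i})_{i=1}^{n}$, build the averaging tree from $x^{(0)}_s=y_{\ell_s}/\norm[y_{\ell_s}]$, $s\leq k^{p(k)}$. If $\norm[w^{(p')}_s]<\tfrac12$ held at every level $p'\leq p(k)$ and every $s$, the computation of the second paragraph with $p=p(k)$ would give $\norm[w^{(p(k))}_1]\geq\tfrac12$, a contradiction; hence $\norm[w^{(p')}_s]\geq\tfrac12$ at some level $p'\leq p(k)$, and then $z_i:=x^{(p'-1)}_{(s-1)k+i}$, $i=1,\dots,k$, is a normalized block sequence of $(y_{\ell_i})_{i=1}^{n}$ with $\norm[z_1+\dots+z_k]=k\norm[w^{(p')}_s]\geq k/2$ (and $(\sum_i z_i)/\norm[\sum_i z_i]$ is the desired normalized $2$-$\ell_1^{k}$ average). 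The hypothesis $k\geq m_2$ is used only to keep the averaging length above the smallest available weight. Note that the $\ell_2$-convexity operations (types II and III) of $K_\xi$ play no role here: the whole proof rests on the even operations together with the growth gap between $(n_j)$ and $(m_j)$, which is therefore the crux.
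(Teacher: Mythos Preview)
Your argument is correct and is precisely the standard Schlumprecht--Gowers--Maurey averaging-tree argument, which is exactly what the paper invokes by citing Lemma II.22 of \cite{ATod} without giving details. The only cosmetic point is that your justification of the growth gap (``$m_{j_1(p)}=o(2^p)$'') and of the role of the hypothesis $k\geq m_2$ is a bit informal; in the cited reference this is handled by choosing $j$ with $k^{\lceil\log_2 m_j\rceil}\leq n_j$ (possible since $\log_2 n_j/\log_2 m_j\to\infty$) and then taking $p$ to be the least integer with $2^p\geq m_j$, but your version is equivalent.
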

Lemma II.22 \cite{ATod}
give us the existence of such average.
\begin{proposition}\label{ss}
$X_\xi$ is a strictly singular extension of $X_{G_{\xi}}$, i.e. the identity map
$X_{G_\xi}\to X_\xi$ is strictly singular.
\end{proposition}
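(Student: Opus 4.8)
The plan is to prove the equivalent assertion that the norms $\norm$ and $\norm_{G_\xi}$ are equivalent on no infinite dimensional subspace of $X_\xi$. By a standard gliding hump argument it suffices to show that for every block subspace $Y$ of $X_\xi$ and every $\e>0$ there is $x\in Y$ with $\norm[x]=1$ and $\norm[x]_{G_\xi}<\e$: otherwise the formal identity $X_\xi\to X_{G_\xi}$ would be bounded below on a block subspace, contradicting this.

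Fix a large even integer $2j_0$, to be specified at the end. Using Proposition \ref{p4.6} together with the usual stabilization procedure I would select inside $Y$ a rapidly increasing sequence $(x_i)_{i=1}^{n_{2j_0}}$ of normalized $2$-$\ell_{1}^{n_{k_i}}$ averages, with $(k_i)$ growing fast enough with respect to $\maxsupp(x_{i-1})$ for Lemma \ref{l4.5} to apply against any functional whose weight is ``seen'' before $x_i$, and put
$$
x=\frac{m_{2j_0}}{n_{2j_0}}\sum_{i=1}^{n_{2j_0}}x_i.
$$
The standard rapidly increasing sequence estimates for mixed Tsirelson extensions (the basic inequality, cf. \cite{AAT} and the technical sections of the present paper) give an absolute constant $C_0$ with $\norm[x]\le C_0$, while applying the even functional $m_{2j_0}^{-1}\sum_i x_i^*$, where $x_i^*\in K_\xi$ norms $x_i$ and $\supp x_i^*\subset\ran x_i$, yields $\norm[x]\ge c_0>0$. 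Thus $\tilde x=x/\norm[x]$ is normalized in $X_\xi$, and it remains to estimate $\norm[x]_{G_\xi}$.

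The heart of the matter is to show $|g(x)|\le C\,m_{2j_0}^{-1}$ for every $g\in G_\xi$ and an absolute constant $C$. Since $G_\xi=G_0\cup G_1\cup G_{sp}\cup G_{\ell_2}$, and each element of $G_{\ell_2}$ is an $\ell_2$ convex combination, with pairwise disjoint index sets, of functionals from $G_1\cup G_{sp}$, the Cauchy--Schwarz inequality reduces the problem to $g\in G_1\cup G_{sp}$, together with a count of how many disjointly indexed summands of an $\ell_2$ combination can act non-negligibly on the sequence $(x_i)$. For such $g$ one runs the basic inequality for the RIS $(x_i)$: the decisive point is that every type I functional occurring inside a $G_\xi$ functional has an \emph{odd} weight $m_{2k-1}$, hence is never the result of an $(\mc{A}_{n_{2j_0}},m_{2j_0}^{-1})$ operation; consequently the factor $m_{2j_0}$ in the definition of $x$ is not matched by a dividing operation inside $g$ and survives. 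Combined with Lemma \ref{l4.5} for the type I part, and with the tree structure of the $\sigma_1$-special sequences (which allows at most one node of each weight to interact efficiently with the sequence $(x_i)$), the remaining contributions are absorbed by a geometric series in the $m_k^{-1}$, giving $|g(x)|\le C\,m_{2j_0}^{-1}$. Hence $\norm[x]_{G_\xi}\le C\,m_{2j_0}^{-1}$ and $\norm[\tilde x]_{G_\xi}\le (C/c_0)\,m_{2j_0}^{-1}<\e$ once $2j_0$ is chosen large.

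The step I expect to be the main obstacle is the $G_{sp}$ and $G_{\ell_2}$ cases of this basic inequality: one must check that the classical mixed Tsirelson / HI estimates survive after the ground set is cut down to $G_\xi$, i.e. bound, for each weight $m$, the number of special (respectively $\ell_2$ summand) functionals contributing more than roughly $m^{-2}$ to $g(x)$, using the injectivity of the coding function $\sigma_1$ and the disjointness of the index sets, so that summation over all weights still leaves only the factor $m_{2j_0}^{-1}$. This is exactly the type of computation carried out in \cite{AAT}, and for it to go through the RIS $(x_i)$ must be chosen sufficiently separated (``$G_\xi$-acceptable'').
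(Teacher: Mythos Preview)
Your overall plan is sound and would succeed, but it is considerably more elaborate than needed and your identification of the ``decisive point'' is off.

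The paper's argument is a two-line application of Proposition~\ref{ris-sss}, which was already proved for $X_{G_\xi}$ in Section~\ref{sec5}. One picks in $Y$ a sequence $(y_n)$ of normalized $2$-$\ell_1^{k_n}$ averages (Proposition~\ref{p4.6}); since $\norm[y_n]_{G_\xi}\le\norm[y_n]=1$, Proposition~\ref{ris-sss} gives an infinite $L$ with $\#\{n\in L:\vert g(y_n)\vert\ge 2m_{2j}^{-2}\}\le n_{2j-1}$ for every $g\in G_\xi$. This immediately yields
\[
\Big\|\frac{1}{n_{2j}}\sum_{i=1}^{n_{2j}}y_i\Big\|_{G_\xi}\le \frac{n_{2j-1}}{n_{2j}}+\frac{2}{m_{2j}^{2}}\le\frac{2}{m_{2j}^{2}},
\qquad
\Big\|\frac{1}{n_{2j}}\sum_{i=1}^{n_{2j}}y_i\Big\|\ge \frac{1}{m_{2j}},
\]
and the ratio $\le 2/m_{2j}$ can be made arbitrarily small. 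No upper bound on the $K_\xi$-norm of the average is required, and no basic inequality (Section~\ref{sec9}) is invoked.

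Your ``decisive point'' that the type~I pieces inside a $G_\xi$-functional carry odd weights (in fact $m_{2k-1}^{2}$, not $m_{2k-1}$) is a red herring here. In the basic inequality, a functional $g\in G_\xi$ is treated as a leaf (the $o(f)=1$ case): the bound there uses \emph{only} the separation hypothesis $\#\{k:\vert g(x_k)\vert\ge\e\}\le n_{j_0-1}$ and never looks at the internal weights of $g$. The parity/weight-mismatch reasoning you describe is the mechanism behind the refined estimate of Proposition~\ref{ris}\,b) for $K_\xi$ type~I functionals, not the mechanism that controls $\norm[\,\cdot\,]_{G_\xi}$. What actually does all the work is precisely the ``$G_\xi$-acceptable'' separation you mention only in your last sentence; once you recognise this, the case analysis over $G_1$, $G_{sp}$, $G_{\ell_2}$ and the appeal to the tree structure of $\sigma_1$-special sequences become unnecessary, since Proposition~\ref{ris-sss} (via Lemma~\ref{l1a} and Proposition~\ref{aa4}) has already packaged exactly that.
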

\begin{proof}
Let $Y$ be a block subspace of  $X_\xi$ and $j\in\N$. By the Proposition \ref{p4.6} we
obtain a sequence $\xn[y]\subset Y$ of normalized $\ell_{1}^{k_n}$ averages and by
Proposition \ref{ris-sss} for any fixed $j>1$ we get $L\in [\N]$ such that for every
$g\in G_{\xi}$
$$
\{n\in L: \vert g(y_n)\vert\geq m_{2j}^{-2}\}\leq n_{2j-1}.
$$
 It follows that
$$
\|\frac{1}{n_{2j}}\sum_{i=1}^{n_{2j}}y_{i}\|_{G_{\xi}}
 \leq
\frac{n_{2j-1}}{n_{2j}}+\frac{1}{m_{2j}^2}\leq \frac{2}{m_{2j}^{2}}
$$
 while
$\norm[\frac{1}{n_{2j}}\sum_{i=1}^{n_{2j}}y_{i}] \geq \frac{1}{m_{2j}}$. Since $j$ was
arbitrarily chosen we have a strictly singular extension.
\end{proof}
\section{The auxiliary space}\label{sec8}
Now we define for any $j_0>1$ the auxiliary space $X_{W_{j_0}}$ and discuss construction
and relation between its norming sets: $W_{j_0}$ and $W_{j_0}^{\prime}$. Next in Subsection
\ref{subsec} we estimate the supremum norm of any norming functional from $W_{j_0}^{\prime}$
(Lemma \ref{linfty}) and its action on a suitable average of $(e_n)_n$ of length $n_{j_0}$  (Lemma \ref{42}).

Let $j_0>1$ be fixed. We set $C_{j_0}=\{\sum_{i\in F}\pm e_{i}^{*}: F\in\mc{A}_{n_{j_0-1}}\}$.
Let $W_{j_0}$ be the minimal subset of $c_{00}(\N)$ satisfying the following conditions
\begin{enumerate}
\item[1)] $W_{j_0}$ contains the set  $C_{j_0}$.
\item[2)] $W_{j_0}$ is closed under
$(\mca_{2n_j}, m_{j}^{-1})$-operations for every $j\in\N$.
\item[3)] $W_{j_0}$ is closed under the operation $\sum_{i\in A}\lambda_{i}f_{i}+\sum_{i\in
B}\lambda_{i}e_{t_i}$ whenever  $A\cap B=\emptyset$, $t_i\ne
t_{j}$ for all $i\ne j\in B$, $f_{i}$ is the result of an
$(\mca_{2n_{j_i}}, m_{j_i})$-operation,  $m_{j_i}\ne m_{j_k}$ for all
$i\ne k\in A$ and  $\sum_{i\in A\cup B}\lambda_{i}^{2}\leq 1$.
\item[4)] $W_{j_0}$ is rationally convex.
\end{enumerate}
The auxiliary space is  the space $X_{W_{j_0}}=\overline{(c_{00}(\N),\norm_{W_{j_0}})}$.

For every functional $f\in W_{j_0}$ which is the result of an $(\mca_{2n_j},
m_{j}^{-1})$-operation we set $w(f)=m_{j}$ and we say that $f$ is of type I. Every
functional of the form $\sum_{i\in A}\lambda_{i}f_{i}+\sum_{i\in B}\lambda_{i}e_{n_i}\in
W_{j_0}$ is called of type II. Every functional which is a convex combination of elements
of $W_{j_0}$ is called of type III.

Consider the minimal set $W_{j_0}^{\prime}$ which satisfies Properties 1)-3) in the
definition of $W_{j_0}$. Then following the arguments of Lemma 3.15 \cite{ATo}, every
$f\in W_{j_0}$ of type I with $w(f)=m_{j}$  can be written as a convex combination
$\sum_{i}\lambda_{i}f_{i}$ of functionals  from the set  $W_{j_0}^{\prime}$ with
$w(f_{i})=m_{j}$ for every $i$. Hence  in order to get an upper estimate for functionals
of type I from the set  $W_{j_0}$ it is enough to get an upper estimate for the
functionals from the set $W_{j_0}^{\prime}$.

An alternative definition of the norming set $W_{j_0}^{\prime}$ is the following:

Set $W_{0}=C_{j_0}$ and assume that $W_{n-1}$ has been defined.

We set
$$W_{n}^{1}=\bigcup_{j}\{m_{j}^{-1}\sum_{i=1}^{d}f_{i}: (f_{i})_{i=1}^{d}\,\text{is $\mc{A}_{2n_{j}}$-admissible subset of $W_{n-1}$}\}
$$
and
\begin{align*}
W_{n}^{2}=\{\sum_{i\in A}&\lambda_{i}f_{i}+\sum_{i\in B}\lambda_{i}e_{t_i}: \sum_{i\in A\cup B}\lambda_{i}^{2}\leq 1, \text{$t_{i}\ne t_{j}$ for all $i\ne j\in B$, $A\cap B=\emptyset$,}\\
& \text{$f_{i}\in W_{n-1}$ is of type I for all $i\in A$ with $w(f_{i})\ne w(f_{j})$ for
all $i\ne j\in A$}\}.
\end{align*}
We set $W_{n}=W_{n}^{1}\cup W_{n}^{2}$. Then $W_{j_0}^{\prime}=\cup_n W_n$.

Every $f\in W_{j_0}^{\prime}$ admits a tree analysis $(\fa)_{\alpha\in\mc{T}}$ satisfying
properties similar to Definition \ref{def3.3}.
\begin{remarks} Let $f\in W_{j_0}^{\prime}$ and $(\fa)_{a\in\mc{T}}$ be a tree analysis of $f$.

a) If $\alpha$ is a terminal node then $\fa\in  C_{j_0}$.  In particular if $\alpha\in
S_{\beta}$ and $\fb$ is of type II then  $\fa=\pm e_{k}^{*}$ for some $k$.

b) If $\alpha,\beta$ are different maximal nodes it is not necessary true that $\supp(\fa)\cap\supp(\fb)=\emptyset$.

c) A consequence of b) is that  it does not hold that $\norm[f]_{\infty}\leq 1$ for every $f\in W_{j_0}^{\prime}$. We show  in Lemma \ref{linfty} that there is a constant $c_1>1$ such that  $\norm[f]_{\infty}\leq c_{1}$ for every $f\in W_{j_0}^{\prime}$.
\end{remarks}
\subsection{Estimates on the auxiliary space}\label{subsec}
Set $c_0=\sum_{j=1}^{\infty}m_{j}^{-2}$ and $c_{1}=(\sum_{n=0}^{\infty}c_0^n)^{1/2}$.
\begin{lemma}\label{linfty} For every $f\in W_{j_0}^{\prime}$ it holds that
$\vert f(e_t)\vert\leq\begin{cases}\frac{c_1}{w(f)}\,\,&\text{if $f$ is of type I}\\
c_1 &\text{if $f$ is of type II}.
\end{cases}
$
\end{lemma}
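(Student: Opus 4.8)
The plan is to prove both bounds simultaneously by induction on the complexity of the tree analysis $(f_\alpha)_{\alpha\in\mc{T}}$ of $f\in W_{j_0}^\prime$, i.e. on $o(f)$ (or equivalently on the number of levels of $\mc{T}$). The base case is $f\in C_{j_0}$, where $f=\sum_{i\in F}\pm e_i^\ast$ with $F\in\mc{A}_{n_{j_0-1}}$; such an $f$ is of type II in the terminology here (a terminal/atomic functional), and $\vert f(e_t)\vert\leq 1\leq c_1$, so the claim holds. Fix $t\in\N$; note that for \emph{any} functional $g$ appearing in the analysis, $g(e_t)$ depends only on whether $t\in\supp(g)$, and in fact for a type I functional $g=m_j^{-1}\sum_{i=1}^d g_i$ arising from an $(\mc{A}_{2n_j},m_j^{-1})$-operation, the supports $\supp(g_i)$ are successive, so $t$ lies in the support of \emph{at most one} $g_i$. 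This disjointness is the crucial point that keeps the constant finite.

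For the inductive step I would treat the two ways a non-terminal node can arise. If $f$ is of type I, $f=m_j^{-1}\sum_{i=1}^d f_i$ with $(f_i)$ an $\mc{A}_{2n_j}$-admissible family from $W_{j_0}^\prime$, then by the support-successiveness just noted there is at most one index $i_0$ with $t\in\supp(f_{i_0})$, and by the inductive hypothesis $\vert f_{i_0}(e_t)\vert\leq c_1$ (whichever type $f_{i_0}$ is, $c_1$ dominates $c_1/w(f_{i_0})$). Hence $\vert f(e_t)\vert\leq m_j^{-1}c_1 = c_1/w(f)$, which is exactly the required bound for type I. If $f$ is of type II, $f=\sum_{i\in A}\lambda_i f_i+\sum_{i\in B}\lambda_i e_{t_i}$ with $\sum_{i\in A\cup B}\lambda_i^2\leq 1$, the $f_i$ ($i\in A$) of type I with pairwise distinct weights $w(f_i)=m_{j_i}$, and the $t_i$ ($i\in B$) pairwise distinct, then by Cauchy--Schwarz
\begin{align*}
\vert f(e_t)\vert &\leq \Bigl(\sum_{i\in A\cup B}\lambda_i^2\Bigr)^{1/2}\Bigl(\sum_{i\in A}\vert f_i(e_t)\vert^2 + \sum_{i\in B}\vert e_{t_i}(e_t)\vert^2\Bigr)^{1/2}
\\
&\leq \Bigl(\sum_{i\in A}\frac{c_1^2}{m_{j_i}^2} + 1\Bigr)^{1/2}
\leq \Bigl(c_1^2\,c_0 + 1\Bigr)^{1/2},
\end{align*}
using the inductive hypothesis $\vert f_i(e_t)\vert\leq c_1/m_{j_i}$ for $i\in A$, the fact that at most one $t_i$ equals $t$ (so the $B$-sum contributes at most $1$), and that the distinctness of the weights $m_{j_i}$ forces $\sum_{i\in A}m_{j_i}^{-2}\leq\sum_{j=1}^\infty m_j^{-2}=c_0$. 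By the choice $c_1=(\sum_{n\geq 0}c_0^n)^{1/2}$ we have $c_1^2 = \sum_{n\geq 0}c_0^n = 1 + c_0\sum_{n\geq 0}c_0^n = 1 + c_0 c_1^2$, so $c_1^2 c_0 + 1 = c_1^2$ and the displayed quantity is $\leq c_1$, completing the type II step. (One should also note $c_0<1$, guaranteed by $m_1=2^5$ and $m_{j+1}=m_j^5$, so that $c_1$ is well-defined.)

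The only subtlety — and the step I expect to need the most care — is bookkeeping about which subfunctionals of a type I node can contain a \emph{fixed} coordinate $t$: the whole argument rests on the successive-support structure of $\mc{A}_n$-operations giving at most one contributing child, and on the distinct-weights condition in type II nodes giving a convergent geometric-type sum $\sum m_{j_i}^{-2}\leq c_0$ rather than an unbounded one. Type III nodes (rational convex combinations) are not in $W_{j_0}^\prime$, so they need not be considered here; were they present, a convex combination of functionals each bounded by $c_1$ is again bounded by $c_1$, so even that case would be harmless. This closes the induction and proves the lemma.
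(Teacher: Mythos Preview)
Your proof is correct and follows essentially the same approach as the paper: an induction on the complexity of $f$, using the successive-support property for type~I nodes and Cauchy--Schwarz together with the distinct-weights condition for type~II nodes. The only cosmetic difference is that the paper carries the sharper level-dependent bound $\vert f(e_t)\vert\leq u_f(1+c_0+\dots+c_0^n)^{1/2}$ for $f\in W_n$ and then passes to the limit, whereas you work with the limiting constant $c_1$ from the outset and close the induction via the fixed-point identity $c_1^2=1+c_0c_1^2$; both routes are equivalent.
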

\begin{proof}
 We prove by induction that for every $f\in W_{n}$ the following holds:
\begin{equation}\label{key1}
\vert f(e_t)\vert\leq u_{f}(1+c_0+\dots+c_{0}^n)^{1/2}
\end{equation}
where  $u_{f}=w(f)^{-1}$ if $f$ is of type  $I$ and $u_{f}=1$ otherwise.

Assume that for every  $f\in W_{n-1} $ it holds that $\vert f(e_t)\vert\leq u_{f}(1+c_0+\dots+c_{0}^{n-1})^{1/2}$ which is certainly true for $n=1$.

If $f=w(f)^{-1}\sum_{i=1}^{d}f_{i}\in W_{n}^{1}$ then there
exists unique  $i\leq d$ such that $t\in\supp(f_{i})$. It follows
from  the inductive hypothesis that
$$\vert f(e_t)\vert\leq \frac{1}{w(f)}\vert f_{i}(e_t)\vert\leq \frac{1}{w(f)}(1+c_0+\dots+c_{0}^{n-1})^{1/2}.
$$
If $f=\sum_{i\in A}\lambda_{i}f_{i}+\sum_{i\in
B}\lambda_{i}e^{*}_{i}\in W_{n}^{2}$ then either $t\not\in B$ or
there exists unique $i_{t}\in B$ such that $e_{t}=e_{i_t}$.
Assume that the latter holds. Setting  $A_{t}=\{i\in A: t\in\supp
f_{i}\}$ from the inductive hypothesis and H\"older inequality  we get
\begin{align*}
\vert f(e_t)\vert &\leq \sum_{i\in A_{t}}\vert \lambda_{i}f_{i}(e_t)\vert+\vert \lambda_{i_t}\vert
\leq
 (1+\sum_{i\in A_{t}}w(f_i)^{-2}(1+c_0+\dots+c_{0}^{n-1}))^{1/2}
 \\
 &\leq (1+(1+c_0+\dots+c_{0}^{n-1})\sum_{j}m_{j}^{-2})^{1/2}
 =(1+c_0+\dots+c_{0}^{n})^{1/2}.
\end{align*}
This proves \eqref{key1} and completes the proof  of the lemma.
\end{proof}
Recall that in the definition of the space we have chosen $n_{j}=(2n_{j-1})^{s_{j-1}}$ where $2^{s_{j-1}}=m_{j}^{3}$.
\begin{lemma}\label{linfty2} Let $f\in W_{j_0}^{\prime}$ such that $\vert f(e_{t})\vert>\frac{3}{m_{j_0}}$ for every $t\in\supp(f)$. Then
$$\#\supp(f)\leq (2n_{j_0-1})^{s_{j_0-1}-2}.
$$
 \end{lemma}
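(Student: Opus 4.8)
The plan is to run a backward induction over a tree analysis $(\fa)_{\alpha\in\mc{T}}$ of $f$ — as in Definition \ref{def3.3} adapted to $W_{j_0}'$, so that the terminal nodes carry functionals of $C_{j_0}$, each of support $\le n_{j_0-1}$ — proving not the bound $(2n_{j_0-1})^{s_{j_0-1}-2}$ directly but a threshold-dependent strengthening: for every $g\in W_{j_0}'$ with $|g(e_t)|>\theta$ for all $t\in\supp g$ one has $\#\supp g\le n_{j_0-1}(2n_{j_0-1})^{d(\theta)}$, where $d(\theta)$ equals, up to small corrections from the constants $c_0,c_1$, the quantity $\log_{16}(1/\theta)$. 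The lemma is the case $\theta=3/m_{j_0}$: there $d(\theta)\approx\log_{16}m_{j_0}=\tfrac14\log_2 m_{j_0}=\tfrac14\cdot 5^{j_0}$, which is of order $5^{j_0-1}$, whereas $s_{j_0-1}-2=3\cdot 5^{j_0}-2=15\cdot 5^{j_0-1}-2$; thus there is an enormous margin between $n_{j_0-1}(2n_{j_0-1})^{d(\theta)}$ and $(2n_{j_0-1})^{s_{j_0-1}-2}$, and this is where all the slack in the statement is spent. (Alternatively, since the basis of $X_{W_{j_0}}$ is $1$-unconditional and the norm of an indicator $\mathbf 1_E$ depends only on $\#E$, the lemma is equivalent to the estimate $\|\mathbf 1_{[1,N]}\|_{W_{j_0}}\le 3N/m_{j_0}$ for $N>(2n_{j_0-1})^{s_{j_0-1}-2}$, a lower bound on the fundamental function of the auxiliary space; one can pursue either formulation.)

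At a type I node $g=m_l^{-1}(g_1+\dots+g_d)$, $d\le 2n_l$, the step is routine: by Lemma \ref{linfty}, $\theta<|g(e_t)|\le c_1/m_l$ forces $m_l<c_1m_{j_0}/3<m_{j_0}$ (as $c_1<3$), so $l\le j_0-1$ and $d\le 2n_{j_0-1}$; each child $g_i$ inherits the hypothesis with the strictly larger threshold $m_l\theta\ge m_1\theta$, hence $d(m_l\theta)\le d(\theta)-\log_{16}m_1<d(\theta)-1$, and summing the inductive bounds over the support-disjoint children closes the induction with room to spare. All the difficulty is at a type II node $g=\sum_{k\in A}\lambda_k g_k+\sum_{k\in B}\lambda_k e_{t_k}$. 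Here one first exploits the pairwise distinctness of the weights $w(g_k)$: for any $t\in\bigcup_k\supp g_k$, Cauchy--Schwarz and Lemma \ref{linfty} give $|g(e_t)|^2\le\sum_k g_k(e_t)^2\le c_1^2\sum_j m_j^{-2}=c_1^2c_0$, and more sharply $|g(e_t)|^2\le c_1^2\sum_{m_j\ge m_{l^*(t)}}m_j^{-2}\le(2/m_{l^*(t)})^2$, where $m_{l^*(t)}$ is the least weight among the $g_k$ with $t\in\supp g_k$. Thus each such $t$ can be charged to the unique least-weight component containing it (unique, again by distinctness of weights), a component whose weight is then $<2/\theta<m_{j_0}$ and hence one of $\le\log_{m_1}(2/\theta)\le j_0$ possibilities; the attached coordinates $t_k$ are controlled by $\sum_k\lambda_k^2\le 1$, directly (yielding $O(\theta^{-2})=O(m_{j_0}^2)$ of them) when the effective threshold at $g$ exceeds $c_1\sqrt{c_0}$, and otherwise by descending into the charged components. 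Assembling: every surviving coordinate acquires a certificate consisting of at most $d(\theta)$ recorded weights, each $\ge m_1$ and with product $<m_{j_0}$; at each recording node the relevant branching is $\le 2n_{j_0-1}$ (type I) or $\le j_0$ plus the $O(m_{j_0}^2)$ attached coordinates (type II); and the certificates form a tree each of whose leaves carries $\le n_{j_0-1}$ coordinates. The resulting count has the shape $(2n_{j_0-1})^{O(5^{j_0-1})}$, with lower-order contributions of size $m_{j_0}^{O(5^{j_0-1})}$ from the attached parts, all of which is comfortably below $(2n_{j_0-1})^{s_{j_0-1}-2}$ since $s_{j_0-1}=3\cdot 5^{j_0}$ dwarfs $5^{j_0-1}$ and $\log n_{j_0-1}$ in turn dwarfs $\log m_{j_0}$.

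The step I expect to be the genuine obstacle is exactly this bookkeeping at type II nodes. Their branching is not a priori finite and their components overlap in support, so the naive picture of following a single branch breaks down; worse, since $j_0>1$ forces $3/m_{j_0}<c_1\sqrt{c_0}$, the crude estimate $\#\{t:|g(e_t)|>\theta\}\le(\theta-c_1\sqrt{c_0})^{-2}$ is useless near the root and only becomes available deep in the tree, once the product of the type I weights already encountered has pushed the effective threshold past $c_1\sqrt{c_0}$. Reconciling the ``charge to the least-weight component'' device with the fact that the flatness hypothesis is imposed on $\supp f$ and not on the potentially far larger support $\bigcup_\beta\supp f_\beta$, disposing of the attached parts in the small-threshold regime by passing to the components, and preventing the per-type-II constants (the $c_1$'s, the factor $2$) from compounding out of control, make up the technical heart of the argument; all of it works only because the growth relations $m_{j+1}=m_j^5$, $2^{s_j}=m_{j+1}^3$, $n_{j+1}=(2n_j)^{s_j}$ keep $c_0$ and $c_1$ negligibly close to $0$ and $1$ and make $s_{j_0-1}$ overwhelm $\log_{m_1}m_{j_0}$.
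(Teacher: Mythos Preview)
Your outline correctly isolates the type~II step as the crux, but there is a genuine gap at exactly that point. The ``charge to the least-weight component'' device tells you the charged child $g_{k^*}$ has weight $m_{l^*(t)}<2/\theta<m_{j_0}$, but it gives \emph{no lower bound} on $|g_{k^*}(e_t)|$, so the threshold does not propagate along the charging path. Your assertion that the recorded type~I weights have ``product $<m_{j_0}$'' then hangs in the air: at a type~II node $f_\alpha(e_t)$ is a sum over overlapping children, the single-branch identity $|f(e_t)|=\prod m_l^{-1}\cdot\prod|\lambda|\cdot|f_{\mathrm{leaf}}(e_t)|$ fails, and one cannot read off a depth bound from the root threshold alone without saying quantitatively how the threshold degrades across a type~II node. (A fix along your lines does exist --- at a type~II node pick $a_k\propto 1/w(g_k)$ with $\sum a_k\le 1$ and observe that $|g(e_t)|>\theta$ forces some $k$ with $|\lambda_k g_k(e_t)|>a_k\theta$, so the grandchild through that $g_k$ inherits threshold $w(g_k)a_k\theta\ge\theta/\sum_j m_j^{-1}>8\theta$ --- but this step is precisely what is missing, and without it neither $d(\theta)$ nor the final count is established.)

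The paper's route is structurally different: instead of a single threshold-tracking induction it runs three independent pruning passes on a fixed tree analysis. First it splits off all subtrees rooted at a minimal type~I node of weight $\ge m_{j_0}$ and shows their aggregate contribution to each $e_t$ is at most $\tfrac{9c_1}{8m_{j_0}}<\tfrac{3}{m_{j_0}}$, reducing to the case where every type~I weight is $<m_{j_0}$. Second, by a backward induction with the auxiliary quantity $u_\gamma=m_{r-1}^{-1}$ when $w(f_\gamma)=m_r$ and $u_\gamma=1$ at type~II, it proves $|f_\gamma(e_t)|\le c_1\,2^{-j}u_\gamma$ at co-depth $j$ (the factor $2$ at type~II coming from the Cauchy--Schwarz bound $\sum_\delta|\lambda_\delta|m_{r_\delta-1}^{-1}\le(\sum_{i\ge 1}m_{i-1}^{-2})^{1/2}\le\tfrac12$ under the distinct-weights constraint), so any $t$ not supported on a terminal node of depth $\le d_{j_0}:=s_{j_0-1}-3$ has $|f(e_t)|<m_{j_0}^{-2}$ and is discarded. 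Third, a separate short induction removes terminal type~II children with $|\lambda_\alpha|<m_{j_0}^{-1}$, whose total contribution is $<\tfrac{3}{2m_{j_0}}$. What remains is a tree of height $\le d_{j_0}$ whose type~I nodes branch $\le 2n_{j_0-1}$ and whose type~II nodes branch $\le(j_0-1)+m_{j_0}^2\le 2n_{j_0-1}$; a direct count gives $(2n_{j_0-1})^{d_{j_0}+1}$. The $u_\gamma$ device and the three-pass structure are exactly the substitutes for the type~II bookkeeping you leave unfinished.
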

This lemma is the key ingredient of our evaluations in the auxiliary space. Its proof is
given in four steps. As usually we shall consider the tree analysis
$(\fa)_{\alpha\in\mc{T}}$ of a functional $f$ in $W_{j_0}^{\prime}$. In the first two
steps we shall show that we may assume that the height of the tree is less or equal to
$s_{j_0-1}-3$, and for all functionals $\fa$  of type I in a tree analysis of $f$ it
holds that $w(\fa)<m_{j_0}$. This will enable us to use an  argument similar to  Lemma A4
in \cite{AAT} and get the result.
\begin{proof}
Let  $(\fa)_{\alpha\in\mathcal{T}}$  be a tree analysis of the functional $f$.
For every $t\in \supp(f)$ we set
\begin{align*}
D_1=\{\alpha\in\mc{T}:\text{$\alpha$ is a minimal node} \text{ with $\fa$ of type I and
$w(\fa)\geq m_{j_0}$}\}.
\end{align*}
Let us observe that the set $D_1$ consists of incomparable nodes.
We set
$$
M_{1}=
\{\gamma\in\mc{T}: \text{$\gamma$ is a maximal  node and there exists $\alpha\in D_1$ with $\alpha\prec\gamma$}\}
$$
and also
$$
M_{2}=\{\gamma\in\mc{T}:\text{$\gamma$ is a maximal  node and $\gamma\not\in M_{1}$}\}.
$$
The definition of $M_{2}$ yields  that for every $\gamma\in M_{2}$ and every
$a\prec\gamma$ with $\fa$ of type I, $w(\fa)<m_{j_0}$.

We denote by $f_{1}$ the functional that we get following backwards the tree analysis of $f$ with
maximal nodes the nodes of $M_{1}$ and $f_{2}$
the corresponding one with maximal nodes those of $M_{2}$. Observe that $f=f_1+f_2$.

We also set $\mathcal{T}_{1}=\{\beta:\text{there exists $\alpha\in D_1$ with
$\beta\preceq\alpha$}\} $ which is a complete subtree of the tree $\mc{T}$  and
$(g_{\alpha})_{\alpha\in\mc{T}_{1}}$ the tree analysis of the functional $f_{1}$. Let us
observe that  every maximal node $\alpha$ of $\mc{T}_{1}$ belongs to the set
$D_1$ and $\ga=\fa$.
\begin{sublemma}
For the functional $f_{1}$, its tree analysis $(\ga)_{\alpha\in\mc{T}_{1}}$ and $t\in\N$
the following hold:
\begin{enumerate}
\item for every  $\alpha\in D_1$  we have $\vert \ga(e_t)\vert\leq\frac{c_{1}}{m_{j_0}}$.
\item for every non-maximal node $\alpha\in\mc{T}_{1}$  for which $\ga$ is of type I we
have
\begin{center}
$\vert\ga(e_t)\vert<\dfrac{9c_{1}}{8w(\ga)m_{j_0}}$.
\end{center}
\item For all $\alpha\in\mc{T}_{1}$ non-maximal with $\ga=\sum_{\beta\in
S_{\alpha}}\lambda_{\beta}\gb$ of type II  we have
\begin{center}$\vert \ga(e_t)\vert \leq\dfrac{9c_{1}}{8m_{j_0}}$.
\end{center}
\end{enumerate}
\end{sublemma}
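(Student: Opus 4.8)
The plan is to obtain (1) immediately from Lemma~\ref{linfty}, and then to prove (2) and (3) together by a single backward induction along the tree $\mc{T}_{1}$, from its leaves towards the root. Here one first observes that the leaves of $\mc{T}_{1}$ are exactly the nodes of $D_1$: the set $D_1$ is an antichain, since by its definition a node of type~I with weight $\ge m_{j_0}$ cannot lie strictly below another such node, and every node of $\mc{T}_{1}$ lies on a branch terminating at some node of $D_1$. For the base case, if $\alpha\in D_1$ then $\ga=\fa$ is a type~I functional of $W_{j_0}'$ with $w(\fa)\ge m_{j_0}$, so Lemma~\ref{linfty} gives $\vert\ga(e_t)\vert\le c_1/w(\fa)\le c_1/m_{j_0}$, which is (1). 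Before the inductive step I would record the structural fact that \emph{a non-maximal node $\alpha$ of $\mc{T}_{1}$ with $\ga$ of type~I must satisfy $w(\ga)<m_{j_0}$}: otherwise $\alpha$ would be a type~I node of weight $\ge m_{j_0}$ lying strictly above a node of $D_1$, contradicting minimality.

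For the inductive step, consider first a non-maximal $\alpha\in\mc{T}_{1}$ with $\ga$ of type~I, written $\ga=w(\ga)^{-1}\sum_\beta g_\beta$, the sum taken over those immediate successors $\beta$ of $\alpha$ that survive in $\mc{T}_{1}$. These $g_\beta$ have successive, hence pairwise disjoint, supports, so there is at most one successor $\beta_0$ with $t\in\supp g_{\beta_0}$; by the inductive hypothesis — or by (1) when $\beta_0\in D_1$ — one checks in all three possible cases for $\beta_0$ that $\vert g_{\beta_0}(e_t)\vert\le\tfrac{9c_1}{8m_{j_0}}$, whence $\vert\ga(e_t)\vert=w(\ga)^{-1}\vert g_{\beta_0}(e_t)\vert<\tfrac{9c_1}{8\,w(\ga)\,m_{j_0}}$, the strict inequality coming from $w(\ga)^{-1}<1$. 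This is (2).

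The real content is the type~II step: let $\ga=\sum_\beta\lambda_\beta g_\beta$ with $\sum_\beta\lambda_\beta^2\le1$. The decisive observation is that the immediate successors of a type~II node are either of type~I or are coordinate functionals $\pm e_k^*$, and the latter are leaves of the full tree lying above no node of $D_1$, hence do not survive in $\mc{T}_{1}$; consequently every surviving $g_\beta$ is of type~I and, by the definition of a type~II combination, the weights $w(g_\beta)$ are pairwise distinct. Cauchy--Schwarz then gives $\vert\ga(e_t)\vert\le\big(\sum_\beta g_\beta(e_t)^2\big)^{1/2}$; splitting the surviving successors into those lying in $D_1$, where $\vert g_\beta(e_t)\vert\le c_1/w(g_\beta)$ with $w(g_\beta)\ge m_{j_0}$ by (1), and the non-maximal type~I ones, where $\vert g_\beta(e_t)\vert<\tfrac{9c_1}{8m_{j_0}w(g_\beta)}$ by the hypothesis, and summing over the distinct weights, one arrives at
\[
\sum_\beta g_\beta(e_t)^2\ \le\ \frac{c_1^2}{m_{j_0}^2}\Big(\sum_{k:\,m_k\ge m_{j_0}}\frac{m_{j_0}^2}{m_k^2}\ +\ \frac{81}{64}\sum_{k\ge1}\frac{1}{m_k^2}\Big)\ <\ \Big(\frac{9c_1}{8m_{j_0}}\Big)^2 ,
\]
the last step being the elementary inequality $\tfrac{1}{1-m_{j_0}^{-8}}+\tfrac{81}{64}c_0<\tfrac{81}{64}$, which holds because $j_0>1$ makes $m_{j_0}\ge m_2$ enormous while $c_0=\sum_k m_k^{-2}$ is minuscule owing to $m_{k+1}=m_k^5$. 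This yields (3).

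I expect the only genuine obstacle to be the type~II estimate, and within it the point that the coordinate functionals $\pm e_k^*$ of a type~II combination do not survive in $\mc{T}_{1}$: were they present they would contribute a term of order $1$, rather than $m_{j_0}^{-1}$, to the Cauchy--Schwarz bound and the estimate would collapse. Once this is isolated, the rest is bookkeeping with the distinct weights together with the slack deliberately built into the constant $9/8$ (chosen precisely so that $1+o(1)<(9/8)^2$). The strict inequalities in (2) and (3) follow automatically — from the factor $w(\ga)^{-1}<1$ in the type~I case and from the strictness of the numerical inequality in the type~II case, which also retroactively makes the bound used for $\beta_0$ in the type~I step strict.
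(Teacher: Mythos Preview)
Your proof is correct and follows the same backward-inductive scheme as the paper, but handles the type~II step by a genuinely different device. The paper uses the triangle inequality together with $|\lambda_\beta|\le 1$: it isolates the (at most one) successor $\beta_0$ with $w(g_{\beta_0})=m_{j_0}$, splits the remaining successors into $S_\alpha^1=\{\beta:w(g_\beta)<m_{j_0}\}$ and $S_\alpha^2=\{\beta:w(g_\beta)>m_{j_0}\}$, and sums the bounds over the distinct weights to reach $\frac{c_1}{m_{j_0}}+\frac{4c_1}{m_{j_0}m_1}\le\frac{9c_1}{8m_{j_0}}$ (using $m_1=32$). Your Cauchy--Schwarz argument exploits $\sum_\beta\lambda_\beta^2\le 1$ directly and avoids singling out $\beta_0$, which is tidier and uses the $\ell_2$-structure of type~II functionals more naturally; the paper's approach, on the other hand, makes the role of the critical weight $m_{j_0}$ more visible and is what feeds into Remark~\ref{re4.5}.

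One small slip: the strict inequality in (2) does \emph{not} come from the factor $w(g_\alpha)^{-1}<1$ --- multiplying the bound $|g_{\beta_0}(e_t)|\le\frac{9c_1}{8m_{j_0}}$ by $w(g_\alpha)^{-1}$ yields only $\le\frac{9c_1}{8\,w(g_\alpha)\,m_{j_0}}$. The strictness actually comes, as you correctly note at the end, from the fact that in each of the three cases for $\beta_0$ the bound $|g_{\beta_0}(e_t)|<\frac{9c_1}{8m_{j_0}}$ is already strict (being $\le c_1/m_{j_0}$ when $\beta_0\in D_1$, $<\frac{9c_1}{8m_1m_{j_0}}$ when $\beta_0$ is non-maximal of type~I, and strictly $<\frac{9c_1}{8m_{j_0}}$ when $\beta_0$ is of type~II by your own numerical inequality).
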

\begin{proof}
(1) follows from Lemma \ref{linfty} and  the definition of the set $M_{1}$.

(2) and (3) are proved inductively.   Let $\alpha\in\mathcal{T}_1$ and assume the result
for all $\alpha\prec\beta$.

If $\ga=w(\ga)^{-1}\sum_{\beta\in S_{\alpha}}\gb$  it is  clear that (2) holds.

We pass now to prove (3). Assume that  $\ga=\sum_{\beta\in S_{\alpha}}\lambda_{\beta}\gb$. Then $\vert \ga(e_t)\vert
=\vert\sum_{\beta\in S_{a}}\lambda_{\beta}\gb(e_t)\vert$.

If   $\beta\in S_{\alpha}$  is not  maximal then $\gb$ is of type I and the inductive
assumptions yields
\begin{equation}\label{bv0}
\vert \gb(e_t)\vert\leq\frac{9c_{1}}{8w(\gb)m_{j_0}}.
\end{equation}
Let  $\beta\in S_{\alpha}$ be a maximal node, namely $\beta\in D_1$. From the definition of the set $M_{1}$ it follows
that  $\gb$ is of type  $I$ and $w(\gb)=m_{j_0+k}$ for some $k\geq
0$.  Lemma \ref{linfty} yields
\begin{equation}\label{bv}
\vert\gb(e_t)\vert\leq\frac{c_{1}}{w(\gb)}(\leq\frac{1}{m_{j_0}m_{j_0}^{k}} \mbox{ if $k>0$}).
\end{equation}
We also observe  that there exists at most one $\beta\in S_{\alpha}$  with
$w(\gb)=m_{j_0}$. Without loss of generality we assume that there does exist $\beta_0\in
S_{\alpha}$ with $w(g_{\beta_0})=m_{j_0}$.

Set $S_{\alpha}^{1}=\{\beta\in S_{\alpha}:\text{$\gb$ of type I with $w(\gb)<m_{j_0}$}\}$,

$S_{\alpha}^{2}=\{\beta\in S_{\alpha}:\text{$\gb$ of type I with $w(\gb)>m_{j_0}$}\}\cup\{\beta\in S_{\alpha}: \beta\,\text{is maximal}\}$.

It follows from \eqref{bv0}, \eqref{bv} and the fact that $w(\gb)\ne w(g_{\gamma})$ for every $\beta\ne\gamma\in S_{\alpha}$, that
\begin{align*}
\vert \ga(e_t)\vert & \leq\sum_{\beta\in S_{\alpha}^{1}}\vert\lambda_{\beta}g_{\beta}(e_t)\vert+
 \vert \lambda_{\beta_0}g_{\beta_0}(e_t)\vert+
 \sum_{\beta\in S_{\alpha}^{2}}\vert\lambda_{\beta} g_{\beta}(e_t)\vert
 \\
 &\leq
\vert \lambda_{\beta_0}g_{\beta_0}(e_t)\vert+
 \sum_{\beta\in S_{\alpha}^{1}}\frac{2c_{1}\vert\lambda_{\beta}\vert}{m_{j_0}w(\gb)}+
\sum_{\beta\in S_{\alpha}^{2}}\frac{c_{1}\vert\lambda_{\beta}\vert}{w(\gb)}
\\
&\leq
\vert g_{\beta_0}(e_t)\vert+
 \sum_{i<j_0}\frac{2c_{1}}{m_{j_0}m_{i}}
+
 \sum_{k\geq 1}\frac{1}{m_{j_0}^{k+1}}
\leq
\frac{c_{1}}{{m_{j_0}}}+
 \frac{4c_{1}}{m_{j_0}m_{1}}\leq \frac{9c_{1}}{8m_{j_0}}.
\end{align*}
\end{proof}
\begin{remark}\label{re4.5} Let $(\fa)_{\alpha\in\mc{T}}$ be a tree analysis of $f_1$ such that for all $\fa$ of type I it holds that $w(\fa)\ne m_{j_0}$. The proof of the above sublemma yields that in this case $\vert f_{1}(e_{t})\vert\leq \frac{9c_{1}}{8m_{j_0}^{2}}$.
\end{remark}
Let us observe that, since $f=f_1+f_2$, the previous sublemma yields that $\supp(f_2)=\supp(f)$.
So we may assume that $f=f_2$. Let $(\fa)_{\alpha\in\mc{T}}$ be the tree analysis of $f$ with all maximal nodes belonging to $M_{2}$.  For $\beta\in\mc{T}$ we denote by
\begin{center}
$\vert\beta\vert=\#\{\alpha\in\mc{T}:\alpha\prec\beta\}$
\end{center}
the order of $\beta$.

We set $d_{j_0}=s_{j_0-1}-3$ and we observe that  $c_{1}2^{-d_{j_0}}<m^{-2}_{j_0}$.
We also set \begin{align*}
D=\{\alpha:\alpha\,
\text{is a maximal node of $\mc{T}$ with} &\text{$\vert \alpha\vert\leq d_{j_0}$ and if  $\alpha\in S_{\beta}$,}\\
&\text{$\fb$ is of type II then $\vert\lambda_{\alpha}\vert\geq m_{j_0}^{-1}$}\}.
\end{align*}
With the next  two sublemmas  we  show that $\supp (f)=\cup_{\alpha\in D}\supp (\fa)$.
\begin{sublemma}\label{sl4.6}
Let $t\in\supp (f)$ be such that there is no  maximal node $\beta$
with $\vert \beta\vert \leq d_{j_0}$ and $t\in \supp(\fb)$. Then it
holds $\vert f(e_t)\vert\leq c_{1}2^{-d_{j_0}}$.
\end{sublemma}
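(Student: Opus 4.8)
The plan is to propagate, along the fixed tree analysis $(\fa)_{\alpha\in\mc{T}}$ of $f$, a bound on the amount of weight the single coordinate $e_t$ can collect from the terminal nodes of the tree. Recall that for $f\in W_{j_0}^{\prime}$ a tree analysis has only nodes of type I and of type II (there is no rational convexity in $W_{j_0}^{\prime}$) and that every maximal node carries a functional of $C_{j_0}$, so $\vert\fg(e_t)\vert\le 1$ at a maximal $\gamma$. For $\alpha\in\mc{T}$ I would define a number $\nu_\alpha$ by backward recursion: $\nu_\alpha=0$ if $t\notin\supp\fa$; $\nu_\alpha=1$ if $\alpha$ is maximal and $t\in\supp\fa$; $\nu_\alpha=w(\fa)^{-1}\nu_{\beta_0}$ if $\fa=w(\fa)^{-1}\sum_{\beta\in S_\alpha}\fb$ is of type I, where $\beta_0\in S_\alpha$ is the unique immediate successor with $t\in\supp f_{\beta_0}$ (unique since the successors of a type I node have pairwise successive supports, whence $\supp\fa$ is their disjoint union); and $\nu_\alpha=\bigl(\sum_{\beta\in S_\alpha}\lambda_\beta^2\bigr)^{1/2}\bigl(\sum_{\beta:\,t\in\supp\fb}\nu_\beta^2\bigr)^{1/2}$ if $\fa=\sum_{\beta\in S_\alpha}\lambda_\beta\fb$ is of type II. Reading off these three identities, a straightforward backward induction gives $\vert\fa(e_t)\vert\le\nu_\alpha$ for every $\alpha$: at a type I node the successors $\beta\neq\beta_0$ contribute $0$ to $\fa(e_t)$, and at a type II node one applies Cauchy--Schwarz to $\fa(e_t)=\sum_{\beta:\,t\in\supp\fb}\lambda_\beta\fb(e_t)$. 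Hence $\vert f(e_t)\vert\le\nu_0$, and it is enough to show $\nu_0\le c_1 2^{-d_{j_0}}$.

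The first step is the uniform estimate $\nu_\alpha\le c_1$ for all $\alpha$ with $t\in\supp\fa$, and in fact $\nu_\alpha\le c_1/w(\fa)$ when $\fa$ is of type I; this follows from the same backward induction that proves Lemma \ref{linfty}, the only point being that at a type II node $\alpha$ the immediate successors carrying $t$ in their support are at most one basis functional $\pm e_t^*$ (with $\nu_\beta=1$) together with finitely many type I functionals of pairwise different weights, so $\sum_{\beta:\,t\in\supp\fb}\nu_\beta^2\le 1+c_1^2\sum_j m_j^{-2}=1+c_1^2 c_0=c_1^2$, the last equality because $c_1^2=(1-c_0)^{-1}$.

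The second step uses the hypothesis on $t$. I would prove, by induction on $d_{j_0}-\vert\alpha\vert$ from $\vert\alpha\vert=d_{j_0}$ upward, that $\nu_\alpha\le c_1\,2^{-(d_{j_0}-\vert\alpha\vert)}$ whenever $t\in\supp\fa$ and $\vert\alpha\vert\le d_{j_0}$. For $\vert\alpha\vert=d_{j_0}$ this is the uniform bound. For $\vert\alpha\vert<d_{j_0}$ the hypothesis forbids $\alpha$ from being a maximal node containing $t$, so $\fa$ is of type I or of type II. If $\fa$ is of type I then $\nu_\alpha=w(\fa)^{-1}\nu_{\beta_0}\le m_1^{-1}c_1\,2^{-(d_{j_0}-\vert\alpha\vert-1)}\le c_1\,2^{-(d_{j_0}-\vert\alpha\vert)}$, since $2\le m_1$. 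If $\fa$ is of type II then, by the hypothesis, no immediate successor of $\alpha$ carrying $t$ can be a basis functional (that would be a maximal node of depth $\vert\alpha\vert+1\le d_{j_0}$ containing $t$); hence all such successors $\beta$ are of type I with pairwise different weights, and $\nu_\beta=w(\fb)^{-1}\nu_{\delta_\beta}$ for the unique grandchild $\delta_\beta$ of $\alpha$ through $\beta$ that carries $t$, a node of depth $\vert\alpha\vert+2$. Estimating $\nu_{\delta_\beta}$ by the inductive hypothesis when $\vert\alpha\vert+2\le d_{j_0}$ and by the uniform bound $c_1$ when $\vert\alpha\vert+2=d_{j_0}+1$, and summing over the distinct weights, one obtains $\nu_\alpha\le\bigl(\sum_{\beta:\,t\in\supp\fb}\nu_\beta^2\bigr)^{1/2}\le c_1\sqrt{c_0}\,2^{-\max\{d_{j_0}-\vert\alpha\vert-2,\,0\}}\le c_1\,2^{-(d_{j_0}-\vert\alpha\vert)}$, the last inequality because $\sqrt{c_0}\le 1/4$. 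Taking $\alpha=0$ gives $\nu_0\le c_1 2^{-d_{j_0}}$, whence $\vert f(e_t)\vert\le c_1 2^{-d_{j_0}}$.

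The genuine difficulty I anticipate is the bookkeeping in this last induction, where the depth assumption has to be matched against the fact that a type II node contracts only over two consecutive levels (the gain $\sqrt{c_0}$ coming from Cauchy--Schwarz together with the disjointness of the weights of its type I successors). One has to check, first, that the single place where $t$ could be collected with a coefficient of size $\approx 1$ — a basis functional $\pm e_t^*$ lying immediately below a type II node — is excluded above level $d_{j_0}$ by the hypothesis; and, second, that the shallow boundary cases $\vert\alpha\vert\in\{d_{j_0}-1,\,d_{j_0}\}$, where the recursion at a type II node reaches past level $d_{j_0}$, are safely absorbed by the uniform bound $\nu_\alpha\le c_1$. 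The remaining computations follow the familiar estimates for mixed Tsirelson norming sets.
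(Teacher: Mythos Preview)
Your proof is correct and follows essentially the same backward-induction-on-depth scheme as the paper: both start from the uniform bound $c_1$ at level $d_{j_0}$, use the hypothesis to exclude basis-vector successors of shallow type~II nodes, and gain a factor $2$ per level. The only difference is bookkeeping---the paper carries a weight-shifted factor $u_\gamma=m_{r-1}^{-1}$ in its inductive claim and handles type~II nodes by the triangle inequality, whereas you look two levels down through the type~I children and use Cauchy--Schwarz with $\sqrt{c_0}\le 1/4$; both devices serve the same purpose.
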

\begin{proof}
Let $t\in\supp(f)$. By induction we shall show the following:
for every $\gamma\in\mathcal{T}$ such that $\vert \gamma\vert =d_{j_0}-j$, $j=0,1,\dots,d_{j_0}$
we have that
\begin{equation}
\vert \fg (e_t)\vert\leq c_{1}2^{-j}u_{\gamma},
\end{equation}
where $u_{\gamma}=m^{-1}_{r-1}$ if  $\fg$ is of type I and $w(\fg)=m_{r}$, (we make the
convention that $m_0=2^{-2}$),\, $u_{\gamma}=1$  if $\fg$ is of type II.

First we observe that Lemma \ref{linfty} yields $\vert \fb(e_t)\vert\leq c_{1}$ for every maximal node $\beta\in \mc{T}$.

Assume that the result holds for all $\delta\succ \gamma$ with  $\vert \delta\vert=d_{j_0}-(j-1)$
and $\vert \gamma\vert=d_{j_0}-j$.

If  $\fg=m_{r}^{-1}\sum_{\delta\in S_{\gamma}}\fd$ is of type I then since there exists
unique $\delta\in S_{\gamma}$ with $t\in\supp \fd$, from the inductive hypothesis we
obtain $$ \vert\fg(e_t)\vert\leq m_{r}^{-1}\frac{c_{1}}{2^{j-1}}\leq
\frac{c_{1}}{m_{r-1}2^{j}}.
$$
If $\fg=\sum_{\delta\in S_{\gamma}}\lambda_{\delta}\fd$ is of type II  then each $\fd$ is
of type I. This follows from our assumption  that every $\delta$ with
$\vert\delta\vert\leq d_{j_0}$ is not  a maximal node of the tree $\mc{T}$.

Hence setting $A_{t}=\{\delta: t\in\supp\fd\}$ we get
$$
\vert\fg(e_t)\vert\leq \sum_{\delta \in A_{t}}\vert \lambda_{\delta}\fd(e_t)\vert\leq \sum_{i}\frac{1}{m_{i-1}}\frac{c_{1}}{2^{j-1}}\leq \frac{c_{1}}{2^{j}}.
$$\end{proof}
By the above sublemma  we may assume
$$
\supp(f)=\cup\{\supp(\fa):\text{ $\alpha$ maximal node with $\vert \alpha\vert\leq d_{j_0}$}\}.
$$
\begin{sublemma} Let $t\in\supp(f)$ be such that for every maximal node $\alpha\in\mc{T}$ with $t\in\supp(\fa)$, we have $\vert \alpha\vert\leq d_{j_0}$  and $\alpha\in S_{\beta}$ with $\fb$ is of type II and $\vert \lambda_{\alpha}\vert\leq m_{j_0}^{-1}$.  Then
$$
\vert f(e_t)\vert< \frac{3}{2m_{j_0}}
$$
\end{sublemma}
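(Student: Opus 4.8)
The plan is to run a backward induction on a tree analysis $(\fa)_{\alpha\in\mc{T}}$ of $f$ — here $f=f_2$ and $\mc{T}$ is the tree analysis all of whose maximal nodes lie in $M_2$, so that every type-I functional occurring in it has weight $<m_{j_0}$ — working inside the subtree $\mc{T}_t=\{\alpha\in\mc{T}:\ t\in\supp\fa\}$. The point that is new compared with Sublemma~\ref{sl4.6} is the following structural observation: by hypothesis every maximal node $\alpha$ of $\mc{T}$ with $t\in\supp\fa$ is an immediate successor of a type-II node $\beta$; since an immediate successor of a type-II node of $W_{j_0}^{\prime}$ is of the form $\pm e^*_k$, and $t\in\supp\fa$, we get $\fa=\pm e^*_t$, so $|\fa(e_t)|=1$, while the associated coefficient satisfies $|\lambda_\alpha|\le m_{j_0}^{-1}$. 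In other words the coordinate $t$ can only propagate down to the leaves of $\mc{T}$ through an $\ell_2$-coefficient of modulus at most $m_{j_0}^{-1}$, and this is what will eventually produce the factor $m_{j_0}^{-1}$ in the bound.

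Granting this, I would prove by downward induction on $|\gamma|$ (from the deepest non-maximal nodes of $\mc{T}_t$ up towards the root) an estimate of the shape
\[
|\fg(e_t)|\le\Bigl(c_1 2^{-(d_{j_0}-|\gamma|)}+\frac{c}{m_{j_0}}\Bigr)u_\gamma
\]
for every non-maximal $\gamma\in\mc{T}_t$, where $u_\gamma$ is exactly the quantity of Sublemma~\ref{sl4.6} ($u_\gamma=m_{r-1}^{-1}$ if $\fg$ is of type I with $w(\fg)=m_r$, with the convention $m_0=2^{-2}$, and $u_\gamma=1$ if $\fg$ is of type II) and $c$ is a universal constant slightly bigger than $1$. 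The base case is a deepest non-maximal node of $\mc{T}_t$: such a node cannot be of type I, for otherwise $t$ would lie in a maximal node with a type-I parent, against the hypothesis; hence it is of type II and the only successor meeting $t$ is a basis vector, so $|\fg(e_t)|=|\lambda_{\delta_1}|\le m_{j_0}^{-1}$. For a type-I node $\fg=m_r^{-1}\sum_{\delta\in S_\gamma}\fd$ there is a unique successor $\delta_0$ with $t\in\supp f_{\delta_0}$ (the supports are pairwise disjoint), and the inductive step closes because $2u_{\delta_0}m_r^{-1}\le m_{r-1}^{-1}$ — this is precisely where the growth law $m_{j+1}=m_j^5$ enters. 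For a type-II node $\fg=\sum_{\delta\in A}\lambda_\delta f_\delta+\sum_{\delta\in B}\lambda_\delta e_{t_\delta}$ one estimates the contribution of the type-I part by Cauchy--Schwarz, using $\sum_\delta\lambda_\delta^2\le 1$ together with the fact that the weights $w(f_\delta)$ are pairwise distinct, so that $\sum_\delta|f_\delta(e_t)|^2$ is controlled by a $\sum_j m_j^{-2}$-type constant exactly as in Lemma~\ref{linfty}; the at most one basis successor with $t_{\delta_1}=t$ contributes the additional term $|\lambda_{\delta_1}|\le m_{j_0}^{-1}$, which is absorbed into $c/m_{j_0}$ thanks to the smallness of $c_0=\sum_j m_j^{-2}$ (again a consequence of the fast growth of $(m_j)$).

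Finally one evaluates the estimate at the root. If $f$ is of type II then $u_{\mathrm{root}}=1$ and
\[
|f(e_t)|\le c_1 2^{-d_{j_0}}+\frac{c}{m_{j_0}}< m_{j_0}^{-2}+\frac{c}{m_{j_0}}<\frac{3}{2m_{j_0}},
\]
using the defining inequality $c_1 2^{-d_{j_0}}<m_{j_0}^{-2}$ of $d_{j_0}$ and $m_{j_0}>6$; if $f$ is of type I with $w(f)=m_r$, one unfolds one further step, $|f(e_t)|=m_r^{-1}|f_{\delta_0}(e_t)|\le m_r^{-1}\bigl(c_1 2^{-(d_{j_0}-1)}+c/m_{j_0}\bigr)u_{\delta_0}$, and since $m_r\ge m_1=32$ and $u_{\delta_0}\le 4$ the right-hand side is again below $\tfrac{3}{2m_{j_0}}$. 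The heart of the matter is not the method — it is the familiar backward induction on tree analyses used throughout these norm estimates — but the calibration of the constant $c$: it must be taken large enough to absorb the $m_{j_0}^{-1}$ coming from the light leaf-coefficients after passing through all the type-II Cauchy--Schwarz steps, yet small enough that the final bound is strictly less than $\tfrac{3}{2m_{j_0}}$, and the room to do so is provided exactly by the fact that $(m_j)$ grows fast enough that $c_0$, hence $c_1$, stays clear of the relevant thresholds. I expect this balancing of constants, rather than any conceptual difficulty, to be the main obstacle.
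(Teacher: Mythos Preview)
Your approach is correct and will close, but it is carrying more machinery than the situation requires. You have imported the level-dependent decay term $c_1\,2^{-(d_{j_0}-|\gamma|)}$ from Sublemma~\ref{sl4.6}, where it is genuinely needed because the leaves there satisfy only $|f_\alpha(e_t)|\le c_1$. In the present sublemma the leaves already contribute at most $m_{j_0}^{-1}$ (via the small coefficient $|\lambda_\alpha|$), so there is nothing for the decay term to kill; it is pure baggage that makes the type-II step delicate, forcing you into the tight window you describe at the end.

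The paper drops that term entirely and runs the same backward induction with a \emph{constant} inductive hypothesis:
\[
|f_\gamma(e_t)|\ <\ \begin{cases} \dfrac{3}{2m_{j_0}} & \text{if $f_\gamma$ is of type II,}\\[2ex] \dfrac{1}{4\,m_{j_0}\,m_{r-1}} & \text{if $f_\gamma$ is of type I with $w(f_\gamma)=m_r$.}\end{cases}
\]
The base case is exactly your observation that the parent of a leaf containing $t$ is type~II with $|\lambda|\le m_{j_0}^{-1}$. For type~I one divides the (type-II or type-I) bound of the unique successor by $m_r$, using $\tfrac{3}{2m_r}\le\tfrac{1}{4m_{r-1}}$. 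For type~II one sums $\sum|\lambda_\beta|\cdot\tfrac{1}{4m_{j_0}m_{r(\beta)-1}}$ over the distinct-weight type-I successors (Cauchy--Schwarz or just $|\lambda_\beta|\le 1$ and $\sum m_{i-1}^{-1}$ small) and adds the single leaf term $|\lambda_{\beta_0}|\le m_{j_0}^{-1}$. No balancing of a separate decay constant is needed; the $3/2$ appears directly as $\tfrac{1}{2}+1$. What your version buys is a uniform treatment of this sublemma and Sublemma~\ref{sl4.6} in one formula, at the cost of a tighter constant chase; the paper's version trades that unification for a two-line induction.

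One caveat that affects both routes: with the convention $m_0=2^{-2}$ stated in Sublemma~\ref{sl4.6}, the sum $\sum_{i\ge 1}m_{i-1}^{-1}$ has $m_0^{-1}=4$ as its first term, and neither your type-II step nor the paper's closes numerically. The computations go through cleanly with $m_0=2^{2}$ instead (so $m_0^{-1}=1/4$), which is almost certainly the intended convention; you should flag this when writing up.
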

\begin{proof}
We prove  by induction on $j=0,\dots,d_{j_0}$
that if $\vert \gamma\vert =d_{j_0}-j$  then
$$|\fg(e_t)|<
\begin{cases} 3/2m_{j_0},\,\,\text{if $\fg$ is of type II}
\\
1/2^2m_{j_0}m_{r-1}\,\,\,\text{if $f_{\gamma}$ is of type I  and $w(f_\gamma)=m_{r}$ }.
\end{cases}
$$
For the predecessors of maximal nodes we have it by assumption. If $f_\gamma$ is of type
I, then only one of its successors have $e_t$ in its support and by the inductive
hypothesis
\begin{center}
$\vert f_\gamma(e_t)\vert<  \dfrac{3}{2m_{j_0} m_{r}}\leq \dfrac{1}{2^2m_{j_0}m_{r-1}}$
\end{center}
If $\fg$ is of type II and $\vert \fg\vert=d_{j_0}-j$, let $\fg=\sum_{\beta\in
S_{\gamma}}\lambda_\beta f_\beta$. Set  $A_{t}=\{\beta\in S_{\gamma}: t\in\supp(\fb)\}$.
There exists at most one  $f_{\beta_0}=e_t^*$ and  all other $f_\beta$ are of type I with
different weights. The  inductive hypothesis and H\"older inequality yield
\begin{align*}
\vert f_{\gamma} (e_t)\vert
 \leq   \sum_{\beta\in  A_{t}, \beta\neq\beta_0}\vert \lambda_\beta \vert
\frac{1}{2^2m_{j_0}m_{\beta-1}}+\vert\lambda_{\beta_0}\vert e_t^*(e_t)
<\frac{1}{2m_{j_0}}+\frac{1}{m_{j_0}}\leq\frac{3}{2m_{j_0}}
\end{align*}
and the induction is finished.
\end{proof}
The above sublemmas  for the tree analysis $(\fa)_{\alpha\in\mc{T}}$ of $f$ with all maximal nodes belonging to $M_{2}$ yields that
\begin{align}
\supp(f)=\cup\{\supp(\fa):\alpha\,
&\text{is a maximal node of $\mc{T}$ with $\vert \alpha\vert\leq d_{j_0}$ and}
\notag\\
&\text{if $\alpha\in S_{\beta}$, $\fb$ is of type II then $\vert\lambda_{\alpha}\vert\geq
m_{j_0}^{-1}$}\}\label{la2}
\end{align}
\begin{sublemma}\label{sl4.7}
Let  $f\in W_{j_0}^{\prime}$ with an tree analysis
$(\fa)_{\alpha\in\mc{T}}$  such that
\begin{enumerate}
\item[1)] the height of $\mc{T}$ is less or  equal to $d_{j_0}$, \item[2)] if $\fa$ is of
type I then $w(\fa)<m_{j_0}$ holds, \item[3)] if $\alpha$ is a maximal node, $\alpha\in
S_{\beta}$, $\fb$ is of type II then $\vert\lambda_{\alpha}\vert\geq m_{j_0}^{-1}$.
\end{enumerate}
Then
$$\#\supp(f)\leq (2n_{j_0-1})^{d_{j_{0}}+1}.$$
\end{sublemma}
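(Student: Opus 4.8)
The plan is to work with a tree analysis $(\fa)_{\alpha\in\mc{T}}$ of $f$ witnessing (1)--(3) and to estimate $\#\supp(\fa)$ for every node $\alpha$ by backward induction on the height of the subtree hanging from $\alpha$. The base case is immediate: a maximal node lies in $C_{j_0}$, so $\#\supp(\fa)\le n_{j_0-1}$. The first real step is to extract from (2) and (3) the two branching bounds. If $\fa$ is of type I, then by (2) its weight $w(\fa)=m_j$ satisfies $m_j<m_{j_0}$, hence $j<j_0$, hence $n_j\le n_{j_0-1}$; since its defining family is $\mca_{2n_j}$-admissible, $\fa$ has at most $2n_j\le 2n_{j_0-1}$ immediate successors and their supports are pairwise disjoint. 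If $\fa=\sum_{i\in A}\lambda_i f_i+\sum_{i\in B}\lambda_i e_{t_i}$ is of type II, then the $f_i$, $i\in A$, are exactly its non-maximal successors (a type I functional is never in $C_{j_0}$); they are of type I with pairwise distinct weights, all less than $m_{j_0}$ by (2), so $\#A\le j_0-1$ (in fact $\#A\le1$ when $j_0=2$), while the maximal successors are the distinct unit vectors $e_{t_i}$, $i\in B$, and condition (3) together with $\sum_i\lambda_i^2\le1$ forces $\#B\le m_{j_0}^2$.

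With these in hand I would prove, by backward induction on the height $k$ of the subtree rooted at $\alpha$, the natural inductive statement
\[
\#\supp(\fa)\ \le\ n_{j_0-1}(2n_{j_0-1})^{k}\ +\ m_{j_0}^{2}\cdot\#\{\beta:\ \alpha\preceq\beta,\ \fb\ \text{of type II}\}.
\]
For a type I node $\alpha$ this is obtained by summing the inductive bound over its $\le 2n_{j_0-1}$ immediate successors (whose supports are disjoint); for a type II node one sums over the $\le j_0-1\le 2n_{j_0-1}$ non-maximal successors, adds the $\le m_{j_0}^2$ coordinates coming from the maximal ones, and notes that $\alpha$ itself enters the set counted in the second summand. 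Evaluating at the root and using that the type II nodes sit among the non-maximal nodes, while every non-maximal node has at most $2n_{j_0-1}$ non-maximal successors — so there are at most $(2n_{j_0-1})^{\delta}$ of them at depth $\delta$, hence at most $\sum_{\delta<d_{j_0}}(2n_{j_0-1})^{\delta}$ in all by (1) — reduces everything to a single numerical estimate.

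That numerical estimate is the main obstacle, and the only place where the precise choice of $(m_j)$ and $(n_j)$ matters: one must absorb the factor $m_{j_0}^{2}$ contributed by the coordinate successors of type II nodes. Here I would invoke the defining relation $d_{j_0}=s_{j_0-1}-3$, arranged so that $c_1 2^{-d_{j_0}}<m_{j_0}^{-2}$, i.e. $m_{j_0}^{2}$ is dominated by $2^{d_{j_0}}$ and hence a fortiori by $(2n_{j_0-1})^{d_{j_0}}$; combined with the sparsity of the type II nodes along branches of $\mc{T}$ — a type II node is never the immediate successor of a type II node, since the non-maximal successors of a type II functional are of type I — this forces both summands in the displayed bound to stay below $(2n_{j_0-1})^{d_{j_0}+1}$, which is the assertion. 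This last geometric estimate is the analogue in the present tree of Lemma A.4 in \cite{AAT}; everything preceding it is bookkeeping with the branching bounds, and I expect the delicate part to lie entirely in bounding the cumulative effect of the $m_{j_0}^2$ coordinates donated by the type II nodes.
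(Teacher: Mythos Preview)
Your branching bounds are right and your two-term inductive invariant
\[
\#\supp(\fa)\ \le\ n_{j_0-1}(2n_{j_0-1})^{k}\ +\ m_{j_0}^{2}\cdot\#\{\beta:\alpha\preceq\beta,\ \fb\ \text{type II}\}
\]
is correct. The gap is in the final numerical closure. Your own count gives at most $\sum_{\delta<d_{j_0}}(2n_{j_0-1})^{\delta}$ type II nodes, which is of order $(2n_{j_0-1})^{d_{j_0}-1}$; multiplying by $m_{j_0}^{2}$ you therefore need $m_{j_0}^{2}\lesssim(2n_{j_0-1})^{2}$, not the much weaker $m_{j_0}^{2}<2^{d_{j_0}}\le(2n_{j_0-1})^{d_{j_0}}$ that you invoke. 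The alternation observation (type II cannot follow type II) does not rescue this: type I can follow type I, so the non-maximal branching stays at $2n_{j_0-1}$ and the global count of type II nodes is still of order $(2n_{j_0-1})^{d_{j_0}-1}$.

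The paper avoids the whole detour. It runs the straightforward single-term induction
\[
\#\supp(\fa)\ \le\ (2n_{j_0-1})^{j+1}\qquad(\text{for }\alpha\text{ of co-depth }j),
\]
and at a type II node simply uses
\[
(j_0-1)(2n_{j_0-1})^{j}+m_{j_0}^{2}\ \le\ (2n_{j_0-1})^{j+1},
\]
absorbing the $m_{j_0}^{2}$ contribution on the spot. This is exactly the inequality $m_{j_0}^{2}\lesssim(2n_{j_0-1})^{2}$ your argument ultimately also needs, but used once per step rather than deferred to a global count; it makes the second summand in your invariant, the bookkeeping of type II descendants, and the alternation remark all unnecessary.
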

\begin{proof}
Inductively we show that for every $\alpha$ with $\vert \alpha\vert=d_{j_0}-j$, $j=0,1,\dots,d_{j_0}$ holds the following
\begin{equation}
\label{bv2}\#\supp(\fa)\leq (2n_{j_0-1})^{j+1}.
\end{equation}
If $\fa$ is a terminal node it is clear that \eqref{bv2} holds.

Assume that the result holds for all $\alpha$ of order $d_{j_0},\dots,d_{j_0}-(j-1)$ and let $\vert \alpha\vert=d_{j_0}-j$.

If $\fa=w(\fa)^{-1}\sum_{\beta\in S_{\alpha}}\fb$ is of type I, then  we get $w(\fa)\leq
m_{j_0-1}$ and hence $\# S_{\alpha}\leq 2n_{j_0-1}$. Using the  inductive hypothesis  we
obtain $$ \#\supp(\fa)\leq 2n_{j_0-1}\max\{\#\supp(\fb):\beta\in S_{\alpha}\}\leq
(2n_{j_0-1})^{j+1}.
$$
If $\fa=\sum_{\beta\in S_{\alpha}}\lambda_{\beta}\fb$, setting
$S_{\alpha}^{1}=\{\beta\in S_{\alpha}:\beta\,\text{maximal}\}$
from  assumption 3)  we get $|\lambda_{\beta}|\geq m_{j_0}^{-1}$
for all $\beta\in S_{\alpha}^{1}$ and therefore $\#
S_{\alpha}^{1}\leq m_{j_0}^{2}$.

Note also that for every $\beta\in S_{\alpha}\setminus S_{\alpha}^{1}$ it holds that
$w(\fb)<m_{j_0}$ and hence  $\# (S_{\alpha}\setminus S_{\alpha}^{1})\leq j_{0}-1$. Hence
bys the inductive hypothesis we get
\begin{align*}
\#\supp(\fa)&\leq (j_0-1)\max\{\#\supp(\fb):\beta\in S_{\alpha}\setminus S_{\alpha}^{1}\}+m_{j_0}^{2}\\
&\leq
(j_0-1)(2n_{j_0-1})^{j}+m_{j_0}^{2}\leq
 (2n_{j_0-1})^{j+1}.
\end{align*}
\end{proof}
\textit{Proof of Lemma \ref{linfty2} completed.} The desired inequality follows from
\eqref{la2}  and Sublemma  \ref{sl4.7}.
\end{proof}
\begin{remark}\label{re4.8}
Take $f\in W_{j_0}^{\prime}$ such that for every $t\in\supp(f)$,
$|f(e_t)|>3m_{j_0}^{-2}$ and every $\fa$ in the tree analysis   $(\fa)_{\alpha\in\mc{T}}$
of $f$ satisfies $w(\fa)\ne m_{j_0}$.  Then Remark \ref{re4.5}, Sublemma \ref{sl4.6} and
\ref{sl4.7} yields that  $\#supp(f)\leq (2n_{j_0-1})^{s_{j_0-1}-2}$.
\end{remark}
\begin{lemma} \label{42}
Let $f\in W_{j_0}^{\prime}$ be of type I. Then 
\begin{equation}\label{basise1}
\,\,\,\vert f(\frac{1}{n_{j_0}}\sum_{i=1}^{n_{j_0}}e_{t})\vert \leq
\begin{cases}
\frac{4}{w(f)\cdot m_{j_0}},\quad &\text{if}\,\,\,w(f)<m_{j_0}\\
\frac{c_{1}}{w(f)},\quad &\text{if}\,\,\,w(f)\geq m_{j_0}\,.
\end{cases}
\end{equation}
If moreover we  assume that there exists a tree analysis
$(f_{\alpha})_{\alpha\in\mc{T}}$ of $f$, such that
$w(f_{\alpha})\not= m_{j_0}$
for every $\alpha\in\mc{T}$,  we have that
\begin{equation}\label{basise2}
\vert f(\frac{1}{n_{j_0}}\sum_{i=1}^{n_{j_0}}e_{k_{i}})\vert \leq
\begin{cases}
\frac{4}{m_{i}m_{j_0}^{2}}\,&\textrm{if $w(f)=m_{i}<m_{j_0}$}\\
\frac{c_{1}}{m_{i}}\,&\textrm{if $w(f)=m_{i}>m_{j_0}$}
\end{cases}
\end{equation}
\end{lemma}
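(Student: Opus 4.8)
The plan is to split according to the size of $w(f)$ relative to $m_{j_0}$. If $w(f)\ge m_{j_0}$, Lemma~\ref{linfty} gives $\vert f(e_{k_i})\vert\le c_1/w(f)$ for every $i$, whence $\vert f(\tfrac1{n_{j_0}}\sum_{i=1}^{n_{j_0}}e_{k_i})\vert\le\tfrac1{n_{j_0}}\sum_{i=1}^{n_{j_0}}\vert f(e_{k_i})\vert\le c_1/w(f)$; this is exactly the bound asserted in both \eqref{basise1} and \eqref{basise2} in this range, so there is nothing more to do. Assume now $f$ of type I with $w(f)=m_i$, $i<j_0$, write $x=\tfrac1{n_{j_0}}\sum_{l=1}^{n_{j_0}}e_{k_l}$, and fix the decomposition $f=m_i^{-1}\sum_{l=1}^{d}f_l$ coming from the $(\mc{A}_{2n_i},m_i^{-1})$-operation producing $f$ (for the second part, the one sitting at the root of the prescribed tree analysis of $f$, so that each $f_l$ inherits a tree analysis with all weights $\ne m_{j_0}$), so that $f_1<\dots<f_d$ lie in $W_{j_0}^{\prime}$ and $d\le 2n_i\le 2n_{j_0-1}$.

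Put $\theta=3m_{j_0}^{-1}$ in the first part and $\theta=3m_{j_0}^{-2}$ in the second, and for each branch set $B_l=\{t\in\supp f_l:\ \vert f_l(e_t)\vert>\theta\}$. The point of this choice is that the proof of Lemma~\ref{linfty2} (resp., under the weight hypothesis, that of Remark~\ref{re4.8}, which remains valid with the lower threshold because at every type~II node only the immediate successors $\pm e^{*}_{k}$ carrying a coefficient exceeding $3m_{j_0}^{-2}$ can contribute, and there are at most $m_{j_0}^{4}$ of these, which is negligible against $n_{j_0-1}$) yields $\#B_l\le(2n_{j_0-1})^{s_{j_0-1}-2}$ for every $l$. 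Decompose $f=g+h$ where $h$ is the restriction of $f$ to $B=\bigcup_{l=1}^{d}B_l$ and $g$ its restriction to the complement. Since $h$ has the same values as $f$ on $B$, Lemma~\ref{linfty} gives $\norm[h]_\infty\le c_1/m_i$, while on the complement of $B$ the relevant branch satisfies $\vert f_l(e_t)\vert\le\theta$, hence $\norm[g]_\infty\le\theta/m_i$; moreover, the sets $\supp f_l$ being disjoint, $\#B=\sum_{l=1}^{d}\#B_l\le 2n_{j_0-1}\,(2n_{j_0-1})^{s_{j_0-1}-2}=(2n_{j_0-1})^{s_{j_0-1}-1}$.

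It remains to combine the two pieces, using $\norm[x]_{\ell_1}=1$, $\#(\supp h\cap\{k_l\})\le\#B$, and $n_{j_0}=(2n_{j_0-1})^{s_{j_0-1}}$:
\begin{equation*}
\vert f(x)\vert\le\vert g(x)\vert+\vert h(x)\vert\le\frac{\theta}{m_i}\norm[x]_{\ell_1}+\frac{c_1}{m_i}\cdot\frac{\#B}{n_{j_0}}\le\frac{\theta}{m_i}+\frac{c_1}{2\,m_i\,n_{j_0-1}}.
\end{equation*}
By the growth of the sequences $(n_j),(m_j)$ one has $c_1/(2n_{j_0-1})\le m_{j_0}^{-2}$ (this is where inequalities of the type $260m_{2j}^{4}\le n_{2j-1}$ are used), so the right-hand side is at most $4/(m_im_{j_0})$ when $\theta=3m_{j_0}^{-1}$ and at most $4/(m_im_{j_0}^{2})$ when $\theta=3m_{j_0}^{-2}$, which is precisely \eqref{basise1}, resp.\ \eqref{basise2}, in the remaining range $w(f)<m_{j_0}$. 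The genuinely delicate step is the per-branch bound $\#B_l\le(2n_{j_0-1})^{s_{j_0-1}-2}$: in the first part it is the content of the proof of Lemma~\ref{linfty2}, but in the second one must re-run the bookkeeping of Sublemma~\ref{sl4.7} with the finer threshold $3m_{j_0}^{-2}$ and the absence of weight-$m_{j_0}$ nodes, checking that the additional signed-basis successors produced at type~II nodes are still absorbed via $\#S_\alpha^1\le m_{j_0}^{4}\le 2n_{j_0-1}$. The second subtle point is purely numerical: the estimate above only works because the concentrated part $h$ lives on at most $(2n_{j_0-1})^{s_{j_0-1}-1}$ coordinates out of $n_{j_0}=(2n_{j_0-1})^{s_{j_0-1}}$, and one needs $2n_{j_0-1}\ge c_1m_{j_0}^{2}$, which is guaranteed by the chosen growth rates.
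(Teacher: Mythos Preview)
Your proposal is correct and follows essentially the same approach as the paper's proof: split on whether $w(f)\ge m_{j_0}$ (handled directly by Lemma~\ref{linfty}) or $w(f)<m_{j_0}$, and in the latter case decompose the support of each branch $f_l$ into the coordinates where $|f_l(e_t)|\le\theta$ (contributing the main term $\theta/m_i$) versus the ``large'' set $B_l$, whose cardinality is controlled by Lemma~\ref{linfty2} (resp.\ Remark~\ref{re4.8} for \eqref{basise2}), and then combine using $\#B/n_{j_0}\le 1/(2n_{j_0-1})$. Your account is in fact slightly more explicit than the paper's about the finer bookkeeping required for \eqref{basise2} --- the observation that with threshold $3m_{j_0}^{-2}$ one gets $\#S_\alpha^1\le m_{j_0}^4$ rather than $m_{j_0}^2$ at type~II nodes --- which the paper compresses into the one-line Remark~\ref{re4.8}.
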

\begin{proof}
If $w(f)\geq m_{j_0}$ Lemma \ref{linfty} yields the result.

Let $f=w(f)^{-1}\sum_{i=1}^{d}f_{i}$ with $w(f)<m_{j_0}$.
We set
$$A_{1}=\{t: t\in\supp(f_i)\,\text{for some $i\leq  d$ and}\,\, f_{i}(e_t)\leq 3m_{j_0}^{-1}\}.
$$
It readily follows that
\begin{equation}\label{kkk0}
\vert f(e_t)\vert\leq \frac{3}{w(f)m_{j_0}}\,\,\,\text{for every $t\in A_1$}.
\end{equation}
We set $B_{i}=\supp(f_{i})\setminus A_{1}$. Lemma \ref{linfty2} yields that
$$\#\supp (B_{i})\leq (2n_{j_0-1})^{s_{j_0-1}-2}.
$$
It follows that $\#\{t: t\not\in A_{1}\}\leq 2n_{j_0-1}(2n_{j_0-1})^{s_{j_0-1}-2}=(2n_{j_0-1})^{s_{j_0-1}-1}$ and therefore
\begin{equation}\label{kkk1}
\vert f(\sum_{t\not\in A_{1}}\frac{e_t}{n_{j_0}})\vert\leq
\frac{(2n_{j_0-1})^{s_{j_0-1}-1}}{n_{j_0}}<m_{j_0}^{-2}.
\end{equation}
From \eqref{kkk0} and \eqref{kkk1} we get the result.

For the proof of \eqref{basise2}  we consider the set
$$
A_{1}=\{t: t\in\supp(f_i)\,\text{for some $i\leq  d$ and}\,\, f_{i}(t)\leq
3m_{j_0}^{-2}\}.
$$
and using Remark \ref{re4.8} we obtain  again the relation \eqref{kkk1} for the new set $A_{1}$.
\end{proof}
\section{The Basic Inequality and its consequences}\label{sec9}
In this section we introduce the notion of rapidly increasing sequences (RIS) and
prove the Basic Inequality (Prop. \ref{bin}), which by results from the previous section
provides estimation on the norm of suitable averages of RIS (Prop. \ref{ris}). In
particular we obtain reflexivity of $X_\xi$.
\begin{definition}\label{d9.1}
Let $\e>0$. A block sequence $(x_{k})$ in $X_{\xi}$ is said to be a
$(C,\varepsilon)$-rapidly increasing sequence  (RIS), if there
exists a strictly increasing sequence $(j_{k})$ of positive
integers such that
\begin{enumerate}
\item[a)] $\norm[x_{k}]\leq 1$ for all $k$.
\item[b)]
$m_{2j_1}^{-1/2}<\e$ and $\#(\ran(x_{k}))m_{2j_{k+1}}^{-1/2}<\e$
for all $k\geq 1$
\item[c)] For every $k=1,2,\ldots$ and every
$f\in K_{\xi}$ of type  I with $w(f)<m_{2j_{k}}$  we have that $\vert f(x_{k})\vert
\le \frac{C}{w(f)}$.
\end{enumerate}
\begin{remark}\label{r5.2} From  Proposition  \ref{p4.6} and Lemma  \ref{l4.5} we get that  for  every $\e>0$   any block subspace  contains a $(3,\e)$-RIS $(x_{k})_{k}$ where  $x_k$ is a normalized $2-\ell_{1}^{n_{j_k}}$ average with $(2j_k)_{k}$ satisfying  condition b) of the above definition.
\end{remark}
\end{definition}
\begin{proposition}[Basic Inequality]\label{bin}
Let $\e>0$, $1<j_0\in\N$, $(x_{k})_{k\in\N}$ be a $(C,\e)$-RIS in  $X_{\xi}$  with the associated sequence $(j_{k})_{k}$.
 Assume that for   every $g\in G_{\xi}$ the set
$I_{g}=\{k:\vert g(x_k)\vert\geq \e\}$
has cardinality at most $n_{j_0-1}$.

Let $(c_{k})_{k}$ be a sequence of scalars.
Then for every $f\in K_{\xi}$ and every interval $I$ there exists a functional $g\in W_{j_0}$ such that
\begin{equation}\label{ll1}
\vert f(\sum_{k\in I}c_{k}x_{k})\vert\leq C(g(\sum_{k\in
I}\vert c_k\vert e_{k})+\e_{f}\sum_{k\in I}\vert
c_k\vert),\,\,\,\e_{f}\leq \e.
\end{equation}
Moreover, if $f$ is the result of an $(\mc{A}_{n_{j}},m_{j}^{-1})$-operation then
$g=e^{*}_{r}$ or $0$ or $g$ is the result of an $(\mca_{2n_{j}}, m_{j}^{-1})$ operation and $\e_{f}\leq \e w(f)^{-1/2}$.

If we additionally assume that for every  $f\in K_{\xi}$ with $w(f)=m_{j_0}$, for every interval $J$   it holds that
\begin{equation}\label{ada}
\vert f(\sum_{k\in J}c_{k}x_{k})\vert
\leq C(\max_{k\in J}\vert c_kf(x_k)\vert+\e\,m_{j_0}^{-1/2}\sum_{k\in J}\vert c_k\vert),
\end{equation}
then we may select the functional $g$ to have a tree analysis $(g_{\alpha})_{\alpha}$ with $w(g_{\alpha})\ne m_{j_0}$ for all $\alpha\in\mc{A}$.
\end{proposition}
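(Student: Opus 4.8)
The proof of the Basic Inequality is a standard but intricate induction on the tree analysis of $f$; the plan is to argue by induction on the height $o(T_f)$ of a tree analysis of $f\in K_\xi$, building the functional $g\in W_{j_0}$ simultaneously along the tree. First I would dispose of the base case: if $f\in G_\xi$, then the hypothesis that $I_f=\{k:|f(x_k)|\ge\e\}$ has cardinality at most $n_{j_0-1}$ lets us write $f(\sum_{k\in I}c_kx_k)$ as a sum over $I_f$ (contributing at most $\max|c_k|$ on a set of size $\le n_{j_0-1}$, absorbed into a functional of the form $\sum_{k\in F}\pm e_k^*\in C_{j_0}\subset W_{j_0}$ with $\#F\le n_{j_0-1}$, using $\norm[f]_\infty\le 1$) plus a tail contributing at most $\e\sum|c_k|$; here I would also invoke Lemma \ref{ll1a}-type control on $G_{\ell_2}$ functionals. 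For the inductive step I split according to the three cases in Definition \ref{def3.3}: $f$ of type I (an $(\mc{A}_{n_j},m_j^{-1})$-operation), type II (an $\ell_2$-combination $\sum\lambda_\beta f_\beta$), or type III (rational convex combination).

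In the type I case, with $f=m_j^{-1}\sum_{i}f_i$, the key dichotomy is whether $w(f)=m_j$ is small or large relative to the $m_{2j_k}$. For each $x_k$ the indices $k$ split into those with $m_{2j_k}>w(f)$ (where RIS condition (c) gives $|f(x_k)|\le C/w(f)$, so these behave like a single average of a $C$-$\ell_1^{n_j}$ vector and one uses Lemma \ref{l4.5} to bound the total contribution of at most $n_{j-1}$ of them) and those with $m_{2j_k}\le w(f)$ (a controlled set, of size comparable to the index of $f$, handled by the inductive hypothesis applied to the $f_i$). Reassembling the pieces produces $g=m_j^{-1}\sum g_i$, which is an $(\mc{A}_{2n_j},m_j^{-1})$-operation in $W_{j_0}$ — the factor $2$ in $\mc{A}_{2n_j}$ is exactly what absorbs the splitting of the index set $I$ into the two regimes — and the error term improves to $\e_f\le\e w(f)^{-1/2}$ because each time we descend through an operation of weight $m_j$ the error is multiplied by $m_j^{-1/2}$ and these combine geometrically. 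The type II case is where property $iv)$/Remark \ref{r3.2}(b) matters: since the $f_\beta$ have distinct weights, at most one has weight exceeding $w(f_{\beta_0})$ for any threshold, and one applies the inductive hypothesis to each $f_\beta$, then recombines the resulting $g_\beta\in W_{j_0}$ via the operation in clause 3) of the definition of $W_{j_0}$ (an $\ell_2$-sum of type I functionals of distinct weights plus possible singletons $e_{t_i}^*$), using H\"older's inequality and $\sum\lambda_\beta^2\le 1$. The type III case is immediate by convexity, taking the convex combination of the $g_\beta$.

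For the \emph{moreover} part with hypothesis \eqref{ada}, I would run the \emph{same} induction but additionally maintain the invariant that no node of the tree of $g$ has weight $m_{j_0}$. The only place a weight-$m_{j_0}$ node of $g$ could arise is from a weight-$m_{j_0}$ node $f_\alpha$ of $f$. When such an $\alpha$ is encountered, instead of recursing through the $(\mc{A}_{n_{j_0}},m_{j_0}^{-1})$-operation one invokes \eqref{ada} directly: it estimates $f_\alpha(\sum_{k\in J}c_kx_k)$ by $\max_k|c_kf_\alpha(x_k)|$ plus an error $\e m_{j_0}^{-1/2}\sum|c_k|$, and the maximum term is absorbed into a single $e_r^*\in W_{j_0}$ (after noting $|f_\alpha(x_k)|\le 1$), while the error is of the required form. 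Thus the weight-$m_{j_0}$ operation is never ``used'' in building $g$, giving a tree analysis of $g$ avoiding weight $m_{j_0}$ as claimed; this is precisely the feature that will later let us apply the refined auxiliary-space estimates \eqref{basise2} of Lemma \ref{42} and Remark \ref{re4.8}. The main obstacle throughout is bookkeeping: tracking how the error constants $\e_f$ accumulate multiplicatively down the tree (to land at $\e_f\le\e$ in general and $\le\e w(f)^{-1/2}$ for type I), and correctly controlling the sizes of the index sets that get peeled off at each type I node so that the resulting admissibility in $W_{j_0}$ (cardinality $\le 2n_j$) genuinely holds; these are routine in spirit but delicate, and mirror the corresponding arguments in \cite{ATo}, \cite{AAT}.
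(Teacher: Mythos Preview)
Your overall architecture is right: induction on $o(f)$, base case $f\in G_\xi$ handled via the cardinality bound on $I_f$ with $g=\sum_{k\in I_f}e_k^*\in C_{j_0}$, and the type II, type III, and ``moreover'' parts are essentially as in the paper (in particular your treatment of weight-$m_{j_0}$ nodes via \eqref{ada} producing a single $e_r^*$ is exactly the paper's Case 1). However, your description of the type I inductive step is garbled in a way that would break the proof.

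The dichotomy in the paper is not a split of the index set $\{k\}$ into two regimes handled respectively by Lemma \ref{l4.5} and by recursion; Lemma \ref{l4.5} plays no role in the Basic Inequality. Rather, one distinguishes three subcases on $w(f)$ relative to the whole interval $I$: (2a) $w(f)<m_{2j_k}$ for all $k\in I$, (2b) $m_{2j_{k_0}}\le w(f)<m_{2j_{k_0+1}}$ for some $k_0\in I$, and (2c) $w(f)\ge m_{2j_{k+1}}$ for all $k\in I$. The recursion into the components $f_i$ occurs only in (2a), i.e.\ when $w(f)$ is \emph{small}, and there the relevant split of $I$ is into the sets $I_i=\{k:\ran x_k\text{ meets only }\ran f_i\}$ together with the ``boundary'' set $I_0$ of $x_k$'s meeting more than one $f_i$; on $I_0$ one uses RIS (c) to get $|f(x_k)|\le C/w(f)$, and the family $\{I_1,\dots,I_d\}\cup\{\{k\}:k\in I_0\}$ has cardinality $\le 2d\le 2n_j$ --- this is the true origin of the factor $2$ in $\mc{A}_{2n_j}$, not ``two regimes''. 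In (2b) and (2c), where $w(f)$ is large, there is no recursion: one uses RIS (b) and (c) directly to get $|f(x_k)|\le \e w(f)^{-1/2}$ for all $k\neq k_0$ (resp.\ all $k$), yielding $g=|f(x_{k_0})|e_{k_0}^*$ or $g=0$. Your proposal reverses this: you recurse when $w(f)$ is large and try to bound directly when $w(f)$ is small, but the direct bound $|f(x_k)|\le C/w(f)$ in the small-weight case only gives $(C/w(f))\sum_{k}|c_k|$, which is far too crude to land inside $C\,g(\sum|c_k|e_k)$ for any $g\in W_{j_0}$. One further point on type II: when recombining the $g_{f_i}$, those coming from subcases (2b)/(2c) are of the form $|f_i(x_k)|e_k^*$ or $0$, and several such may share the same $k$; the paper groups them via the partition $L_0,L_1,L_2,\dots$ so that the resulting functional has the form $\sum_{i\in L_0}|\lambda_i|g_{f_i}+\sum_k(\sum_{i\in L_k}|\lambda_if_i(x_k)|)e_k^*$, which is what actually fits clause 3) of $W_{j_0}$ with distinct $e_{t_k}^*$'s.
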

\begin{proof}
We shall treat the case that there exists  $j_0$ satisfying   \eqref{ada}.
We also assume  that
$\ran(x_k)\cap\ran(f)\ne\emptyset$ for every $k\in I$.
We proceed by induction of the order $o(f)$ of the functional $f$.

Let $o(f)=1$. Then we have that $f\in G_{\xi}$ and from  the assumptions
the set
$R_{f}=\{k\in I:\vert f(x_k)\vert\geq \e\}$ has cardinality at most
$n_{j_0-1}$.
We set $g_{f}=\sum_{k\in R_{f}}e_{k}^{*}$ and $\e_{f}=\e$.
It readily follows
$$
\vert f(\sum_{k\in I}c_{k}x_{k})\vert\leq g_{f}(\sum_{k\in I}\vert c_k\vert e_{k})+\e_{f}\sum_{k\in I}\vert c_{k}\vert.
$$
Suppose now that the result holds for every functional in $K_{\xi}$ with order less than
$q$ and consider $f\in K_{\xi}$ with $o(f)= q$.

We consider the following three cases.

\textit{Case 1}. $f$ is of type I and $w(f)=m_{j_0}$.
 We choose
$k_0\in I$ with $\vert c_{k_0}f(x_{k_0})\vert=\max_{k\in I}\vert
c_kf(x_k)\vert$ and we set $g_{f}=\vert f(x_{k_0})\vert
e^{*}_{k_0}$. Then from our assumption \eqref{ada} it follows
\begin{align*}
\vert f(\sum_{k\in I}c_{k}x_{k})\vert&\leq C\left(\max_{k\in I}\vert c_kf(x_{k})\vert+\e m_{j_0}^{-1/2}\sum_{k\in I}\vert c_k\vert\right)
\\
&\leq C\left(g(\sum_{k\in I}\vert c_k\vert e_{k})+\e m_{j_0}^{-1/2}\sum_{k\in I}\vert c_k\vert\right).
\end{align*}
\textit{Case 2}. $f$ is of type I and $w(f)\ne m_{j_0}$.

Then
$
f=m_{j}^{-1}\sum_{i=1}^{d}f_{i}
$
with $j\ne j_0$  and $d\leq n_{j}$.
We consider the following three subcases.

\textit{Subcase 2a}. $w(f)<m_{2j_k}$ for all $k\in I$.

For every $i\leq d$ we set
$$I_{i}=
\{k\in I: \ran(x_k)\cap \ran(f_i)\ne\emptyset\,\text{and}\,\ran(x_k)\cap\ran(f_{i^{\prime}})=\emptyset\,\text{for all}\,\, i^{\prime}\ne i\}.
$$
We also set $I_{0}=\{k\in I:\ran(x_k)\cap\ran(f_i)\ne\emptyset\,\text{ for at least
two}\,\, i\in \{1,\dots,d\}\}$.
We observe that $\# I_{0}\leq d$. Condition   c) in the definition
of the RIS yields
\begin{equation}\label{eq1}
\vert f(x_k)\vert\leq\frac{C}{w(f)}\,\,\,\,\text{for every  $k\in I_0$}.
\end{equation}
For every $i\leq d$   we have that $I_i$ is a subinterval of $I$, hence our inductive
assumption yields that there exists $g_{f_i}\in W_{j_0}$ with $\supp( g_{f_i})\subset I_i$ such that
\begin{equation}\label{eq2}
\vert f_{i}(\sum_{k\in I_{i}}c_{k}x_{k})\vert
\leq
C(g_{f_i}(\sum_{k\in I_{i}}\vert c_{k}\vert e_{k})+\e_{f_{i}}\sum_{k\in I_{i}}\vert c_k\vert).
\end{equation}
The family $\{I_1,\dots, I_d\}\cup\{\{k\} : k\in  I_0\}$ consists
of pairwise disjoint intervals and has cardinality less than or
equal to $2d\leq 2n_j$. We set
$$
g_{f}=\frac{1}{w(f)}(\sum_{i=1}^{d}g_{f_i}+\sum_{k\in
I_0}e^{*}_{k}).
$$
Then $g_{f}\in W_{j_0}$, $\supp g_{f}\subset I$, while from \eqref{eq1}, \eqref{eq2} we
obtain \begin{align}
\vert f(\sum_{k\in I}c_{k}x_{k})\vert &
\leq \sum_{k\in I_0}\vert c_k\vert\vert
f(x_k)\vert+\frac{1}{w(f)}\sum_{i=1}^{d}C\left(g_{f_{i}}(\sum_{k\in I_{i}}\vert c_k\vert
e_{k})+\e\sum_{k\in I_{i}}\vert c_k\vert\right)
\notag\\
&\leq C\left(g_{f}(\sum_{k\in I}\vert c_k\vert e_{k})+\e_{f}\sum_{k\in I}\vert c_k\vert\right),\,\,\text{where $\e_{f}=\e w(f)^{-1}$}.
\end{align}
\\

\textit{Subcase 2b.} $m_{2j_{k_0}}\leq w(f)<m_{2j_{k_0+1}}$ for
some $k_0\in I$.

From condition  $b)$ in the definition of RIS we get
\begin{equation}\label{eq3a}
\mbox{
$\vert f(x_k)\vert\leq \e\, w(f)^{-1/2}=\e_{f}$ for all $k\in I$ with $k<k_0$}.
\end{equation}
Using that $w(f)\geq m_{2j_1}$ and condition c) we get
\begin{equation}\label{eq4a}
\mbox{$\vert f(x_k)\vert\leq Cw(f)^{-1}\leq C\e w(f)^{-1/2}=C\e_{f}$
for every $k_0<k\in I$}.
\end{equation}
Thus setting $g_{f}=\vert f(x_{k_0})\vert e^{*}_{k_0}$  from \eqref{eq3a},\eqref{eq4a} 
we get
\begin{align}
\vert f(\sum_{k\in I}c_{k}x_{k})\vert &\leq \vert c_{k_0}f(x_{k_0})\vert+\sum_{k\in I\setminus\{k_0\}}\vert c_{k}f(x_k)\vert
\\
&\leq \vert c_{k_0}f(x_{k_0})\vert+C\e_{f}\sum_{k\in I\setminus\{k_0\}}\vert c_k\vert
\leq C\left(g_{f}(\sum_{k\in I}\vert c_{k}\vert e_{k})+\e_{f}\sum_{k\in I}\vert c_k\vert\right).\notag
\end{align}
\textit{Subcase 2c.} $m_{2j_{k+1}}\leq w(f)$ for all $k\in I$.

In this case  as in \eqref{eq3a}, $\vert f(x_k)\vert \leq
\e\,\,w(f)^{-1/2}=\e_{f}$ for all $k\in I$, and  we set $g_{f}=0$.
It follows easily that  \eqref{ll1} holds.

\smallskip

\textit{Case 3.} $f$ is of type II, i.e. $f=\sum_{i=1}^{d}\lambda_{i}f_{i}$.

For the given interval $I$  the inductive assumption associates to each $f_{i}$ a functional $g_{f_{i}}$ satisfying
\eqref{ll1}.

We note that  there exists at most one $i\in\{1,\dots,d\}$ with $w(f_i)=m_{j_0}$. Without
loss of generality we assume that there does exist such an $i$, denoted by $i_0$. From
\textit{Case} 1 we have $g_{f_{i_0}}=\vert f(x_{k_0})\vert e^{*}_{k_0}$ for some  $k_0\in
I$.

For every $1\leq i \leq d$ we set
 $C_{f_i}=\{k\in I: \ran(f_{i})\cap\ran(x_k)\ne\emptyset\}$.

We partition the set $M=\{1,\dots,d\}\setminus\{i_0\}$ as follows:
$$
L_{0}=\{i\in M: w(f_{i})<m_{2j_{k}}\,\,\text{for all $k\in C_{f_i}$}\}
$$
and for $k=1,\dots,$ we set
$$
\mbox{$L_{k}=\{i\in M: k\in C_{f_{i}}\,\,\text{and}\,\,
m_{2j_{k}}\leq w(f_i)<m_{2j_{k+1}}\}$}.
$$
We enlarge $L_{k_0}$ by adding  $i_0$ to its elements.
Note that if $i\not\in L_{0}\cup\cup_{k}L_{k}$ then  $g_{f_i}=0$.
We set
\begin{align*}
g_{f}=\sum_{i=1}^{d}\vert \lambda_{i}\vert g_{f_i}= \sum_{i\in
L_0}\vert \lambda_{i}\vert g_{f_i} +\sum_{k}(\sum_{i\in
L_{k}}\vert\lambda_{i}f_{i}(x_k)\vert)e_{k}^{*}.
 \end{align*}
We show that $g_{f}\in W_{j_0}$. Since for every $i\in L_{0}$ it holds  that
$w(g_{f_i})=w(f_{i})$ we have that the functionals $g_{f_i}, i\in L_{0}$, have different
weights. Also since the sets  $L_{k}$ are pairwise disjoint it follows
$$
\sum_{i\in L_0}\lambda_{i}^{2}+
\sum_{k}(\sum_{i\in L_{k}}\vert\lambda_{i}f_{i}(x_k)\vert)^2\leq
\sum_{i\in L_0}\lambda_{i}^{2}+
\sum_{k}(\sum_{i\in L_{k}}\lambda_{i}^2)\norm[x_k]\leq
\sum_{i=1}^{d}\lambda_{i}^{2}\leq 1,
$$
hence $g_{f}\in W_{j_0}$.

We show that  \eqref{ll1} holds.  First we observe that since for all $i$, $\e_{f_{i}}\leq\e\,w(f_{i})^{-1/2}$ it follows that
$
\sum_{i\leq d}\vert\lambda_{i}\vert \e_{f_{i}}\leq \e.
$
Also,
\begin{align*}
\vert f(\sum_{k\in I}c_{k}x_{k})\vert
& \leq
\sum_{i=1}^{d}\vert\lambda_{i}f_{i}(\sum_{k\in C_{f_i}}c_{k}x_{k})\vert
\leq \sum_{i=1}^{d}C\vert\lambda_{i}\vert
(g_{f_i}(\sum_{k\in C_{f_i}}\vert c_{k}\vert e_{k}) +
\e_{f_{i}}\sum_{k\in C_{f_i}}\vert c_{k}\vert)
\\
&\leq C\left(\sum_{i\in L_0}\vert\lambda_{i}\vert g_{f_i}+
\sum_{k}(\sum_{i\in L_{k}}\vert\lambda_{i}f_{i}(x_k)\vert)
e_{k}^{*}\right) \left(\sum_{k\in I}\vert c_k\vert e_{k}\right)
+C\e\sum_{k\in I}\vert c_k\vert
\\
&=Cg_{f}(\sum_{k\in I}\vert c_k\vert e_{k}) +C\e\sum_{k\in I}\vert c_k\vert.
\end{align*}

\textit{Case 4.} $f=\sum_{i=1}^dr_{i}f_{i}$, where $(r_{i})_{i=1}^d\subset\Q$, is a
rational convex combination.

As in the previous case for every $i=1,\dots,d$ we set
$$I_{i}=\{k\in I: \ran(f_{i})\cap\ran(x_k)\ne\emptyset\}.
$$
Take suitable $(g_{f_i})$ by the inductive hypothesis. Setting $g_{f}=\sum_{i=1}^{d}r_{i}g_{f_i}$ we get
$$
\vert f(\sum_{k\in I}c_{k}x_{k})\vert \leq  C(g_{f}(\sum_{k\in I}\vert c_{k}\vert e_{k})+\e\sum_{k\in I}\vert c_k\vert)
$$
\end{proof}
\begin{proposition}\label{ris}
Let $\e>0$, $1<j\in\N$, $(x_{k})_{k\in\N}$ be a $(C,\e)$-RIS in  $X_{\xi}$  with the associated sequence $(j_{k})_{k}$ and  $j<j_{1}$.
 Assume that for   every $g\in G_{\xi}$ the set
$I_{g}=\{k:\vert g(x_k)\vert\geq 2m_{j}^{-2}\}$
has cardinality at most $n_{j-1}$.
Then

a) If $\varepsilon\leq\frac{2}{m_{j}^{2}}$ then for every $f\in K_{\xi}$ of type I
$$
\vert f(\frac{1}{n_{j}}\sum_{k=1}^{n_{j}}x_{k})\vert \leq
\begin{cases}
\frac{5C}{m_{j}w(f)}\,,\quad &\text{if}\,\,\, w(f)<m_{j}\\
\frac{c_{1}C}{w(f)}+\frac{2C}{m^2_{j}}\,,&\text{if}\,\,\, w(f)\geq
m_{j}\,.
\end{cases}
$$
In particular
$\|\frac{1}{n_{j}}\sum\limits_{k=1}^{n_{j}}x_{k}\|
\leq\frac{3C}{m_{j}}$.

b) If  moreover for $j_{0}=j$ the additional assumption  of the
Basic Inequality is fulfilled (Proposition~\ref{bin} \eqref{ada}), then for a linear
combination $\frac{1}{n_{j}}\sum_{i=1}^{n_{j}}b_{i}x_{i}$, where $\vert b_{i}\vert\leq
1$, we have
$$
\|\frac{1}{n_{j}}\sum_{i=1}^{n_{j}}b_{i}x_{i}\|
\leq\frac{3C}{m_{j}^{2}}\,\,.
$$
\end{proposition}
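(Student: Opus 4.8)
The plan is to deduce both parts from the Basic Inequality (Proposition~\ref{bin}) together with the estimates on the auxiliary space $W_{j}$ from Section~\ref{sec8}. Throughout we regard the given $(C,\e)$-RIS as a $(C,2m_{j}^{-2})$-RIS, which is legitimate since enlarging $\e$ only weakens condition~b) of Definition~\ref{d9.1}; then the hypothesis that $\{k:|g(x_{k})|\ge 2m_{j}^{-2}\}$ has cardinality $\le n_{j-1}$ for every $g\in G_{\xi}$ is exactly what Proposition~\ref{bin} asks for with $j_{0}=j$, so every error term it produces obeys $\e_{f}\le 2m_{j}^{-2}$, with $\e_{f}\le 2m_{j}^{-2}w(f)^{-1/2}$ when $f$ is of type~I. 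The second ingredient is the reduction recorded just after the alternative description of $W_{j}^{\prime}$: every type~I functional of $W_{j}$ of weight $m_{r}$ is a convex combination of type~I functionals of $W_{j}^{\prime}$ of the same weight $m_{r}$ (and, in the setting of part~b), this reduction preserves the property that no node of the tree analysis carries the weight $m_{j}$). Hence the estimates of Lemma~\ref{42}, although stated for $W_{j}^{\prime}$, transfer to all type~I functionals of $W_{j}$.

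For part~a), fix $f\in K_{\xi}$ of type~I and apply Proposition~\ref{bin} with $c_{k}\equiv1$ and $I=\{1,\dots,n_{j}\}$; by its ``Moreover'' clause the associated $g\in W_{j}$ is $0$, or of the form $e_{l}^{*}$, or a type~I functional with $w(g)=w(f)$, and $\e_{f}\le 2m_{j}^{-2}w(f)^{-1/2}$. In the first two cases $g(\tfrac1{n_{j}}\sum_{k=1}^{n_{j}}e_{k})\le n_{j}^{-1}$ is negligible; in the third, expanding $g$ as a convex combination of $W_{j}^{\prime}$-functionals of weight $w(f)$ and invoking \eqref{basise1} gives $g(\tfrac1{n_{j}}\sum_{k=1}^{n_{j}}e_{k})\le 4(w(f)m_{j})^{-1}$ if $w(f)<m_{j}$ and $\le c_{1}w(f)^{-1}$ if $w(f)\ge m_{j}$. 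Substituting into \eqref{ll1} and absorbing the error term --- using $2m_{j}^{-2}w(f)^{-1/2}\le (m_{j}w(f))^{-1}$ (valid since $w(f)<m_{j}$ and $m_{j}>4$) in the first case and $2m_{j}^{-2}w(f)^{-1/2}\le 2m_{j}^{-2}$ in the second --- yields the two displayed bounds. For the ``In particular'' assertion it suffices to bound $\norm[\tfrac1{n_{j}}\sum_{k=1}^{n_{j}}e_{k}]_{W_{j}}$: every type~I functional of $W_{j}$ evaluates on $\tfrac1{n_{j}}\sum_{k=1}^{n_{j}}e_{k}$ to at most $c_{1}/m_{j}$ by the above, every type~II functional $\sum_{i}\lambda_{i}g_{i}+\sum_{i}\lambda_{i}e_{t_{i}}^{*}$ to at most $n_{j}^{-1/2}+m_{j}^{-1}(16c_{0}+2c_{1}^{2})^{1/2}$ (the $e^{*}$-part is tiny, the rest is Cauchy--Schwarz over the pairwise distinct weights of the $g_{i}$, with $c_{0}=\sum_{j}m_{j}^{-2}$), and type~III functionals are convex combinations of these; since $m_{1}=2^{5}$ makes $c_{0}$ small and $c_{1}$ close to $1$ (and $j>1$ forces $n_{j}^{-1/2}\ll m_{j}^{-1}$), this sum is $\le 2/m_{j}$, so taking the supremum over $f\in K_{\xi}$ in \eqref{ll1} gives $\norm[\tfrac1{n_{j}}\sum_{k=1}^{n_{j}}x_{k}]\le C(2m_{j}^{-1}+2m_{j}^{-2})\le 3Cm_{j}^{-1}$.

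For part~b) we repeat the argument using the last clause of Proposition~\ref{bin}: under hypothesis \eqref{ada} with $j_{0}=j$ one may choose $g\in W_{j}$ with a tree analysis in which every weight differs from $m_{j}$. Applying Proposition~\ref{bin} with $c_{k}=b_{k}$, $|b_{k}|\le1$, gives for arbitrary $f\in K_{\xi}$ that $|f(\tfrac1{n_{j}}\sum_{i=1}^{n_{j}}b_{i}x_{i})|\le C(g(\tfrac1{n_{j}}\sum_{i=1}^{n_{j}}|b_{i}|e_{i})+2m_{j}^{-2})$, and since $|g(\tfrac1{n_{j}}\sum_{i}|b_{i}|e_{i})|\le \tfrac1{n_{j}}\sum_{i=1}^{n_{j}}|g(e_{i})|$ --- a quantity the proof of Lemma~\ref{42} actually controls --- estimate \eqref{basise2} applies: a type~I $g$ of weight $m_{r}\ne m_{j}$ contributes at most $4(m_{r}m_{j}^{2})^{-1}\le \tfrac18 m_{j}^{-2}$ when $m_{r}<m_{j}$, and at most $c_{1}m_{r}^{-1}\le c_{1}m_{j}^{-5}$ when $m_{r}>m_{j}$ (here $m_{r}\ge m_{j+1}=m_{j}^{5}$); the type~II and type~III cases reduce to type~I as before, using that the subtrees rooted at the constituent type~I functionals also avoid the weight $m_{j}$. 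Summing gives $g(\tfrac1{n_{j}}\sum_{i}|b_{i}|e_{i})\le m_{j}^{-2}$, so $\norm[\tfrac1{n_{j}}\sum_{i=1}^{n_{j}}b_{i}x_{i}]\le C(m_{j}^{-2}+2m_{j}^{-2})=3Cm_{j}^{-2}$.

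The substance of the proof is bookkeeping rather than a single new idea. One must check that the reduction of a type~I functional of $W_{j}$ to a convex combination of elements of $W_{j}^{\prime}$ leaves the weight unchanged and (for part~b)) does not introduce a node of weight $m_{j}$; that the estimates of Lemma~\ref{42}, although phrased for the all-ones average $\tfrac1{n_{j}}\sum e_{k}$, genuinely bound $\tfrac1{n_{j}}\sum_{i}|g(e_{i})|$ and so persist when the coordinates are multiplied by scalars of modulus $\le1$ --- immediate from that lemma's proof, which splits $\supp g$ into the coordinates where $g$ is small and a set of negligible cardinality; and that the constants $c_{0},c_{1}$ together with the leftover terms $n_{j}^{-1}$, $n_{j}^{-1/2}$ really fit under $3C/m_{j}$ and $3C/m_{j}^{2}$, which is where the rapid growth $n_{j+1}=(2n_{j})^{s_{j}}$ with $2^{s_{j}}=m_{j+1}^{3}$, the assumption $j>1$, and the size of $m_{1}$ are used.
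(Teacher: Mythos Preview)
Your proof is correct and follows precisely the approach the paper indicates: apply the Basic Inequality (Proposition~\ref{bin}) and transfer the auxiliary-space estimates of Lemma~\ref{42} from $W_{j}^{\prime}$ to $W_{j}$ via the convex-combination reduction. The paper's own proof is the single sentence ``an application of the Basic Inequality and Lemma~\ref{42}'', so your write-up simply fills in the bookkeeping that the authors leave implicit.
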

\begin{proof}
The proof is an application of the Basic Inequality and Lemma
\ref{42}.
\end{proof}
\begin{proposition}
The space  $X_{\xi}$ is reflexive.
\end{proposition}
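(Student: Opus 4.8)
The plan is to deduce reflexivity from James's theorem by showing that the bimonotone basis $(e_n)$ of $X_\xi$ is both boundedly complete and shrinking.

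Bounded completeness is the easy half, and — in contrast with the corresponding argument for $X_{G_\xi}$ in Theorem \ref{refl}, which needed a case analysis on the type of the norming functional — here it requires no case analysis at all, precisely because $K_\xi$ is closed under the even operations. Suppose towards a contradiction that $\sup_n\norm[\sum_{i=1}^n a_ie_i]\le 1$ while $\norm[\sum_{i\in F_n}a_ie_i]>\e_0$ for some $\e_0>0$ and successive intervals $F_1<F_2<\dots$ of $\N$. Since $K_\xi$ is norming and closed under restriction to intervals, for each $n$ we may pick $g_n\in K_\xi$ with $\ran g_n\subset[\min F_n,\max F_n]$ and $g_n(\sum_{i\in F_n}a_ie_i)\ge\e_0$. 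Fix $j$; the successive supports make $(g_1,\dots,g_{n_{2j}})$ an $\mc{A}_{n_{2j}}$-admissible family, so $g=m_{2j}^{-1}\sum_{n=1}^{n_{2j}}g_n\in K_\xi$. Evaluating at $\sum_{i\le M}a_ie_i$ with $M>\max F_{n_{2j}}$ gives $\norm[\sum_{i\le M}a_ie_i]\ge g(\sum_{i\le M}a_ie_i)=m_{2j}^{-1}\sum_{n=1}^{n_{2j}}g_n(\sum_{i\in F_n}a_ie_i)\ge \e_0\, n_{2j}/m_{2j}$, which tends to infinity since $n_{2j}\ge m_{2j}^{3}$; this contradicts the bound $1$. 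Hence $(e_n)$ is boundedly complete.

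For shrinkingness we must show that every normalized block sequence $(x_n)$ is weakly null. Assume not: there are $x^*\in X_\xi^*$ with $\norm[x^*]=1$ and $\e>0$ such that, after passing to a subsequence, $x^*(x_n)>\e$ for all $n$. One cannot simply repeat the proof of Corollary \ref{shrinking}, since $G_\xi$ is no longer norming for $X_\xi$; instead one routes the estimate through the Basic Inequality. By Proposition \ref{p4.6}, Lemma \ref{l4.5} and Remark \ref{r5.2} we extract inside $\langle(x_n)_n\rangle$ a $(3,\e')$-RIS $(u_k)$ of normalized $2$-$\ell_1^{n_{j_k}}$-averages; each such average can be taken to be a positive combination $\sum_n\mu_nx_n$ of the $x_n$ with $\sum_n\mu_n\ge 1$ — indeed it is a normalized sum $\frac{1}{\norm[y]}y$ with $y=\sum_{i\in A}x_i$ and $\norm[y]\le\#A$ — so $x^*(u_k)>\e$ for every $k$. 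Here $\e'$ may be prescribed as small as we wish, and condition (b) of Definition \ref{d9.1} then forces the first weight of the RIS to be as large as we like; so fix an even $j>1$ with $9/m_j<\e$ and build the RIS so that in addition $\e'\le 2m_j^{-2}$ and $j$ is smaller than the first index of the associated sequence of the RIS. Applying Proposition \ref{ris-sss} to $(u_k)$ (which, up to the normalizing factor $\le 2$, is a sequence of genuine averages of a bounded block sequence, exactly as in the proof of Proposition \ref{ss}) we obtain $L\in[\N]$ with $\#\{k\in L:\vert g(u_k)\vert\ge 2m_j^{-2}\}\le n_{j-1}$ for every $g\in G_\xi$. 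Relabelling $(u_k)_{k\in L}$ as a RIS, Proposition \ref{ris}(a) yields $\norm[\frac{1}{n_j}\sum_{k=1}^{n_j}u_k]\le 9/m_j<\e$, whereas $x^*(\frac{1}{n_j}\sum_{k=1}^{n_j}u_k)>\e$ — a contradiction. Thus $(e_n)$ is shrinking, and James's theorem gives that $X_\xi$ is reflexive (this is the reflexivity already announced in Section \ref{sec9} as a consequence of Proposition \ref{ris}).

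The only delicate point — and the sole real obstacle — is the bookkeeping of parameters in the shrinking argument: one must choose the accuracy $\e'$ of the RIS, the weights occurring in its definition, and the index $j$ so that simultaneously $9/m_j<\e$, $\e'\le 2m_j^{-2}$, $j$ lies below the first index of the RIS, and the separation estimate of Proposition \ref{ris-sss} holds at threshold $2m_j^{-2}$ with bound $n_{j-1}$; all of this is possible precisely because a RIS of $\ell_1$-averages can be built with arbitrarily fast growing weights inside any block subspace, and because the $\ell_1$-averages themselves are positive combinations of the ambient block sequence. Everything else is supplied by results already proved: Proposition \ref{p4.6} and Lemma \ref{l4.5} for the $\ell_1^{n_{j_k}}$-averages, Remark \ref{r5.2} for assembling them into a RIS, Proposition \ref{ris-sss} for the $G_\xi$-separation, and Proposition \ref{bin} via Proposition \ref{ris} for the final norm estimate.
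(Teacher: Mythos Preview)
Your proof is correct and is precisely the ``standard argument'' that the paper defers to: the paper's own proof is the single sentence ``The reflexivity of $X_{\xi}$ is consequence of Proposition~\ref{ris} following standard arguments, see \cite{AAT},\cite{AT}'', and what you have written is exactly that argument spelled out --- bounded completeness from closure of $K_\xi$ under the even operations, and shrinkingness by passing to a RIS of $\ell_1$-averages inside the offending block subspace and invoking Proposition~\ref{ris} (via Proposition~\ref{ris-sss} for the $G_\xi$-separation hypothesis), just as in the proof of Proposition~\ref{ss}.

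One very small imprecision: your ``indeed it is a normalized sum $\frac{1}{\norm[y]}y$ with $y=\sum_{i\in A}x_i$'' describes only the first step of the Schlumprecht--Gowers--Maurey construction; after iteration the average is a \emph{nested} normalization, not a single one. The conclusion you need, however --- that $u_k$ is a positive combination $\sum_n\mu_n x_n$ with $\sum_n\mu_n\ge 1$ --- survives the iteration, since at each stage one normalizes a sum of norm-one blocks (so the sum of coefficients can only increase), and hence $x^*(u_k)>\e$ remains valid. This does not affect the correctness of the argument.
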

The reflexivity of $X_{\xi}$ is consequence of  Proposition \ref{ris} following standard arguments, see \cite{AAT},\cite{AT}.
\begin{remark}\label{denseK}
The reflexivity of the space $X_{\xi}$ yields that  $K_{\xi}$ is norm-dense subset of $B_{X_{\xi}^{*}}$.  Indeed since  $K_{\xi}$ is norming set   it follows that  $\conv(K_{\xi})$ it is $w^{*}-$dense.  The reflexivity of $X_{\xi}$ implies that $\conv(K_{\xi})$  is $w-$dense and hence $\norm$-dense. Since $K_{\xi}$ is rationally convex we get that $K_{\xi}$ is in fact norm-dense subset of $B_{X_{\xi}^{*}}$.
\end{remark}
\section{Exact pairs and  attracting sequences}\label{sec10}
Now we define exact pairs and show saturation of $X_\xi$ by them. Next we introduce the
crucial notion of attracting sequences and estimate the norms of averages of elements
forming these sequences: vectors (Corollary \ref{depest}) and functionals (Corollary
\ref{c412}).
\begin{definition}\label{d4.8}
A pair $(x,f)$ with $x\in X_{\xi}$ and $f\in K_{\xi}$ is said to be a $(C,2j)$-exact pair, $C\geq 1$, $j\in\N$ if the following conditions are satisfied
\begin{enumerate}
\item[1)] $f(x)=1$ and $\ran f=\ran x$.
\item[2)] $f$ is  of type I and $w(f)=m_{2j}$.
\item[3)] $1\leq\norm[x]\leq 3C$, $\norm[x]_\infty\leq m_{2j}^{-2}$ and for every $g$ of type I
with $w(g)<m_{2j}$ it holds that $\vert g(x)\vert\leq \frac{5C}{w(g)}$ while for $g$ of
type I with $w(g)>m_{2j}$, $\vert g(x)\vert\leq 5Cm_{2j}^{-1}$.
\end{enumerate}
\end{definition}
\begin{proposition}\label{p4.9}
Let $j\in\N$ and  $Y$ be a block subspace of $X_{\xi}$. There exists a
$(3,2j)$-exact pair with $x\in Y$.
\end{proposition}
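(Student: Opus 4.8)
The goal is to produce, inside an arbitrary block subspace $Y$, a $(3,2j)$-exact pair $(x,f)$, i.e.\ a normalized-up-to-constants vector $x\in Y$ and a type I functional $f\in K_\xi$ of weight $m_{2j}$ with $f(x)=1$, $\ran f=\ran x$, controlled sup-norm ($\norm[x]_\infty\le m_{2j}^{-2}$) and the estimates on the action of other type I functionals. The standard route, following \cite{AAT} and \cite{AT}, is to build $x$ as a suitably normalized average of a rapidly increasing sequence of $\ell_1^{n_{j_k}}$-averages drawn from $Y$, pass to a subsequence so that the hypothesis of the Basic Inequality holds, and then read off all the required properties from Proposition~\ref{ris}.

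Concretely, first I would invoke Remark~\ref{r5.2} (which rests on Proposition~\ref{p4.6} and Lemma~\ref{l4.5}): fix a small $\e>0$ with $\e\le 2m_{2j}^{-2}$ and extract from $Y$ a $(3,\e)$-RIS $(x_k)_k$, where each $x_k$ is a normalized $2$-$\ell_1^{n_{j_k}}$ average and the associated sequence $(j_k)$ satisfies condition b) of Definition~\ref{d9.1} and also $2j<j_1$. Next, using Proposition~\ref{ris-sss} (applied to the block sequence producing the averages $x_k$), I would pass to an infinite subset of the indices so that for every $g\in G_\xi$ the set $\{k:\ |g(x_k)|\ge 2m_{2j}^{-2}\}$ has cardinality at most $n_{2j-1}$ — this is exactly the hypothesis needed to run Proposition~\ref{ris} with $j$ replaced by $2j$. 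Then I set
$$
\widetilde x=\frac{1}{n_{2j}}\sum_{k=1}^{n_{2j}}x_k,\qquad f=\frac{1}{m_{2j}}\sum_{k=1}^{n_{2j}}x_k^*,
$$
where $x_k^*\in K_\xi$ is a norming functional for $x_k$ with $\ran x_k^*=\ran x_k$; since the $x_k$ are successive blocks and $(\minsupp x_k)_{k=1}^{n_{2j}}\in\mc A_{n_{2j}}$, $f$ is the result of an $(\mc A_{n_{2j}},m_{2j}^{-1})$-operation, hence a type I functional of weight $m_{2j}$. One has $f(\widetilde x)\ge \frac{1}{m_{2j}}\cdot\frac{1}{n_{2j}}\cdot n_{2j}=\frac{1}{m_{2j}}$, so $\norm[\widetilde x]\ge m_{2j}^{-1}$, while Proposition~\ref{ris}a) gives $\norm[\widetilde x]\le 3\cdot 3\,m_{2j}^{-1}=9m_{2j}^{-1}$ (with $C=3$). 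Finally rescale: put $x=\widetilde x/f(\widetilde x)$ and keep $f$ as above, so that $f(x)=1$; the normalization factor $1/f(\widetilde x)$ lies in $[m_{2j},\,9m_{2j}]$ roughly, and $\norm[x]\le 9m_{2j}^{-1}\cdot m_{2j}=9$, which after a harmless adjustment of constants (or a sharper use of Proposition~\ref{ris}a), noting $f(\widetilde x)$ is actually close to $m_{2j}^{-1}$) gives $1\le\norm[x]\le 9\le 3C$ with $C=3$; matching the exact constant $3C$ may require choosing the $x_k^*$ a bit more carefully so that $f(\widetilde x)$ is genuinely of order $m_{2j}^{-1}$, which is routine.

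It remains to verify properties 1)–3) of Definition~\ref{d4.8}. Property 1) is immediate from the construction and the choice $\ran x_k^*=\ran x_k$ (so $\ran f=\ran x$). Property 2) holds since $f$ is type I of weight $m_{2j}$ by construction. For property 3): the bound $1\le\norm[x]\le 3C$ was just discussed; the estimate $\norm[x]_\infty\le m_{2j}^{-2}$ follows from $\norm[\widetilde x]_\infty\le \frac{1}{n_{2j}}\max_k\norm[x_k]_\infty\le \frac{1}{n_{2j}}$ and the smallness of $n_{2j}^{-1}$ relative to $m_{2j}^{-2}$ times the normalization constant (here the choice $n_{j+1}=(2n_j)^{s_j}$, $2^{s_j}=m_{j+1}^3$ gives ample room); and the action of an arbitrary type I functional $g$ on $\widetilde x$ with $w(g)<m_{2j}$ or $w(g)>m_{2j}$ is controlled by Proposition~\ref{ris}a), which after the normalization by $f(\widetilde x)\approx m_{2j}^{-1}$ yields $|g(x)|\le \frac{5C}{w(g)}$ resp.\ $|g(x)|\le 5Cm_{2j}^{-1}$ — precisely the two inequalities demanded (the constant $5$ in Proposition~\ref{ris}a) is chosen to match this). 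I expect the only mildly delicate point to be keeping all the numerical constants consistent — in particular arranging that $f(\widetilde x)$ is bounded below by a fixed multiple of $m_{2j}^{-1}$ so that the rescaling does not inflate the constants beyond $3C$ and $5C$ — but this is a standard bookkeeping matter handled by shrinking $\e$ and, if needed, slightly perturbing the norming functionals $x_k^*$; no new idea is required beyond Proposition~\ref{ris} and Remark~\ref{r5.2}.
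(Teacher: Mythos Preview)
Your approach is essentially identical to the paper's: build a $(3,\e)$-RIS of normalized $2$-$\ell_1$ averages in $Y$ via Remark~\ref{r5.2}, pass to a subsequence using Proposition~\ref{ris-sss} so that Proposition~\ref{ris} applies, take $f$ to be the $(\mc A_{n_{2j}},m_{2j}^{-1})$-combination of norming functionals, and rescale. Your worries about the constants are unnecessary: since each $x_k$ is normalized and $x_k^*$ norms it, $x_k^*(x_k)=1$ exactly, hence $f(\widetilde x)=m_{2j}^{-1}$ on the nose, the rescaled vector is precisely $x=\frac{m_{2j}}{n_{2j}}\sum_{k=1}^{n_{2j}}x_k$, and Proposition~\ref{ris}a) gives $\norm[x]\le 9=3C$ directly.
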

\begin{proof}
Let  $Y$ be a block subspace  of $X_{\xi}$ and  $j\in\N$. By Remark \ref{r5.2},
Proposition \ref{ris-sss} we can choose for $\e\leq 2m_{j}^{-2}$ a $(3,\e)$-RIS
$(x_{k})_{k=1}^{n_{2j}}$ satisfying the assumptions of Proposition \ref{ris}. It follows
that
$$
\|\frac{m_{2j}}{n_{2j}}\sum_{i=1}^{n_{2j}}x_{i}\|\leq 9.
$$
Choosing $f_{i}\in K_{\xi}$ such that $f_{i}(x_{i})=1$ and $\ran (f_i)\subset\ran (x_i)$ we have that $f=m_{2j}^{-1}\sum_{i=1}^{n_{2j}}f_{i}\in K_{\xi}$ and
$f\left(\frac{m_{2j}}{n_{2j}}\sum_{i=1}^{n_{2j}}x_{i}\right)= 1.$
Setting  $x=E\frac{m_{2j}}{n_{2j}}\sum_{i=1}^{n_{2j}}x_{i}$, where
$E=\ran(f)$,   Proposition \ref{ris} a) yields that  $(x,f)$ is a
$(3,2j)$-exact pair.
\end{proof}
\begin{definition}\label{d10.3}
A double sequence $(x_k,f_k)_{k=1}^{n_{2j-1}}$ is called a $(C,2j-1)$ attracting
sequence, if there is a sequence $(j_k)_{k=1}^{n_{2j-1}}$ such that
\begin{enumerate}
\item $(f_k)_{k=1}^{n_{2j-1}}$ is a $(2j-1)$-attractor sequence
with $w(f_{2k-1})=m_{j_{2k-1}}$ and $f_{2k}=e^*_{l_{2k}}$ where
$l_{2k}\in\Lambda_{j_{2k}}$ for all $k\leq n_{2j-1}/2$,
\item
$x_{2k}=e_{l_{2k}}$ for all $k\leq n_{2j-1}/2$,
\item
$(x_{2k-1},f_{2k-1})$ is a $(C,j_{2k-1})$ exact pair.
\item $j_k=2\sigma(f_1,\dots,f_{k-1})$ for any $k\leq n_{2j-1}$
\end{enumerate}
\end{definition}
\begin{remark} If $(x_k,f_k)_{k=1}^{n_{2j-1}}$ is a $(C,2j-1)$-attracting
sequence, then $(x_k/(3C))_{k=1}^{n_{2j-1}}$ is a $(5/3, n_{2j-1})$ RIS. Indeed,
$$\# (\ran x_k)\frac{1}{m^{1/2}_{j_{i_{k+1}}}}=\# (\ran
f_k)\frac{1}{m^{1/2}_{2\sigma(f_1,\dots,f_k)}}<\min \{
\norm[f_i]_\infty,\ i\leq k\}\leq m_{j_1}^{-1/2}\leq n^{-1}_{2j-1}
$$
by the condition on $\sigma$. Condition (c) in definition of RIS is satisfied thanks
to $x_{2k}=e_{l_{2k}}$ and the fact that $(x_{2k-1},f_{2k-1})$ is a $(C,j_{2k-1})$ exact pair.
\end{remark}
\begin{corollary}\label{depest}
Let  $(x_k,f_k)_{k=1}^{n_{2j-1}}$ be a $(C,2j-1)$ attracting sequence with associated
sequence $(j_k)_{k}$, with $\norm[x_{2k-1}]_{G_{\xi}}\leq m_{2j-1}^{-2}$ for every $k\leq
n_{2j-1}/2$. Then
\begin{equation}\label{qq2} \frac{1}{2m_{2j-1}^{2}}\leq \|\frac{1}{n_{2j-1}}\sum\limits_{k=1}^{n_{2j-1}}
  (-1)^{k+1}x_k\| \leq \frac{15C}{m_{2j-1}^2}
\end{equation}
\end{corollary}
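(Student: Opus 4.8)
\emph{Proof plan.} The two inequalities are proved separately; the lower one is elementary and the upper one is where the attractor mechanism does its work.

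\textbf{Lower bound.} Put $v=\sum_{k=1}^{n_{2j-1}}(-1)^{k+1}x_{k}=\sum_{k}x_{2k-1}-\sum_{k}e_{l_{2k}}$ and take $g=-\,m_{2j-1}^{-2}\sum_{k=1}^{n_{2j-1}/2}e^{*}_{l_{2k}}$. Since the $l_{2k}$ are distinct and $n_{2j-1}/2\le n_{2j-1}$, we have $g\in G_{1}\subseteq K_{\xi}$. In the attractor sequence $f_{1}<e^{*}_{l_{2}}<f_{3}<e^{*}_{l_{4}}<\cdots$, and $\ran f_{2k-1}=\ran x_{2k-1}$ because $(x_{2k-1},f_{2k-1})$ is an exact pair; hence every $l_{2k}$ lies strictly between $\ran x_{2k-1}$ and $\ran x_{2k+1}$, so $e^{*}_{l_{2k}}(x_{2r-1})=0$ for all $k,r$. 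Thus $g$ annihilates the odd block, $g\bigl(-\sum_{k}e_{l_{2k}}\bigr)=m_{2j-1}^{-2}\,n_{2j-1}/2$, and $g\bigl(v/n_{2j-1}\bigr)=\tfrac12 m_{2j-1}^{-2}$, which gives $\norm[v/n_{2j-1}]\ge\tfrac12 m_{2j-1}^{-2}$.

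\textbf{Upper bound.} By the Remark following Definition \ref{d10.3}, $(x_{k}/3C)_{k=1}^{n_{2j-1}}$ is a $(5/3,\e)$-RIS whose associated indices all exceed $2j-1$ (using $m_{2j_{1}}>n_{2j-1}^{3}$ and the growth of $\sigma$). For $f\in K_{\xi}$ one estimates $|f(v)|$ by an induction on the order $o(f)$ following the proof of the Basic Inequality (Proposition \ref{bin}) together with the auxiliary-space estimates of Section \ref{sec8}. The only weight that could produce a term of size $\sim m_{2j-1}^{-1}$ rather than $\sim m_{2j-1}^{-2}$ is $m_{2j-1}$, and the point is that such a term is negligible because $v$ is \emph{alternating}. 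A node $f_{\alpha}$ with $w(f_{\alpha})=m_{2j-1}$ is a restriction of the result of an $(\mc{A}_{n_{2j-1}},m_{2j-1}^{-1})$-operation on an attractor sequence $(g_{l})$ whose even entries $g_{2i}=e^{*}_{\lambda_{2i}}$ are $\sigma$-coded from $(g_{1},\dots,g_{2i-1})$. By the tree structure of attractor sequences (Remark \ref{r3.2}(a)) and injectivity of $\sigma$, $(g_{l})$ and the defining sequence $(f_{k})$ of our attracting sequence agree on an initial segment and then split with pairwise different odd weights and distinct even coordinates. On the common part $g_{l}=f_{l}$, so $g_{2i-1}$ contributes $m_{2j-1}^{-1}$ to $x_{2i-1}$ while its $\sigma$-partner $g_{2i}=e^{*}_{l_{2i}}$ contributes $-m_{2j-1}^{-1}$ to $-e_{l_{2i}}$ and these cancel in $f_{\alpha}(v)$; off the common part the odd weights $w(g_{2i-1})$ are enormous, so the exact-pair estimates (Definition \ref{d4.8}(3)) and $\ran$-disjointness bound the relevant values by $O(n_{2j-1}^{-2})$, while the even $\sigma$-partners satisfy $\lambda_{2i}\ne l_{2r}$ for all $r$ and are blind to $v$; the at most two entries cut by the interval restriction contribute at most $2m_{2j-1}^{-1}$. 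Summing over at most $n_{2j-1}$ entries, $|f_{\alpha}(v/n_{2j-1})|\ll m_{2j-1}^{-2}$. One then propagates through the tree treating every weight-$m_{2j-1}$ node by this bound (the role hypothesis \eqref{ada} would play if it held), controlling every leaf $f_{\alpha}\in G_{\xi}$ on $v/n_{2j-1}$ by $O(m_{2j-1}^{-2})$ via $\norm[x_{2k-1}]_{G_{\xi}}\le m_{2j-1}^{-2}$ on the odd block and the fact that weights in $G_{1}$ are $m_{2i-1}^{2}$ and each such functional hits at most $n_{2i-1}$ coordinates, while every remaining node has weight $\ge m_{2j}=m_{2j-1}^{5}$ or $\le m_{2j-1}^{1/5}$ and is handled by the Basic Inequality with Lemmas \ref{linfty}, \ref{linfty2}, \ref{42}. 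Tracking constants yields $\norm[v/n_{2j-1}]\le 15C\,m_{2j-1}^{-2}$.

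\textbf{Main obstacle.} The crux is the cancellation at weight-$m_{2j-1}$ nodes: one must verify that, because the $e^{*}$-entries of every attractor functional are $\sigma$-coded from its $G_{1}$-type entries, such a functional is essentially annihilated by the alternating sum $v$. Making this rigorous inside the tree analysis — keeping track of the off-resonance errors, the interval-restriction boundary terms, and the interplay with the $\ell_{2}$-combination and rational-convex operations defining $K_{\xi}$ — is where the real work lies; once weight $m_{2j-1}$ has been neutralised, the remaining bounds are routine consequences of the Basic Inequality and the computations of Section \ref{sec8}.
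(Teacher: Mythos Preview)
Your proposal is essentially correct and rests on the same mechanism as the paper: the lower bound via the $G_{1}$-functional $m_{2j-1}^{-2}\sum_{k}e_{l_{2k}}^{*}$, and the upper bound via the tree structure of attractor sequences, which forces any weight-$m_{2j-1}$ functional to act on the alternating sum $v$ essentially like a single $e_{k_{0}}^{*}$ (common part telescopes, split part is negligible by exact-pair estimates and the coding of even entries).

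The only real difference is packaging. The paper does not redo the tree analysis; it simply verifies the two hypotheses of Proposition~\ref{ris}\,(b) with $j_{0}=2j-1$. First (``K1''), for every $g\in G_{\xi}$ the set $\{k:|g(x_{k})|\ge 2m_{2j-1}^{-2}\}$ has at most $n_{2j-2}$ elements --- trivial on the odd block by the hypothesis $\norm[x_{2k-1}]_{G_{\xi}}\le m_{2j-1}^{-2}$, and on the even block $(e_{l_{2k}})$ by a short case analysis over $G_{1}$, $G_{sp}$, $G_{\ell_2}$. Second (``K2''), hypothesis~\eqref{ada} actually \emph{does} hold: for any $(2j-1)$-attractor functional $g$ and interval $J$, $|g(\sum_{k\in J}(-1)^{k}x_{k})|\le 5C(\max_{k\in J}|g(x_{k})|+m_{2j-1}^{-4}\#J)$, proved exactly by your split/common argument. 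Having K1 and K2, Proposition~\ref{ris}\,(b) gives the upper bound immediately. So rather than ``propagating through the tree'', you should just plug into the machinery already built; your parenthetical ``the role hypothesis~\eqref{ada} would play if it held'' is too pessimistic.

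One small point to tighten: your treatment of leaves $f_{\alpha}\in G_{\xi}$ only mentions $G_{1}$ explicitly, but on the even block $(e_{l_{2k}})$ one must also handle $G_{sp}$ and $G_{\ell_{2}}$ functionals; the paper does this by counting, not by a direct $O(m_{2j-1}^{-2})$ bound on $g(v/n_{2j-1})$, and the counting is what the Basic Inequality actually needs.
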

\begin{proof}
To see the lower estimate in \eqref{qq2} note that
$\tilde{f}=m_{2j-1}^{-2}\sum_{k=1}^{n_{2j-1}/2}f_{2k}\in G_{\xi}\subset K_{\xi}$.
Hence
$$
\|\frac{1}{n_{2j-1}}\sum\limits_{k=1}^{n_{2j-1}}
  (-1)^{k+1}x_k\|\geq \vert \tilde{f}(\frac{1}{n_{2j-1}}\sum\limits_{k=1}^{n_{2j-1}}
  (-1)^{k+1}x_i)\vert =m_{2j-1}^{-2}/2.
$$
The upper estimation in \eqref{qq2} follows from Proposition
\ref{ris} b) for $b_{k}=(-1)^{k}$ and $j_0=2j-1$ after we show
that
\begin{enumerate}
 \item[K1)]for every $g\in G_{\xi}$ it holds that $\#\{k:\vert g(x_k)\vert\geq 2m_{2j-1}^{-2}\}\leq n_{2j-2}$
\item[K2)]  The additional property of the Basic Inequality holds for the sequence $(x_k/(3C))_{k=1}^{n_{2j-1}}$ with constant $5/3$.
\end{enumerate}
To see K1) note that from the assumption we have that
$\norm[x_{2k-1}]_{G_{\xi}}\leq m_{2j-1}^{-2}$. Take now any $g\in
G_{1}^{r}=\{m_{2r-1}^{-2}\sum_{i\in F}\pm e_{i}^{*}: F\in \mca_{n_{2r-1}}\}$. If $r\geq j$ then $\vert g(e_{l_{2k}})\vert\leq
m_{2j-1}^{-2}$ while if $r<j$ it holds that $\#\{l_{2k}: \vert
g(e_{l_{2k}})\vert\geq m_{2j-1}^{-2}\}\leq n_{2r-1}$.

From the definition of the $G_\xi-$special functionals $g=E\sum_{r=1}^{d}g_{r}$ we obtain
$$ \vert g(e_{l_{2k}})\vert\geq 2m_{2j-1}^{-2}\Rightarrow l_{2k}\in\supp
g_{r}\,\textrm{with}\,g_{r}\in \cup_{i\leq j-1}G_{1}^{i}
$$
and therefore
$$\#\{l_{2k}: \vert g(e_{l_{2k}})\vert\geq 2m_{2j-1}^{-2}\}\leq \sum_{r<j}n_{2r-1}\leq n_{2j-2}.
$$
Finally let $y^{*} = \sum_{k=1}^d a_ky_k^{*} \in G_{\ell_2}$. For every
$k=1,\dots,d$ let $y_k^{*} = y_{k,1}^{*} + y_{k,2}^{*}$ with
$\ind(y_{k,1}^{*}) \subset\{1,\dots,j\}$ and
$\ind(y_{k,2}^{*}) \subset \{j+1, j+2,\dots\}.$ So we may
write $y^{*}= \sum_{k=1}^d a_k y_{k,1}^{*} + \sum_{k=1}^d a_k
y_{k,2}^{*}.$

Since $\sum_{i\geq j}m_{2i-1}^{-2}<\frac{3}{2m_{2j-1}^{2}}$
and the
sets $\inde(y_{k}^{*})_{k}$ are pairwise disjoint
in
order $\vert y^{*}(e_{l_{2i}})\vert\geq 2/m_{2j-1}^{2}$ it must hold
that  $l_{2i}\in\supp y_{k,1}^{*}$ for some $k\leq d$.
As in
the previous case we get
$$\#\{l_{2i}: \vert y^{*}(e_{l_{2i}})\vert\geq 2m_{2j-1}^{-2}\}\leq \sum_{r<j}n_{2r-1}\leq n_{2j-2}.
$$
To see K2) we have to show that for any $(2j-1)$-attractor
functional $g$ and for any interval $J\subset\{1,\dots,n_{2j-1}\}$
we have
$$
\vert g(\sum_{k\in J}(-1)^kx_k)\vert\leq 5C(\max\vert g(x_k)\vert+m_{2j-1}^{-4}\# J).
$$
Let $g=m_{2j-1}^{-1}\sum_{i=1}^dg_i$, $d\leq n_{2j-1}$. If $g=f=m_{2j-1}^{-1}\sum_{i=1}^{n_{2j-1}}f_{i}$
then $\vert g(\sum_k(-1)^kx_k)\vert=0$. Otherwise let
$i_0=\min\{i\leq d:\ f_i\ne g_i\}$. Then, by definition of the
attracting sequence and since $J$ is an interval, we have
\begin{equation}\label{bas1}
\vert \sum_{i=1}^{i_0-1}g_i(\sum_{k\in J}(-1)^kx_k)\vert\leq 3\max_{i\leq i_0-1}\vert g_{i}(x_{i})\vert.
\end{equation}
By Remark \ref{r3.2}a) for any $2i-1>i_0,2k-1>i_0$ we
have $w(g_{2i-1})\ne w(f_{2k-1})$ and for any $2i>i_0$, $2k>i_0$
we have $g_{2i}\ne f_{2k}$. Notice $g_{2i}(x_{2k})=0$ for any
$2i>i_0$ and any $2k\in J$.
By the definition of the attractor sequence we get for any $2k>i_0$
\begin{equation}\label{bas2}
\vert\sum_{i \geq i_0}g_{i}(x_{2k})\vert\leq \max_{i}\norm[g_{2i-1}]_{\infty}\leq m_{j_1}^{-1}\leq m_{2j-1}^{-4}.
\end{equation}
Now by the definition of the exact sequence for any $2i-1\geq i_0$ and
any $2k-1>i_0$ we have
$$
\vert g_{2i-1}(x_{2k-1})\vert\leq 5C\max\{w(g_{2i-1})^{-1},
m_{j_{2k-1}}^{-1}\}\leq 5Cn_{2j-1}^{-2}.
$$
Hence using that $\norm[x_{2k-1}]_{\infty}\leq m_{j_{2k-1}}^{-2}$ we obtain
\begin{equation}\label{bas3} \vert\sum_{i\geq i_0}g_{i}(x_{2k-1})\vert\leq 5C
n_{2j-1}(n_{2j-1}^{-2}+\norm[x_{2k-1}]_{\infty})\leq 5C m_{2j-1}^{-4}.
\end{equation}
From \eqref{bas1}, \eqref{bas2}, \eqref{bas3} we get
\begin{align*}
\vert g(\sum_{k\in J}(-1)^{k}x_{k})\vert
&=
\vert \frac{1}{m_{2j-1}}(\sum_{i=1}^{i_0-1}g_{i}+\sum_{i\geq i_0}g_{i})(\sum_{k\in J}(-1)^{k}x_{k})\vert
\\
&\leq 3\max_{i<i_0}\vert g(x_i)\vert+\vert g(x_{i_0})\vert+5C\sum_{k\in J} m_{2j-1}^{-4}
\leq
5C(\max_{k\in J}\vert g(x_k)\vert+\frac{\#J}{m_{2j-1}^{4}})
\end{align*}
which ends the proof of K2) and thus the whole proof.
\end{proof}
\begin{corollary}\label{c412}
Let $(x_i,f_i)_{i=1}^{n_{2j-1}}$ be a $(C,2j-1)$-attracting sequence of
length $n_{2j-1}$ satisfying the assumption of Corollary
\ref{depest}. Set
$$
\phi=m_{2j-1}^{-2}\sum\limits_{i=1}^{n_{2j-1}/2}f_{2i-1},\ \ \ \ \psi=m_{2j-1}^{-2}\sum\limits_{i=1}^{n_{2j-1}/2}f_{2i}
$$
Then
$$
\frac{1}{30C}\leq\norm[\psi]\leq 1,\,\,\,\norm[\phi+\psi]\leq
m_{2j-1}^{-1}.
$$
\end{corollary}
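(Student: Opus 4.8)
The plan is to prove the three estimates for $\phi$, $\psi$, and $\phi+\psi$ separately, drawing on the machinery already established.

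\textbf{The estimate $\norm[\phi+\psi]\leq m_{2j-1}^{-1}$.} This is the easiest: the attracting functional $f=m_{2j-1}^{-1}\sum_{i=1}^{n_{2j-1}/2}(f_{2i-1}+f_{2i})$ is a member of $K_\xi$ by clauses (1)--(3) in the definition of $K_\xi$ (it is the result of an $(\mc{A}_{n_{2j-1}},m_{2j-1}^{-1})$-operation on an attractor sequence, recalling $f_{2i}=e^*_{l_{2i}}$). Hence $\norm[f]\leq 1$, and $\phi+\psi = m_{2j-1}^{-1}f$, giving the bound at once. For the upper bound $\norm[\psi]\leq 1$: since $\psi = m_{2j-1}^{-2}\sum_{i=1}^{n_{2j-1}/2}e^*_{l_{2i}}\in G_1^j\subset G_\xi$ (as $\# \{l_{2i}\}\leq n_{2j-1}/2\leq n_{2j-1}$ and the weight is $m_{2j-1}^2$), it lies in the norming set and so has norm at most one.

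\textbf{The lower estimate $\norm[\psi]\geq \frac{1}{30C}$.} Here I would evaluate $\psi$ against the vector $y=\frac{1}{n_{2j-1}}\sum_{k=1}^{n_{2j-1}}(-1)^{k+1}x_k$, whose norm is at most $\frac{15C}{m_{2j-1}^2}$ by Corollary~\ref{depest}. We have $\psi(x_{2i})=m_{2j-1}^{-2}e^*_{l_{2i}}(e_{l_{2i}})=m_{2j-1}^{-2}$, while $\psi(x_{2k-1})=0$ for all $k$ since $\ran x_{2k-1}=\ran f_{2k-1}$ is disjoint from $\{l_{2i}\}$ (the $l_{2i}$ lie in the successor coordinates $\Lambda_{j_{2k}}$, which come after $f_{2k-1}$ in the block ordering). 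Actually I should be careful about signs: the nonzero contributions come from the even-indexed terms $(-1)^{2i+1}x_{2i}=-x_{2i}$, so $\psi(y)=\frac{1}{n_{2j-1}}\sum_{i=1}^{n_{2j-1}/2}(-1)\cdot m_{2j-1}^{-2}$, hence $|\psi(y)|=\frac{1}{2m_{2j-1}^2}$. Therefore
\begin{equation}
\norm[\psi]\geq \frac{|\psi(y)|}{\norm[y]}\geq \frac{1/(2m_{2j-1}^2)}{15C/m_{2j-1}^2}=\frac{1}{30C},
\end{equation}
which is the claimed bound.

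\textbf{Remaining point.} The only subtlety, and what I expect to be the main obstacle, is verifying carefully that $\psi(x_{2k-1})=0$; this rests on the structural fact (Remark~\ref{r3.2}) that in an attractor sequence the coordinate $l_{2i}\in\Lambda_{\sigma(f_1,\dots,f_{2i-1})}$ is strictly above $\supp f_{2i-1}$, together with $\ran x_{2k-1}=\ran f_{2k-1}$ from the definition of an exact pair, and also that for $i\neq k$ the support of $f_{2i-1}$ sits below $l_{2k}$ or above it but disjoint — the block condition $f_1<f_2<\dots<f_{n_{2j-1}}$ in $\Q_s$ handles this. Granting these support considerations the three inequalities follow immediately from the already-established norm estimates, and no new estimation on the space $X_\xi$ is needed.
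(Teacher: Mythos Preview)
Your proof is correct and follows essentially the same approach as the paper's: the bound $\norm[\phi+\psi]\leq m_{2j-1}^{-1}$ comes from $m_{2j-1}(\phi+\psi)\in K_\xi$, and the lower bound on $\norm[\psi]$ comes from testing $\psi$ against the alternating average $\frac{1}{n_{2j-1}}\sum_k(-1)^{k+1}x_k$ and invoking the upper estimate of Corollary~\ref{depest}. Your extra care with the sign and with the support disjointness (which is immediate from the block condition $f_1<\dots<f_{n_{2j-1}}$ together with $\ran x_{2k-1}=\ran f_{2k-1}$) only makes the argument more explicit than the paper's version.
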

\begin{proof}
Notice that $m_{2j-1}(\phi+\psi)\in K_{\xi}$ hence the second
inequality holds. To prove the first, from Corollary \ref{depest} we have
$\|\frac{1}{n_{2j-1}}\sum\limits_{k=1}^{n_{2j-1}}
  (-1)^{k+1}x_k\| \leq \frac{15C}{m_{2j-1}^2}$ and therefore
$$
\norm[\psi]\geq\psi(\frac{m_{2j-1}^2}{15Cn_{2j-1}}\sum_{i=1}^{n_{2j-1}}(-1)^{i+1}x_i)
=\frac{m_{2j-1}^2}{15Cn_{2j-1}}
\frac{1}{m_{2j-1}^{2}}\sum_{i=1}^{n_{2j-1}/2}f_{2i}(x_{2i})
=\frac{1}{30C}.
$$
\end{proof}
\section{Spaces with no $\ell_{p}$ as a spreading model}
\label{sec11}
In this section we show that the space $X_{\xi}$ does not admit
$c_0$ or $\ell_{p}, 1\leq p<\infty$, as a
spreading model.
Actually we  show that  this holds   for a wider class of Banach spaces
which describe now.

Let $G$ be a ground set.
Let $\mathcal{W}_{G}$ denote the smallest subset of $c_{00}(\N)$ which
\begin{enumerate}
 \item is symmetric, closed under the projections of its elements on intervals of $\N$ and  $G\subset\mathcal{W}_{G}$.
 \item  for every $j\in\N$ is closed under the $(A_{n_j}, m_{j}^{-1})$ operation.
 \item whenever $(f_{i})_{i=1}^{d}$ is the result  of an
$(A_{n_{j_i}},m_{j_i}^{-1})$-operation  with $n_{j_k}\ne n_{j_{m}}$ for $k\neq m$,  then
$\sum_{i=1}^{d}\lambda_{i}f_{i}\in \mathcal{W}_{G}$ for all $(\lambda_{i})_{i=1}^{d}\in
B_{\ell_2}\cap [\Q]^{<\infty}$.
 \item is rationally convex.
\end{enumerate}
\begin{definition}\label{trcomp}
A subset $D_{G}$ of $\mathcal{W}_{G}$ is said to be an extension of $G$ if:
 \begin{enumerate}
 \item[(i)]  The set  $D_{G}$ is symmetric, closed under the projections of its elements
on intervals of $\N$ and  $G\subset D_G$.
 \item[(ii)] For any $j\in\N$ we have that $D_{G}$ is  closed under the $(A_{n_{2j}},m_{2j}^{-1})$-operation.
\item[(iii)] Whenever $(f_{i})_{i=1}^{d}$ is the result  of an
$(A_{n_{j_i}},m_{j_i}^{-1})$-operation  with $n_{j_k}\ne n_{j_{m}}$ for $k\neq m$,  then
$\sum_{i=1}^{d}\lambda_{i}f_{i}\in D_{G}$ for all $(\lambda_{i})_{i=1}^{d}\in
B_{\ell_2}\cap [\Q]^{<\infty}$. \item[iv)] It  is rationally convex.
\end{enumerate}
 \end{definition}
\begin{definition}\label{norm1}
 Let $D_{G}$ be an extension subset of $\mc{W}_{G}$.
We define $\mathcal{Y}_{D_{G}}=\overline{(c_{00}(\N),\norm_{D_{G}})}$.
\end{definition}
We prove now the following theorem
\begin{theorem}\label{t11.3}
Let $G$ be a ground set  such that the corresponding space $X_{G}$ does not admit
$\ell_{1}$ as a spreading model. Then for every extension $D_{G}$ the space $\mc{Y}_{D_G}$ does not admit any $\ell_p$ or $c_0$ as a spreading model.
\end{theorem}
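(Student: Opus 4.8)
The plan is to treat the three kinds of spreading model separately; the cases $c_0$ and $\ell_p$ with $1<p<\infty$ are soft and do not even use the hypothesis on $X_G$, while the case $\ell_1$ carries all the weight. First I would record for $\mc{Y}_{D_G}$ the analogues of what is proved for $X_\xi$: since $D_G$ is closed under the even operations $(\mc{A}_{n_{2j}},m_{2j}^{-1})$, under the $\ell_2$-combinations of type-$\mathrm{I}$ functionals with distinct weights, and is rationally convex (properties (ii)--(iv) of Definition~\ref{trcomp}), the Basic Inequality of Proposition~\ref{bin} holds verbatim in $\mc{Y}_{D_G}$, hence so do the RIS-estimates of Proposition~\ref{ris}; as for $X_\xi$ this yields that the basis of $\mc{Y}_{D_G}$ is shrinking and boundedly complete, so in particular $\mc{Y}_{D_G}$ contains no copy of $\ell_1$. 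Finally, by Proposition~\ref{p4.6} every block subspace of $\mc{Y}_{D_G}$ contains normalized $2$-$\ell_1^{k}$ averages, and quantitatively a normalized $2$-$\ell_1^{n_j}$ average built from a block sequence uses at most $N(j)$ of its terms. Now assume for contradiction that a normalized sequence of $\mc{Y}_{D_G}$ generates an $\ell_p$ ($1\le p<\infty$) or $c_0$ spreading model with constant $C$; passing to differences and to a subsequence (and, for $p=1$, invoking Fact~\ref{fact1}) we may assume it is a normalized block sequence $(x_k)$, and that for $p=1$ the constant is $(1-\e)^{-1}$ with $\e$ as small as we please.

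For $c_0$ and for $\ell_p$ with $p>1$: pick $f_k\in D_G$ with $f_k(x_k)\ge 1/2$ and $\ran f_k\subset\ran x_k$. Any $n_{2j}$ successive functionals are $\mc{A}_{n_{2j}}$-admissible, so $m_{2j}^{-1}\sum_{i=1}^{n_{2j}}f_{k_i}\in D_G$, whence for $k_1<\dots<k_{n_{2j}}$
$$\Bigl\|\sum_{i=1}^{n_{2j}}x_{k_i}\Bigr\|\ \ge\ \frac1{m_{2j}}\sum_{i=1}^{n_{2j}}f_{k_i}(x_{k_i})\ \ge\ \frac{n_{2j}}{2m_{2j}}.$$
On the other hand, taking $k_1$ large the spreading-model property forces $\|\sum_{i=1}^{n_{2j}}x_{k_i}\|\le 2C$ in the $c_0$-case and $\le 2Cn_{2j}^{1/p}$ in the $\ell_p$-case. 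By the growth of $(n_j)$ relative to $(m_j)$ one has $n_{2j}/m_{2j}\to\infty$ and, for each fixed $p>1$, $n_{2j}^{1-1/p}/m_{2j}\to\infty$; so both inequalities fail for $j$ large, excluding $c_0$ and $\ell_p$ with $p>1$.

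It remains to exclude $\ell_1$, and here the hypothesis that $X_G$ has no $\ell_1$-spreading model must be used. The plan is: fix a large $j$, pick indices $j<j_1<\dots<j_{n_j}$, and — working in a far tail of $(x_l)$ and using that a $2$-$\ell_1^{n_{j_i}}$ average involves a controlled number of terms (Proposition~\ref{p4.6}) — choose successive index sets $F_1<\dots<F_{n_j}$, $|F_i|=n_{j_i}$, each $\mc S_1$-admissible, and set $z_i=\frac1{n_{j_i}}\sum_{l\in F_i}x_l$. The $\ell_1$-spreading model gives $1-\e\le\|z_i\|\le1$, so $\tilde z_i:=z_i/\|z_i\|$ is a normalized $2$-$\ell_1^{n_{j_i}}$ average, and with the $j_i$ growing fast enough $(\tilde z_i)_{i=1}^{n_j}$ is a $(3,\e)$-RIS (Remark~\ref{r5.2}). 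Then $u:=\frac1{n_j}\sum_{i=1}^{n_j}\tilde z_i=\sum_{l\in F}d_lx_l$ with $d_l\ge0$, $\sum_{l\in F}d_l=\frac1{n_j}\sum_i\|z_i\|^{-1}\ge1$ and $F=\bigcup_iF_i$, which we arrange to be $\mc S_1$-admissible. Hence the $\ell_1$-spreading model forces $\|u\|\ge(1-\e)\sum_{l\in F}d_l\ge1-\e$, while Proposition~\ref{ris}(a) applied to the RIS $(\tilde z_i)$ with outer index $j$ gives $\|u\|\le 9/m_j$; for $j$ large this is a contradiction, so $\mc{Y}_{D_G}$ admits no $\ell_1$-spreading model.

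\textbf{The main obstacle} is the step ``Proposition~\ref{ris}(a) applies'': the Basic Inequality, and hence Proposition~\ref{ris}, requires that after passing to a subsequence one has $\#\{i:\ |g(\tilde z_i)|\ge 2m_j^{-2}\}\le n_{j-1}$ for every $g$ in the \emph{ground set} $G$. For $G=G_\xi$ this is precisely Lemma~\ref{l1a} together with Propositions~\ref{aa4} and~\ref{ris-sss}; for a general ground set $G$ one must reprove this ``separation'' of averages of block sequences with respect to $G$, and this is exactly where the hypothesis that $X_G$ has no $\ell_1$-spreading model enters --- it rules out the pathological ground sets (such as the one for which $X_G=\ell_1$) for which such a separation fails. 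A secondary, purely technical point is the simultaneous bookkeeping in the $\ell_1$-case: one must choose $j_1<\dots<j_{n_j}$ and place the averages so as to meet at once the RIS growth condition of Definition~\ref{d9.1} (which pushes supports far out) and the $\mc S_1$-admissibility of $F$, which requires a preliminary reduction to a block sequence with sufficiently controlled support growth; this is delicate but standard. Once the separation is in place, the Basic Inequality transfers all estimates to an auxiliary space $W_{j_0}$ (with $j_0=j$), Lemma~\ref{42} bounds the transferred functional on $\frac1{n_j}\sum e_k$, Proposition~\ref{ris} follows, and the proof is complete; the same package also yields the reflexivity of $\mc{Y}_{D_G}$ and the strict singularity of $X_G\to\mc{Y}_{D_G}$ used in the reductions above.
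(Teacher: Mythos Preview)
Your treatment of $c_0$ and $\ell_p$, $p>1$, matches the paper's: Proposition~\ref{p4.6} shows $\ell_1$ is finitely block representable in every block subspace of $\mc{Y}_{D_G}$, which kills these spreading models immediately.

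For $\ell_1$ your route is genuinely different from the paper's, and the ``main obstacle'' you flag is exactly the point where they part. The hypothesis that $X_G$ admits no $\ell_1$ spreading model is used in the paper via the Erd\H{o}s--Magidor theorem: given a bounded block sequence $(x_n)$ in $\mc{Y}_{D_G}$ and $\e>0$, one passes to averages $z_n=\frac{1}{n_0}\sum_{k\in F_n}x_k$ with $\|z_n\|_G<\e$ for \emph{all} $n$; by Fact~\ref{fact1} the $(z_n)$ still generate an $\ell_1$ spreading model with constant $(1-\e)^{-1}$. Note that $\|z_n\|_G<\e$ makes your separation condition vacuous (the set $\{i:|g(z_i)|\ge\e\}$ is empty for every $g\in G$), so your RIS/Basic Inequality program can in fact be completed along these lines.

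The paper, however, does \emph{not} invoke the Basic Inequality at all from this point; it argues directly and more cheaply. Since $\|z_n\|_G<\e$, any $\phi\in D_G$ norming $z_n$ to level $1-\e$ is, up to a convex combination, of type II: $\phi=\sum_i\lambda_if_i$ with the $f_i$ weighted and of pairwise distinct weights. Two short lemmas then localize the weights: first (analogue of Lemma~\ref{ll1a}), for any fixed vector there is $j_0$ such that type-II functionals with all indices $>j_0$ act with modulus $<\e$; applying this to a single $z_{n_0}$ and comparing $\ell_2$-masses in $\phi$ shows that there is a \emph{uniform} $j_0$ such that every $z_n$ is $0.9$-normed by some type-II functional supported on weights $\le m_{j_0}$, hence any type-II functional supported on weights $>m_{j_0}$ satisfies $|\phi(z_n)|<0.6$ for all $n$. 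Now form a single long average $u=\frac{1}{n_{j_0+1}}\sum_{i\le n_{j_0+1}}z_{k_i}$: a norming $\phi=\sum_i\lambda_if_i$ splits into a low-weight part (indices $\le j_0$), on which each $f_i$ acts on $u$ with modulus $\le 2/w(f_i)$ by the elementary $\ell_1$-average estimate of Lemma~\ref{l4.5}, and a high-weight part contributing at most $0.6$; summing gives $|\phi(u)|<0.8$, contradicting $\|u\|\ge 1-\e$.

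In short: your plan is salvageable once Erd\H{o}s--Magidor is inserted, but it carries the full RIS machinery (growing-length averages, the auxiliary space $W_{j_0}$, Proposition~\ref{ris}) where the paper gets by with one fixed-length average and Lemma~\ref{l4.5}. The paper's argument is also self-contained in the sense that it does not need reflexivity of $\mc{Y}_{D_G}$ or the strict singularity of $X_G\to\mc{Y}_{D_G}$, both of which you invoke in passing.
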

\begin{remark}
The first example of a Banach space $X$ with no $\ell_{p}$ as a spreading model was given
by E. Odell and Th. Schlumprecht \cite{OSsm}.  The spaces we consider in Theorem
\ref{t11.3} are extensions of their example. In particular  the space $\mc{Y}_{D_G}$
is similar to their example when  $G=\{\pm e_{n}^{*}: n\in\N\}$ and  $D_{G}=\mc{W}_{G}$.  Our proof provides also an alternative proof of their result.
\end{remark}
\begin{proof}[Proof of the theorem]
First we note that  Proposition \ref{p4.6}  holds for the space $\mc{Y}_{D_G}$. It follows
that  only  $\ell_{1}$ is finitely block representable  in  $\mc{Y}_{D_G}$ and hence
$\mc{Y}_{D_G}$ does not admit $c_0$ or $\ell_{p}$, $p>1$, as a spreading model. We prove
now that the space $\mc{Y}_{D_G}$ does not contain a normalized sequence generating an
$\ell_{1}$-spreading model.

Since the space  $X_{G}$ does not admit  $\ell_{1}$ as aspreading  model Erdos-Magidor
theorem \cite{EM},  yields  that for every bounded sequence $\xn$ and
every $\e>0$ we can choose a block sequence $\xn[y]$  of $\xn$
where each  $y_{n}=\sum_{k\in F_{n}}x_{k}/n_0$,  $\#F_{n}=n_0$,
such that $\norm[y_n]_{G}<\e$.

Also by the Fact \ref{fact1}  if a sequence $\xn$ generates an $\ell_{1}$-spreading model with constant $c$ then for every $\e>0$ there
exists a block sequence  $\xn[y]$ of $\xn$ generating an $\ell_{1}$-spreading model with constant $(1-\e)^{-1}$.
So assuming   that a normalized block sequence $(y_n)$ generates an
$\ell_{1}$-spreading model with constant $C$,  passing to suitable  block sequence $\xn[z]$ of $\xn[y]$ we may assume that
\begin{enumerate}
\item[A)]  $\xn[z]$ generates an $\ell_{1}$-spreading model with constant $(1-\e)^{-1}$.
\item[B)] $\norm[z_n]_{G}<\e$ for every $n\in\N$.
\end{enumerate}
We shall need  also the following lemmas
\begin{lemma}\label{le7.1}
 Let $x\in \mathcal{Y}_{D_G}$.
Then for every $\e>0$ there exists $j_0\in\N$ such that for every $\phi\in D_{G}$ of
type II with $\inde(\phi)> j_0$ it holds that $\vert \phi(x)\vert<\e$.
\end{lemma}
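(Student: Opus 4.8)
The plan is to reduce to a finitely supported vector and then exploit the $\ell_2$-structure of a type II functional together with the rapid decay of $(m_j^{-1})_j$. First I would fix $\e>0$, choose $x'\in c_{00}(\N)$ with $\norm[x-x']<\e/2$, and set $D=\norm[x']_{\ell_1}$. Since $D_G$ is a symmetric norming subset of $B_{\mathcal{Y}_{D_G}^*}$, every $\phi\in D_G$ satisfies $\vert\phi(x-x')\vert\le\norm[x-x']<\e/2$; hence it suffices to find $j_0\in\N$, depending only on $x'$ and $\e$, so that $\vert\phi(x')\vert<\e/2$ for every type II $\phi\in D_G$ with $\inde(\phi)>j_0$.

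Next I would record the pointwise estimate for type I functionals: exactly as in the proof of Lemma \ref{linfty} (induction along a tree analysis; an $(\mc{A}_{n_j},m_j^{-1})$-operation splits a single coordinate among its successors, and type II combinations are handled by the Cauchy--Schwarz inequality, which is why overlapping supports cost only the universal constant $c_1$), one gets $\norm[f]_\infty\le c_1/w(f)$ for every type I $f\in\mc{W}_G$. Now take a type II functional $\phi=\sum_i\lambda_if_i\in D_G$ with $\sum_i\lambda_i^2\le 1$, each $f_i$ of type I with $w(f_i)=m_{j_i}$ and the $j_i$ pairwise distinct, and $\inde(\phi)=\{j_i\}>j_0$, i.e. $j_i>j_0$ for all $i$. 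Then $\vert f_i(x')\vert\le\norm[f_i]_\infty\,\norm[x']_{\ell_1}\le c_1D\,m_{j_i}^{-1}$, so by the Cauchy--Schwarz inequality and the distinctness of the $j_i$,
$$\vert\phi(x')\vert\le\Big(\sum_i\lambda_i^2\Big)^{1/2}\Big(\sum_i\vert f_i(x')\vert^2\Big)^{1/2}\le c_1D\Big(\sum_i m_{j_i}^{-2}\Big)^{1/2}\le c_1D\Big(\sum_{j>j_0}m_j^{-2}\Big)^{1/2}.$$

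Finally I would pick $j_0$ so large that $c_1D\big(\sum_{j>j_0}m_j^{-2}\big)^{1/2}<\e/2$; this is possible since $\sum_j m_j^{-2}<\infty$ (indeed $m_{j+1}=m_j^5$, so the tail is tiny). Then $\vert\phi(x)\vert\le\vert\phi(x')\vert+\vert\phi(x-x')\vert<\e/2+\e/2=\e$ for every type II $\phi\in D_G$ with $\inde(\phi)>j_0$, which is the claim. Everything here is routine Cauchy--Schwarz and a tail estimate; the only point requiring a little care is the uniform bound $\norm[f]_\infty\le c_1/w(f)$ for type I functionals, which is genuinely needed because type II combinations in an extension $D_G$ need not have pairwise disjoint supports, and which is established by the same inductive argument as Lemma \ref{linfty}.
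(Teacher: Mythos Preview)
Your proof is correct and follows essentially the same approach the paper intends: the paper omits the proof, referring to Lemma~\ref{ll1a}, whose argument is precisely the Cauchy--Schwarz plus tail estimate you carry out, with the extra (and appropriate) care of first approximating $x\in\mathcal{Y}_{D_G}$ by a finitely supported vector. One minor remark: in $\mathcal{W}_G$ the type~II operation has no additional $e_t^*$ summands, so the same induction actually yields the sharper bound $\norm[f]_\infty\le 1/w(f)$ for type~I $f$ (the constant $c_1$ is not needed), but your weaker bound is of course sufficient.
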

The proof is similar to the proof of Lemma \ref{ll1a} and we omit it.
\begin{lemma}\label{le7.2} Assume that $\xn[z]$  satisfies A) and B) for $\e=10^{-3}$.
Then there exists $j_0\in\N$ such that for all  $n\in\N$ there exists $\phi$ of type II
with  $\inde(\phi)\leq j_0$ such that $\vert \phi(z_{n})\vert \geq 0.9$.
\end{lemma}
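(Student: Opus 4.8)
The statement of Lemma \ref{le7.2} parallels Proposition \ref{nol1}, where the analogous conclusion was obtained for $X_{G_\xi}$ working with type II functionals $\phi\in G_{\ell_2}$ of the norming set $G_\xi$. Here the role of $G_{\ell_2}$ is played by the type II functionals of the extension $D_G$, which are $\ell_2$-combinations $\phi=\sum_i\lambda_i f_i$ with $f_i$ the result of an $(\mca_{n_{j_i}},m_{j_i}^{-1})$-operation and distinct indices. The plan is to repeat the dichotomy-and-Ramsey scheme of Proposition \ref{nol1} adapted to $\mc{Y}_{D_G}$, using assumptions A) and B) on $(z_n)$ in place of the spreading-model hypothesis \eqref{lo} there.

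First I would fix $\e=10^{-3}$ and, applying Lemma \ref{le7.1} to $z_1$, obtain $j_0$ such that every type II functional $\phi$ with $\inde(\phi)>j_0$ satisfies $|\phi(z_1)|<\e$. Next, using B) (each $\norm[z_n]_G<\e$) together with the analogue of Lemma \ref{ll2} — namely that for a normalized block sequence and $\delta>0$ there are $j_1\in\N$ and an infinite $L$ so that every $f$ of type I with $\ind(f)>j_1$ has $|f(z_n)|>\delta$ for at most one $n\in L$ — I would reduce to a situation in which only ``low-index'' contributions matter. Then, for each triple $1<k<n$ in (the reindexed) $L$, I would use A), i.e. the fact that $(z_n)$ generates an $\ell_1$-spreading model with constant $(1-\e)^{-1}$, to produce a norming functional $\phi_{k,n}\in D_G$ with $|\phi_{k,n}(z_r)|$ close to $1$ for $r\in\{1,k,n\}$; splitting $\phi_{k,n}$ according to whether the indices of its type I components exceed $j_0$ or not, and using Lemma \ref{le7.1} for $z_1$, the $\ell_2$-mass of the ``high-index'' part is forced to be small, so the ``low-index'' part still almost norms $z_k$ and $z_n$. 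Since the index sets of the components are pairwise disjoint and bounded by $j_0$, there are at most $j_0$ of them.

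After this reduction, as in Proposition \ref{nol1} I would further decompose each low-index component according to the position of its first node overlapping $\ran(z_n)$ relative to $j_1$ and to whether that node has small action on $z_k$ or $z_n$, giving a Ramsey-stable trichotomy $G^0,G^1,G^2$. If the $G^0$-part always almost norms $z_k$ we are done with $\phi$ the corresponding type II functional and $\inde(\phi)\le j_0$ (after possibly enlarging $j_0$ by $j_1$). The cases $G^1,G^2$ I would exclude by the same argument as in the Claim inside the proof of Proposition \ref{nol1}: build, by finite induction over $k$, small families $U_{k,n}$ of segments (here: of the component functionals), showing $\#U_{n-1,n}$ is bounded independently of $n$; then a weak$^*$-compactness / disjointness-of-supports argument yields a contradiction.

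\textbf{Main obstacle.} The delicate point, exactly as in Proposition \ref{nol1}, is running the finite induction that builds the families $U_{k,n}$ and controlling their cardinality: one must verify that at each step the ``new'' component functionals produced have index sets disjoint from those already collected, so that $\ell_2$-combining $t\approx 1+1/\delta_0^2$ of them would give a functional of norm exceeding $1$ — the desired contradiction. Making the disjointness precise requires keeping careful track of where each component's relevant node sits with respect to $\ran(z_k)$ and $\ran(z_n)$, and this is where the structure of $D_G$ (closure only under the $\ell_2$-combination of type I functionals with distinct indices, and under even operations) must be used exactly as for $G_{\ell_2}\subset G_\xi$. Everything else is a routine transcription of the estimates \eqref{lo1}--\eqref{lo11} with $G_\xi$ replaced by $D_G$ and the spreading-model hypothesis replaced by A); I do not expect new ideas beyond those already present in Section \ref{sec6}.
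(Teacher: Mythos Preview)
Your plan dramatically overshoots what is needed, and in fact the part you call a mere ``reduction'' already contains the entire proof. The paper's argument is six lines: fix the single vector $z_2$, apply Lemma \ref{le7.1} to it to get $j_0$, and for any $n$ use A) to pick $\phi\in D_G$ with $\phi(z_2+z_n)\ge 1.998$; by B) this $\phi$ must be type II, $\phi=\sum_i\lambda_if_i$. Splitting the index set into $A=\{i:w(f_i)\le m_{j_0}\}$ and its complement $B$, Lemma \ref{le7.1} gives $|\sum_{i\in B}\lambda_if_i(z_2)|<\e$, hence $(\sum_{i\in A}\lambda_i^2)^{1/2}\ge 0.997$, hence $(\sum_{i\in B}\lambda_i^2)^{1/2}<0.09$, hence $|\sum_{i\in A}\lambda_if_i(z_n)|\ge 0.998-0.09>0.9$. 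The functional $\sum_{i\in A}\lambda_if_i$ is type II with index $\le j_0$, and the lemma is proved. Only \emph{pairs} are needed, no Ramsey theorem, no analogue of Lemma \ref{ll2}, no passing to subsequences.

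The additional machinery you propose---the trichotomy $G^0,G^1,G^2$, the families $U_{k,n}$, the ``first node overlapping $\ran(z_n)$''---does not even make sense in the general setting of $D_G$. In Proposition \ref{nol1} those notions rely on the \emph{segment} structure of $G_\xi$-special functionals $\phi_s=\sum_i\e_if_{s,i}$ coming from the coding $\sigma_1$; a generic type I functional in $D_G$ is just the result of an $(\mc{A}_{n_j},m_j^{-1})$-operation and has no canonical decomposition into ``nodes'' along a segment. So not only is this part of your plan unnecessary, it cannot be carried out as written. The reason Proposition \ref{nol1} needed all that work is that there the components $\phi_s$ of a type II functional are $G_\xi$-special functionals with \emph{unbounded} support size, whereas here the low-index part has at most $j_0$ weighted components and the $\ell_2$-mass transfer argument finishes immediately.
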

\begin{proof}
Let $\phi\in D_{G}$ be such that $\phi(z_{2}+z_{n})\geq 1.998.$
It follows that  $\vert \phi (z_{2})\vert \geq 0.998$ and $\vert\phi(z_{n})\vert\geq 0.998$.

By B) we get that $\phi=\sum_{i=1}^{d}\lambda_{i}f_{i}$, where each $f_{i}$ is a
weighted functional i.e. is a result of an $(\mc{A}_{n_{j_i}}, m_{j_i}^{-1})$-operation
and weights of $(f_i)_{i=1}^d$ are different.
Let
$j_0\in\N$ be the  number we obtain from Lemma \ref{le7.1} for $z_{2}$. Setting
$A=\{i\leq d: w(f_{i})\leq m_{j_0}\}$ and $B$ its complement we get $\vert\sum_{i\in
B}\lambda_{i}f_{i}(z_2)\vert<0.001$. Therefore
$$
(\sum_{i\in A}\lambda_{i}^2)^{1/2}\geq
\vert\sum_{i\in A}\lambda_{i}f_{i}(z_{2})\vert\geq 0.997.
$$
It follows that
$$(\sum_{i\in B}\lambda_{i}^{2})^{1/2}\leq (1-0.997^2)^{1/2}\Rightarrow\|\sum_{i\in B}\lambda_{i}f_{i}\|\leq 0.09
$$
Hence $\vert \sum_{i\in A}\lambda_{i}f_{i}(z_{n})\vert\geq 0.9$.
\end{proof}
By the above lemma we get that
\begin{equation}\label{key}\mbox{for every $\phi$ of type II with $\inde(\phi)> j_0$ it holds that $\vert \phi(z_n)\vert<0.6$ for all $n\in\N$.}
\end{equation}
Indeed, assume that there exists $\phi_{2}$ of type II with $\inde(\phi_{2})>j_0$ and
$\vert\phi_{2}(z_{n})\vert\geq 0.6$. Then by the previous lemma we get $\phi_{1}$ of type
II with $\inde(\phi_{1})\leq j_0$ such that $\phi_1(z_{n})\geq 0.9$. It follows that
$\phi=\frac{1}{\sqrt{2}}(\phi_{1}+\phi_{2})\in D_{G}$ and hence
$\norm[z_{n}]\geq\phi(z_n)\geq \frac{0.9+0.6}{\sqrt{2}}>1$, a contradiction.

Consider now the vector
$u=\frac{1}{n_{j_0+1}}\sum_{i=1}^{n_{j_0+1}}z_{n_{j_0+1}+i}$.
Since we have  that $\xn[z]$ generates an $\ell_{1}$-spreading model with constant $0.999^{-1}$,
there exists  $\phi\in  D_{G}$
such  that $\phi(u)\geq 0.999$.
Since  $\norm[z_n]_{G}\leq 10^{-3}$ for all $n\in\N$  we
get
that $\phi=\sum_{i=1}^{d}\lambda_{i}f_{i}$ where each $f_{i}$ is a weighted functional and their weights are different.
Set
\begin{center}
$R_{1}=\{i: w(f_{i})\leq m_{j_0}\}$ and  $R_{2}=\{i: w(f_{i})>m_{j_0}\}.$
 \end{center}
 By
\eqref{key}  for every $n$ we obtain $\vert\sum_{i\in
R_{2}}\lambda_{i}f_{i}(z_n)\vert\leq 0.6$ and hence
\begin{equation}\label{key2}
\vert\sum_{i\in R_{2}}\lambda_{i}f_{i}(u)\vert\leq 0.6.
\end{equation}
On the other hand if $i\in R_{1}$    by Lemma \ref{l4.5} we get
$
\vert f_{i}(u)\vert\leq\frac{2}{w(f_{i})}.
$

Combining the above inequality with  \eqref{key2} we get
$$
0.999 \leq \vert\phi(u)\vert\leq \sum_{i\in
R_{1}}\frac{2\vert\lambda_{i}\vert}{w(f_{i})}+0.6<0.8,
$$
a contradiction.
\end{proof}
\begin{corollary}\label{c4.15} The space  $X_{\xi}$ does not admit  any $\ell_p$ (or $c_0$) as a spreading model.
\end{corollary}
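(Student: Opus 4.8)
The plan is to recognize the space $X_{\xi}$ as a special instance of the spaces $\mc{Y}_{D_G}$ treated in Theorem \ref{t11.3} and then simply quote that theorem. Concretely, I would take $G=G_{\xi}$ for the ground set and $D_G=K_{\xi}$ for the extension, so that $X_{\xi}=\mc{Y}_{K_{\xi}}$ by the very definition of $X_{\xi}$ in Section \ref{sec7}. The two hypotheses of Theorem \ref{t11.3} are then: (a) the auxiliary space $X_{G_{\xi}}$ associated with the ground set $G_{\xi}$ does not admit $\ell_{1}$ as a spreading model, and (b) $K_{\xi}$ is an extension of $G_{\xi}$ in the sense of Definition \ref{trcomp}. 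Hypothesis (a) is precisely Theorem \ref{pr220}, so nothing new is needed there.

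The bulk of the (routine) work is verifying (b), i.e.\ comparing the defining clauses of $K_{\xi}$ in Section \ref{sec7} with Definition \ref{trcomp}. First, $K_{\xi}\subseteq\mc{W}_{G_{\xi}}$: this follows by induction on the inductive generation of $K_{\xi}$, since $G_{\xi}\subseteq\mc{W}_{G_{\xi}}$ and every operation used to produce an element of $K_{\xi}$ --- the even operations $(\mc{A}_{n_{2j}},m_{2j}^{-1})$, the odd operations $(\mc{A}_{n_{2j-1}},m_{2j-1}^{-1})$ applied to attractor sequences (which are in particular $\mc{A}_{n_{2j-1}}$-admissible sequences of length at most $n_{2j-1}$), the $\ell_{2}$-combinations $\sum_{i\in A}\lambda_{i}f_{i}$ of weighted functionals with pairwise distinct $n$-indices, and rational convex combinations --- is among the operations generating $\mc{W}_{G_{\xi}}$. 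Second, the four requirements (i)--(iv) of Definition \ref{trcomp} match, respectively, conditions 1, 2, 4 and 5 in the definition of $K_{\xi}$: condition 1 gives that $K_{\xi}$ is symmetric, closed under interval restrictions and contains $G_{\xi}$; condition 2 gives closure under all even operations $(\mc{A}_{n_{2j}},m_{2j}^{-1})$; condition 4 gives closure under $\ell_{2}$-combinations of weighted functionals (including the odd, i.e.\ attractor, functionals) having pairwise distinct $n$-indices, which is exactly clause (iii); and condition 5 gives rational convexity. The additional closure property, condition 3 of $K_{\xi}$ (odd operations on attractor sequences), only enlarges $K_{\xi}$ and does not conflict with any axiom.

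With (a) and (b) in hand, Theorem \ref{t11.3} applied to $G=G_{\xi}$ and $D_G=K_{\xi}$ yields that $\mc{Y}_{K_{\xi}}=X_{\xi}$ does not admit any $\ell_{p}$, $1\leq p<\infty$, or $c_{0}$ as a spreading model, which is the assertion of the corollary. The only point requiring a moment's care is the bookkeeping in step (b): in particular, checking that the attractor functionals, being results of odd $(\mc{A}_{n_{2j-1}},m_{2j-1}^{-1})$-operations, count as admissible weighted functionals in clause (iii) of Definition \ref{trcomp}, and that no clause of the definition of $K_{\xi}$ produces an element outside $\mc{W}_{G_{\xi}}$; both are immediate from the definitions, so no genuine obstacle remains.
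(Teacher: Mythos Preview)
Your proposal is correct and is exactly the approach the paper intends: the corollary is stated without proof immediately after Theorem \ref{t11.3}, and the verification that $K_{\xi}$ is an extension of $G_{\xi}$ in the sense of Definition \ref{trcomp} (together with Theorem \ref{pr220} for the hypothesis on $X_{G_{\xi}}$) is precisely the implicit argument. Your careful bookkeeping that $K_{\xi}\subset\mc{W}_{G_{\xi}}$ and that the attractor functionals qualify as weighted functionals for clause (iii) is the only content to check, and you have handled it correctly.
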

The abstraction of the properties of the set $D_{G}$ enable us to derive also the following
\begin{theorem} There exists a reflexive Hereditarily Indecomposable Banach space $\mathfrak{X}_{HI}$ with no $\ell_{p}$, $1\leq p<\infty$, or $c_{0}$ as a spreading model.
\end{theorem}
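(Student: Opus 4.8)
The plan is to realize $\mathfrak{X}_{HI}$ as a space $\mc{Y}_{D_G}$ for a suitable ground set $G$ and a conditional extension $D_G$ of $G$, and then to invoke Theorem \ref{t11.3}. For the ground set we make the minimal choice $G=G_0=\{\pm e_n^*:n\in\N\}$, so that $X_{G_0}$ is isometric to $c_0$; being $c_0$-saturated, $c_0$ admits no $\ell_1$-spreading model, hence $G_0$ satisfies the hypothesis of Theorem \ref{t11.3}. We then let $D_{G_0}$ be the minimal subset of $\mc{W}_{G_0}$ which is symmetric, closed under restrictions to intervals of $\N$, contains $G_0$, is closed under the even operations $(\mca_{n_{2j}},m_{2j}^{-1})$, closed under the odd operations $(\mca_{n_{2j-1}},m_{2j-1}^{-1})$ applied only to $\sigma$-special sequences of weighted functionals (defined through a coding function $\sigma$ as in Section \ref{sec7}, but without the attractor alternation), closed under the rational $\ell_2$-convex combinations $\sum_i\lambda_if_i$ of weighted functionals with pairwise distinct weights, and rationally convex. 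Since $\mc{W}_{G_0}$ is already closed under all $(\mca_{n_j},m_j^{-1})$-operations, every special-sequence functional lies in $\mc{W}_{G_0}$, so $D_{G_0}\subset\mc{W}_{G_0}$; and by construction $D_{G_0}$ satisfies conditions (i)--(iv) of Definition \ref{trcomp}, so $\mathfrak{X}_{HI}:=\mc{Y}_{D_{G_0}}$ is an extension of $X_{G_0}$ in the required sense.

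First I would verify the basic structural properties of $\mathfrak{X}_{HI}$. Reflexivity is obtained along the lines of Section \ref{sec9}: the even operations force the basis to be shrinking (as in Corollary \ref{shrinking}, using that every normalized average $\tfrac1{n_{2j}}\sum_{i=1}^{n_{2j}}y_i$ of a bounded block sequence has small norm), while the estimate on averages of rapidly increasing sequences — the analogue of Proposition \ref{ris}, deduced from the Basic Inequality together with the auxiliary-space estimates of Section \ref{sec8} (Lemmas \ref{linfty2} and \ref{42}) — yields that the basis is boundedly complete; hence $\mathfrak{X}_{HI}$ is reflexive. The existence of $(3,2j)$-exact pairs in every block subspace is proved exactly as in Proposition \ref{p4.9}.

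The core of the proof is that $\mathfrak{X}_{HI}$ is Hereditarily Indecomposable. Given $\e>0$, block subspaces $Y,Z$ and a large odd index $2j_0-1$, I would construct a $\sigma$-special sequence of functionals $(f_k)_{k=1}^{n_{2j_0-1}}$ with coding-determined, pairwise distinct weights, together with vectors $(x_k)_{k=1}^{n_{2j_0-1}}$ chosen alternately from $Y$ and $Z$ so that each $(x_k,f_k)$ is a $(3,j_k)$-exact pair and $(x_k/9)_k$ is a rapidly increasing sequence satisfying the additional hypothesis \eqref{ada} of the Basic Inequality; the verification of \eqref{ada} is the argument of condition K2) in the proof of Corollary \ref{depest}. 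On one side, $m_{2j_0-1}^{-1}\sum_k f_k\in D_{G_0}$, so $\|\tfrac1{n_{2j_0-1}}\sum_k x_k\|\geq m_{2j_0-1}^{-1}$, and consequently the normalized sum $y+z$ of the $Y$-part and the $Z$-part of $\tfrac1{n_{2j_0-1}}\sum_k x_k$ has norm bounded below by a fixed positive constant. On the other side, the Basic Inequality combined with the auxiliary-space estimates (Lemmas \ref{linfty2}, \ref{42}) shows that $|f(\sum_k(-1)^kx_k)|$ is bounded by a universal constant for every $f\in K$: a functional of weight $\ne m_{2j_0-1}$ contributes a negligible amount by the estimate with the additional assumption, while for a functional of weight $m_{2j_0-1}$ the tree structure of special sequences and the injectivity of $\sigma$ force agreement with $(f_k)_k$ only on an initial segment, beyond which the tails behave like an independent RIS, and the type-II ($\ell_2$-convex) functionals cannot recover the cancellation either. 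Hence $\|\tfrac1{n_{2j_0-1}}\sum_k(-1)^kx_k\|$ is of order $n_{2j_0-1}^{-1}$, so $\|y-z\|<\e\|y+z\|$, which is the HI condition.

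The step I expect to be the main obstacle is precisely this last estimate for the alternating sum: one must run the Maurey--Rosenthal coding argument simultaneously with the mixed-Tsirelson/HI estimations of Sections \ref{sec8}--\ref{sec10}, controlling the additional rational $\ell_2$-convex combinations of property (iii) — the very ingredient that destroys any $\ell_1$-spreading model — so that they do not create an $\ell_1$ asymptotic obstruction to the HI estimate. Once the HI property is established, Theorem \ref{t11.3} applies with $G=G_0$ and $D_G=D_{G_0}$, giving that $\mathfrak{X}_{HI}$ admits no $\ell_p$, $1\le p<\infty$, nor $c_0$ as a spreading model, and the construction is complete.
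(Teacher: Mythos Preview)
Your approach is essentially the one the paper gives: take the trivial ground set $G_0=\{\pm e_n^*\}$, let $K_{HI}=D_{G_0}$ be the minimal extension closed under the even operations, the odd operations on $\sigma$-special sequences (all $f_i$ weighted, no attractor alternation), rational $\ell_2$-combinations of distinctly weighted functionals, and rational convexity; prove HI via exact pairs, dependent sequences and the Basic Inequality as in Sections \ref{sec8}--\ref{sec10}; then invoke Theorem \ref{t11.3}. One quantitative slip to correct: the Basic Inequality together with Lemma \ref{42} does \emph{not} bound $|f(\sum_k(-1)^kx_k)|$ by a universal constant (equivalently, the alternating average is not of order $n_{2j_0-1}^{-1}$); what one actually obtains is $\|\tfrac{1}{n_{2j_0-1}}\sum_k(-1)^kx_k\|\leq C m_{2j_0-1}^{-2}$ via Proposition \ref{ris} b), which is still amply sufficient against the lower bound $m_{2j_0-1}^{-1}$ for the unsigned average and yields the HI inequality.
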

\begin{proof}[Sketch of the proof]
First we shall define the norming set $K_{HI}$ of the space $\mathfrak{X}_{HI}$.  The set $K_{HI}$ is the smallest subset of $c_{00}(\N)$ satisfying the following conditions
\begin{enumerate}
\item  $K_{HI}$ is symmetric i.e. if $f\in K_{HI}$ then $-f\in K_{HI}$, it is closed under the restriction of its elements to intervals of $\N$ and $\{e_n^{*}: n\in\N\}\subset K_{HI}$.
\item $K_{HI}$ is closed under $(\mc{A}_{n_{2j}},m_{2j}^{-1})$-operations.
\item $K_{HI}$ is closed under $(\mc{A}_{n_{2j-1}},m_{2j-1}^{-1})$-operations on special sequences.
\item $K_{HI}$ is closed under the  operation $\sum_{i\in A}\lambda_{i}f_{i}$ whenever
\begin{enumerate}
\item  $f_{i}$ is the  result of an $(A_{n_{j_i}},
m_{j_i}^{-1})$-operation and $n_{j_i}\ne n_{j_{k}}$ for every
$i\ne k\in A$ \item $(\lambda_{i})_{i\in A}\in B_{\ell_2}\cap
[\Q]^{<\infty}$.
\end{enumerate}
\item $K_{HI}$ is rationally convex.
\end{enumerate}
To complete the definition of $K_{HI}$ we must define the special sequences. An
$n_{2j-1}-$special sequence $(f_{i})_{i=1}^{n_{2j-1}}$ is defined  as the
$n_{2j-1}$-attractor sequence  (Def. \ref{d7.1})  with the exception  that the functionals
$f_{2i}$  are  the results of an $(\mc{A}_{n_{2\sigma(f_{1},\dots,f_{2i-1})}},
m_{2\sigma(f_{1},\dots,f_{2i-1})}^{-1})$-operation of functionals of $K_{HI}$  instead of
$(e^{*}_{\lambda_{2i}})$ we use in the attractor sequence. Observe that as a ground set  we take the set $\{e_n^{*}:n\in\N\}$. The space $\mathfrak{X}_{HI}$
is the completion  of $(c_{00}(\N), \norm_{K_{HI}})$. In order to show that
$\mathfrak{X}_{HI}$ is HI space we follow the method initiated in \cite{GM} and extended
in  \cite{AT},\cite{AAT},\cite{ATod}. Namely first we observe that  Proposition
\ref{p4.6} holds for the space $\mathfrak{X}_{HI}$ hence there exist seminormalized
$\ell_{1}$ averages in every block subspace. Next we consider  rapidly increasing sequence (RIS) of $\ell_{1}$
averages, Def. \ref{d9.1} and we observe that Basic Inequality, Prop.\ref{bin},  holds
also for the space $\mathfrak{X}_{HI}$. It follows that the  estimations of Prop.
\ref{ris} holds for  RIS in the space $\mathfrak{X}_{HI}$. This enable us to  consider
the exact pairs, Def. \ref{d4.8}.   and also the  dependent  sequences. A $(C,
n_{2j-1})$-dependent  sequence $(x_k,f_{k})_{k=1}^{n_{2j-1}}$, $j\in\N$,  is  defined as
the attractor sequence, Def. \ref{d10.3}, with the exception that for all $k\leq
n_{2j-1}$, $(x_{k},f_{k})$ is an exact pair.  From the Basic Inequality and the
estimations on RIS  we obtain that estimations  of  Corollary \ref{depest} also holds for
the dependent sequences.  The estimations  in Corollary \ref{depest} easily yields that
$\mathfrak{X}_{HI}$ is indeed  HI space.

Since the ground set of the space $X_{HI}$  is  the set $G=\{\pm e_{n}^{*}:n\in\N\}$,
Theorem \ref{t11.3} yields that the space
$\mathfrak{X}_{HI}$ does  not admit any   $\ell_{p}$ or $c_{0}$ as a spreading model.
\end{proof}
\section{The $c_0$-index of the dual of $X_\xi$}\label{sec12}
We show now that every subspace
of  the dual space $X_\xi^{*}$ has $c_0$-index greater than $\omega^{\xi}$ and
does not contain a sequence generating a $c_0$-spreading model. As we have observe in Remark \ref{denseK}  the set $K_{\xi}$ is a norm-dense subset of $B_{X_{\xi}^{*}}$. So proving that every block subspace generated by  a block sequence of elements of $K_{\xi}$ has  $c_0$-index greater than $\omega^{\xi}$ we get that the same holds for all block subspaces of $X_{\xi}^*$.  Hence in the sequel we will  assume that the block subspaces are generated by block sequences of $K_{\xi}$.

We shall need  the dual result to Proposition \ref{p4.6}.
\begin{proposition}\label{p4.7}
For every $j\in\N$ every block subspace $Y$ of $X_{\xi}^{*}$ contains a
$2-c_{0}^{n_{2j}}$ average i.e. there exists a block sequence
$(x_{i}^{*})_{i=1}^{n_{2j}}$ in $Y$ such that $\norm[x_{i}^{*}]\geq 2^{-1}$ for every
$i\leq n_{2j}$ and
$
2^{-1}\leq \|\sum_{i=1}^{n_{2j}}x_{i}^{*}\|\leq 1.
$
\end{proposition}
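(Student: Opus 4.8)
The plan is to dualize the construction of $\ell_1$-averages (Proposition~\ref{p4.6}) by importing into $Y$, via the attractors, the $c_0$-tree structure that the ground set $G_\xi$ carries. Recall that the type--I functionals of $G_\xi$, i.e. the nodes $\eta=m_{2r-1}^{-2}\sum_{k\in E}\pm e_k^*$ with $\#E\le n_{2r-1}$, satisfy $\norm[\eta]\ge\tfrac12$ (by \eqref{las}), while for any $G_\xi$-special sequence $(\eta_1,\dots,\eta_d)$ one has $\sum_{i=1}^d\eta_i\in G_{sp}\subseteq G_\xi$, hence $\bigl\|\sum_{i=1}^d\eta_i\bigr\|\le1$. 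Thus a $G_\xi$-special sequence of length $n_{2j}$ is already a $2$-$c_0^{n_{2j}}$ average; the only missing point is that it need not lie in the prescribed block subspace $Y$ of $X_\xi^*$, and this is precisely what the attractors remedy, through \eqref{ei1} and \eqref{ei2} (equivalently, Corollary~\ref{c412}).

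So assume, as we may, that $Y=\langle(g_n)_n\rangle$ for a block sequence $(g_n)_n$ of $K_\xi$, and fix $j$. I would build by induction on $i=1,\dots,n_{2j}$ integers $r_i$, attracting functionals $f^{(i)}=g_1^{(i)}+g_2^{(i)}$ of weight $m_{2r_i-1}$, ground-set nodes $\eta_i:=-m_{2r_i-1}^{-1}g_2^{(i)}\in G_1^{r_i}\subseteq G_\xi$, and vectors $\theta_i\in Y$, arranged so that $(\eta_1,\dots,\eta_{n_{2j}})$ is a $G_\xi$-special sequence (that is, $r_1\in M_1$, $r_{i+1}=\sigma_1(\eta_1,\dots,\eta_i)\in M_2$, and $(\minsupp\eta_i)_{i=1}^{n_{2j}}\in\mc{S}_\xi$), with $\theta_1<\dots<\theta_{n_{2j}}$, $\norm[\theta_i-\eta_i]\le 2m_{2r_i-1}^{-1}$ and $\norm[\theta_i]\ge\tfrac12-2m_{2r_i-1}^{-1}$. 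At the inductive step the index $r_i$ is already forced by the previously chosen nodes; I would apply the attractor lemma of Section~\ref{sec7} (the one whose assertion includes \eqref{ei1}) to $Y$ with $j_0:=r_i$, obtaining an attracting functional $f^{(i)}$ of weight $m_{2r_i-1}$ with $\dist(g_1^{(i)},Y)$ as small as desired; since $f^{(i)}\in K_\xi$ we have $\norm[f^{(i)}]\le1$, which is \eqref{ei2}, and \eqref{ei1} gives $\norm[m_{2r_i-1}^{-1}g_1^{(i)}]\ge c$. Choosing $\theta_i\in Y$ within $\dist(m_{2r_i-1}^{-1}g_1^{(i)},Y)$ of $m_{2r_i-1}^{-1}g_1^{(i)}$ and combining with \eqref{las} applied to $\eta_i$ yields the two estimates on $\theta_i$; the freedom to push the supports of the attracting sequence defining $f^{(i)}$ arbitrarily far to the right lets me keep the $\theta_i$, $\eta_i$ successive and force $\minsupp\eta_i\ge p_i$ for a prescribed $\{p_1<\dots<p_{n_{2j}}\}\in\mc{S}_\xi$, so that $(\minsupp\eta_i)_{i=1}^{n_{2j}}\in\mc{S}_\xi$ by spreading.

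Setting $x_i^*:=\theta_i$ for $i=1,\dots,n_{2j}$ then gives a block sequence in $Y$ with $\norm[x_i^*]\ge\tfrac12-2m_{2r_i-1}^{-1}$, and, since $\sum_{i=1}^{n_{2j}}\eta_i\in G_{sp}$,
\[
\Bigl\|\sum_{i=1}^{n_{2j}}x_i^*\Bigr\|\ \le\ \Bigl\|\sum_{i=1}^{n_{2j}}\eta_i\Bigr\|+\sum_{i=1}^{n_{2j}}2m_{2r_i-1}^{-1}\ \le\ 1+\sum_{i=1}^{n_{2j}}2m_{2r_i-1}^{-1},
\]
while $\bigl\|\sum_i x_i^*\bigr\|\ge\norm[x_1^*]\ge\tfrac12-2m_{2r_1-1}^{-1}$ by bimonotonicity of the basis. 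Taking $r_1$ large makes the tail $\sum_i 2m_{2r_i-1}^{-1}$ negligible, and after the harmless rescaling by $\bigl(1+\sum_i 2m_{2r_i-1}^{-1}\bigr)^{-1}$ (and relabelling the constant) one obtains the desired $2$-$c_0^{n_{2j}}$ average. I expect the main obstacle to be the bookkeeping in the inductive step: one must simultaneously respect the coding $\sigma_1$ — which dictates the weight $m_{2r_{i+1}-1}$ of the next node $\eta_{i+1}$ — and the coding $\sigma$ internal to the attracting sequence that places $\theta_{i+1}$ near $Y$, while keeping $(\minsupp\eta_i)_{i=1}^{n_{2j}}$ inside $\mc{S}_\xi$ and all perturbation errors summable. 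This interlocking of the two codings is exactly the ``attractors method'' and is where the work lies; the norm estimates themselves are immediate from \eqref{ei1}, \eqref{ei2} and \eqref{las}.
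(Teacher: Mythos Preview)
Your approach has a genuine circularity. The ``attractor lemma'' you invoke --- producing, for a given block subspace $Y\subset X_\xi^*$, an attracting functional $f=g_1+g_2$ of prescribed odd weight with $\dist(g_1,Y)<\varepsilon$ --- is not a free-standing result in the paper. Its content is established only inside the proof of Proposition~\ref{p413}, and that proof \emph{begins} by applying Proposition~\ref{p4.7}: one first picks $2$--$c_0^{n_{2j_i}}$ averages $x_i^*\in Y$, uses them to manufacture exact pairs $(y,y^*)$ with $y^*\in Y$, and only then assembles the attracting sequence whose odd part lies in $Y$. Without Proposition~\ref{p4.7} you have no mechanism for producing exact pairs whose \emph{functional} part lies in the prescribed $Y$; Proposition~\ref{p4.9} places the \emph{vector} in a prescribed block subspace of $X_\xi$, which is the wrong side of the duality. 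So invoking the attractor lemma here begs the question.

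There is also a norm mismatch. Inequality~\eqref{las} is computed in $X_{G_\xi}$, not in $X_\xi$; it gives an upper bound on $\norm[\,\cdot\,]_{G_\xi}$ of the test vector, but since $\norm[\,\cdot\,]_{X_\xi}\geq\norm[\,\cdot\,]_{G_\xi}$ this does not yield $\norm[\eta_i]_{X_\xi^*}\geq 1/2$. The only available lower bound for your $\eta_i$ (qua the $\psi$ of Corollary~\ref{c412}) in $X_\xi^*$ is $1/(30C)$, far from $1/2$; even absent the circularity your scheme would deliver a $c_0^{n_{2j}}$ average with a much worse constant than $2$.

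The paper's proof is entirely different and much lighter: it is a one-line citation to Lemma~5.4 of \cite{AT}, an elementary duality with Proposition~\ref{p4.6}. One takes a normalized block basis $(g_n)$ of $Y$ and a biorthogonal block sequence $(y_n)\subset X_\xi$, and the existence of $2$--$\ell_1^{n_{2j}}$ averages in $\langle y_n\rangle$ forces a $2$--$c_0^{n_{2j}}$ average in $Y$ --- no attractors, no $G_\xi$-special sequences, no coding.
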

For the proof see   Lemma 5.4 in \cite{AT}.
\begin{proposition}\label{p413}
 a) The  $c_0$-index of every subspace $Y$ of $X_\xi^{*}$ is greater than $\omega^{\xi}$.

b) The dual space $X_\xi^{*}$ of $X_\xi$ does not contain a normalized basic sequence
generating a $c_0$-spreading model.
\end{proposition}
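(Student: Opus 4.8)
The plan for part (a) is to carry into an arbitrary block subspace of $X_\xi^*$ the $c_0$-tree structure that lives in $G_\xi$, using the attractor mechanism. By the remark preceding the proposition it suffices to treat a block subspace $Y$ generated by a block sequence $(w_n^*)$ of $K_\xi$; a general infinite dimensional subspace contains, up to a small perturbation, such a block subspace, and the $c_0$-index can only grow when one passes to a larger subspace, so the general case follows. Recall (Section~\ref{sec5}) that the $G_\xi$-special sequences form a tree, and the associated functionals $\sum_{i\in B}\pm f_i\in G_{sp}$ form a $c_0$-tree on $X_\xi^*$ with constant roughly $2$ and order $o(\mc{S}_\xi)=\omega^\xi+1$, since for every $F\in\mc{S}_\xi$ one realizes a $\sigma_1$-special sequence $(f_i)_{i\in F}$ with supports prescribed from below. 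We want to reproduce this tree inside $Y$.

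Fixing $Y$, I would build recursively a tree $\mc{R}$ of normalized finite sequences in $Y$, together with a parallel subtree $\mc{R}'$ of the tree of $\sigma_1$-special sequences, in bijection and order-isomorphic. Given a node of $\mc{R}'$ equal to a special sequence $(v_1,\dots,v_l)$, with a node $(\hat y_1,\dots,\hat y_l)$ of $\mc{R}$ already fixed so that the $\hat y_i$ are close to $v_i/\norm[v_i]$, I extend it as follows. Put $j'=\sigma_1(v_1,\dots,v_l)$ (or any fixed, large $j'\in M_1$ at the root) and apply the basic attractor Lemma to a tail of $Y$ supported well beyond $\maxsupp v_l$, with weight index $j'$ and a small $\e_{l+1}>0$; as in its proof one obtains an attracting functional $h=g_1+g_2$ of weight $m_{2j'-1}$, with $h$ supported after $v_l$, $\dist(g_1,Y)<\e_{l+1}$ and, by \eqref{ei1}, $\norm[m_{2j'-1}^{-1}g_1]\ge c$. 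Set $v_{l+1}=-m_{2j'-1}^{-1}g_2$, which by construction is a type I functional of $G_1^{j'}\subset G_\xi$ supported after $v_l$, so $(v_1,\dots,v_{l+1})$ is again a $\sigma_1$-special sequence; by Corollary~\ref{c412}, $\norm[v_{l+1}]\ge 1/(30C)$. Pick $y_{l+1}\in Y$ with $\norm[y_{l+1}-m_{2j'-1}^{-1}g_1]<\e_{l+1}$; then \eqref{ei2} gives $\norm[y_{l+1}-v_{l+1}]<\e_{l+1}+m_{2j'-1}^{-1}$, and $c/2\le\norm[y_{l+1}]\le 2$. Choosing the errors $\e_i$ summably small and the root index $j'$ large enough that along every branch $\sum_i\norm[y_i-v_i]$ is below, say, $\tfrac1{60C}$ (possible since the weights $m_{2j'_i-1}$ increase along a special sequence), and using that $(v_1,\dots,v_l)$ is a genuine special sequence so $\norm[\sum_{i\in B}\pm v_i]_{X_\xi^*}\le 1$ for all $B$ while the $v_i$ have pairwise disjoint ranges, one checks that each node $(\hat y_1,\dots,\hat y_l)$ of $\mc{R}$ is $C_0$-equivalent to the unit vector basis of $c_0^l$ with a universal $C_0$. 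Finally, since at every node of $\mc{R}'$ the attractor Lemma lets us choose the next $v_{l+1}$ with $\minsupp v_{l+1}$ arbitrarily large, $\mc{R}'$ branches cofinally at every non-maximal node, and a node of $\mc{R}'$ is maximal exactly when its sequence of minimal supports is $\mc{S}_\xi$-maximal; a transfinite induction paralleling the one in the proof of Proposition~\ref{her-asympt} (and using that $\mc{S}_\xi$ and its iterated derivatives are spreading) then shows $D^\alpha(\mc{R}')\supseteq\{(v_1,\dots,v_l)\in\mc{R}':(\minsupp v_i)_{i\le l}\in D^\alpha(\mc{T}_{\mc{S}_\xi})\}$ for all $\alpha$, hence $o(\mc{R})=o(\mc{R}')\ge o(\mc{S}_\xi)=\omega^\xi+1>\omega^\xi$. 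I expect the delicate points to be exactly this last order computation — matching the support freedom coming from the attractor Lemma (applied to tails of $Y$) with the combinatorics of $\mc{S}_\xi$ — and checking the uniform $c_0$-constant.

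For part (b) I would argue by contradiction via the elementary fact that a $c_0$-spreading model in $X^*$ forces an $\ell_1$-spreading model in $X$. Suppose $(x_n^*)\subset X_\xi^*$ is a normalized basic sequence generating a $c_0$-spreading model with constant $C$. Using reflexivity of $X_\xi$ (so $X_\xi^*$ is reflexive), passing to a subsequence and, if necessary, to the semi-normalized differences $x_{2n}^*-x_{2n-1}^*$ (which still satisfy a $c_0$-upper estimate on $\mc{S}_1$-sets), we may assume $(x_n^*)$ is weakly null. Choose $x_n\in B_{X_\xi}$ with $x_n^*(x_n)\ge 1/2$; by weak nullity and a diagonal argument pass to a further subsequence along which $\vert x_i^*(x_j)\vert$ is as small as needed for $i\ne j$. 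Then for $F\in\mc{S}_1$, testing $\sum_{i\in F}x_i$ against $\sum_{i\in F}\e_ix_i^*$ with $\e_i=\sign x_i^*(x_i)$ and using $\norm[\sum_{i\in F}\e_ix_i^*]\le C$ yields $\norm[\sum_{i\in F}x_i]\ge c'\#F$, so a normalized basic subsequence of $(x_n)$ generates an $\ell_1$-spreading model. This contradicts Corollary~\ref{c4.15}, by which $X_\xi$ admits no $\ell_p$ — in particular no $\ell_1$ — as a spreading model. Hence $X_\xi^*$ contains no normalized basic sequence generating a $c_0$-spreading model.
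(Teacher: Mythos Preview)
Your proposal is correct and follows essentially the same route as the paper. For (a) you invoke the attractor mechanism to place in $Y$ a perturbation of a $G_\xi$-special sequence and so transfer the $c_0$-tree of order $\omega^\xi+1$; the paper does the same, only it unpacks the ``basic attractor Lemma'' explicitly by first producing $2$--$c_0^{n_{2j}}$ averages in $Y$ via Proposition~\ref{p4.7}, then $(6,2j)$-exact pairs $(y,y^*)$ with $y^*\in Y$, and finally the attracting sequence and Corollary~\ref{c412}. For (b) the paper uses the same duality idea, but passes first to a block sequence $(f_n)$ in $X_\xi^*$ and then to a block biorthogonal $(x_n)\subset X_\xi$ with $f_i(x_j)=\delta_{i,j}$, which avoids your diagonalization step; both conclude via Corollary~\ref{c4.15}.
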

\begin{proof}
a) It is enough to prove the result for the block subspaces. Let
$Y$ be a block subspace of $X_\xi^{*}$. By Proposition \ref{p4.7}
for every $j\in\N$ we choose $x_{i}^{*}$, $i\in\N$  such that
$x^{*}_{i}$ is a $2-c_{0}^{n_{2j_i}}$ average. Let
$x^*_{i}=\sum_{t=1}^{n_{2j_i}}x_{i,t}^{*}$ and
$x_{i}=\sum_{t=1}^{n_{2j_i}}\frac{x_{i,t}}{n_{2j_i}}$ where
$x^{*}_{i,t}(x_{i,t})\geq 2^{-1}$ and $\norm[x_{i,t}]=1$.

Let $1<j\in\N$. Passing to a subsequence we may assume that
$(x_{i})_{i=1}^{n_{2j}}$ is a $(3,1/n_{2j})-$RIS and by Proposition \ref{ris-sss} for every
$g\in G_{\xi}$ holds
$$
\#\{n\in\N: \vert g(x_n)\vert\geq \frac{2}{m_{2j}^2}\}\leq
20m_{2j}^{4}\leq n_{2j-1}
$$
Set $\breve{y}=\frac{m_{2j}}{n_{2j}}\sum_{i=1}^{n_{2j}}x_{i}$ and
$y^{*}=m_{2j}^{-1}\sum_{i=1}^{n_{2j}}x_{i}^{*}$.

Then setting $y:=\lambda \breve{y}$ for some $\lambda\in [1,2]$,
Proposition \ref{ris} yields that $(y,y^{*})$ is a
$(6,2j)-$exact pair.

Note that for every  $g\in G_{\xi}$, setting $A=\{i:\vert
g(x_i)\vert\geq \frac{2}{m_{2j}^{2}}\}$ we get
\begin{equation}\label{6}
\vert g(y)\vert
\leq
\frac{2m_{2j}}{n_{2j}} ( \sum_{i\in
A}\vert g(x_i)\vert+\sum_{i\not\in A}\vert g(x_i)\vert)
\leq
\frac{2m_{2j}}{n_{2j}} (n_{2j-1}+\frac{2n_{2j}}{m_{2j}^{2}})\leq
\frac{6}{m_{2j}}
\end{equation}
It follows that for every $j\in\N$ and  every block subspace we have a $(6,2j)$-exact
pair $(y,y^*)$ with $y^*\in Y$ and $\norm[y]_{G_{\xi}}\leq 6/m_{2j}$.

Therefore we are able to construct for every $j\in\N$ a $n_{2j-1}$-attractor sequence
$(y_{i},y_{i}^{*})_{i=1}^{n_{2j-1}}$ with $(y_{2i-1},y^{*}_{2i-1})$ - a
$(6,2j_{2i-1})$-exact pair, $y_{2i-1}^*\in Y^*$ and
$\norm[y_{2i-1}]_{G_{\xi}}<m_{2j-1}^{-2}$ for every $i$. From Corollary  \ref{depest} we
get
$$
\frac{1}{2m_{2j-1}^{2}}\leq
\|\frac{1}{n_{2j-1}}\sum_{i=1}^{n_{2j-1}}(-1)^iy_{i}\|\leq
\frac{90}{m_{2j-1}^{2}}.
$$
Setting
$z_{j}^{*}=m_{2j-1}^{-2}\sum_{i=1}^{n_{2j-1}/2}y_{2i-1}^{*}$ and
$w_{j}^{*}=-m_{2j-1}^{-2}\sum_{i=1}^{n_{2j-1}/2}y_{2i}^{*}$ we get
that
\begin{equation}\label{dd1}
\norm[z_{j}^{*}],\norm[w_{j}^{*}]\geq
\frac{1}{180},\,\,\,\textrm{and}\,\,\,\norm[z^{*}_{j}-w^{*}_{j}]\leq
\frac{1}{m_{2j-1}}.
\end{equation}
Let $F\in\mc{S}_{\xi}$ and $(w_{j}^{*})_{j\in F}$ be a $G_{\xi}-$special
sequence such that $w_j^*$ satisfies \eqref{dd1} for an
appropriate $z_{j}^{*}$ and $\minsupp w_j^*>j$ for every $j\in F$.

From the definition of the ground set $G_{\xi}$ we have
$
\sum_{j\in F}\e_{j}w_{j}^{*}\in G_{\xi}\subset B_{X_{\xi}^{*}}
$
and hence
\begin{equation*}\|\sum_{j\in F}\e_{j}z_{j}^{*}\|\leq
\|\sum_{j\in F}\e_{j}w_{j}^{*}\|+\|\sum_{j\in
F}\e_{j}z_{j}^{*}-\e_{j}w_{j}^{*}\| \leq 2.
\end{equation*}
Notice that the tree consisting of sequences $(z_j^*)_{j\in F}$, with $F\in S_\xi$,
obtained in the way described above has order greater or equal $o(\mc{S}_\xi)$.

b) Notice that if there is a normalized basic sequence in $X_\xi^*$ generating a
$c_0$-spreading model, then we get also a block sequence $(f_n)$ generating a
$c_0$-spreading model, and then a normalized block sequence $(x_n)\subset X_\xi$  with
$f_i(x_j)=\delta_{i,j}$ generates an $\ell_{1}$-spreading model in $X_{\xi}$, a
contradiction with Corollary \ref{c4.15}.
\end{proof}
\section{The  space $\mathfrak{X}_{\xi}$}\label{sec13}
In this section we define our final space $\mathfrak{X}_{\xi}$ as a suitable quotient
$X_\xi/X_L$ and show the desired properties of $\mathfrak{X}_{\xi}$ (Corollaries
\ref{last} and \ref{clast}). In order to this we show for any subspace $Y\subset X_\xi$
with $Y/X_L$ infinite dimensional the existence of an $\ell_{1}$ average and a norming
functional with controlled behavior with respect to $Y/X_L$ and $X_L^\perp$ (Lemma
\ref{lf}) and consequently the existence of a suitable attracting sequence (Corollary
\ref{c5.3}).

For an  infinite subset $L$ of $\N$ we set $X_{L}=\overline{\langle(e_{n})_{n\in
L}\rangle}$, where $(e_n)_{n\in\N}$ is the basis of $X_{\xi}$. We shall  prove that that
for every $L\in [\N]$ with $\N\setminus L$ also infinite and  satisfying $\#(L\cap
\Lambda_{i})=\infty$ for every $i\in\N$, the  quotient space
$\mathfrak{X}_{\xi}=X_{\xi}/X_{L}$, does not contain a (normalized) sequence generating
an $\ell_{1}$-spreading model but every of its subspaces has  $\ell_{1}$-index greater
than $\omega^{\xi}$. Let $Q:X_{\xi}\to \mathfrak{X}_{\xi}$ denotes the quotient map.

Let $\N=L\cup M$, where $M\cap L=\emptyset$. First we prove the following
\begin{proposition}\label{p13.1}
The sequence  $(Q(e_{n}))_{n\in M}$ is a basis for the quotient $\mathfrak{X}_{\xi}$.
\end{proposition}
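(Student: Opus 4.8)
The plan is to show that $(Q(e_n))_{n\in M}$ is a Schauder basis for $\mathfrak{X}_\xi = X_\xi/X_L$ by verifying the two standard requirements: that these vectors have dense linear span in the quotient, and that the natural projections are uniformly bounded. The first point is immediate: the images $Q(e_n)$, $n\in\N$, span a dense subspace of $\mathfrak{X}_\xi$ since $(e_n)_{n\in\N}$ is a basis of $X_\xi$ and $Q$ is a quotient map, and for $n\in L$ we have $Q(e_n)=0$; hence $\langle Q(e_n):n\in M\rangle$ is dense. So the heart of the matter is the uniform boundedness of the partial-sum projections associated to the enumeration of $M$.

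\textbf{Key steps.} First I would fix the increasing enumeration $M=\{m_1<m_2<\dots\}$ and, for $k\in\N$, consider the operator $P_k$ on $\mathfrak{X}_\xi$ that should send $Q(e_{m_i})$ to itself for $i\le k$ and to $0$ for $i>k$. The natural candidate is to build $P_k$ from the basis projection $R_k$ of $X_\xi$ onto $\langle e_n : n\le m_k\rangle$ composed appropriately; the point is that since $(e_n)$ is a \emph{bimonotone} basis of $X_\xi$ (this follows from $K_\xi$ being symmetric and closed under restriction to intervals of $\N$, exactly as recorded for $X_{G_\xi}$), the interval projections on $X_\xi$ have norm $1$. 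The crucial observation is that the subspace $X_L = \overline{\langle e_n : n\in L\rangle}$ is itself spanned by a \emph{subset} of the basis, so the interval projection $R$ of $X_\xi$ onto $\langle e_n : n\le m_k\rangle$ satisfies $R(X_L)\subseteq X_L$ and $(\mathrm{Id}-R)(X_L)\subseteq X_L$; therefore $R$ drops to a well-defined operator $\bar R_k$ on the quotient $X_\xi/X_L$ with $\|\bar R_k\|\le \|R\|=1$. One then checks that $\bar R_k$ acts correctly on the candidate basis: $\bar R_k Q(e_{m_i})=Q(e_{m_i})$ for $i\le k$ because $m_i\le m_k$, and $\bar R_k Q(e_{m_i})=Q(R e_{m_i})=Q(0)=0$ for $i>k$ because then $m_i > m_k$. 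Since these projections are uniformly bounded (by $1$) and converge pointwise to the identity on the dense set $\langle Q(e_n):n\in M\rangle$, and the span of $(Q(e_n))_{n\in M}$ is dense, $(Q(e_n))_{n\in M}$ is a Schauder basis of $\mathfrak{X}_\xi$ with basis constant $1$.

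\textbf{Main obstacle.} The only subtlety — and the step that deserves care — is checking that $(Q(e_n))_{n\in M}$ is \emph{linearly independent} in the quotient, equivalently that for no nontrivial finitely supported scalar sequence $(a_n)_{n\in M}$ does $\sum_{n\in M} a_n e_n$ lie in $X_L$. But this is automatic: if $\sum_{n\in M}a_n e_n\in X_L=\overline{\langle e_n:n\in L\rangle}$, then applying the interval (coordinate) projections of the basis $(e_n)$ shows $a_n=0$ for all $n\in M$, since $M\cap L=\emptyset$. Consequently the partial sum operators $\bar R_k$ genuinely stabilize to the identity on each finite linear combination, and the above argument goes through. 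Thus the proof is essentially a bookkeeping exercise reducing the quotient basis question to bimonotonicity of $(e_n)$ in $X_\xi$ together with the fact that $X_L$ is a coordinate subspace; no estimates involving $K_\xi$ beyond its closure properties are needed.
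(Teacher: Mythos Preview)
Your proof is correct and follows essentially the same idea as the paper's: both exploit that $(e_n)$ is bimonotone in $X_\xi$ and that $X_L$ is spanned by a subset of the basis, so the basis projections are compatible with the quotient. The paper phrases this by computing the quotient norm of a finite sum $\sum_{i\le n}a_{m_i}Q(e_{m_i})$ explicitly as $\|\sum_{i=m_1}^{m_n}a_ie_i\|$ for a suitable choice of the $L$-coordinates and then applying bimonotonicity directly to get the partial-sum inequality, whereas you factor the interval projection $R$ through the quotient to obtain $\bar R_k$ with $\|\bar R_k\|\le 1$; these are two renderings of the same argument.
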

\begin{proof}For every
$x=\sum_{n \in\N}a_ne_n\in X_{\xi}$ we get
$Q(x)=\sum_{n\in M}a_nQ(e_n)$.
Let  $M=(m_i)$ and $j<n$. By the bimonotonicity of $(e_n)_{n\in\N}$ we have  that
$\norm[\sum_{i=1}^{n}a_{m_i}Q(e_{m_i})]=\norm[\sum_{i=m_1}^{m_n}a_{i}e_{i}]$ for some
$a_{i}, i\in [m_1,m_{n}]$. Using again that $(e_n)_{n\in\N}$  is bimonotone, for every
$j<n$ we obtain $$ \|\sum_{i=m_1}^{m_j}a_{i}e_{i}\| \leq
\|\sum_{i=m_1}^{m_n}a_{i}e_{i}\|= \|\sum_{i=1}^{n}a_{m_i}Q(e_{m_i})\|$$ and
therefore
$
\|\sum_{i=1}^{j}a_{m_i}Q(e_{m_i})\|\leq
\|\sum_{i=m_1}^{m_j}a_i{e}_{i}\| \leq
\|\sum_{i=1}^{n}a_{m_i}Q(e_{m_i})\|.
$
\end{proof}
We shall need the following  result which  is analogous to Lemma 11 \cite{F3}.
\begin{lemma}\label{lf}
Let $N,m\in\N$, $\e\in(0,1/4)$ and $Y$ subspace of $X_{\xi}$ such that  the quotient $Y/X_{L}$ is infinite dimensional.
Then there exist a $2-\ell_{1}^{N}$ average $x\in\{e_{i}:i\geq m\}$ and $f\in K_{\xi}$ such that
\begin{center}
\mbox{$f(x)\geq 1/2,\,\,\,\minsupp f\geq\minsupp x$,
$
\dist(Q(x), Y/X_{L})<\e$ and $\dist(f,
X_{L}^{\perp})<\e.$}
\end{center}
\end{lemma}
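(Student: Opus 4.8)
The plan is to realise $x$ as a lift of a normalized block vector living in the quotient $\mathfrak X_\xi=X_\xi/X_L$, to read off the condition on $Q(x)$ directly from that lift, and to obtain $f$ by pulling back an almost‑norming functional from $\mathfrak X_\xi^{*}=X_L^{\perp}$ and cutting it down to $\ran x$.

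Put $M=\N\setminus L$. Since $X_\xi$ is reflexive so is $\mathfrak X_\xi$, and by Proposition \ref{p13.1} the sequence $(Q(e_n))_{n\in M}$ is a shrinking basis of $\mathfrak X_\xi$; since $Y/X_L$ is infinite dimensional, a gliding--hump argument produces, for any prescribed summable $(\delta_k)_k$ with $\sum_k\delta_k<\e/2$, a normalized block sequence $(\bar y_k)_k$ of $(Q(e_n))_{n\in M}$ whose block supports lie in $\{n\in M:n>m\}$, together with $\bar z_k\in Y/X_L$ satisfying $\|\bar y_k-\bar z_k\|<\delta_k$. Let $y_k$ be the natural lift of $\bar y_k$ into $\langle\{e_i:i\ge m\}\rangle$; then $(y_k)_k$ is a genuine block sequence of $(e_n)$ with $Q(y_k)=\bar y_k$ and $\|y_k\|\ge\|\bar y_k\|=1$. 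Applying Proposition \ref{p4.6} inside $\langle (y_k)_k\rangle$ (we may assume $N\ge m_2$) yields a normalized $2-\ell_1^N$ average $x=\frac1N\sum_{i=1}^Nx_i=\sum_{k\in F}c_ky_k$ with $F$ finite, so $x\in\langle\{e_i:i\ge m\}\rangle$. Bimonotonicity of $(e_n)$ forces $|c_k|\,\|y_k\|=\|c_ky_k\|\le\|x\|=1$, hence $|c_k|\le1$, so $Q(x)=\sum_{k\in F}c_k\bar y_k$ lies within $\sum_{k\in F}|c_k|\delta_k<\e$ of $\sum_{k\in F}c_k\bar z_k\in Y/X_L$; this already gives $\dist(Q(x),Y/X_L)<\e$.

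For the functional, grant for the moment that the average can be produced with $\|Q(x)\|>1/2+\e$. Then there is $h\in B_{X_\xi^{*}}$ with $h|_{X_L}=0$ (i.e.\ $h\in X_L^{\perp}$) and $h(x)=h(Q(x))>1/2+\e$; by Remark \ref{denseK} pick $f_0\in K_\xi$ with $\|f_0-h\|<\e$, so $f_0(x)>1/2$ and $\dist(f_0,X_L^{\perp})<\e$. Finally set $f=E f_0$ with $E=\ran x$. Since both $K_\xi$ and $X_L^{\perp}$ are closed under restriction to intervals of $\N$ and such a restriction has norm at most one, $f\in K_\xi$, $f(x)=f_0(x)>1/2$, $\minsupp f\ge\minsupp x$, and $\dist(f,X_L^{\perp})\le\|E f_0-E h\|<\e$, as required.

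The crux, and the step I expect to be the main obstacle, is precisely the lower bound $\|Q(x)\|>1/2+\e$: passing to the quotient can in principle shrink the norm of an $\ell_1$-average, so the average --- and, if needed, the lifts $y_k$ --- must be chosen carefully enough that this does not happen. This is exactly where the hypotheses on $L$ (that $\N\setminus L$ is infinite and that $L$ meets each $\Lambda_i$ in an infinite set) are used: they leave enough room in $K_\xi$ to keep the quotient norm of suitable averages supported in $M$ under control, after which one tunes the accuracies $(\delta_k)$ in the gliding--hump step and the length in Proposition \ref{p4.6} so that $\|Q(x)\|$ stays above $1/2+\e$. This is the quotient-space analogue of the mechanism in Lemma~11 of \cite{F3}, which the argument here mirrors.
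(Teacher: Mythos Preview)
Your reduction of the distance conditions is fine, and you correctly isolate the real issue: one must guarantee $\|Q(x)\|>1/2$. But this step is not carried out, and the explanation you offer for why it should go through is off target. Neither hypothesis on $L$ that you cite plays any role in this lemma (the condition $\#(L\cap\Lambda_i)=\infty$ is only used later, in building attracting sequences), and tuning the $\delta_k$'s or ``the length in Proposition~\ref{p4.6}'' cannot repair the quotient norm of an average already produced in $X_\xi$: the length $N$ is fixed by the statement, and the $\delta_k$'s govern only closeness to $Y/X_L$, not $\|Q(x)\|$. More fundamentally, if you first manufacture a normalized $2$-$\ell_1^N$ average of the lifts $y_k$ inside $X_\xi$ via Proposition~\ref{p4.6}, you have no handle on its quotient norm afterwards; with the ``natural'' lifts supported on $M$ you do not even know that $(\|y_k\|)_k$ is bounded, so the coefficients of $x$ in the $\bar y_k$'s may be uncontrollably small.

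What actually drives the argument (and what the paper does, following Lemma~11 of \cite{F3}) is to run the Schlumprecht averaging construction \emph{with respect to the quotient norm}. One takes lifts $x_n$ with $\|x_n\|=\|\widehat x_n\|$ and block functionals $x_n^{*}\in X_L^{\perp}\cap B_{X_\xi^{*}}$ with $x_n^{*}(x_n)=1$, then Ramsey-colours according to whether $\|\frac1N\sum\widehat x_{l_i}\|_{\mathfrak X_\xi}>1/2$. Either one succeeds at some stage and gets the required average directly, or after $k$ iterations the accumulated renormalisation factor exceeds $2^k$. Choosing $j,k$ with $2^k>m_{2j}^{2}$ and $N^k\le n_{2j}$, the functional $m_{2j}^{-1}\sum_{i=1}^{N^k}x_{l_i}^{*}$ lies in $B_{X_\xi^{*}}$ (this is where the structure of $K_\xi$---closure under $(\mathcal A_{n_{2j}},m_{2j}^{-1})$-operations together with norm-density in $B_{X_\xi^{*}}$---enters, not any special property of $L$), and evaluating it on the renormalised vector yields $2^k/m_{2j}>1$, a contradiction. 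In short: the construction of the $\ell_1$-average and the lower bound on its quotient norm cannot be decoupled; they must be obtained simultaneously by carrying out the iterative/Ramsey step in the quotient.
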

\begin{proof}
Let $(\e_{n})_{n\in\N}$ be positive numbers with $\sum_{n}\e_{n}<\e$.
Let $\hyn$ be a normalized $w-$null sequence in $Y/X_{L}$. Passing
to  a subsequence we may assume that  $\hyn$ is $(\e_n)$-close to
a normalized block sequence $\hxn\in \mathfrak{X}_{\xi}$  and there
exists $x_{n}\in X_{\xi}$ such that
$$
\mbox{ $Q(x_n)=\widehat{x}_n$, $\norm[x_n]=\norm[\widehat{x}_{n}]$
and
$\ran(x_n)=[\minsupp(\widehat{x}_{n}),\maxsupp(\widehat{x}_{n})]$}.
$$
Let also $y_{n}^{*}\in \mathfrak{X}_{\xi}^{*}=X_{L}^{\perp}$ such
that $y_{n}^{*}(x_{n})=1=\norm[\widehat{x}_{n}]$,
$\norm[y_n^{*}]=1$ for every  $n\in\N$. Since
$(e_n^{*})_{n\in\N}$ is bimonotone basis of $X_{\xi}^{*}$, setting
$x_{n}^{*}=E_{n}y_{n}^{*}$, where $E_{n}=\ran(x_n)$, we get that
$(x_{n}^{*})_{n\in\N}$ is a block sequence
$$
\mbox{$\norm[x_n^{*}]\leq \norm[y_n^{*}],\,\,\,\,x_{n}^{*}(x_{n})=y_{n}^{*}(x_n)$
and  $x_{n}^{*}\in X_{L}^{\perp}$}
$$
Let $k,j\in\N$ be such that $2^k>m^2_{2j}$ and $N^k\leq n_{2j}$. Such $j,k$ exist, since
by definition $n_{2j}=(2n_{2j-1})^{s_{2j-1}}$ and $2^{s_{2j-1}}=m_{2j}^{3}$. Hence if
$N\leq m_{2j-1}$ setting $k=s_{2j-1}$ we get  $N^k\leq n_{2j}$ and  $2^{-k}<
m^{-2}_{2j}$. We set
$$
A_{1}=\left\{L\in [\N]: L=(l_{i})_{i\in\N}:
\|\frac{1}{N}\sum_{i=1}^{N}\widehat{x}_{l_{i}}\|_{\mathfrak{X}_\xi}>1/2\right\}.
$$
By Ramsey theorem we may find an  $L\in[\N]$ such that either $[L]\subset A_{1}$ or $[L]\cap A_1=\emptyset$.

Assume first that $[L]\subset A_{1}$. We may assume that $x_{\min L}\geq m$.
Since  $\widehat{x}_{n}=Q(x_n)$ for every $n\in\N$ it follows that
$$
\|\frac{1}{N}\sum_{i=1}^{N}x_{l_{i}}\|\geq
\|\frac{1}{N}\sum_{i=1}^{N}\widehat{x}_{l_{i}}\|_{\mathfrak{X}_\xi}> 1/2.
$$
Let $x=\frac{1}{N}\sum_{i=1}^{N}x_{l_{i}}$. Take $g\in
\mathfrak{X}_{\xi}^{*}=X_{L}^{\perp}$ such that
$g(\frac{1}{N}\sum_{i=1}^{N}x_{l_{i}})>1/2$. As before we may
assume that  $\minsupp x=\minsupp g$.

Setting  $\widehat{x}=Q(\frac{1}{N}\sum_{i=1}^{N}x_{l_{i}})$ we get
$$
\dist(\widehat{x}, Y/X_{L})\leq
\|\frac{1}{N}\sum_{i=1}^{N}(\widehat{x}_{l_{i}}-\widehat{y}_{l_i})\|_{\mathfrak{X}_{\xi}}\leq
\frac{1}{N}\sum_{i=1}^{N}\norm[\widehat{x}_{l_{i}}-\widehat{y}_{l_i}]_{\mathfrak{X}_{\xi}}<\sum_{i}\e_{l_i}\leq\e.
$$
Since  $B_{X_{\xi}^{*}}=\overline{K_{\xi}}^{w^{*}}$ we choose $f\in K_{\xi}$ with
$$
f(x)\geq 1/2,\,\,\minsupp f\geq\minsupp x\,\,\mbox{and}\,\,\norm[f-g]<\e.
$$
This is possible by taking an interval $B$ such that $\norm[g-Bg]<\e/2$ and using that
$\minsupp x=\minsupp g$ and $B_{X_{\xi}^{*}}=\overline{K_{\xi}}^{w^{*}}$.

Assume now that  $[L]\cap A_{1}=\emptyset$. Set $\widehat{y}_{n}^{(1)}=\frac{1}{N}\sum_{i\in F_n}\widehat{x}_{l_{i}}$
where  $\# F_{i}=N$, $F_{n}<F_{n+1}$ for every $n$  and $\cup_{i}F_{i}=L$.

Passing to a  subsequence we may assume that  there exists $a_1\geq 2$ such that
$\widehat{x}_{n}^{(2)}=a_{1}\frac{1}{N}\sum_{i\in F_n}\widehat{x}_{i}^{(1)}$
is a normalized  sequence in $Y/Z$.
We may again apply Ramsey theorem defining $A_{2}$ as before. If we get
some  $L$ with $[L]\subset A_{2}$ the proof finishes as before.

Assume that  in none of the first $k$ steps we  get  $L\in[\N]$ with $[L]\subset A_{k}$.
Then there exist $a_1,a_2,\ldots,a_k\ge
 2$ and $l_1<l_2<\cdots<l_{N^k}$ in $\N$ such that the vector
$$
\widehat{y}=a_1a_2\cdot\ldots\cdot
 a_k\frac{1}{N^{k}}\sum\limits_{i=1}^{N^{k}}\widehat{x}_{l_i}
$$
satisfies $\norm[\widehat{y}]_{\mathfrak{X}_\xi}=1$.

The functional $y^{*}=\frac{1}{m_{2j}}\sum\limits_{i=1}^{N^{k}}x_{l_i}^{*}\in
X_{L}^{\perp}$ satisfies $\norm[y^{*}]\le 1$. Therefore
$$
1=\norm[\widehat{y}]_{\mathfrak{X}_\xi}\geq y^{*}(\widehat{y}) =\frac{2^kN^k}{m_{2j}N^k}
=\frac{2^k}{m_{2j}}
$$
which contradicts  our choice of $k$ and $j$.
\end{proof}
\begin{corollary}\label{cf}
Let $Y/X_{L}$ be an infinitely dimensional subspace of the quotient $X_{\xi}/X_L$. For
every $j\in\N$, $\e>0$ there exists a $(6,2j)$-exact pair $(y,f)$ such that
\begin{enumerate}
 \item $\dist(Q(y),Y/X_L)<\e$ and $\dist(f, X_{L}^{\perp})<\e$,
\item $\norm[y]_{G_{\xi}}<6m_{2j}^{-1}$.
\end{enumerate}
\end{corollary}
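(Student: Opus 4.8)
The plan is to combine the construction of exact pairs in block subspaces of $X_\xi$ (Proposition~\ref{p4.9}) with the averaging argument of Lemma~\ref{lf}, which produces an $\ell_1$-average whose image under $Q$ lies near $Y/X_L$ and a norming functional near $X_L^\perp$. The key point is to merge these two mechanisms: the $(C,2j)$-exact pair must be built from an RIS whose vectors simultaneously enjoy the ``$Q$-closeness to $Y/X_L$'' and ``norming functional close to $X_L^\perp$'' features, while retaining the estimate $\norm[\cdot]_{G_\xi}\le 6 m_{2j}^{-1}$ from \eqref{6}.

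First I would fix $j\in\N$, $\e>0$ and choose a decreasing sequence of error terms summing to less than $\e$. Using Lemma~\ref{lf} repeatedly (with increasingly large parameters $m$ and with $N$ the appropriate $\ell_1$-average length $n_{j_k}$), I would extract a block sequence of $2$-$\ell_1^{n_{j_k}}$-averages $(x_k)_{k=1}^{n_{2j}}$ in $\{e_i: i\ge m\}$, together with functionals $(f_k)\subset K_\xi$, such that for each $k$ we have $f_k(x_k)\ge 1/2$, $\minsupp f_k\ge\minsupp x_k$, $\dist(Q(x_k),Y/X_L)<\e_k$ and $\dist(f_k, X_L^\perp)<\e_k$. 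Passing to a subsequence I would arrange that $(x_k)$ is a $(3,\e')$-RIS with the associated sequence $(j_k)$ satisfying the growth condition b) of Definition~\ref{d9.1}, and moreover (by Lemma~\ref{l1a}, Proposition~\ref{ris-sss} applied to these averages, exactly as in the proof of Proposition~\ref{p4.9}) that for every $g\in G_\xi$ the set $\{k:\ |g(x_k)|\ge 2m_{2j}^{-2}\}$ has cardinality at most $n_{2j-1}$.

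Next I would set $y=\lambda E\frac{m_{2j}}{n_{2j}}\sum_{k=1}^{n_{2j}}x_k$ with $E=\ran f$, where $f=m_{2j}^{-1}\sum_{k=1}^{n_{2j}}f_k\in K_\xi$ is of type I with $w(f)=m_{2j}$, and $\lambda\in[1,2]$ is chosen so that $\norm[y]$ lands in the required range; Proposition~\ref{ris} a) guarantees $(y,f)$ is a $(6,2j)$-exact pair (the constant $6$ coming from the normalization factor $\lambda$, as in Proposition~\ref{p413}). The estimate $\norm[y]_{G_\xi}\le 6 m_{2j}^{-1}$ follows from \eqref{6}, since by construction $\#\{k:\ |g(x_k)|\ge 2m_{2j}^{-2}\}\le n_{2j-1}$ for every $g\in G_\xi$. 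Finally, $\dist(Q(y),Y/X_L)<\e$ holds because $Q$ is linear of norm one and each $Q(x_k)$ is within $\e_k$ of $Y/X_L$, so the convex/averaged combination stays within $\sum_k\e_k<\e$ of $Y/X_L$; similarly $\dist(f,X_L^\perp)<\e$ since $f$ is a normalized-coefficient combination of the $f_k$ and $X_L^\perp$ is a subspace.

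The main obstacle I anticipate is the simultaneous bookkeeping: ensuring that the single block sequence $(x_k)$ extracted from iterated applications of Lemma~\ref{lf} can be thinned to satisfy \emph{all} of the RIS conditions, the $G_\xi$-smallness condition needed for \eqref{6}, \emph{and} the closeness conditions, without the successive passes to subsequences destroying earlier properties. This is handled by a diagonal argument: Lemma~\ref{lf} already delivers vectors supported arbitrarily far out, so at stage $k$ one chooses $x_k$ far enough to the right that the RIS growth condition holds and that $x_k$ is a $2$-$\ell_1^{n_{j_k}}$-average for $j_k$ large; the $G_\xi$-smallness is then automatic for averages of this form by Lemma~\ref{l1a} and Proposition~\ref{ris-sss}, exactly as in the proof of Proposition~\ref{p4.9}, and the closeness constants $\e_k$ are fixed in advance. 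Thus the corollary reduces to re-running the proof of Proposition~\ref{p4.9} with the averages supplied by Lemma~\ref{lf} in place of arbitrary block averages.
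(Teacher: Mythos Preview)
Your proposal is correct and follows essentially the same approach as the paper: iterate Lemma~\ref{lf} to obtain a $(3,\e)$-RIS of $2$-$\ell_1^{n_{j_k}}$-averages $(z_k)$ with paired functionals $(f_k)$ satisfying the closeness conditions, invoke Proposition~\ref{ris-sss} for the $G_\xi$-smallness, form $y=\lambda\frac{m_{2j}}{n_{2j}}\sum z_k$ and $f=m_{2j}^{-1}\sum f_k$, apply Proposition~\ref{ris} to get the $(6,2j)$-exact pair, and read off condition~(2) from~\eqref{6}. Your discussion of the bookkeeping (choosing each $x_k$ far enough to the right at the inductive stage rather than literally passing to a subsequence afterward) is in fact how the paper's phrase ``By Proposition~\ref{ris-sss} we can assume'' should be unpacked, so you have matched the intended argument.
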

\begin{proof}
Let $j\in\N$ and $(\e_{i})_{i}\subset (0,1)$ such that $\sum_{i}\e_{i}<\e\leq n_{2j}^{-1}$.
Using Lemma \ref{lf} inductively we choose $(z_{i},f_{i})_{i=1}^{n_{2j}}\in (c_{00}(\N)\times K_{\xi})$
such that
\begin{enumerate}
\item[a)] $z_{i}$ is a  $2-\ell_{1}^{n_{2j_i}}$ average and $(z_{i})_{i=1}^{n_{2j}}$ is a
$(3,\e)$-RIS.
 \item[b)]  $\dist(Q(z_{i}), Y/X_L)<\e_i$ and  $\dist(f_i,X_{L}^{\perp})<\e_i$ for every $i$.
\item[c)] $f_i(z_i)\geq 1/2$.
\item[d)] $\ran(z_{i})\cup\ran(f_{i})<\ran (z_{i+1})\cup\ran(f_{i+1})$ for every $i$.
\end{enumerate}
By Proposition \ref{ris-sss} we can assume that
for every  $g\in G_{\xi}$ the  set $\{i\in\N:\ \vert g(z_i)\vert\geq2m_{j}^{-2}\}$  has
cardinality at most $n_{j-1}$. Setting
$z=\frac{m_{2j}}{n_{2j}}\sum_{i=1}^{n_{2j}}z_{i}$ and
$f=m_{2j}^{-1}\sum_{i=1}^{n_{2j}}f_{i}$ we have $f(z)\geq 1/2$.
Taking $y:=\lambda z$ for some $\lambda\in [1,2]$ we get from
Proposition \ref{ris}  that  $(y,f)$ is a $(6,2j)$-exact pair.
It is easy to see that the exact pair $(y,f)$ satisfies the
requirements $(1)$.
The argument of \eqref{6} yields that (2) also holds.
\end{proof}
\begin{corollary}\label{c5.3}
For every  $j\in\N$, any $Y/X_L$ infinitely dimensional subspace of the quotient
$X_\xi/X_L$ and any $\e>0$ there exists a $(6,2j-1)$-attracting sequence
$(y_{i},f_{i})_{i=1}^{n_{2j-1}}$ with the associated sequence $(j_{i})_{i}$ such that
\begin{enumerate}
\item[A)]  $\supp f_{2i}=\supp y_{2i}\subset L$,
\item[B)] For every $i\leq n_{2j-1}$, $(y_{2i-1},f_{2i-1})$ is a
$(6,j_{2i-1})$-exact pair and
$\norm[y_{2i-1}]_{G_{\xi}}<m_{2j-1}^{-2}$,
\item[C)] $\dist (Q(\sum_iy_{2i-1}),Y/X_L)<\e$,
\item[D)] There exists  $g\in X_{L}^{\perp} $ with
$$
\|g-\frac{1}{m_{2j-1}^{2}}\sum_{i=1}^{n_{2j-1}/2}f_{2i-1}\|\leq
m_{2j-1}^{-2}\,\,\,\textrm{and}\,\,\,
\|g-\frac{1}{m_{2j-1}^{2}}\sum_{i=1}^{n_{2j-1}/2}f_{2i}\|\leq
2m_{2j-1}^{-1}.
$$
\end{enumerate}
\end{corollary}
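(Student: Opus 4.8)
The plan is to assemble the attracting sequence $(y_i,f_i)_{i=1}^{n_{2j-1}}$ by a finite induction, alternating between invoking Corollary~\ref{cf} inside the quotient $Y/X_L$ at the odd steps and placing a single coordinate functional supported in $L$ at the even steps. Fix positive reals $\e_1,\dots,\e_{n_{2j-1}}$ with $\sum_i\e_i<\min\{\e,1\}$, and fix $j_1\in N_1$ with $m_{2j_1}>n_{2j-1}^3$, as required for the first entry of a $(2j-1)$-attractor sequence (Def.~\ref{d7.1}); such $j_1$ exists since $N_1$ is infinite and $(m_j)$ is increasing. Suppose $(y_i,f_i)$ and the integers $l_{2i}$ have been defined for $i<2k-1$. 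Apply Corollary~\ref{cf} to the infinite dimensional quotient $Y/X_L$ with parameters $j_1$ (if $k=1$), resp. $\sigma(f_1,\dots,f_{2k-2})$ (if $k\geq 2$), and $\e:=\e_{2k-1}$, to obtain a $(6,2j_1)$- resp. $(6,2\sigma(f_1,\dots,f_{2k-2}))$-exact pair $(y_{2k-1},f_{2k-1})$ with $\dist(Q(y_{2k-1}),Y/X_L)<\e_{2k-1}$, $\dist(f_{2k-1},X_L^\perp)<\e_{2k-1}$ and $\norm[y_{2k-1}]_{G_\xi}<6/w(f_{2k-1})$; inspecting the proof of Corollary~\ref{cf}, which builds a block sequence by repeated use of Lemma~\ref{lf}, we may also require $\ran y_{2k-1}=\ran f_{2k-1}$ to lie entirely to the right of everything chosen so far. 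Then, using the hypothesis $\#(L\cap\Lambda_t)=\infty$ for every $t$, choose $l_{2k}\in L\cap\Lambda_{\sigma(f_1,\dots,f_{2k-1})}$ with $l_{2k}>\maxsupp f_{2k-1}$, and set $f_{2k}=e_{l_{2k}}^*$, $y_{2k}=e_{l_{2k}}$. We stop once $i=n_{2j-1}$ (an even number).

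Next one checks that $(y_i,f_i)_{i=1}^{n_{2j-1}}$ is a genuine $(6,2j-1)$-attracting sequence in the sense of Definition~\ref{d10.3}: the $\sigma$-coding is respected by construction; $f_1$ is the result of an $(\mc{A}_{n_{2j_1}},m_{2j_1}^{-1})$-operation with $j_1\in N_1$, $m_{2j_1}>n_{2j-1}^3$, each odd $f_{2k-1}$ is obtained from the appropriate $\sigma$-indexed operation on $K_\xi$-functionals, each $f_{2k}=e_{l_{2k}}^*$, each $(y_{2k-1},f_{2k-1})$ is a $(6,\cdot)$-exact pair, and the $\Q_s$- and block-requirements hold thanks to the support control. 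Moreover every odd weight exceeds $n_{2j-1}^3$: for $k=1$ by the choice of $j_1$, and for $k\geq2$ because $|f_1(e_l)|\leq m_{2j_1}^{-1}$ for $l\in\supp f_1$ forces, by the defining inequality on $\sigma$, that $m_{2\sigma(f_1,\dots,f_{2k-2})}^{1/2}>|f_1(e_l)|^{-1}\geq m_{2j_1}$. Consequently $\norm[y_{2k-1}]_{G_\xi}<6/w(f_{2k-1})<6\,n_{2j-1}^{-3}<m_{2j-1}^{-2}$ (using $n_{2j-1}^3>6m_{2j-1}^2$), which is property B); property A) is immediate since $\supp f_{2i}=\supp y_{2i}=\{l_{2i}\}\subset L$.

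Property C) follows from subadditivity of the quotient norm, since $Y/X_L$ is a subspace:
\[
\dist\Bigl(Q\bigl(\textstyle\sum_i y_{2i-1}\bigr),\,Y/X_L\Bigr)\le\sum_i\dist\bigl(Q(y_{2i-1}),Y/X_L\bigr)<\sum_i\e_{2i-1}<\e .
\]
For property D), choose $g_{2i-1}\in X_L^\perp$ with $\norm[f_{2i-1}-g_{2i-1}]<\e_{2i-1}$ (possible as $\dist(f_{2i-1},X_L^\perp)<\e_{2i-1}$) and put $g:=m_{2j-1}^{-2}\sum_{i=1}^{n_{2j-1}/2}g_{2i-1}\in X_L^\perp$; then $\norm[g-m_{2j-1}^{-2}\sum_i f_{2i-1}]\le m_{2j-1}^{-2}\sum_i\e_{2i-1}<m_{2j-1}^{-2}$. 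Since property B) says exactly that the hypothesis $\norm[y_{2i-1}]_{G_\xi}\le m_{2j-1}^{-2}$ of Corollary~\ref{depest} is satisfied, Corollary~\ref{c412} applies and gives $\norm[\phi+\psi]\le m_{2j-1}^{-1}$ for $\phi=m_{2j-1}^{-2}\sum_i f_{2i-1}$ and $\psi=m_{2j-1}^{-2}\sum_i f_{2i}$; hence $\norm[g+\psi]\le\norm[g-\phi]+\norm[\phi+\psi]<m_{2j-1}^{-2}+m_{2j-1}^{-1}\le 2m_{2j-1}^{-1}$, which is the second estimate of D) read with the sign convention for the even part already fixed in \eqref{dd1}.

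The point to emphasize is that the main analytic difficulty does not lie in this corollary. Everything nontrivial — producing exact pairs that simultaneously approximate $Y/X_L$ and $X_L^\perp$, lie beyond a prescribed point, and have small $G_\xi$-norm — is carried out in Corollary~\ref{cf} (and in Lemma~\ref{lf} beneath it), which we are free to use. What remains is essentially bookkeeping: coordinating the $\sigma$-coding of Definition~\ref{d7.1} with the repeated use of the one fixed quotient $Y/X_L$, and keeping all weights above $n_{2j-1}^3$ so that B) holds. The one genuinely structural ingredient is that the hypothesis $\#(L\cap\Lambda_t)=\infty$ for all $t$ is precisely what makes the even steps available while forcing $\supp f_{2i}\subset L$ (so that in the subsequent application these even coordinates vanish under $Q$); and the apparent sign mismatch in D) is merely the convention of \eqref{dd1}, causing no real trouble.
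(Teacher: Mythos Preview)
Your proof is correct and follows essentially the same approach as the paper's own: build the attracting sequence by alternating applications of Corollary~\ref{cf} at the odd steps with the choice of $e_{l_{2k}}^*$, $l_{2k}\in L\cap\Lambda_{\sigma(\cdots)}$, at the even steps, then read off A)--C) from the construction and D) from Corollaries~\ref{depest} and~\ref{c412}. Your write-up is in fact more careful than the paper's terse version---you make explicit the choice of $j_1\in N_1$ with $m_{2j_1}>n_{2j-1}^3$, the verification that all odd weights exceed $n_{2j-1}^3$ via the growth condition on $\sigma$, and the sign issue in the second inequality of D), all of which the paper leaves implicit.
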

\begin{proof}
Using Corollary \ref{cf} and assumption on the set $L$ we can
choose an attracting sequence $(y_{i},f_{i})_{i=1}^{n_{2j-1}}$
such that $A)$ holds and for every $i\leq n_{2j-1}/2$ the couple
$(y_{2i-1},f_{2i-1})$ is a $(6,j_{2i-1})-$exact pair satisfying the conclusion of Corollary
\ref{cf} for $\e_i=\e2^{-i}$, i.e.
\begin{enumerate}
 \item $\dist(Q(y_{2i-1}),Y/X_L)<\e_i$ and $\dist(f_{2i-1}, X_{L}^{\perp})<\e_i$
\item $\norm[y_{2i-1}]_{G_{\xi}}<6m_{j_{2i-1}}^{-1}<m_{2j-1}^{-2}$.
\end{enumerate}
For every $i\leq n_{2j-1}/2$ choose $g_{2i-1}\in
X_{L}^{\perp}$ with $\norm[f_{2i-1}-g_{2i-1}]<\e2^{-i}$ and let
$g=m_{2j-1}^{-2}\sum_{i=1}^{n_{2j-1}/2}g_{2i-1}$.

By Corollaries  \ref{depest}, \ref{c412} the chosen vectors and functionals satisfy the desired conditions.
\end{proof}
\begin{corollary}\label{last} Every subspace of the quotient $\mathfrak{X}_\xi$ has   $\ell_{1}$-index greater than $\omega^{\xi}$.
 \end{corollary}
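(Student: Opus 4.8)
The plan is to show that every infinite dimensional closed subspace $Z=Y/X_{L}$ of $\mathfrak{X}_{\xi}$ carries an $\ell_{1}$-tree of order at least $o(\mc{S}_{\xi})=\omega^{\xi}+1$ with a universal constant; since an $\ell_{1}$-tree on $Z$ is by definition an $\ell_{1}$-tree on any larger space, this yields $I(Z)\geq\omega^{\xi}+1>\omega^{\xi}$. The tree will be the exact primal counterpart of the $c_{0}$-tree of $G_{\xi}$ and of the one built in Proposition \ref{p413}: its nodes will be indexed by partial $\sigma_{1}$-special sequences, each step being supplied by Corollary \ref{c5.3}, whose point is precisely that in any subspace of the quotient one finds attracting sequences whose even parts $e_{l_{2i}}$ sit in $X_{L}$, hence are killed by $Q$.

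Concretely, I would fix a large initial index $j_{1}\in M_{1}$ and recurse along the tree of $\sigma_{1}$-special sequences. To a node consisting of a partial special sequence $(\psi_{1},\dots,\psi_{l-1})$ (each $\psi_{i}\in G_{1}$ supported in $L$, with $\psi_{1}\in G_{1}^{j_{1}}$ and $\psi_{i}\in G_{1}^{\sigma_{1}(\psi_{1},\dots,\psi_{i-1})}$ for $i\geq2$), together with normalized vectors $\hat v_{1},\dots,\hat v_{l-1}\in Z$ and functionals $g_{1},\dots,g_{l-1}\in X_{L}^{\perp}=\mathfrak{X}_{\xi}^{*}$, one attaches successors as follows: put $j_{l}=\sigma_{1}(\psi_{1},\dots,\psi_{l-1})$ (or $j_{l}=j_{1}$ if $l=1$), and apply Corollary \ref{c5.3} inside $Z$ with supports arbitrarily far out and small error, obtaining a $(6,2j_{l}-1)$-attracting sequence $(y_{k},f_{k})_{k=1}^{n_{2j_{l}-1}}$ with $f_{2i}=e^{*}_{l_{2i}}$, $l_{2i}\in L$. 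Set $\psi_{l}=m_{2j_{l}-1}^{-2}\sum_{i}f_{2i}\in G_{1}^{j_{l}}$, $w_{l}=\frac{1}{n_{2j_{l}-1}}\sum_{k}(-1)^{k+1}y_{k}$, and let $g_{l}\in X_{L}^{\perp}$ be the functional of D), which is $m_{2j_{l}-1}^{-2}$-close to $\phi_{l}:=m_{2j_{l}-1}^{-2}\sum_{i}f_{2i-1}$. Since $y_{2i}=e_{l_{2i}}\in X_{L}$ we get $Q(w_{l})=\frac{1}{n_{2j_{l}-1}}\sum_{i}Q(y_{2i-1})$, so by C) $Q(w_{l})$ lies within arbitrarily small distance of $Z$; Corollaries \ref{depest} and \ref{c412} give $\norm[w_{l}]\leq 90\,m_{2j_{l}-1}^{-2}$, $\norm[g_{l}]\leq2$, and (by disjointness of ranges and $f_{2i-1}(y_{2i-1})=1$) $g_{l}(w_{l})=\phi_{l}(w_{l})+O(m_{2j_{l}-1}^{-4})=\tfrac12 m_{2j_{l}-1}^{-2}(1+o(1))$; hence, after a negligible perturbation placing it in $Z$, $\hat v_{l}:=Q(w_{l})/\norm[Q(w_{l})]$ is normalized, lies in $Z$, and $g_{l}(\hat v_{l})\geq 1/200$. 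Each node has infinitely many successors (choice of the tail), a branch is made terminal once $(\minsupp\psi_{i})_{i\leq l}$ is a maximal $\mc{S}_{\xi}$-set, and since $\minsupp\psi_{l}$ may be taken as large as we wish and $\mc{S}_{\xi}$ is spreading, every $F\in\mc{S}_{\xi}$ is realised along a branch; exactly as in Proposition \ref{p413}, the resulting tree $\mc{T}$ on $Z$ has order $\geq o(\mc{S}_{\xi})=\omega^{\xi}+1$.

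It remains to check that each branch $(\hat v_{1},\dots,\hat v_{k})\in\mc{T}$ is $C$-equivalent to the unit vector basis of $\ell_{1}^{k}$ for a $C$ independent of the branch. The upper estimate is trivial. For the lower one, given scalars $(a_{l})_{l=1}^{k}$ put $\e_{l}=\sign(a_{l})$ and $g=\frac{1}{3}\sum_{l=1}^{k}\e_{l}g_{l}$. By construction $(\psi_{1},\dots,\psi_{k})$ is a $\sigma_{1}$-special sequence, so $\sum_{l}\e_{l}\psi_{l}\in G_{sp}\subset B_{X_{\xi}^{*}}$; writing $\phi_{l}=m_{2j_{l}-1}^{-1}f^{(l)}-\psi_{l}$, where $f^{(l)}=m_{2j_{l}-1}^{-1}\sum_{k}f_{k}\in K_{\xi}$ is the attracting functional of the $l$-th step, and using $\norm[g_{l}-\phi_{l}]\leq m_{2j_{l}-1}^{-2}$, a direct estimate gives $\norm[\sum_{l}\e_{l}g_{l}]\leq 1+\sum_{l}m_{2j_{l}-1}^{-1}+\sum_{l}m_{2j_{l}-1}^{-2}\leq3$; since also $g\in X_{L}^{\perp}$, we have $g\in B_{\mathfrak{X}_{\xi}^{*}}$. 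Finally, $\supp\phi_{l}$ lies in the range of the $l$-th attracting sequence and these ranges are successive, so for $l\neq m$ the off-diagonal terms satisfy $|g_{l}(\hat v_{m})|\leq 450\, m_{2j_{l}-1}^{-2}$, and because $j_{1}$ (hence every $j_{l}$) is large this yields $\sum_{l\neq m}|g_{l}(\hat v_{m})|<\delta$ with $\delta$ as small as desired, uniformly. Therefore $g\big(\sum_{l}a_{l}\hat v_{l}\big)\geq\frac{1}{3}\big(\tfrac{1}{200}-\delta\big)\sum_{l}|a_{l}|$, which gives the lower $\ell_{1}$-estimate with a universal constant. Thus $\mc{T}$ is an $\ell_{1}$-tree on $Z$ of order $>\omega^{\xi}$ and $I(Z)>\omega^{\xi}$.

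I expect the main obstacle to be purely organisational: making all the requirements compatible in one recursion. One must coordinate the internal coding $\sigma$ of the attractor sequences produced by Corollary \ref{c5.3} with the ground-set coding $\sigma_{1}$ so that the functionals $\psi_{l}$ genuinely assemble into a $\sigma_{1}$-special sequence (correct weights $\psi_{l}\in G_{1}^{\sigma_{1}(\psi_{1},\dots,\psi_{l-1})}$, the ordering $\psi_{1}<\psi_{2}<\dots$ and $\ind\psi_{1}<\ind\psi_{2}<\dots$, and $(\minsupp\psi_{l})\in\mc{S}_{\xi}$), while simultaneously keeping the tree $\mc{S}_{\xi}$-rich enough that $o(\mc{T})\geq o(\mc{S}_{\xi})$ and pushing the vectors $w_{l}$ into $Z$ with the quoted norm control coming from C), D) and Corollary \ref{depest}; the norm estimates themselves are routine consequences of Corollaries \ref{depest}, \ref{c412} and \ref{c5.3}.
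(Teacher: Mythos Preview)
Your proposal is correct and follows essentially the same route as the paper: invoke Corollary~\ref{c5.3} recursively so that the even parts $\psi_l=m_{2j_l-1}^{-2}\sum_i f_{2i}$ assemble into a $G_\xi$-special sequence, use D) to replace them by $g_l\in X_L^\perp$, and evaluate $\tfrac{1}{c}\sum_l\e_lg_l$ on the (quotients of the) attracting averages to get the uniform $\ell_1$ lower bound along every $\mc{S}_\xi$-branch. The only cosmetic differences are that the paper works with the un-normalized $u_j=\frac{m_{2j-1}^2}{n_{2j-1}}\sum_ly_{2l-1}^j$ rather than your normalized $\hat v_l$, and bounds $\|\sum_l\e_lg_l\|$ directly via $\|g_l-\psi_l\|\le 2m_{2j_l-1}^{-1}$ from D) instead of going through $\phi_l=m_{2j_l-1}^{-1}f^{(l)}-\psi_l$.
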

\begin{proof}
By Corollary \ref{c5.3}  we  can choose for every subspace $Y/X_L$ of the quotient
$\mathfrak{X}_{\xi}$ and any $j$ an attracting sequence
$(y^{j}_{l},f^{j}_{l})_{l=1}^{n_{2j-1}}$ and $g_{j}\in X_L^{\perp}$ such that for all
$l$, $\norm[y_l^j]_G<m_{2j-1}^{-2}$ and for
$$
z^{*}_{j}=m_{2j-1}^{-2}\sum_{l=1}^{n_{2j-1}/2}f^{j}_{2l}\in G_1,\
\ \ \
f_j=m_{2j-1}^{-2}\sum_{l=1}^{n_{2j-1}/2}f_{2l-1}^{j}, \ \
\ \
u_j=\frac{m_{2j-1}^{2}}{n_{2j-1}}\sum_{l=1}^{n_{2j-1}/2}y^{j}_{2l-1}
$$
we have
$$
\norm[g_{j}-z_{j}^{*}]<2m_{2j-1}^{-1}\,\,\textrm{and}\,\,\,
\norm[g_{j}-f_j]\leq m_{2j-1}^{-2}\,\,\textrm{and}\,\,\, \dist
(Q(u_j),Y/X_L)< 16^{-j}.
$$
In particular we get $\norm[g_j]\leq 2$. Setting
$\widehat{u}_{j}=Q(u_j) =
Q(\frac{m_{2j-1}^{2}}{n_{2j-1}}\sum_{l=1}^{n_{2j-1}}(-1)^{j+1}y^{j}_{l})
$ we get by Corollary \ref{depest}
\begin{align*}
\frac{1}{8}\leq\frac{1}{4}-m_{2j-1}^{-2}/2\leq f_{j}(u_{j})/2-\norm[f_j-g_j]/2 &\leq
g_{j}(u_{j})/2\leq\norm[\widehat{u}_{j}]_{\mathfrak{X}_\xi}
\\
&\leq
\|\frac{m_{2j-1}^{2}}{n_{2j-1}}\sum\limits_{l=1}^{n_{2j-1}}(-1)^{j+1}y^{j}_{l}\|\leq
90.
\end{align*}
For any $F\in \mc{S}_\xi$ we pick $(z_j^*)_{j\in F}$, $(g_j)_{j\in
F}$, $(u_j)_{j\in F}$ so that
\begin{enumerate}
\item $z_j^*$, $g_j$, $u_j$ are picked as above for any $j\in F$,
\item $\minsupp (z_j)>j$ for any $j\in F$,
\item $(z_j^*)_{j\in F}$ is a $G_{\xi}$-special sequence.
\end{enumerate}
Then we have that
$
\|\sum_{j\in F}\e_{j}z_{j}^{*}\|\leq 1\,\,\,\textrm{for every
$\e_{j}\in\{-1,1\}$}.
$
Since  $\norm[g_j-z^{*}_j]\leq 2^{-j}$ for every $j\in F$ it follows that
$\norm[\sum_{j\in F}\e_{j}g_{j}]\leq 2$ for every $\e_{j}=\pm 1$. Therefore
$$
\|\sum_{j\in F}a_j\widehat{u}_{j}\|_{\mathfrak{X}_\xi}\geq\frac{1}{2} \sum_{j\in
F}\textrm{sgn}(a_j)g_{j}(\sum_{j\in F}a_ju_{j})\geq \frac{1}{2}\sum_{j}\frac{1}{8}\vert
a_j\vert\geq \frac{1}{16}\sum\vert a_j\vert.
$$
Since $\dist (\widehat{u}_j,Y/X_L)< 16^{-j}$ for any $j$, by the above procedure we can
obtain an $\ell_{1}$-tree in $Y/X_L$ with order greater or equal $o(\mc{S}_\xi)$, hence the
 $\ell_{1}$-index of $Y/X_L$ is greater than $\omega^{\xi}$.
\end{proof}
\begin{corollary}\label{clast}
The space $\mathfrak{X}_{\xi}=X_{\xi}/X_{L}$ does not contain a normalized basic
sequence generating an $\ell_{1}$-spreading model.
\end{corollary}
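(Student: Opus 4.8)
The plan is to reduce the statement for the quotient $\mathfrak{X}_\xi=X_\xi/X_L$ to the corresponding statement for $X_\xi$ itself, which is Corollary \ref{c4.15}. Recall that quotients of spaces without an $\ell_1$-spreading model need not in general be well-behaved, so the argument must be more careful than a mere ``pass to a subsequence''. First I would observe that if $(\widehat{y}_n)$ is a normalized basic sequence in $\mathfrak{X}_\xi$ generating an $\ell_1$-spreading model with constant $C$, then by standard perturbation one may assume (after passing to a subsequence and using Proposition \ref{p13.1}, which gives that $(Q(e_n))_{n\in M}$ is a bimonotone-type basis of $\mathfrak{X}_\xi$) that $(\widehat{y}_n)$ is $(\e_n)$-close to a normalized block sequence $(\widehat{x}_n)$ of $(Q(e_n))_{n\in M}$, and hence $(\widehat{x}_n)$ itself generates an $\ell_1$-spreading model with constant, say, $2C$. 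For each $n$ pick $x_n\in X_\xi$ with $Q(x_n)=\widehat{x}_n$, $\norm[x_n]=\norm[\widehat{x}_n]$ and $\ran(x_n)=[\minsupp\widehat{x}_n,\maxsupp\widehat{x}_n]$, so that $(x_n)$ is a normalized block sequence in $X_\xi$. The key point is that $\norm[\sum a_i x_i]\geq\norm[Q(\sum a_i x_i)]_{\mathfrak{X}_\xi}=\norm[\sum a_i\widehat{x}_i]_{\mathfrak{X}_\xi}$ for all scalars, since $Q$ is a quotient map; therefore $(x_n)$ generates an $\ell_1$-spreading model in $X_\xi$ with the same constant $2C$.

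Next I would invoke Corollary \ref{c4.15}, which asserts that $X_\xi$ does not admit any $\ell_p$ (or $c_0$), in particular no $\ell_1$, as a spreading model. This directly contradicts the existence of the block sequence $(x_n)$ just constructed, and finishes the proof. So the entire argument is: lift a hypothetical $\ell_1$-spreading-model sequence from the quotient to $X_\xi$ using the quotient-map norm inequality, then cite Corollary \ref{c4.15}.

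The only genuine subtlety — and the step I expect to require the most care — is the lifting: one must be sure that the lifted sequence $(x_n)$ is genuinely semi-normalized and a \emph{block} sequence, and that the inequality $\norm[\sum a_i x_i]_{X_\xi}\geq\norm[\sum a_i Q(x_i)]_{\mathfrak{X}_\xi}$ combined with the upper $\ell_1$-estimate (automatic from $\norm[x_i]=1$ together with the triangle inequality, up to constant $1$) really yields that $(x_i)$ is $C'$-equivalent to the $\ell_1$-basis on every Schreier-$\mc{S}_1$ set, i.e. generates an $\ell_1$-spreading model in the precise sense of the relevant definition. This is exactly the perturbation/compactness bookkeeping carried out, in the analogous situation, in the proof of Corollary \ref{last} and in Lemma \ref{lf} (and is the content of the standard fact that a normalized block sequence of $(Q(e_n))_{n\in M}$ lifts to one in $X_\xi$ with control on supports). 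Granting that routine bookkeeping, the contradiction with Corollary \ref{c4.15} is immediate, and one may note as a bonus that the same lifting argument shows $\mathfrak{X}_\xi$ admits no $\ell_p$ or $c_0$ as a spreading model, since $X_\xi$ does not and finite block representability passes through the quotient map.
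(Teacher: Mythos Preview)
Your proposal is correct and follows essentially the same approach as the paper: reduce to a block sequence in the quotient, lift it to a normalized block sequence in $X_\xi$ via the quotient-map norm inequality, and contradict Corollary~\ref{c4.15}. The paper's proof is a terse three-line version of exactly this argument; your additional bookkeeping about the lifting (using Proposition~\ref{p13.1} and the choice of $x_n$ with $\norm[x_n]=\norm[\widehat{x}_n]$) is precisely what is implicit there.
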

\begin{proof}
If a normalized basic sequence $\hxn\subset \mathfrak{X}_{\xi}$ generates an $\ell_{1}$-spreading model  in
$\mathfrak{X}_{\xi}$, we may assume that it is a block sequence. Then we get a block
sequence in $X_{\xi}$ which generates an $\ell_{1}$-spreading model, a contradiction by
Corollary \ref{c4.15}.
\end{proof}
Gathering the results of this and previous sections  we get the following
\begin{theorem} For every countable  ordinal $\xi$ there exists a separable   reflexive Banach space $\mathfrak{X}_{\xi}$ with the hereditary Bourgain $\ell_{1}$-index greater than $\omega^{\xi}$ such that $\mathfrak{X}_{\xi}$ does not admit an $\ell_{1}$-spreading model. Moreover the dual  $\mathfrak{X}_{\xi}^{*}$ has  hereditary  $c_0$-index greater than $\omega^{\xi}$ and does not admit a $c_0$-spreading model.
\end{theorem}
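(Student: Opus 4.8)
The plan is to take for $\mathfrak{X}_{\xi}$ the quotient $X_{\xi}/X_{L}$ constructed in Section~\ref{sec13}, where $L\in[\N]$ is chosen with $\N\setminus L$ infinite and $\#(L\cap\Lambda_{i})=\infty$ for every $i$, and then assemble the properties proved in the preceding sections. First I would recall that $X_{\xi}$ is reflexive (the consequence of Proposition~\ref{ris} recorded right after it); since reflexivity and separability pass to quotients by closed subspaces, $\mathfrak{X}_{\xi}$ is a separable reflexive Banach space, and by Proposition~\ref{p13.1} the sequence $(Q(e_{n}))_{n\in\N\setminus L}$ is a Schauder basis for it, so in particular $\mathfrak{X}_{\xi}$ is infinite dimensional and the index statements below are meaningful.

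For the hereditary $\ell_{1}$-index I would invoke Corollary~\ref{last}: every subspace $Y/X_{L}$ of $\mathfrak{X}_{\xi}$ carries an $\ell_{1}$-tree of order at least $o(\mc{S}_{\xi})=\omega^{\xi}+1$, hence has $\ell_{1}$-index greater than $\omega^{\xi}$; taking the infimum over all subspaces gives $I_{h}(\mathfrak{X}_{\xi})>\omega^{\xi}$, and the same argument restricted to block subspaces gives $I_{hb}(\mathfrak{X}_{\xi})>\omega^{\xi}$. That $\mathfrak{X}_{\xi}$ admits no $\ell_{1}$-spreading model is exactly Corollary~\ref{clast}, which reduces the assertion to Corollary~\ref{c4.15} by observing that a normalized basic sequence in the quotient generating an $\ell_{1}$-spreading model may be taken to be a block sequence, and such a block sequence lifts to one in $X_{\xi}$ with the same property.

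It remains to treat the dual. Since $\mathfrak{X}_{\xi}=X_{\xi}/X_{L}$ one has $\mathfrak{X}_{\xi}^{*}=X_{L}^{\perp}$, a closed subspace of $X_{\xi}^{*}$; consequently every infinite dimensional subspace of $\mathfrak{X}_{\xi}^{*}$ is also an infinite dimensional subspace of $X_{\xi}^{*}$. Applying Proposition~\ref{p413}(a) we conclude that its $c_{0}$-index exceeds $\omega^{\xi}$, so $\mathfrak{X}_{\xi}^{*}$ has hereditary $c_{0}$-index greater than $\omega^{\xi}$; and Proposition~\ref{p413}(b), applied to $X_{\xi}^{*}$, shows that $\mathfrak{X}_{\xi}^{*}$ contains no normalized sequence generating a $c_{0}$-spreading model.

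The serious content has of course already been supplied: the main obstacles are the combinatorial estimate behind Corollary~\ref{c4.15}/Corollary~\ref{clast} (no $\ell_{1}$-spreading model), which rests on Proposition~\ref{nol1} and the Basic Inequality, and the attractor argument behind Corollary~\ref{last}, which transports a $c_{0}$-tree of height $\omega^{\xi}$ from $G_{\xi}$ into an arbitrary quotient subspace via the attracting functionals and Corollary~\ref{c5.3}. The only points in the present step that still need a word of care are that the lift of an $\ell_{1}$-spreading model sequence from $\mathfrak{X}_{\xi}$ to $X_{\xi}$ can indeed be taken to be a block sequence, and that for $Y\subset X_{\xi}$ with $Y/X_{L}$ infinite dimensional Corollary~\ref{c5.3} genuinely applies; both follow easily from bimonotonicity of $(e_{n})_{n\in\N}$ and the choice of $L$.
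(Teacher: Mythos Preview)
Your proposal is correct and matches the paper's own treatment: the theorem is stated there immediately after Corollary~\ref{clast} with the single sentence ``Gathering the results of this and previous sections we get the following'', so the intended proof is precisely the assembly you describe---reflexivity of $X_{\xi}$ passing to the quotient, Corollaries~\ref{last} and~\ref{clast} for the $\ell_{1}$ side, and the identification $\mathfrak{X}_{\xi}^{*}=X_{L}^{\perp}\subset X_{\xi}^{*}$ together with Proposition~\ref{p413} for the dual $c_{0}$ side. Your care about the lift to a block sequence and the applicability of Corollary~\ref{c5.3} is appropriate but already handled in the cited results, so nothing further is needed.
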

\begin{remark}
The results presented above concerning indices of the dual space $X^*_\xi$ and the
quotient space $\mathfrak{X}_{\xi}$ suggest the following problem:

Assume that $X$ is a reflexive  Banach space such that every subspace of the dual has
 $c_0$-index greater than  $\omega^{\xi}$. Does there exist a subspace $Y$ such
that  every subspace of the quotient has  $X/Y$ has  Bourgain $\ell_{1}$-index greater than
$\omega^{\xi}$?
\end{remark}

\end{document}